	\newtheorem{assumption}[theorem]{Assumption}
\newcommand{\A}{\mathcal{A}}
\newcommand{\C}{\mathcal{C}}
\newcommand{\N}{\mathbb{N}}
\newcommand{\E}{\mathbb{E}}
\newcommand{\cL}{\mathcal{L}}
\newcommand{\cP}{\mathcal{P}}
\newcommand{\Xbar}{\overline{X}}
\newcommand{\Vbar}{\overline{V}}
\newcommand{\Bbar}{\overline{B}}
\newcommand{\Zbar}{\overline{Z}}
\newcommand{\R}{\mathbb{R}}
\newcommand{\vf}{\Phi}
\newcommand{\lf}{\overline{G}}
\newcommand{\pot}{\psi}
\newcommand{\Exp}{\mathrm{Exp}}
\newcommand{\trace}{\textnormal{tr}}
\newcommand{\Unifsphere}{\textnormal{Unif}(\mathbb{S}^{d-1})}
\newcommand{\sign}{\textnormal{sign}}
\newcommand{\Unif}{\mathrm{Unif}}
\newcommand{\RDBDR}{\textbf{RDBDR} } 
\newcommand{\BDRDB}{\textbf{BDRDB} }
\newcommand{\DRBRD}{\textbf{DRBRD} }
\newcommand{\DBRBD}{\textbf{DBRBD} }
\newcommand{\prox}{\mathrm{prox}}
\newcommand{\po}{\left(}
\newcommand{\pf}{\right)}
\newcommand{\cco}{\llbracket}
\newcommand{\ccf}{\rrbracket}
\newcommand{\dd}{\text{d}}
\newcommand{\na}{\nabla}
\newcommand{\1}{\mathbbm{1}}
\begin{document}

\title{Piecewise deterministic sampling with splitting schemes}

\author{\name Andrea Bertazzi \email andreabertazzi@duck.com \\
       \addr Centre de mathématiques appliquées \\
       École Polytechnique 
       \AND
       \name Paul Dobson \email p.dobson\_1@hw.ac.uk \\
       \addr Department of Mathematics and Computer Science\\
       Heriot-Watt University and Maxwell Institute for Mathematical Sciences
       \AND
       \name Pierre Monmarch\'e \email 	pierre.monmarche@sorbonne-universite.fr \\
       \addr Laboratoire Jacques-Louis Lions and  Laboratoire de Chimie Théorique\\
       Sorbonne Universit{\'e}
       }
\editor{Anthony Lee}

\maketitle

\begin{abstract}
We introduce Markov chain Monte Carlo (MCMC) algorithms based on numerical approximations of piecewise-deterministic Markov processes obtained with the framework of splitting schemes. We present unadjusted as well as adjusted algorithms, for which the asymptotic bias due to the discretisation error is removed applying a non-reversible Metropolis-Hastings filter. In a general framework we demonstrate that the unadjusted schemes have weak error of second order in the step size, while typically maintaining a computational cost of only one gradient evaluation of the negative log-target function per iteration.
Focusing then on unadjusted schemes based on the Bouncy Particle and Zig-Zag samplers, we provide conditions ensuring geometric ergodicity and consider the expansion of the invariant measure in terms of the step size. We analyse the dependence of the leading term in this expansion on the refreshment rate and on the structure of the splitting scheme, giving a guideline on which structure is best. Finally, we illustrate promising results for our samplers with numerical experiments on a Bayesian imaging inverse problem and a system of interacting particles.
\end{abstract}

\begin{keywords}
  MCMC algorithms, piecewise deterministic Markov processes, splitting schemes, non-reversible samplers, subsampling
\end{keywords}




\section{Introduction}\label{sec:intro}

Piecewise deterministic Markov processes (PDMPs) are non-diffusive Markov processes combining a deterministic motion and random jumps. They appear in a wide range of modelling problems \citep{Cloez_etal,lemaire_thieullen_thomas_2020,Locherbach} and, over the last decade, have gained considerable interest as Markov Chain Monte Carlo (MCMC) methods \citep{PetersDeWith2012,Monmarche2016,BierkensRoberts,BPS,BPS_Durmus,vanetti2017piecewisedeterministic}.  Their dynamics can be described by their infinitesimal generator, which is of the form
\begin{equation}\label{eq:genPDMP}
\cL f(z) = \langle \vf(z), \nabla_zf(z)\rangle + \lambda(z)\int_E (f(y)-f(z)) Q(z,dy)\,,
\end{equation}
where $E$ is the state space and, in this work, $\Phi$ is a smooth and globally Lipschitz vector field, $\lambda:E\to [0,\infty)$ is a continuous function and $Q$ is a probability kernel. The associated process follows the ordinary differential equation (ODE) $\dot z= \Phi(z)$ and, at rate $\lambda(z)$, jumps to a new position distributed according to $Q(z,\cdot)$. We refer to \citet{Davis1984} and \citet{pdmp_inv_meas} for general considerations on PDMP. We denote the deterministic dynamics by $\varphi_t$, the integral curve of $\Phi$, that is the solution to 
\begin{equation*}
\frac{d}{dt}\varphi_t(z)=\vf(\varphi_t(z)), \quad \varphi_0(z)=z, \quad \text{ for all } t\geq 0, z\in E,
\end{equation*}
which exists since $\vf$ is globally Lipschitz. We also assume that $\varphi_t$ leaves $E$ invariant. For $T\sim \Exp(1)$, the random time of the next jump, $\tau$, is given by 
\begin{equation}\label{eq:switching_time_pdmp}
\tau = \inf \left\{t>0: \int_0^t \lambda(\varphi_s(z))\dd s\geq T\right\}.
\end{equation}

This work addresses the question of the simulation of a PDMP with generator \eqref{eq:genPDMP}. The classical method is to use a Poisson thinning procedure \citep{lewis_shedler_thinning,LeThTh17} to sample the jump times, and then to solve the ODE exactly if possible, or otherwise by a standard numerical scheme. Similar to rejection sampling which requires a good reference measure, an efficient Poisson thinning algorithm requires the knowledge of good bounds for the jump rate $\lambda$ along the trajectory of the ODE. In this work, we focus on the case in which such bounds are not available, or are so crude that thinning would not be numerically efficient. In Section \ref{sec:imaging_experiments} we give a concrete example of the latter situation, showing how in a high dimensional setting the Poisson thinning approach makes the exact simulation of a PDMP prohibitive even when the negative log-target distribution is gradient Lipschitz (see Equation \eqref{eq:bounds_zzs_tv} for more details on the bounds, which in the considered case have efficiency that decreases polynomially with the dimension of the process). In this setting, the random event times have to be approximated even if the ODE can be solved exactly. This question has recently been addressed in \citet{bertazzi2021approximations}, \citet{cotter2020nuzz},  \citet{corbella_automatic} with three different schemes. In this paper we define approximations of PDMPs by taking advantage of the core ideas behind splitting schemes, which are widely used and studied for other dynamical systems such as Hamiltonian or underdamped Langevin processes \citep{Leimkuhler_rational,LeimkuhlerMatthewsStoltz,monmarche_splitting}, but that have not been considered in the context of PDMPs before. Following the principle of splitting a PDMP into its elementary components, we obtain novel MCMC algorithms which, as we shall prove, have a numerical error which is of order 2 in the step-size.
Moreover, it is a flexible framework and thus such schemes can be easily combined with multi-time-step or factorization methods \citep{isokin} or integrated in hybrid PDMP/diffusion schemes \citep{weisman,monmarche_kin_walks}. Note that, by using a numerical approximation, we lose one of the interests of PDMP for MCMC purpose, which is the exact simulation by thinning, while in our case the invariant measure of the scheme will have a deterministic bias with respect to the true target measure. However, we still benefit from the good long-time convergence properties of the ballistic non-reversible process and, contrary to Hamiltonian-based dynamics, it is still possible to factorize the target measure and define efficient schemes in terms of number of computations of forces {while using a single step-size} (see \cite{weisman,monmarche_kin_walks} and Section~\ref{subsec:example_particles}). We shall also show how the correct stationary distribution can be recovered by means of a non-reversible Metropolis-Hastings acceptance/rejection step (see Section \ref{sec:metropolis_adjusted_algorithms}). Moreover, for classical velocity jump processes used in MCMC, since the norm of the velocity is constant (between possible refreshments which are independent of the potential), these schemes are numerically  stable (see the numerical experiments in Section~\ref{sec:numerical} where the step-size of PDMP schemes can be taken larger than for the classical ULA), even for non-globally Lipschitz potentials. 

The core idea of splitting schemes is first to split the generator in several parts such that a process associated to each part can be simulated exactly. For instance, when the ODE can be solved exactly, one can write $\cL=\cL_D+\cL_J$ with
\begin{align*}
    & \cL_D f(z) = \langle \vf(z), \nabla_zf(z)\rangle, \\
    & \cL_J f(z) = \lambda(z)\int_E (f(y)-f(z)) Q(z,dy)\,,
\end{align*}
in which case the process associated to $\cL_D$ is simply the solution of the ODE, hence D stands for drift, while the process associated to $\cL_J$ is a continuous-time Markov chain, for which the jump rate is constant between two jumps (so that the jump times are simple exponential random variables), hence J stands for jumps. Then, one approximates the semigroup of the true process by a Strang splitting
\begin{equation}\label{eq:Strang}
    P_\delta = e^{\delta(\cL_D+\cL_J)} \approx e^{\frac{\delta}{2}\cL_D} e^{\delta\cL_J}e^{\frac{\delta}{2}\cL_D}
\end{equation}
for a small step size $\delta>0$. Therefore, over one time step the approximation follows $\cL_D$ for time $\delta/2$, then $\cL_J$ for time $\delta$ and finally  $\cL_D$ again for time $\delta/2$. Given a step size $\delta$, now we illustrate how the $(n+1)$-th iteration works. Starting at time $t_n=n\delta$ at state $\Zbar_{t_n}$ the process first moves deterministically for a half step:
\begin{align*}
    & \Zbar_{t_n+\delta/2} = \varphi_{\delta/2}(\Zbar_{t_n}).
\end{align*}
Then we simulate the pure jump part of the process: we generate an event time $\tau_1\sim \Exp(\lambda(\Zbar_{t_n+\delta/2}))$ and, if $\tau_1<\delta$, we set $\Zbar_{t_n+\delta/2}\sim Q(\Zbar_{t_n+\delta/2},\cdot)$. Then we repeat this step as long as $\sum_i \tau_i < \delta$, though, since we are interested in second order schemes, it is enough to limit ourselves to two jumps per time step. Note that the rate is updated after every jump and is constant between jumps. 
We conclude the iteration by a final half step of deterministic motion:
\begin{equation*}
    \Zbar_{t_{n+1}} = \varphi_{\delta/2}(\Zbar_{t_n+\delta/2}).
\end{equation*}
We refer to this scheme as the splitting scheme \textbf{DJD}, where consistently with above $\textbf{D}$ stands for drift and $\textbf{J}$ for jumps. When the ODE cannot be solved exactly,  any second-order numerical scheme can be used instead of $\varphi_t$. Moreover, in some cases (typically for the Hamiltonian dynamics) the generator $\cL_D$ can be further divided in several ODEs. Similarly, for computational purpose, it can be interesting in some cases to split the jump part $\cL_J$ in several operators. It is also possible to keep in $\cL_D$ a combination of ODE and jump, simulated e.g. by thinning, while some parts of the jump are treated separately in $\cL_J$ (it could make sense for instance in the context of \cite{weisman}). When there are more than two parts in the splitting of $\cL$, a scheme is obtained by starting from \eqref{eq:Strang} and using e.g. $e^{\delta \cL_J} \approx e^{\frac{\delta}{2}\cL_A} e^{\delta\cL_B}e^{\frac{\delta}{2}\cL_A} $ if $\cL_J=\cL_A+\cL_B$, etc.


Such splitting schemes can be used to simulate any PDMP. For some modelling problems, it can be interesting to have estimates on the trajectorial error between the approximated process and the two process, for instance when dynamical properties (like mean squared displacement or transition rates) are of interest. However, in this work, we have mainly in mind the PDMPs which are used for MCMC methods, in particular our recurrent examples will be the Zig-Zag sampler (ZZS) \citep{BierkensRoberts,ZZ} and the Bouncy Particle sampler (BPS) \citep{PetersDeWith2012,Monmarche2016,BPS}. As a consequence, we will not discuss trajectorial errors but rather focus on what is relevant for MCMC purposes, namely the long-time convergence of the Markov chain (which should scale properly as the step size vanishes) and the numerical bias on the invariant measure and on  empirical averages of the chain.

\subsubsection*{Main contributions of the paper}
The main contributions of this paper are the following:
\begin{itemize}
    \item We introduce a novel approach to approximate PDMPs based on splitting schemes, an idea which had not been previously considered for processes of this type and that, as we prove in Theorem \ref{thm:weakerror}, has the key advantage of giving an approximation of second order at the cost of one gradient evaluation per iteration.
    \item We define an unbiased version of our splitting schemes by introducing a non-reversible Metropolis adjustment based on the skew detailed balance condition, thus giving a way to eliminate the discretisation error. For these adjusted algorithms we characterise the average rejection rate.
    \item We prove geometric convergence of the law of the unadjusted splitting schemes to the unique invariant measure and we carefully characterise the dependence of the rate of convergence on the step size (these are Theorems \ref{thm:ergodicBPS} and \ref{thm:ergodicity_zzs}).
    \item We study the asymptotic bias in the invariant measure of the unadjusted schemes and determine what structure of splitting scheme performs best and is most robust to poor choices of the refreshment rate, an important tuning parameter of our algorithms.
    \item We demonstrate the advantages of our algorithm based on ZZS on sampling problems in Bayesian Imaging and Molecular Dynamics. In particular, in the imaging context our algorithm gives faster uncertainty quantification compared the unadjusted Langevin algorithm thanks to its better stability in the step size. In the molecular dynamics setting, we show how to decompose the pairwise interactions between the $N$ particles to reduce the cost of iterations of our algorithm to $\mathcal{O}(N)$, compared to the $\mathcal{O}(N^2)$ of the Hamiltonian Monte Carlo algorithm.
\end{itemize}

\subsubsection*{Organisation of the paper}
The article is organised as follows. We conclude this introduction by presenting the algorithms we focus on in this paper. In Section~\ref{subsec:examples} we discuss our two main examples and their approximation with splitting schemes. In Sections~\ref{sec:metropolis_adjusted_algorithms} and \ref{subsec:subsamp} we discuss respectively how we can Metropolis-adjust our schemes in a non-reversible fashion and how we can modify the algorithms to do subsampling. We conclude our introduction with Section~\ref{sec:algorithms_boundaries}, where we describe how boundaries can be treated with our splitting schemes.
Section~\ref{sec:CVsplitting} is devoted to the analysis of the weak error for the finite-time empirical averages of the scheme \textbf{DJD}. The main result, Theorem~\ref{thm:weakerror}, states that for this scheme the \emph{weak error is of order} $2$ in the step-size. The geometric ergodicity of splitting schemes based on our main examples is established in Section~\ref{sec:ergodicity}, with a consistent dependency of the estimates on the step-size.  In Section~\ref{sec:expansion_mu}, we provide a formal expansion (in terms of the step-size) of the invariant measure of the schemes depending on the choice of the splitting, in the spirit of \cite{Leimkuhler_rational}, with a particular focus in Section~\ref{sec:onedimtargets_expinvmeas} on three one-dimensional examples where everything can be made explicit. In Section \ref{sec:scaling_rej_prob} we study the average rejection rate of our adjusted schemes, then verifying our theoretical results with numerical simulations on two Gaussian distributions. Numerical experiments for applications in Bayesian Imaging and Molecular dynamics are provided in Section~\ref{sec:numerical}. Finally, technical proofs are gathered in an Appendix.

\subsubsection*{Comparison to related works} 
\textit{Comparison to PDMP based approaches.} The work in this paper can be seen as a continuation of the work that two of the authors started with their coauthors in \citet{bertazzi2021approximations}, in which a general framework to approximate PDMPs is introduced and studied. In this previous work, the focus is not a specific scheme and thus the results are mostly general and not tailored for particular processes or schemes, though the ZZS and BPS are considered as recurrent examples. In particular, the schemes introduced in \citet{bertazzi2021approximations} leave considerable freedom to the user in the choice of some crucial components of the algorithm, namely an approximation of the switching rates or a numerical integrator in place of the exact flow map.
On the other hand, in this paper we follow the philosophy of splitting schemes to describe a simple recipe to approximate PDMPs, an approach that was not considered in \citet{bertazzi2021approximations}. The main advantage of splitting schemes is the second order of accuracy with one gradient evaluation per iteration, whereas second order algorithms considered in \citet{bertazzi2021approximations} relied on approximations of second order of the switching rates, which can be usually obtained with the expensive computation of the Hessian of the negative log-target. Moreover, in this work we describe how to remove the bias introduced by our approximation with a non-reversible Metropolis-Hastings step. 
Two other works \citet{cotter2020nuzz,corbella_automatic} focus on approximate simulation of the Zig-Zag sampler, which is one of our two main examples. In \citet{cotter2020nuzz} the authors suggest to approximate event times by using numerical approximations of the integral of the rates along the dynamics \eqref{eq:switching_time_pdmp}, as well as a root finding algorithm. In \citet{corbella_automatic}, the authors suggest using a numerical optimisation algorithm at each iteration to obtain a suitable bound that enables the use of Poisson thinning. The first difference is that we mainly consider our approximations as discrete time Markov chains, whereas the processes of \citet{cotter2020nuzz} and \citet{corbella_automatic} are interpreted in continuous time, although neither resulting process is a Markov process due to the nature of the numerical algorithms that are used. Naturally, one could interpret our algorithms as continuous time processes, which again would not be Markov processes. Secondly, without assuming any properties that we do not verify, we derive theoretical justifications of our proposed algorithms, such as bounds on the weak error and existence, uniqueness, and geometric convergence to a stationary distribution. Moreover, we introduce Metropolis adjusted algorithms to eliminate the error introduced by the numerical approximations, while this aspect is not studied in previous works.

\textit{Comparison to SDE based approaches.} As far as theoretical results are concerned, notice that over the past few years non-asymptotic efficiency bounds for MCMC algorithms like HMC or Langevin-based methods have been obtained, particularly in high-dimensional settings and for specific families of target measures (e.g. Gaussian, log-concave or mean-field models) see for example \citet{Gouraudetal,Whalley,Bou_Rabee_2020,Camrudetal,cheng2018underdamped,durmus2016sampling} and references within. In this paper, we prove a number of theoretical results for our new algorithms. Our theorems are certainly less quantitative and specialised compared to the aforementioned literature, and this is natural for several reasons. First of all, many results known for HMC and Langevin are not yet established even for continuous-time PDMPs.
For instance, direct Wasserstein coupling methods are very efficient for ordinary or stochastic differential equations, but more delicate to implement when the process involves non-homogeneous Poisson jumps (see \citet[Section 4.3]{elementary}  in this direction with a result for a mean-field ZZS). In particular, for Gaussian targets the HMC and unadjusted Langevin algorithms give Markov chains that are Gaussian, therefore the study boils down to linear algebra and sharp non-asymptotic bounds can be obtained, see for example \citet{Gouraudetal,Whalley}. This is not true for PDMPs. In the recent \citet{Kamatani}, some asymptotic study is provided for badly-conditioned Gaussian targets (in a fixed dimension, focusing mainly on the $2$-dimensional case) for BPS and ZZS, and in \citet{Deligiannidis} the marginals of the BPS with separable target are shown to converge to a randomized HMC process in high dimension. These results are only asymptotic, restricted to very specific targets, and require involved technical proofs. It should be possible to adapt them to splitting schemes of PDMP, but this requires a study on its own. On the other hand, our theoretical results provide the necessary qualitative convergence guarantees for the algorithms,  similar to classical results for  Langevin-based algorithm as \citet{Leimkuhler_rational} and \citet{Talay}. 

\subsection{Main examples}\label{subsec:examples}
Let us now introduce two examples  from the computational statistics literature.  In this setting we have a target probability measure with density $\pi(x)\propto \exp(-\pot(x))$ for $x\in \mathbb{R}^d$.

\begin{algorithm}[t]
\SetAlgoLined
\SetKwInOut{Input}{Input}\SetKwInOut{Output}{Output}
\Input{Number of iterations $N$, initial condition $(x,v)$, step size $\delta$.}
\Output{Chain $(\Xbar_{t_{n}},\Vbar_{t_{n}})_{n=0}^N$.}
 Set $n=0$, $(\Xbar_0,\Vbar_0) = (x,v)$\;
 \While{$n < N$}{
  Set $\Xbar_{t_{n+1}} = \Xbar_{t_n} +\frac{\delta}{2} \Vbar_{t_{n}} $\;
  Set $\Vbar_{t_{n+1}}=\Vbar_{t_{n}}$\;
  \For{$i=1\dots,d$}{
  With probability $(1-\exp(-\delta\lambda_i(\Xbar_{t_{n+1}},\Vbar_{t_{n+1}})))$ set $\Vbar_{t_{n+1}} = R_i \Vbar_{t_{n+1}}$\;
  }
  Set $\Xbar_{t_{n+1}} = \Xbar_{t_{n+1}} + \frac{\delta}{2}\Vbar_{t_{n+1}} $\;
  Set $n=n+1$\;
 }
 \caption{Splitting scheme \textbf{DBD} for ZZS}
 \label{alg:splitting_DBD_ZZS}
\end{algorithm}

\begin{example}[Zig-Zag sampler, \citet{ZZ}]\label{ex:ZZ}
	Let $E=\R^d\times\{+1,-1\}^d$. For any $z\in E$, we write $z=(x,v)$ for $x\in \R^d$, $v\in\{+1,-1\}^d$, where $x$ is interpreted as the position of the particle and $v$ denotes the corresponding velocity. The deterministic motion of ZZS is determined by $\Phi(x,v)=(v,0)^T$, i.e. the particle travels with constant velocity $v$.
    For $i=1,\dots,d$ we define the jump rates $\lambda_i(x,v):=(v_i\partial_i\pot(x))_++\gamma_i(x)$, where $\gamma_i(x)$ can be any non-negative function and is often chosen to be zero. The corresponding (deterministic) jump kernels are given by $Q_i((x,v),(dy,dw))=\delta_{(x,R_iv)}(dy,dw)$, where $\delta_z$ denotes the Dirac delta measure and $R_i$ is the operator that flips the sign of the $i$-th component of the vector it is applied to, that is $R_iv=(v_1\ldots,v_{i-1},-v_i,v_{i+1},\ldots,v_d).$
    Hence the $i$-th component of the velocity is flipped with rate $\lambda_i$.
    The ZZS is described by its generator
	\begin{equation}\label{eq:ZZgen}
	\cL f(x,v) = \langle v, \nabla_x f(x,v)\rangle + \sum_{i=1}^d\lambda_i(x,v)[f(x,R_iv)-f(x,v)]. 
	\end{equation}
	Simulating the event times with rates of this form is in general a very challenging problem. 
	
	We can apply the splitting scheme above as follows. For simplicity we consider the process with \emph{canonical rates}, i.e. $\gamma_i=0$ for all $i$. Then we can split the generator as
	\begin{align*}
	    & \cL_D f(x,v) = \langle v, \nabla_x f(x)\rangle, \\
        & \cL_B f(x,v) = \sum_{i=1}^d\lambda_i(x,v)[f(x,R_iv)-f(x,v)]. 
	\end{align*}
    Here we define the scheme \textbf{DBD}, where \textbf{B} stands for \emph{bounces}.
	Given $(\Xbar_{t_n},\Vbar_{t_n})$, we start by a half step of deterministic motion:
	\begin{equation*}
	    \Xbar_{t_n+\frac{\delta}{2}} = \Xbar_{t_n} + \frac{\delta}{2}\Vbar_{t_n}.
	\end{equation*}
	Then for $i=1,\dots,d$ we draw $\tau_i\stackrel{iid}{\sim}\Exp(\lambda_i(\Xbar_{t_n+\delta/2},\Vbar_{t_n}))$, which are homogeneous exponential random variables. Then let $\tau_{(1)}=\min \tau_i$ and set
	\begin{align*}
	    \Vbar_{t_{n+1}} = \begin{cases}
	    \Vbar_{t_n} \quad & \text{ if } \tau_{(1)}>\delta \\
	    R_I\Vbar_{t_n}  \quad & \text{ if } \tau_{(1)}\leq\delta
	    \end{cases}
	\end{align*}
	where $R_I=\prod_{i\in I}R_i$ and $I$ is the set of indices $i$ for which $\tau_i\leq \delta$, and $R_Iv=v$ when $I$ is the empty set. 
    Observe that for canonical rates flipping the sign of a component does not affect the other switching rates, and thus it is not possible to have two flips in the same component when $\gamma_i=0$. Finally, set
	\begin{equation*}
	    \Xbar_{t_{n+1}} = \Xbar_{t_n+\frac{\delta}{2}} + \frac{\delta}{2}\Vbar_{t_{n+1}},
	\end{equation*}
	which concludes the iteration. The procedure is described in pseudo code form in Algorithm~\ref{alg:splitting_DBD_ZZS}. An interesting feature of the algorithm is that the jump part of the chain can be computed \emph{in parallel}, since in that stage a velocity flip in one component does not affect the other components of the process.
\end{example}

\begin{algorithm}[t]
\SetAlgoLined
\SetKwInOut{Input}{Input}\SetKwInOut{Output}{Output}
\Input{Number of iterations $N$, initial condition $(x,v)$, step size $\delta$.}
\Output{Chain $(\Xbar_{t_{n}},\Vbar_{t_{n}})_{n=0}^N$.}
 Set $n=0$, $(\Xbar_0,\Vbar_0) = (x,v)$\;
 \While{$n < N$}{
  Set $\Vbar_{t_{n+1}} = \Vbar_{t_n}$ \;
  With probability $(1-\exp(-\lambda_r \frac{\delta}{2}))$ draw $\Vbar_{t_{n+1}}\sim \Unifsphere$  \;
  Set $\Xbar_{t_{n+1}} = \Xbar_{t_n} + \frac{\delta}{2} \Vbar_{t_{n+1}} $\;
  With probability $(1-\exp(-\delta\lambda_1(\Xbar_{t_{n+1}},\Vbar_{t_{n+1}})))$ set $\Vbar_{t_{n+1}} = R(\Xbar_{t_{n+1}}) \Vbar_{t_{n+1}}$\;
  Set $\Xbar_{t_{n+1}} = \Xbar_{t_{n+1}} + \frac{\delta}{2}\Vbar_{t_{n+1}} $\;
  With probability $(1-\exp(-\lambda_r \frac{\delta}{2}))$ set $\Vbar_{t_{n+1}}\sim \Unifsphere$ \;
  Set $n=n+1$\;
 }
 \caption{Splitting scheme \RDBDR for BPS}
 \label{alg:splitting_RDBDR_BPS}
\end{algorithm}
\begin{example}[Bouncy Particle Sampler, \citet{BPS}]\label{ex:BPSintro}
	\sloppy Let $E=\R^d\times\R^d$, and for any $z\in E$ we write $z=(x,v)$ for $x\in \R^d$, $v\in\R^d$. The deterministic motion is the same as for ZZS: $\Phi(x,v)=(v,0)^T$. The BPS has two types of random events: reflections and refreshments. These respectively have rates $\lambda_1(x,v) =(v^T\nabla_x\pot(x))_+$ and $\lambda_2(x,v)= \lambda_{r}$ for $\lambda_{r}>0$,  and corresponding jump kernels
	$$Q_1((x,v),(dy,dw)) = \delta_{(x,R(x)v)}(y,w), \quad Q_2((x,v),(dy,dw)) =  \delta_x(dy)\nu(dw),$$ 
	where $\nu$ is a rotation-invariant probability measure on $\R^d$ (typically the standard Gaussian measure or the uniform measure on $\mathbb S^{d-1}$), and  
	$$R(x)v=v-2\frac{\langle v,\nabla_x\pot(x)\rangle}{\lvert\nabla_x\pot(x)\rvert^2} \nabla_x\pot(x).$$
    The operator $R$ \emph{reflects} the velocity $v$ off the hyperplane that is tangent to the contour line of $\pot$ passing though point $x$. Importantly, the norm of the velocity is unchanged by the application of $R$, and this corresponds to an elastic collision of the particle on the hyperplane.
	The BPS has generator 
	\begin{equation}\notag
	\cL f(x,v)\! =\! \langle v, \nabla_x f(x)\rangle + \lambda_1(x,v)[f(x,R(x)v)-f(x,v)]+\lambda_2 \!\int\!\! \big(f(x,w) - f(x,v)\big) \nu(dw).
	\end{equation}
	In this case we split the generator in three parts:
	\begin{align*}
        & \cL_D f(x,v) = \langle v, \nabla_x f(x)\rangle, \\
        & \cL_B f(x,v) = \lambda_1(x,v)[f(x,R(x)v)-f(x,v)], \\
        & \cL_R f(x,v) = \lambda_2 \!\int\!\! \big(f(x,w) - f(x,v)\big) \nu(dw),
    \end{align*}
    We then define the scheme \textbf{RDBDR}, where \textbf{R} stands for \emph{refreshments}. Starting at time $t_n=n\delta$ at state $(\Xbar_{t_n},\Vbar_{t_n})$ we begin by drawing $\tau_1\sim \Exp(\lambda_r)$ and setting
    \begin{align*}
	    \tilde{V}_{t_{n}+\frac{\delta}{2}} = \begin{cases}
	    \Vbar_{t_n} \quad & \text{ if } \tau_{1}>\delta/2 \\
	    W_1  \quad & \text{ if } \tau_{1}\leq\delta/2
	    \end{cases}
	\end{align*}
	for $W_1\sim\nu$. Then the process evolves deterministically for time $\delta/2$:
	\begin{equation*}
	    \Xbar_{t_n+\frac{\delta}{2}} = \Xbar_{t_n} + \frac{\delta}{2} \tilde{V}_{t_n+\frac{\delta}{2}}.
	\end{equation*}
	At this point, we check if a reflection takes place by drawing $\tau_2 \sim \Exp(\lambda_1(\Xbar_{t_n+\frac{\delta}{2}},\tilde{V}_{t_{n}+\frac{\delta}{2}}))$ and set
	\begin{align*}
	    \Vbar_{t_{n}+\frac{\delta}{2}} = \begin{cases}
	    \tilde{V}_{t_{n}+\frac{\delta}{2}} \quad & \text{ if } \tau_{2}>\delta \\
	    R(\Xbar_{t_n+\frac{\delta}{2}}) \tilde{V}_{t_{n}+\frac{\delta}{2}}  \quad & \text{ if } \tau_{2}\leq\delta
	    \end{cases}
	\end{align*}
	Importantly, $\lambda_1(\Xbar_{t_{n}+\frac{\delta}{2}}, \Vbar_{t_{n}+\frac{\delta}{2}}) = 0$  if a reflection takes place and thus at most one reflection can happen. This is a consequence of the fact that $\langle R(x) v,\nabla \pot(x)\rangle = -\langle v,\nabla \pot(x)\rangle$ by definition of the reflection operator.
	After this we set
	$$ \Xbar_{t_{n+1}} = \Xbar_{t_n+\frac{\delta}{2}} + \frac{\delta}{2}, $$
	and finally conclude the iteration drawing $\tau_3\sim \Exp(\lambda_r)$ and letting
	\begin{align*}
	    \tilde{V}_{t_{n+1}} = \begin{cases}
	    \Vbar_{t_n+\frac{\delta}{2}} \quad & \text{ if } \tau_{3}>\delta/2 \\
	    W_2  \quad & \text{ if } \tau_{3}\leq\delta/2
	    \end{cases}
	\end{align*}
	where $W_2\sim \nu$. 
    The pseudo code can be found in Algorithm \ref{alg:splitting_RDBDR_BPS}.
\end{example}

\subsection{Metropolis adjusted algorithms}\label{sec:metropolis_adjusted_algorithms}
Naturally, the use of splitting schemes to approximate a PDMP introduces a discretisation error. In this section we discuss how to eliminate this bias with the addition of a  Metropolis-Hastings (MH) acceptance-rejection step. In Section~\ref{sec:nonrev_MH} we describe the general procedure, which is a \emph{non-reversible} MH algorithm, and then apply this to ZZS and BPS.  Similarly this can be applied to other kinetic PDMPs used in MCMC.

\subsubsection{Non-reversible Metropolis-Hastings}\label{sec:nonrev_MH}
The classical MH algorithm builds a $\mu$ invariant Markov chain $P$ by enforcing \emph{detailed balance} (DB): for all $x,y$ it holds that $\mu(d x) P(x,d y) = \mu(d y) P(y,d x).$ The chain is then said \emph{reversible}.
PDMPs such as BPS and ZZS do not satisfy DB and are said to be \emph{non-reversible}. Since this property can lead to a faster converging process (see e.g. \cite{diaconis_nonreversible}), it is reasonable here to Metropolise our splitting schemes in a non-reversible fashion. Moreover, as we shall see below, for our chains based on splitting schemes of PDMPs it is not possible to use the standard MH framework, as in general the chain cannot go back to the previous state.
For this reason, we rely on a different balance equation known as \emph{skew detailed balance}: considering a chain for which the state can be decomposed as $z=(x,v)$, for all $x,v,y,w$
\begin{equation}\label{eq:skewDB}
    \mu(dx,dv) P((x,v),(dy,dw)) = \mu(dy,dw) P((y,-w),(dx,-dv)).
\end{equation}
Integrating both sides with respect to $x$ and $v$ we can see that $\mu$ is a stationary measure for the chain $P$. This condition is at the basis of the classical non-reversible HMC algorithm of \cite{Horowitz_guidedHMC} and was considered in several works on the lifting approach, as for instance \citet{Turitsyn2011,Vucelja_lifting,Hukushima_2013,MichelKapferKrauth2014}.
More generally, skew detailed balance holds when we compose a reversible kernel with a measure preserving involution (see e.g. \citet{involutiveMCMC} or \citet{Thin_nonreversible}), which in our case is the operator that flips the sign of the velocity vector. 
Here we wish to construct skew-reversible Markov chains $P$ by Metropolising kernels $\mathcal{Q}$ which are unadjusted splitting schemes of BPS and ZZS. Because we only need to adjust the \textbf{DBD} part, it is sufficient to consider kernels $\mathcal{Q}$ of the form $$\mathcal{Q}((x,v),(y,w)) = \sum_{j=1}^n p_j(x,v) \1_{(y,w)=F_j(x,v)}\,,$$ where $p_j$ is the probability of applying operator $F_j$, $\sum_{i=1}^n p_i(x,v)=1$ for all $(x,v) \in E$, and finally $F_j:E\to E$ are volume preserving maps. Note that this is a more general setting than that of the HMC algorithm, which corresponds to the case $n=1$ with $F_1$ being a splitting scheme for the Hamiltonian dynamics. For $\mathcal{Q}$ as above the skew-DB holds as long as the move from $(x,v)$ to $(y,w)=F_j(x,v)$ is accepted with probability
\begin{equation}\label{eq:prob_MH_nonrev}
    \alpha((x,v),(y,w)) =  1\wedge \frac{\mu(y,-w)p_j(y,-w)  }{\mu(x,v) p_j(x,v)}.
\end{equation}
If the proposal $(y,w)$ is rejected, the new state of the chain becomes $(x,-v)$, in which case \eqref{eq:skewDB} is trivially satisfied.

\begin{algorithm}[t]
\SetAlgoLined
\SetKwInOut{Input}{Input}\SetKwInOut{Output}{Output}
\Input{Number of iterations $N$, initial condition $(x,v)$, step size $\delta$.}
\Output{Chain $(\Xbar_{t_{n}},\Vbar_{t_{n}})_{n=0}^N$.}
 Set $n=0$, $(\Xbar_0,\Vbar_0) = (x,v)$\;
 \While{$n < N$}{
  Set $\Xbar_{t_n+\delta/2} = \Xbar_{t_n} + \frac{\delta}{2} \Vbar_{t_{n}}$\;
  Set $\tilde{V}=\Vbar_{t_{n}}$\;
  \For{$i=1\dots,d$}{
  With probability $(1-\exp(-\delta\lambda_i(\Xbar_{t_n+\delta/2},\tilde{V})))$ set $\tilde{V} = R_i \tilde{V}$\;
  }
  Set $\tilde{X}= \Xbar_{t_n+\delta/2} + \frac{\delta}{2} \tilde{V} $\;
  Set $(\Xbar_{t_{n+1}},\Vbar_{t_{n+1}}) = (\tilde{X},\tilde{V})$ with probability $$1\wedge \left(\frac{\pi(\tilde{X})  }{\pi(\Xbar_{t_n})} \, \exp \Bigg(\delta \sum_{j=1}^d \left(\lambda_j(\Xbar_{t_n+\delta/2},\Vbar_{t_n})-\lambda_j(\Xbar_{t_n+\delta/2},-\tilde{V})\right)\Bigg) \right)$$  
  \textbf{else} set $(\Xbar_{t_{n+1}},\Vbar_{t_{n+1}}) = (\Xbar_{t_{n}},-\Vbar_{t_{n}})$\;
  Set $n=n+1$\;
 }
 \caption{Non-reversible Metropolis adjusted ZZS}
 \label{alg:Metropolis_DBD_ZZS}
\end{algorithm}

\subsubsection{Non-reversible Metropolis adjusted ZZS}
Taking advantage of the skew-reversible Metropolis-Hastings framework described above we now define an exact version of splitting \textbf{DBD} of ZZS. 
Recall that the splitting \textbf{DBD} of Example \ref{ex:ZZ} proposes moves from $(x,v)$ to states of the form $$ (\tilde{X},\tilde{V}) = (x+ \frac{\delta}{2} (v+R_I v) ,R_I v).$$ As we shall motivate below, in this case the acceptance probability \eqref{eq:prob_MH_nonrev} becomes
\begin{equation}\label{eq:MH_prob_zzs}
    \alpha((x,v),(\tilde X,\tilde V))= 1\wedge \exp\left(\pot(x)-\pot(\tilde{X})  + \delta \sum_{i\notin I}v_i\partial_i \pot(x+v\delta/2) \right).
\end{equation}
In case of rejection the state is set to $(x,-v)$. 
The pseudo-code for the resulting adjusted scheme is shown in Algorithm \ref{alg:Metropolis_DBD_ZZS}, where an equivalent expression of the acceptance probability is used.

\textit{Derivation of \eqref{eq:MH_prob_zzs}.} 
Let $x_{1/2}(x,v) := x + v \delta/2$. After one iteration the algorithm proposes state
$ (\tilde{X},\tilde{V}) = (x_{1/2}+ R_I v\delta/2 ,R_I v)$
with probability
\begin{equation}\label{eq:mh_zzs_kernel}
    \exp\left(-\delta \sum_{i\notin I}\lambda_i(x_{1/2},v) \right) \prod_{i \in I} (1-\exp(-\delta \lambda_i(x_{1/2},v)).
\end{equation}
The classical MH scheme is not directly applicable, as in general the probability that the process goes from $(\tilde{X},\tilde{V})$ to $(x,v)$ is $0$. Hence we enforce skew-DB by first computing the probability that the chain goes from $(\tilde{X},-\tilde{V})$ to $(x,-v)$. This can only be achieved by following the same path of $(x,v)\to(\tilde{X},\tilde{V})$ backwards, hence flipping the sign of the velocity components in $I$. Noticing that $x_{1/2}(x,v)=x_{1/2}(\tilde{X},-\tilde{V}),$ we find that the probability of this path is the same as \eqref{eq:mh_zzs_kernel} but where terms $\lambda_i(x_{1/2},v)$ are substituted by $\lambda_i(x_{1/2},-\tilde V)$.
Observe that for $i\in I$ it holds that $\tilde{V}_i = -v_i$ and thus $\lambda_i(x_{1/2},v) = \lambda_i(x_{1/2},-\tilde{V})$, while for $i\notin I$ we have $\tilde{V}_i = v_i$ and hence $\lambda_i(x_{1/2},v) - \lambda_i(x_{1/2},-\tilde{V}) = v_i\partial \pot(x_{1/2}).$
Therefore applying \eqref{eq:prob_MH_nonrev}, we find that the acceptance probability of state $(\tilde X,\tilde V)$ is \eqref{eq:MH_prob_zzs}.

\begin{algorithm}[t]
\SetAlgoLined
\SetKwInOut{Input}{Input}\SetKwInOut{Output}{Output}
\Input{Number of iterations $N$, initial condition $(x,v)$, step size $\delta$.}
\Output{Chain $(\Xbar_{t_{n}},\Vbar_{t_{n}})_{n=0}^N$.}
 Set $n=0$, $(\Xbar_0,\Vbar_0) = (x,v)$\;
 \While{$n < N$}{
  Set $\Vbar_{t_{n}+\delta/2} = \Vbar_{t_n}$ \;
  With probability $(1-\exp(-\lambda_r \delta/2))$ draw $\Vbar_{t_{n}+\delta/2}\sim \Unifsphere$  \;
  Set $\Xbar_{t_n+\delta/2} = \Xbar_{t_n} +\frac{\delta}{2}\Vbar_{t_{n}+\delta/2} $\;
  Set $\tilde{V}=\Vbar_{t_{n}+\delta/2}$\;
  With probability $(1-\exp(-\delta\lambda_1(\Xbar_{t_n+\delta/2},\tilde{V})))$ set $\tilde{V}= R(\Xbar_{t_n+\delta/2}) \tilde{V}$\;
  Set $\tilde{X} = \tilde{X}+ \frac{\delta}{2}\tilde{V} $\;
  Set $(\Xbar_{t_{n+1}},\Vbar_{t_{n+1}}) = (\tilde{X},\tilde{V})$ with probability  
  $$ 1\wedge \frac{\pi(\tilde{X}) \times \exp(-\delta \lambda(\Xbar_{t_n+\delta/2}, -\tilde{V}))  }{\pi(\Xbar_{t_n})\times \exp(-\delta \lambda(\Xbar_{t_n+\delta/2},\Vbar_{t_n}))}$$
  \textbf{else} set $(\Xbar_{t_{n+1}},\Vbar_{t_{n+1}}) = (\Xbar_{t_{n}},-\Vbar_{t_{n}+\delta/2})$ \;
  With probability $(1-\exp(-\lambda_r \delta/2))$ set $\Vbar_{t_{n+1}}\sim \Unifsphere$ \;
  Set $n=n+1$\;
 }
 \caption{Non-reversible Metropolis adjusted BPS}
 \label{alg:Metropolis_RDBDR_BPS}
\end{algorithm}
\subsubsection{Non-reversible Metropolis adjusted BPS}
Here we consider scheme \RDBDR of BPS and derive the appropriate acceptance probability \eqref{eq:prob_MH_nonrev}. 
The resulting procedure is written in pseudo code form in Algorithm \ref{alg:Metropolis_RDBDR_BPS}. 

\textit{Derivation of Algorithm \ref{alg:Metropolis_RDBDR_BPS}.} First observe that the refreshment steps ensure irreducibility but do not alter the stationary distribution of the process, thus we focus on the \textbf{DBD} part. Recall the notation $x_{1/2}(x,v) = x+\delta v /2$. According to \textbf{DBD}, the process moves from an initial condition $(x,v)$ to 
\begin{equation}\label{eq:tildes_metropolis}
    (\tilde{X},\tilde{V}) = 
    \begin{cases}
	    (x_{1/2}+ \frac{\delta}{2} R(x_{1/2})v ,R(x_{1/2})v) \quad & \text{ with probability } 1-\exp(-\delta \lambda(x_{1/2},v)), \\
	    (x+\delta v,v)  \quad & \text{ with probability } \exp(-\delta \lambda(x_{1/2},v)).
	\end{cases}
\end{equation}
Observe that for both states in \eqref{eq:tildes_metropolis} it holds $x_{1/2}(x,v)=x_{1/2}(\tilde{X},-\tilde{V}) = x_{1/2}$. We now compute the acceptance probability \eqref{eq:prob_MH_nonrev} in either of the two cases. 

Consider first the case in which a reflection took place, which corresponds to the first line of \eqref{eq:tildes_metropolis}. The process goes from $(\tilde{X},-\tilde{V})$ back to $(x,-v)$ with the same probability with which the process has a reflection at $x_{1/2}$.  Recall that by definition of $\lambda$, it holds that $\lambda(x_{1/2},v) = \lambda(x_{1/2},-R(x_{1/2})v)$ and therefore the probability that the process goes from $(\tilde{X},-\tilde{V})$ to $(x,-v)$ is the same of going from $(x,v)$ to $(\tilde{X},\tilde{V})$. This gives that the acceptance probability \eqref{eq:prob_MH_nonrev} is
\begin{equation}\label{eq:MH_bps_1}
    1 \wedge \frac{\pi(x_{1/2}+ \frac{\delta}{2} R(x_{1/2})v )}{\pi(x)} = 1 \wedge \exp\Big(\pot(x)-\pot\left(x_{1/2}+ \delta R(x_{1/2})v/2\right) \Big).
\end{equation}

Consider now the second case in \eqref{eq:tildes_metropolis}. The probability that the process goes from $(x+\delta v,-v)$ to $(x,-v)$ is $\exp(-\delta \lambda(x_{1/2},-v))$, while the probability of going from $(x,v)$ to $(x+\delta v,v)$ is $\exp(-\delta \lambda(x_{1/2},v))$. Observing that $\lambda(x_{1/2},v)- \lambda(x_{1/2},-v) = \langle v, \nabla \pot(x_{1/2}) \rangle$ we find that in this case the MH acceptance probability is 
\begin{equation}\label{eq:MH_bps_2}
    \begin{aligned}
    1\wedge\frac{\pi(x+\delta v) \times \exp(-\delta \lambda(x_{1/2},-v))}{\pi(x)\times \exp(-\delta \lambda(x_{1/2},v))} &
    & = 1\wedge \exp\Big(\pot(x)-\pot(x+v\delta)  + \delta\langle v, \nabla \pot(x_{1/2}) \rangle \Big).
\end{aligned}
\end{equation}

Hence we have shown that the unadjusted proposal $(\tilde X,\tilde V)$ is accepted with probability
\begin{equation}
    \alpha((x,v),(\tilde X,\tilde V)) =1\wedge \exp\Big(\pot(x)-\pot(\tilde{X})  + \delta(\lambda(x_{1/2},v)-\lambda(x_{1/2},-\tilde{V}))\Big).
\end{equation}

\subsection{{Algorithms with subsampling}}\label{subsec:subsamp}
One of the attractive features of ZZS and BPS is \emph{exact subsampling}, i.e. the possibility when the potential is of the form $\pot(x) = \frac{1}{M} \sum_{j=1}^M \pot_j(x)$ of using only a randomly chosen $\pot_j$ to simulate the next event time. 
The typical application of this technique is Bayesian statistics, where $\pot(x)$ is the \emph{posterior distribution}, $x$ is the parameter of the chosen statistical model and, when the data points are independent realisations, $\pot_j$ can be chosen to depend only on the $j$-th batch of data points and not on the rest of the data-set. For large data-sets, this technique can greatly reduce the computational cost per event time. In essence, this property is a consequence of the fact that the ZZS with non-canonical rates
\begin{equation}\label{eq:rates_noncanonical}
    \lambda_i(x,v) =\frac1M \sum_{j=1}^M (v_i \partial_i \pot_j(x))_+ \quad \text{for }i=1,\dots,d
\end{equation}
is invariant with respect to $\pi\propto \exp(-\pot)$ \citep{ZZ}. If the functions $t\mapsto  (v_i \partial_i \pot_j(x+vt))_+$ can be bounded independently of $j$, then one can generate proposals for the next event time using the Poisson thinning technique. A proposal is then accepted evaluating a term $(v_i \partial_i \pot_J(x))_+$ for $J$ that is picked uniformly at random from $\{1,\dots,M\}.$ Overall, this procedure has $O(1)$ cost.
Bayesian statistics is not the only area where this structure of $\pot$ arises, see e.g. the interacting particle system of Section \ref{subsec:example_particles}, where subsampling corresponds to a splitting of the forces.

Algorithm \ref{alg:splitting_DBD_ZZS} can be modified to allow for subsampling by adapting the \textbf{B} part, that is the simulation of the jumps. Here we describe two approaches to achieve this, and we give the pseudo-codes for the corresponding modified jump part in Appendix~\ref{sec:pseudocodes_subsampling}. Whenever $ (v_i \partial_i \pot_j(x))_+$ can be bounded by a constant $\beta$ independently of $j$ (and $v_i$), then we can take advantage of Poisson thinning similarly to the case of the continuous-time ZZS. In this case, we can simulate the jump part exactly for each component $i$ by the following iterative procedure, to be repeated until the end of the time-step: (i) draw a proposed jump time with rate $\beta$, (ii) if the proposed time is smaller than the time left to the current time-step, then we accept it with probability $\beta^{-1} (v_i \partial_i \pot_J(x))_+$ for a random $J\sim\Unif(\{1,\dots,M\})$, and flip the corresponding sign of the velocity, which becomes $-v_i.$ A consequence of the non-canonical rates \eqref{eq:rates_noncanonical} is that for each component multiple jumps can happen at each time step.
We will discuss a concrete example where this approach can be followed in Section~\ref{subsec:example_particles}. 
When this procedure is not applicable, then one can simply simulate for each component $i=1,\dots,d$ a jump process with rate $ (v_i \partial_i \pot_J(x))_+$, where $J$ is refreshed after each jump. This approach does not achieve exact simulation of the jump part of ZZS, hence it introduces further numerical inaccuracies. We remark that this is similar to the algorithm discussed in Example 5.7 of \citet{bertazzi2021approximations}. 
A splitting scheme with subsampling based on BPS can be defined with similar ideas.

\subsection{PDMPs with boundaries}\label{sec:algorithms_boundaries}
Another interesting feature of PDMPs such as BPS and ZZS is that, thanks to the simple deterministic dynamics, boundary conditions can be included and hitting times of the boundary can be easily computed (see \citet{Davis1993} or \citet{pdmp_piecewise_smooth} for a discussion of PDMPs with boundaries). Here we illustrate how to simply adapt splitting schemes to these settings by adding the boundary behaviour to the \textbf{D} part of the scheme.

Boundary terms appear for instance when the target distribution $\pi$ is defined on a restricted domain \citep{pdmp_restricted_domains}. In this case, Algorithms \ref{alg:splitting_DBD_ZZS} and \ref{alg:splitting_RDBDR_BPS} can be modified by incorporating the boundary term in part \textbf{D} of the splitting scheme, as the boundary can be hit only if there is deterministic motion. Hence, the continuous deterministic dynamics are applied as in the exact process, while other jumps are performed in the \textbf{B} steps.

Another example of this setting is when $\pi$ is a mixture of a continuous density and a discrete distribution on finitely many states, as in Bayesian variable selection when a spike and slab prior is chosen. Sticky PDMPs were introduced in \citet{sticky_pdmp} to target a distribution of the form 
$$ \mu(dx) \propto \exp(-\pot(x))\prod_{i=1}^d (dx_i + \frac{1}{c_i}\delta_0(dx_i)),$$
which assigns strictly positive mass to events $\{x_i=0\}$. The sticky ZZS of \citep{sticky_pdmp} is obtained following the usual dynamics of the standard ZZS and in addition freezing the $i$-th component for a time $\tau\sim\Exp(c_i)$ when $x_i$ hits zero. The simulation of this process is challenging for the same reasons of the standard ZZS, since the two processes have the same switching rates $\lambda_i$ for $i=1,\dots,d$. The $i$-th component is either frozen, which is denoted by $(x_i,v_i)\in A_i$, or it evolves as given by the usual dynamics of ZZS. The generator can then be decomposed as $\cL = \cL_{D}  + \cL_{B}$ where $\cL_{D} = \sum_{i=1}^d \cL_{D,i}$ and $ \cL_{B} = \sum_{i=1}^d \cL_{B,i}$, 
\begin{align*}
    \cL_{D,i} f(x,v) &= v_i \frac{\partial}{\partial x_i} f(x,v)\1_{A_i^C}(x_i,v_i) + c_i(f(T_i(x,v))-f(x,v)) \1_{A_i}(x_i,v_i), \\
    \cL_{B,i} f(x,v) &=  \lambda_i(x,v)[f(x,R_iv)-f(x,v)]\1_{A_i^C}(x_i,v_i),
\end{align*}
and $T_i(x,v)$ corresponds to unfreezing the $i$-th component (we refer to \cite{sticky_pdmp} for a detailed description). An iteration of the scheme \textbf{DBD} in this case proceeds by a first half step of \textbf{D}, which is identical to the continuous sticky ZZS but with $\lambda_i$ temporarily set to $0$. Hence frozen components are unfrozen with rate $c_i$ and then start moving again, or unfrozen components move with their corresponding velocity $v_i$ and become frozen for a random time with rate $c_i$ if they hit $x_i=0$. Then a full step of the usual bounce kernel \textbf{B} is done for the components which are not frozen, while for the frozen components, that is $(x_i,v_i) \in A_i$, the generator $\cL_{B,i}$ does nothing and so the velocity cannot be flipped. So unfreezing is not possible in this step. The iteration ends with another half step of \textbf{D} in a similar fashion to the previous one.

These ideas are more general than the two specific examples we considered and do not introduce further difficulties for our schemes. Finally, notice that a Metropolis correction can be added following Section \ref{sec:metropolis_adjusted_algorithms}, and subsampling is possible following Section~\ref{subsec:subsamp}.

\section{Convergence of the splitting scheme}\label{sec:CVsplitting}

In this section we prove that under suitable conditions the splitting scheme \textbf{DJD} described in Section \ref{sec:intro} is indeed a second order approximation of the original PDMP \eqref{eq:genPDMP}. 

Note that in this section we have a PDMP defined on some arbitrary space $E$ therefore it is not clear what it means to have a derivative, indeed we will typically be interested in the setting $E=\R^d\times \mathcal{V}$ for some set $\mathcal{V}$ which may be a discrete set. Instead of working with a full derivative we will define the directional derivative, $D_\vf$, in the direction $\Phi$ as
\begin{equation*}
D_\vf g(z) = \lim_{t\to 0} \frac{d}{dt} g(\varphi_t(z))
\end{equation*}
for any $g\in C(E)$ for which $t\mapsto g(\varphi_t(z))$ is continuously differentiable in $t$ for every $z$. Note if $E$ is a subset of $\R^d$ for some $d$ and $g$ is continuously differentiable then
$D_\vf g(z) = \vf(z)^T\nabla g(z).$
We extend this definition to multi-dimensional valued functions $G:E\to \R^m$ by defining $D_\vf G(z) = (D_\vf G^i(z))_{i=1}^m$. We define the space $\C_\vf^{k,m}$ to be the set of all functions $g:E\to \R$ which are $k$ times continuously differentiable in the direction $\vf$ with all derivatives $D_\vf^\ell g(z)$ up to order $k$ bounded by a polynomial of order $m$. We endow this space with the norm
\begin{equation*}
    \lVert g\rVert_{\C_\vf^{k,m}}:=\sup_{z\in E}\frac{\lvert g(z)\rvert +\sum_{\ell=1}^k\lvert D_\vf^\ell g(z)\rvert}{1+\lvert z\rvert^{m}}. 
\end{equation*}

Let us make the following assumptions.
\begin{assumption}\label{ass:determisticflowmap}
Let $\Phi$ be a globally Lipschitz vector field defined on $E$ and assume that $D_\Phi \Phi$ is well-defined.
\end{assumption}

\begin{assumption}\label{ass:switching_rate}
The switching rate $\lambda:E\to [0,\infty)$ is twice continuously differentiable in the direction $\Phi$ and $\lambda,D_\vf\lambda, D_\vf^2\lambda$ grow at most polynomially. We denote by $m_\lambda$ a constant such that $\lVert \lambda \rVert_{\C_\vf^{2,m_\lambda}}<\infty$.
\end{assumption}

\begin{assumption}\label{ass:kernelexp}
Let $Q$ be a probability kernel defined on $E$. We shall consider the operator $Q:\C_b(E)\to \C_b(E)$ defined by
\begin{equation}
    Q g(z) = \int g(\tilde{z}) Q(z,d\tilde{z}), \quad \text{ for any } g \in \C_b(E).
\end{equation}
Moreover we assume that $Q$ has moments of all orders and $Q g$ has at most polynomial growth of order $m$ whenever $g$ has at most polynomial growth of order $m$. 
For any $m \in \mathbb{N}$, and $g\in \C_\vf^{1,m}$ we assume the following distribution is well-defined:
\begin{equation}
    (D_\vf Q)g(z) = D_{\Phi}(Qg)(z).
\end{equation}
As an abuse of notation we shall write $D_\vf Q$ also as a kernel. We assume for any $m\in \mathbb{N}$, and $g\in \C_\vf^{1,m}$
\begin{equation}\label{eq:kernelexpfirst}
     \left\lvert Qg(\varphi_s(z))-Qg(z) \right \rvert \leq C s(1+\lvert z\rvert^m) \lVert g\rVert_{\C_\vf^{1,m}},
\end{equation}
and also that there exists a constant $C$ such that for any $g\in \C_\vf^{2,m} $
\begin{equation}\label{eq:kernelexpsecond}
    \left\lvert Qg(\varphi_s(z))-Qg(z) -s D_\Phi Qg(z) \right\rvert
     \leq C s^2 (1+\lvert z\rvert^m)\lVert g\rVert_{\C_\vf^{2,m}}. 
\end{equation}
\end{assumption}

\begin{assumption}\label{ass:derivative_estimate}
The closure $(\cL, D(\cL))$ of the operator $(\cL, \C_c^1(E))$ in $L_\mu^2$ generates a $C_0$-semigroup $\cP_t$. If $g\in \C_\vf^{2,0}$ then we assume that $\cP_tg$ is also twice continuously differentiable in the direction $\Phi$ and $\cL\cP_t g$ is continuously differentiable in the direction $\vf$. Moreover we assume $D_\vf^2\cP_t g$ and $D_\vf\cL\cP_t g$ are both polynomially bounded for finite $t$ and for some $C>0, R\in \R, m_{\cP}\in \mathbb{N}$
\begin{equation*}
   \lvert D_\vf\cP_t g(z)\rvert+ \lvert D_\vf^2\cP_t g(z)\rvert +\lvert D_\vf\cL\cP_t g(z)\rvert \leq C (1+\lvert z\rvert^{m_{\cP}}) e^{R t} \lVert g \rVert_{\C_\vf^{2,0}}.
\end{equation*}

\end{assumption}

\begin{assumption}\label{ass:momentcond}
    Let $\overline{Z}_{t_k}$ denote the approximation obtained by the splitting scheme \textbf{DJD}. Assume that for each $k$, $\overline{Z}_{t_k}$ has moments of all orders and moreover for every $M\in \N$ there exists some $\overline{G}_M$ such that
    \begin{equation*}
        \sup_{m\leq M}\mathbb{E}_z[\lvert \overline{Z}_{t_k}\rvert^m] \leq \overline{G}_M(z).
    \end{equation*}
\end{assumption}

\begin{theorem}\label{thm:weakerror}
Let $Z_t$ be a PDMP corresponding to the generator \eqref{eq:genPDMP}. Assume that Assumption \ref{ass:determisticflowmap} to Assumption \ref{ass:momentcond} hold. 
Then there exist constants $C,R$ such that for any $g\in \C_\vf^{2,0}\cap D(\cL)$ we have for some $M\in\N$
\begin{equation*}
    \sup_{k\leq n}\lvert \mathbb{E}[g(Z_{t_k})]-\mathbb{E}[g(\overline{Z}_{t_k})]\rvert \leq C e^{R t_n} \overline{G}_M(z)\delta^3 n \lVert g\rVert_{\C_\vf^{2,0}}.
\end{equation*}
\end{theorem}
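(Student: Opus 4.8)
The plan is to establish the weak error estimate via the standard "one-step error plus telescoping/stability" strategy used for weak convergence of numerical schemes for Markov processes. Write $P_\delta$ for the one-step transition operator of the splitting scheme \textbf{DJD}, so that $\mathbb{E}[g(\overline Z_{t_k})] = P_\delta^k g(z)$, and write $\cP_\delta = \cP_{t_1}$ for the time-$\delta$ transition operator of the exact PDMP. The telescoping identity
\begin{equation*}
\cP_{t_n} g - P_\delta^n g = \sum_{k=0}^{n-1} P_\delta^{k}\left(\cP_{t_{n-k}} - P_\delta \cP_{t_{n-k-1}}\right) g = \sum_{k=0}^{n-1} P_\delta^{k}\left(\cP_\delta - P_\delta\right)\cP_{t_{n-k-1}} g
\end{equation*}
reduces everything to (i) a one-step local error bound $\lvert (\cP_\delta - P_\delta) h(z)\rvert \le C(1+\lvert z\rvert^m)\delta^3 \lVert h\rVert_{\C_\vf^{2,0}}$ for suitable $h$, applied to $h = \cP_{t_{n-k-1}} g$, (ii) control of $\lVert \cP_{t_{n-k-1}} g\rVert_{\C_\vf^{2,0}}$, which is exactly what Assumption~\ref{ass:derivative_estimate} provides (up to the polynomial weight and the $e^{Rt}$ factor), and (iii) a moment bound to absorb the polynomial weight $(1+\lvert z\rvert^m)$ after applying $P_\delta^k$, which is Assumption~\ref{ass:momentcond} with the function $\overline G_M$.

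**The heart of the argument is the one-step local error**, i.e. showing $\cP_\delta$ and $P_\delta$ agree to order $\delta^3$ when applied to $\C_\vf^{2,0}$ functions. I would expand both operators in powers of $\delta$. For the exact semigroup, a Dynkin/Taylor expansion gives $\cP_\delta = I + \delta \cL + \tfrac{\delta^2}{2}\cL^2 + O(\delta^3)$ in the appropriate weak sense, where $\cL = \cL_D + \cL_J$ with $\cL_D = D_\vf$ and $\cL_J h(z) = \lambda(z)(Qh(z) - h(z))$; the remainder is controlled using Assumptions~\ref{ass:switching_rate}, \ref{ass:kernelexp} and \ref{ass:derivative_estimate}. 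For the splitting operator, $P_\delta = \Pbar_{\delta/2}^D \circ S_\delta^J \circ \Pbar_{\delta/2}^D$ where $\Pbar_t^D h = h\circ\varphi_t$ is the exact flow and $S_\delta^J$ is the transition operator of the pure-jump process with frozen rate run for time $\delta$ (truncated to at most two jumps). Each factor must be expanded: $\Pbar_{\delta/2}^D = I + \tfrac{\delta}{2}D_\vf + \tfrac{\delta^2}{8}D_\vf^2 + O(\delta^3)$ directly from the definition of $D_\vf$ and the chain rule, and $S_\delta^J = I + \delta\cL_J + \tfrac{\delta^2}{2}\cL_J^2 + O(\delta^3)$ — the latter requires care because the splitting scheme uses a rate frozen at the post-drift position and allows at most two jumps, so one must check that the discrepancy (the neglected triple jumps, and the fact that the "true" $\cL_J$-flow would also freeze the rate) is genuinely $O(\delta^3)$; this follows because a third jump in time $\delta$ has probability $O(\delta^3)$, and $S_\delta^J$ is in fact the \emph{exact} semigroup $e^{\delta\cL_J}$ up to that truncation. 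Composing the three expansions and using the Strang/palindromic structure, the $\delta^0$, $\delta^1$ and $\delta^2$ terms match $I + \delta\cL + \tfrac{\delta^2}{2}\cL^2$ exactly — this is the classical reason Strang splitting is second order — leaving a local error of order $\delta^3$. Throughout, the error terms involve $D_\vf^2$, $D_\vf Q$, $D_\vf\lambda$ and compositions thereof applied to $h$, all bounded by $\lVert h\rVert_{\C_\vf^{2,0}}$ times a polynomial weight via Assumptions~\ref{ass:switching_rate}--\ref{ass:kernelexp}; one also needs that $\varphi_t$ maps polynomial-growth bounds to polynomial-growth bounds with controlled constants, which follows from $\Phi$ being globally Lipschitz (Gr\"onwall).

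**The main obstacle** I anticipate is the bookkeeping of the polynomial weights and the growth of constants along the recursion: applying $P_\delta^k$ to a function bounded by $(1+\lvert z\rvert^m)$ produces $\mathbb{E}_z[(1+\lvert \overline Z_{t_k}\rvert^m)]$, which Assumption~\ref{ass:momentcond} bounds by $\overline G_M(z)$ uniformly in $k\le n$ — so the weight is handled, but one must be careful that the polynomial degree $m$ that appears is determined by finitely many applications of the operators $Q$, $D_\vf$, $\lambda$ and $\varphi_t$ to a bounded function $g\in\C_\vf^{2,0}$, hence some fixed $M\in\N$ depending only on $m_\lambda$, $m_\cP$ and the kernel, not on $n$ or $\delta$. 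The $e^{Rt_n}$ factor comes from summing the $n$ terms each carrying the bound $\lVert\cP_{t_{n-k-1}}g\rVert_{\C_\vf^{2,0}} \le C e^{R t_{n-k-1}}\lVert g\rVert_{\C_\vf^{2,0}}$ from Assumption~\ref{ass:derivative_estimate} (enlarging $R$ if needed), so the sum of $n$ local errors of size $\delta^3$ gives the stated $C e^{Rt_n}\overline G_M(z)\delta^3 n\lVert g\rVert_{\C_\vf^{2,0}}$. A secondary technical point is justifying the Taylor expansions of $\cP_\delta$ in the weak (not strong) sense, i.e. controlling $\cL\cP_t g$ and $D_\vf\cL\cP_t g$ along the trajectory — precisely the quantities bounded in Assumption~\ref{ass:derivative_estimate} — and ensuring all the integral remainders $\int_0^\delta\cdots$ in Taylor's theorem with integral remainder are finite, which again reduces to the polynomial-growth assumptions combined with the moment bound.
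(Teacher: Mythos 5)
Your outer structure is exactly the paper's: the telescoping identity you write is (after reindexing) the one the paper uses, Assumption~\ref{ass:derivative_estimate} supplies the $e^{Rt}$-controlled norms of $\cP_{t}g$, and Assumption~\ref{ass:momentcond} absorbs the polynomial weights after applying $P_\delta^k$. So the task reduces, as you say, to the one-step local error bound, and this is where your route diverges from the paper's and where there is a genuine gap.

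You propose to expand the flow operator as $\Pbar_{\delta/2}^D h = h + \tfrac{\delta}{2}D_\vf h + \tfrac{\delta^2}{8}D_\vf^2 h + O(\delta^3)$ and likewise for $S_\delta^J$, then compose and invoke the Strang/palindromic cancellation. The formal cancellation is correct (the $\delta^2$ coefficient of the triple product is $\tfrac12 D_\vf^2 + \tfrac12(D_\vf\cL_J + \cL_J D_\vf) + \tfrac12 \cL_J^2 = \tfrac12\cL^2$), but the claimed $O(\delta^3)$ remainder for $\Pbar_{\delta/2}^D h$ is not available: with Taylor's theorem and integral remainder you get
\begin{equation*}
h(\varphi_{\delta/2}(z)) = h(z) + \tfrac{\delta}{2}D_\vf h(z) + \int_0^{\delta/2}\bigl(\tfrac{\delta}{2}-s\bigr) D_\vf^2 h(\varphi_s(z))\,ds\,,
\end{equation*}
and to replace the integral by $\tfrac{\delta^2}{8}D_\vf^2 h(z) + O(\delta^3)$ you need $D_\vf^2 h$ to be $\Phi$-Lipschitz, i.e.\ effectively $D_\vf^3 h$. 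But here $h = \cP_{t_{n-k-1}}g$, and Assumption~\ref{ass:derivative_estimate} only controls $D_\vf^2\cP_t g$ and $D_\vf\cL\cP_t g$, not a third directional derivative. Expanding each factor independently to second-order-with-remainder and composing gives you a local error that is $o(\delta^2)$ (by continuity of $D_\vf^2 h$ along the flow), which after summing $n\sim T/\delta$ terms only yields $o(\delta)$ — better than first order, but it does not establish the claimed $O(\delta^2)$ global rate.

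The paper's local-error argument is structured precisely to avoid this. Instead of an abstract operator power series, it decomposes $\mathbb{E}_z[f_k(\Zbar_\delta,\delta)] - f_k(z,0)$ into a pure-transport baseline plus three contributions: the event of exactly one jump of the scheme, the event of two or more jumps, and a Duhamel/fundamental-theorem-of-calculus term that captures the exact semigroup's jump contribution. Each jump contributes an explicit factor of size $\lambda\delta$ from its occurrence probability, so the Taylor expansions \emph{inside} the one-jump term only need to be accurate to $O(\delta^2)$ (which second directional derivatives provide), and the two-jump and three-plus-jump terms are already $O(\delta^2)$ and $O(\delta^3)$ respectively before any expansion. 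Explicit bookkeeping then shows that the $\delta$- and $\delta^2$-coefficients cancel term by term, leaving an $O(\delta^3)$ remainder controlled by $\C_\vf^{2,\cdot}$ norms alone. If you want to retain your Strang viewpoint, the fix is to write $P^J_\delta = I + (P^J_\delta - I)$, use that $P^J_\delta - I$ already carries a $\delta$, and be correspondingly more economical with the expansion of the two flow factors — at which point you will have essentially reconstructed the paper's conditioning-on-jumps decomposition. Your secondary points (truncating triple jumps is $O(\delta^3)$; the polynomial-weight bookkeeping via $\overline G_M$; the $e^{Rt_n}$ factor from summing $n$ local errors) are all correct and in line with the paper.
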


\begin{proof}
    The proof follows a similar approach to \citet[Theorem 4.24]{bertazzi2021approximations} and can be found in Appendix \ref{sec:proof_weakerror}.
\end{proof}

The Theorem shows that splitting schemes of the form \textbf{DJD} give second order approximations of PDMPs, under the assumptions we stated. Indeed, the term $\delta^3 n$ equals $\delta^2 t_n$, where $t_n$ is the time horizon of the continuous time process, and thus we find a $\delta^2$ dependence. Compared to \citet{bertazzi2021approximations} our estimate contains a term that is exponentially increasing in the time horizon. This term could be handled by assuming e.g. geometric convergence of the derivatives of the semigroup for the continuous time PDMP. To the best of our knowledge, aside from the results in \citet{bertazzi2021approximations} there are no known results establishing such estimates for PDMPs. The technical nature of our proof also makes the application of this idea challenging, but we see no reason why this approach should not give a uniform in time estimate of the weak error similarly to \citet{bertazzi2021approximations}. Finally, let us comment on the choice of the class of test functions for which our result holds, that is $\C_\vf^{2,0}\cap D(\cL)$. This choice is explained by the fact that it is necessary to consider functions that are twice differentiable in the direction of the deterministic motion to obtain bounds on the error over one time step. For this reason, it cannot be expected to have a result e.g. in Wasserstein distance, which is not suited for (continuous time) PDMPs as we discussed previously.

\begin{example}[ZZS continued]\label{ex:ZZS_weakerror} Recall the Zig-Zag sampler from Example \ref{ex:ZZ} let us verify the Assumption \ref{ass:determisticflowmap} to \ref{ass:momentcond} in this case. In order to have a smooth switching rate we replace $\lambda_i(x,v)$ by
\begin{equation*}
    \lambda_i(x,v) = \log\left(1+\exp(v_i\partial_i\pot(x))\right).
\end{equation*}
    This is shown to be a valid switching rate in \citet{AndrieuLivingstone}. We will assume that $\pot\in \C^2$ with bounded second and third derivatives. Let us now consider each assumption in turn.
    
    \textit{Assumption \ref{ass:determisticflowmap}:} In this case $\Phi(x,v)=(v,0)^T$ which is smooth and globally Lipschitz.

    \textit{Assumption \ref{ass:switching_rate}:} Since $\lambda_i$ is the composition of smooth maps and $\pot$ we have that $\lambda_i$ has the same smoothness in $x$ as $\pot$ and hence $x\mapsto \lambda_i(x,v)$ is $\C^2$. As $s\mapsto \log(1+e^s)$ grows at most linearly, has first and second derivatives bounded by $1$ we have that $\lambda_i, \nabla_x\lambda_i$ and $\nabla_x^2\lambda_i$ are all polynomially bounded.

\textit{Assumption \ref{ass:kernelexp}:} The proof of this can be found in Appendix \ref{subsec:proof_ex_ZZS}.

\textit{Assumption \ref{ass:derivative_estimate}:} By \citet{AndrieuLivingstone} we have that $\cP_t$ is a strongly continuous semigroup on $L_\mu^2$ with generator $(\cL,D(\cL))$ given as the closure of $(\cL, C_c^1(E))$. Moreover we have that the assumptions of \citet[Theorem 17]{pdmp_inv_meas} are satisfied and hence $\cP_t g(x,v)$ is differentiable in $x$. Following the proof of \citet[Theorem 17]{pdmp_inv_meas} one also has
\begin{equation*}
    \lvert \nabla_x \cP_tg \rvert \leq C(1+\lvert x\rvert^m)e^{Rt} \lVert g\rVert_{\C_\vf^{1,0}}.
\end{equation*}
    Note here since $D_\vf g(x,v) = v^T\nabla_x g$ we have that $\C_\vf^{k,0}$ coincides with the space of continuous functions which are $k$-times continuously differentiable in the variable $x$. 
    By the same arguments one can also obtain 
    \begin{equation*}
    \lvert \nabla_x^2 \cP_tg \rvert \leq C(1+\lvert x\rvert^m)e^{Rt} \lVert g\rVert_{\C_{\vf}^{2,0}}.
\end{equation*}

\textit{Assumption \ref{ass:momentcond}:} This will be established in  Theorem~\ref{thm:ZZS_ergodic_details} (see Lemma \ref{lem:drift_DBD}), in which we show that the chain satisfies a geometric drift condition for a function that bounds any power.
\end{example}

\section{Ergodicity of splitting schemes of BPS and ZZS}\label{sec:ergodicity}
We shall now focus on results on ergodicity of splitting schemes of BPS and ZZS. In particular we show existence of an  invariant distribution, characterise the set of all invariant distributions, and establish convergence of the law of the process to such distributions with geometric rate. Importantly, we make sure that the geometric convergence has the expected dependence on the step size and that the estimates are stable as $\delta$ decreases to $0$. The statements can be found in Section \ref{sec:mainresults_ergodicity}, respectively in Theorems \ref{thm:ergodicBPS} for BPS and \ref{thm:ergodicity_zzs} for ZZS. We obtain our theorems applying the classical theory of \cite{meyn_tweedie_1993}, which is based on minorisation and drift conditions. In Section \ref{sec:strategy_ergodicity} we explain the strategy that we follow to obtain such results.


\subsection{Main results}\label{sec:mainresults_ergodicity}
Let us now state more precisely the result on  ergodicity we shall obtain for splitting schemes of BPS and ZZS.  For a given probability distribution $\mu$, we define its $V$-norm as $\lVert \mu\rVert_V := \sup_{\lvert g\rvert \leq V} \lvert \mu(g)\rvert$. We shall show that the chains admit a unique invariant distribution $\mu_\delta$ and that there exist constants $C'',\tilde \kappa, \delta_0>0$ and a function $V$ such that for $n\geq 1$ and any probability distributions $\mu,\mu'$ it holds
\begin{equation}\label{eq:ergodic}
    \|\mu P_\delta^n - \mu' P_\delta ^n \|_V  \leqslant  C'' \tilde \kappa^{n\delta} \|\mu  - \mu' \|_V, \quad \text{ for all } \delta \in [0,\delta_0].
\end{equation}
Note that $C'',\tilde \kappa, \delta_0,V$ are all independent of $\delta$. Clearly, taking $\mu'=\mu_\delta$ we obtain geometric convergence to the invariant distribution of the splitting scheme. 

For splitting schemes of the BPS, we work under the following condition.
\begin{assumption}\label{assu:BPSscheme}
The dimension is $d \geq 2$, the velocity equilibrium $\nu$ is the uniform measure on $\mathbb S^{d-1}$.
There exists $C>0$ such that  
\[\frac1C |x|^2 - C \leqslant  \psi(x) \leqslant C|x|^2 + C\,,\qquad  \frac1C |x| - C \leqslant |\na \psi(x)| \leqslant C|x| + C\]
for  all $x\in\R^d$. Moreover, $\|\na^2 \psi\|_\infty < \infty$ and, without loss of generality, $\inf \psi = 1$.
\end{assumption}

Notice that, when $d=1$, the BPS and the ZZS coincide, in which case we refer to Theorem~\ref{thm:ergodicity_zzs} below.
Our result of ergodicity for  splitting schemes of the BPS is the following.
\begin{theorem}\label{thm:ergodicBPS}
Consider any scheme of the BPS  based on the decomposition \textbf{D},\textbf{R},\textbf{B}. Under Assumption~\ref{assu:BPSscheme}, the chains have a unique stationary distribution and there exist $\delta_0,C''>0,\tilde \kappa\in(0,1)$  and $V:\R^d\times\mathbb S^{d-1} \rightarrow [1,+\infty)$ satisfying 
\[ \text{for all } x\in\R^d,v\in \mathbb S^{d-1},\qquad e^{|x|/a}/a \leqslant V(x,v) \leqslant a e^{a|x|}\]
for which \eqref{eq:ergodic} holds.
\end{theorem}
\begin{proof}
The proof can be found in Appendix \ref{sec:proofs_ergodicity_bps}.
\end{proof}

More care is required for the \textbf{DBD} scheme of the ZZS since this Markov chain has periodicity and is not irreducible, which is reminiscent of the discrete-space Zig-Zag chain studied in  \citet{monmarche_kin_walks}. Let us illustrate this behaviour by considering the one dimensional setting. Let $(x,v)$ be the initial condition of the process. Since $v$ has magnitude $1$, the position component $x$ can only vary by multiples of the step size $\delta$. Thus for a fixed initial condition $(x,v)$ the process remains on a grid  $(x+\delta\mathbb{Z})\times \{-1,1\}$. Moreover, after a single step of the scheme there are two possible outcomes: either the velocity does not change, in which case $x$ moves to $x+\delta v$, or the velocity is flipped and the position remains the same. This means that the change in the position (by amounts of $\delta$) plus half the difference in the velocity always changes by $\pm 1$ each step and hence is equal to the number of steps in the scheme up to multiples of two, i.e.
\begin{equation*}
    \frac{\overline{X}_{n\delta}-x}{\delta} + \frac{1}{2}(\overline{V}_{n\delta}-v) \in n+2\mathbb{Z}.
\end{equation*}
As a consequence, with probability $1$ the chain alternates between two disjoint sets, depending on whether $n$ is even or odd. Therefore, the chain is periodic and not ergodic. To overcome this issue, we consider the chain with one step transition kernel given by $P_\delta^2 = P_\delta P_\delta$, i.e. we restrict to the case of an even number of steps. 
For a given initial condition $(x,v)\in \R^d\times \{-1,1\}^d$, the Markov chain $P_\delta^2$ lives on the following grid
\begin{equation}\label{eq:D_minorisation}
    D(x,v):=\{(y,w) \in C \times\{\pm 1\}^d: (y_i,w_i)\in D_1(x_i,v_i) \, \text{ for all } i=1,\dots,d\},
\end{equation}
where $D_1(x_i,v_i):=D_+(x_i,v_i)\cup D_-(x_i,v_i),$ with
\begin{equation}\notag
    \begin{aligned}
        D_+(x_i,v_i) & := \{(y_i,w_i): w_i=v_i, \, y_i=x_i+m\delta,\, m\in  2\mathbb{Z}\}, \\
        D_-(x_i,v_i) & := \{(y_i,w_i): w_i=-v_i,\, y_i=x_i+m\delta,\, m\in  2\mathbb{Z}+1\}.
    \end{aligned}
\end{equation}
Hence the Markov chain $P_\delta^2$ is aperiodic, though it is not irreducible on $\R^d$ and therefore has (infinitely) many invariant measures which depend on the initial condition $(x,v)$. The ergodicity of $P_\delta^2$ can nevertheless be characterised as, for a given initial condition $(x,v)$, it is irreducible on $D(x,v)$. Notice that  the chain at odd steps lives on the disjoint set $\{(y,w):y=x+m\delta, \, m\in \mathbb{Z}, \, w\in\{\pm 1\}^d \} \setminus D(x,v)$.

In this case we show in Theorem \ref{thm:ergodicity_zzs} that the Markov chain with transition kernel $P_\delta^2$ is irreducible on $D(x,v)$, has a unique invariant measure, $\pi_\delta^{x,v}$, and is geometrically ergodic. 
Now we can characterise all the invariant measures of the Markov chain with transition kernel $P_\delta^2$ defined on $\R^d\times \{-1,1\}^d$ as the closed convex hull of the set $\{\pi_\delta^{x,v}:x\in \R^d, v\in \{-1,1\}^d\}$. Now consider the Markov chain with transition kernel $P_\delta^2$ on $\R^d\times\{-1,1\}^d$. For any initial distribution $\mu$ we have convergence of $\mu P_\delta^{2n}$ to some measure $\pi^\mu_\delta$ as $n$ tends to $\infty$ and $\pi^\mu_\delta$ is given by
\begin{equation}\label{eq:mi_mu}
    \pi^\mu_\delta(\varphi)  = (\mu\pi_\delta^{x,v})(\varphi):= \int_{\R^d\times\{-1,1\}^d}\int_{\R^d\times\{-1,1\}^d} \varphi(y,w) \pi_\delta^{x,v}(dy,dw) \mu(dx,dv).
\end{equation}

We use the next assumption to verify that Theorem \ref{thm:Harris} applies for initial conditions drawn from probability distributions with support on $D(x,v)$.
\begin{assumption}\label{ass:ergodicity_zzs}
Consider switching rates $\lambda_i(x,v) = (v_i\partial_i \pot (x))_+ + \gamma_i(x)$ for $i=1,\dots,d$. $\pot \in \mathcal{C}^2(\mathbb{R}^d)$ and the following conditions hold:
\begin{enumerate}[label=(\alph*)]
    \item There exists a scalar $\tilde y\geq 0$ such that for all $y > \tilde y$ 
    $$\underline{\lambda}(y):=\min_{i=1,\dots,d} \quad \min_{(x,v): \, x_iv_i\in [\tilde y,y], \,\lvert x_j \rvert \in [\tilde y,y] \text{ for all }j\neq i } \,\, \lambda_i(x,v)>0. $$
    
    \item For $\lvert x\rvert\geq R$ for some $R>0$
	\begin{equation}\label{eq:boundongamma}
	   \sup_{t\in(0,1)}\sup_{\substack{y_1,y_2\in B(x,t\sqrt{d}),\\v,w\in \{-1,1\}^d}} e^{ \left(t^2(\lvert(v+w)^T\nabla^2 \pot(y_1))_i\rvert+2 t\lvert(w\nabla^2 \pot(y_2))_i\rvert\right)}\gamma_i(x+vt)e^{ t v_i\partial_i\pot(x)}\leq \gamma_0<1.
	\end{equation}
	
	\item  Denote as $B(x,\delta\sqrt{d})$ the ball with centre at $x$ and radius $\delta\sqrt{d}$. Then 
	\begin{equation*}
		\lim_{\lVert x\rVert \to \infty} \,\, \sup_{y_1,y_2\in B(x,\delta\sqrt{d})}\frac{\max\{1,\lVert \nabla^2 \pot(y_1) \rVert\}}{\lvert \partial_i \pot(y_2) \rvert} = 0 \qquad \text{for all } 0\leq \delta \leq \delta_0,\, i=1,\dots,d,
	\end{equation*}
 where $\delta_0=2(1+\gamma_0)^{-1}$, for $\gamma_0$ as in  part (b).
\end{enumerate}
\end{assumption}
Part (a) in Assumption \ref{ass:ergodicity_zzs} is inspired by  \citet[Assumption 3]{BierkensRoberts} and is used to show that a minorisation condition holds. This condition is either a consequence of properties of the target, or else can be enforced by taking a non-negative excess switching rate, in which case $\gamma_i(x)$ can be chosen to be a continuous function $\gamma_i:\mathbb{R}^d\to  (0,\infty)$. In principle one could prove a minorisation condition using the techniques of \citet{Bierkensergodicity}, but this is beyond the scope of this paper. Part (b) is a condition on the decay of the refreshment rate, while Part (c) is similar to  Growth Condition 3 in \citet{Bierkensergodicity} and is satisfied for instance if $\pot$ is strongly convex with globally Lipschitz gradient. These two conditions are used to show that a drift condition holds. 

\begin{theorem}\label{thm:ergodicity_zzs}
Consider the splitting scheme \textbf{DBD} for ZZS. Suppose Assumption~\ref{ass:ergodicity_zzs} holds. Then there exist $C'',\delta_0>0,\tilde\kappa\in(0,1)$ and $V:\R^d\times\{-1,1\}^d\rightarrow [1,\infty)$ satisfying 
\[\text{for all } (x,v)\in \R^d\times\{-1,1\}^d\,,\qquad\prod_{i=1}^d \left(1+2\lvert \partial_i\pot(x)\rvert \right)^{-\frac12} \leq \frac{V(x,v)}{\exp(\beta\pot(x))} \leq \prod_{i=1}^d \left(1+2\lvert \partial_i\pot(x)\rvert \right)^\frac12\]
for all $\beta\in(0,1/2)$ such that, for all $\delta\in(0,\delta_0]$,  the following holds:
\begin{enumerate}
    \item  Fix $(x,v)\in\R^d\times\{-1,1\}^d$ and consider $P^2_\delta = P_\delta P_\delta$, transition kernel on  $D(x,v)$. Then $P^2_\delta$ admits a unique invariant distribution and the inequality \eqref{eq:ergodic} holds with $P_\delta$ replaced by $P_\delta^2$ with these $C'',\tilde\kappa,V$ for any $\mu,\mu'$ having support on $D(x,v)$.
    \item For any probability measure $\mu$ on $\R^d\times\{-1,1\}^d$ with $\mu(V) <\infty$, we have that $\mu P_\delta^{2n}$ converges as $n\rightarrow\infty$ to the measure $\pi^\mu_\delta$ given by \eqref{eq:mi_mu} where $\pi^{x,v}_\delta$ is the unique invariant measure of $P_\delta^2$ on $D(x,v)$ and we have
\begin{equation}\label{eq:convergence_random_IC_DBD}
    \|\mu P_\delta^{2n} -\pi^\mu_\delta\|_V \leq C''  \tilde{\kappa}^{n\delta} \,\int\|\delta_{(x,v)}-\pi_\delta^{x,v}\|_V\mu(dx,dv)\,.
\end{equation}
\end{enumerate}

\end{theorem}
\begin{proof}
The proof can be found in Appendix \ref{sec:ergodicity_DBD_zzs}.
\end{proof}

Under similar assumptions we establish geometric ergodicity of schemes \textbf{DRBRD}, \RDBDR of ZZS, where the switching rates in the \textbf{B} part are $\lambda_i(x,v)=(v_i\partial_i\pot(x))_+$, i.e. the canonical rates, while refreshments in the \textbf{R} part are independent draws from $\Unif(\{\pm 1\}^d)$ with rate $\gamma(x):\mathbb{R}^d\to[0,\infty).$ The rigorous statement of this result, Theorem \ref{thm:ergo_others_zzs}, and its proof can be found in Appendix \ref{sec:proof_ergo_others_zzs}.

\subsection{Proof strategy}\label{sec:strategy_ergodicity}
Let us start by stating the following classical result, due to Meyn and Tweedie (see \citet{meyn_tweedie_1993} for the original result, while here the specific statement is based on \citet[Theorem 1.2]{HairerMattingly2008}, see also  \citet[Theorem S.7]{BPS_Durmus} for the explicit constants).
\begin{theorem}\label{thm:Harris}
Consider a Markov chain with transition kernel $P$ on a set $E$. Suppose that there exist constants $\rho\in[0,1)$, $C,\alpha>0$, a function $V:E\rightarrow [1,+\infty)$ and a probability measure $\nu$ on $E$ such that the two following conditions are verified:
\begin{enumerate}
    \item Drift condition: for all $x\in E$,
    \begin{equation}\label{eq:drift_condition}
        P V(x) \leq \rho V(x) + C.
    \end{equation}
    \item Local Dobelin condition: for all $x\in E$ with $V(x) \leqslant 4C/(1-\rho)$,
    \[\delta_x P  \geqslant \alpha \nu \,.\]
\end{enumerate}
Then, for all probability measures $\mu,\mu'$ on $E$ and all $n\in\N$,
\begin{equation}\label{eq:statement_Harris}
    \|\mu P^n - \mu' P^n\|_V \leqslant \frac{C}{\alpha}\kappa^n\|\mu -\mu'\|_V 
\end{equation}
where $\kappa = \max(1-\alpha/2,(3+\rho)/4)$. Moreover $P$ admits a unique stationary distribution $\mu_*$ satisfying $\mu_*(V) < \infty.$
\end{theorem}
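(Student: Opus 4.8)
The plan is to follow the coupling proof of Harris' theorem due to Hairer and Mattingly \cite{HairerMattingly2008}. I would fix a parameter $\beta>0$, to be optimized at the end, and introduce on $E$ the complete metric
\[
    d_\beta(x,y)\;=\;\1_{x\neq y}\bigl(2+\beta V(x)+\beta V(y)\bigr)
\]
(it induces the discrete topology on $E$), together with the associated minimal transport cost $\mathcal{W}_\beta(\mu,\mu')=\inf_\Gamma\int d_\beta\,\dd\Gamma$ over couplings $\Gamma$ of $\mu$ and $\mu'$. The first step is to record two elementary comparisons: for $|g|\le V$ and any coupling $\Gamma$ of $\mu,\mu'$ one has $|\mu(g)-\mu'(g)|\le\int\1_{x\neq y}(V(x)+V(y))\,\dd\Gamma\le\beta^{-1}\int d_\beta\,\dd\Gamma$, whence $\|\mu-\mu'\|_V\le\beta^{-1}\mathcal{W}_\beta(\mu,\mu')$, while on the other hand $\mathcal{W}_\beta(\delta_x,\delta_y)=d_\beta(x,y)\le 2+\beta V(x)+\beta V(y)$, so the drift condition \eqref{eq:drift_condition} keeps all relevant quantities finite. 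The heart of the matter is then the one-step contraction
\begin{equation}\label{eq:Harris_onestep}
    \mathcal{W}_\beta(\delta_xP,\delta_yP)\;\le\;\kappa\,d_\beta(x,y)\qquad\text{for all }x,y\in E,
\end{equation}
with $\kappa=\max\bigl(1-\tfrac{\alpha}{2},\tfrac{3+\rho}{4}\bigr)$: a standard gluing argument then promotes \eqref{eq:Harris_onestep} to $\mathcal{W}_\beta(\mu P,\mu'P)\le\kappa\,\mathcal{W}_\beta(\mu,\mu')$ for arbitrary probability measures, hence to $\mathcal{W}_\beta(\mu P^n,\mu'P^n)\le\kappa^n\mathcal{W}_\beta(\mu,\mu')$, and feeding this into the two comparisons above (with the particular $\beta$ selected below, of order $\alpha/C$) produces \eqref{eq:statement_Harris} together with the prefactor $C/\alpha$.

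To establish \eqref{eq:Harris_onestep} I would fix $x\neq y$ (the case $x=y$ is trivial since then both sides vanish) and split into two regimes according to the size of $V(x)+V(y)$ relative to the scale $R_0:=4C/(1-\rho)$. If $V(x)+V(y)\le R_0$, both $x$ and $y$ lie in the set $\{V\le 4C/(1-\rho)\}$ on which the minorization holds; writing $\delta_xP=\alpha\nu+(1-\alpha)\tilde P_x$ and likewise at $y$, I would use the coupling that sets $X_1=Y_1\sim\nu$ with probability $\alpha$, so that $d_\beta(X_1,Y_1)=0$ on that event --- this buys the contraction of the ``indicator part'' of $d_\beta$ --- while the ``$\beta V$ part'' of $\mathbb{E}[d_\beta(X_1,Y_1)]$ is controlled by the drift condition applied to the marginals. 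If instead $V(x)+V(y)>R_0$, no coupling is needed: one bounds $d_\beta(X_1,Y_1)\le 2+\beta V(X_1)+\beta V(Y_1)$, the drift condition contracts the $\beta V$ terms by $\rho$, and the choice $2C/R_0=(1-\rho)/2$ ensures the leftover constant $2\beta C$ is absorbed into $\tfrac{1-\rho}{2}\beta(V(x)+V(y))$; the rate $\tfrac{3+\rho}{4}$ arises as the midpoint of $\bigl(\tfrac{1+\rho}{2},1\bigr)$, with $\tfrac{1+\rho}{2}=\rho+2C/R_0$ the effective drift rate just above the small set. The main obstacle is the final bookkeeping: one must choose a single $\beta$ for which the lower bound forced by the large-$V$ regime is compatible with the upper bound forced by the small set --- this is exactly what the calibration $R_0=4C/(1-\rho)$ is designed to make possible --- and then verify that this $\beta$ reproduces the advertised constants $\kappa$ and $C/\alpha$ rather than merely \emph{some} geometric rate and \emph{some} prefactor. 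This is the computation-heavy part, carried out with explicit constants in \cite{BPS_Durmus} (the regeneration/split-chain proof of Meyn and Tweedie offers an alternative route that makes the $1/\alpha$ dependence transparent).

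It remains to deduce existence and uniqueness of $\mu_*$. By \eqref{eq:Harris_onestep}, $P$ is a $\kappa$-contraction of $(\mathcal{P}_V,\mathcal{W}_\beta)$, where $\mathcal{P}_V=\{\mu:\mu(V)<\infty\}$ is a complete metric space (it is exactly the set on which $d_\beta$-moments are finite). The Banach fixed point theorem then yields a unique fixed point $\mu_*\in\mathcal{P}_V$, which is the claimed stationary distribution; and $\mu_*(V)\le C/(1-\rho)<\infty$ follows by iterating \eqref{eq:drift_condition} along $(\delta_xP^n)_n$ and passing to the limit, using lower semicontinuity of $\mu\mapsto\mu(V)$ under $\mathcal{W}_\beta$-convergence. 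Finally, any stationary probability measure with finite $V$-moment is a fixed point of $P$ in $\mathcal{P}_V$, hence equals $\mu_*$, which is the asserted uniqueness.
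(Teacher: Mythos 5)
The paper states this result as classical and cites Meyn--Tweedie, \cite[Theorem~1.2]{HairerMattingly2008}, and \cite[Theorem~S.7]{BPS_Durmus} without reproducing a proof; your sketch correctly reconstructs the Hairer--Mattingly coupling argument underlying that statement, including the metric $d_\beta$, the two-regime dichotomy at $R_0 = 4C/(1-\rho)$, the provenance of $(3+\rho)/4$ as the midpoint of $\left((1+\rho)/2,\,1\right)$, and the Banach fixed-point argument for existence and uniqueness of $\mu_*$. The one place you explicitly hedge --- whether the calibration $\beta \sim \alpha/C$ yields precisely the prefactor $C/\alpha$ rather than merely a universal multiple of it --- is exactly the bookkeeping carried out with explicit constants in \cite{BPS_Durmus}, so your deference there is appropriate.
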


\begin{remark}\label{rmk:ergo_nominorisation}
Under the Drift condition \eqref{eq:drift_condition} alone, following the proof of  \citet[Theorem 1.2]{HairerMattingly2008} in the case $\alpha=0$ we get that for all probability measures $\mu,\mu'$ on $E$,
\[\|\mu P - \mu' P\|_V \leqslant (\rho +2C) \|\mu-\mu'\|_V\,.
 \]
We use this inequality in the discussion that follows.
\end{remark}

We prove geometric ergodicity of our splitting schemes by showing that the assumptions of Theorem~\ref{thm:Harris} are satisfied. 
More precisely, both in the case of BPS and of ZZS, we obtain a local Doeblin (or minorisation) condition with constant $\alpha$ after $n_*=\lceil t_*/\delta \rceil$ steps, where $t_*>0$  plays the role of physical time and $n_*$ is the number of steps needed to travel for an equivalent time. Here $t_*,\alpha$ are independent of $\delta$. On the other hand, we show that the drift condition holds for one step of the kernel with constants $\rho = 1-b\delta$ and $C = D\delta$, where $b,D$ and the Lyapunov function $V$ are independent of $\delta$. This implies that for any $s>0$ and any $\delta \in (0,\delta_0]$ 
\begin{eqnarray*}
(P_\delta)^{\lceil s/\delta\rceil} V & \leqslant & \po 1 - b\delta\pf^{\lceil s/\delta\rceil} V + D \delta \sum_{k=0}^{\lceil s/\delta\rceil-1} \po 1 - b\delta\pf^k  \leqslant  e^{-b s} V + \frac{D}b\,.
\end{eqnarray*}
Applying Theorem~\ref{thm:Harris}, we get for $P_\delta^{n_*}$ a long-time convergence estimate which is uniform over $\delta\in(0,\delta_0]$, that is for all $\delta\in(0,\delta_0]$ and $n\geq 1$ we find 
\begin{equation}\notag
    \|\mu (P_\delta^{n_*})^n - \mu' (P_\delta^{n_*})^n\|_V \leqslant \frac{C'}{\alpha}  \kappa^{n} \,\|\mu -\mu'\|_V,
\end{equation}
where $C'=D/b$ and $\kappa=\max(1-\alpha/2,(3+e^{-bt_*})/4)$. Observe that the rhs does not depend on $\delta.$
Using the observation in Remark \ref{rmk:ergo_nominorisation}, we can get convergence in $V$-norm for $P^n_\delta$. Indeed for $n=m n_*+r$ with $r<n_*$ we have
\begin{eqnarray}
    \|\mu P_\delta^n - \mu' P_\delta ^n \|_V 
    & \leqslant & \frac{C'}{\alpha} \kappa^m \|\mu P_\delta^r - \mu' P_\delta^r\|_V \nonumber\\
    & \leqslant & \frac{C'}{\alpha }   \po 1+ 2 C'\pf  \kappa^m \|\mu  - \mu' \|_V \nonumber\\
    & \leqslant & C'' \tilde \kappa^{n\delta} \|\mu  - \mu' \|_V, \notag 
\end{eqnarray}
where $\tilde{\kappa}=\kappa^{1/(t_*+\delta_0)}\in(0,1)$ and $C''=C'   \po 1+ 2 C'\pf /(\alpha  \kappa)$ are independent from $\delta$. 
Here we used that with computations identical to above we get the drift condition $P_\delta^r V\leq (1-b\delta)V+D(1-(1-b\delta)^r)/b \leq V+C' $, which is enough for the current purpose. As a conclusion, the estimates given in Theorems~\ref{thm:ergodicBPS} and \ref{thm:ergodicity_zzs} (or in Appendices \ref{sec:proofs_ergodicity_bps} and \ref{sec:proofs_ergo_zzs} for more details)   give the expected dependency in $\delta$ for the convergence rate of the process toward equilibrium.

\section{Expansion of the invariant measure of splitting schemes for BPS and ZZS}\label{sec:expansion_mu}

In this section we investigate the bias in the invariant measure of different splittings of BPS and ZZS and draw conclusions on which schemes are to be preferred. This analysis sheds light on the dependence of the bias on the structure of the splitting scheme as well as on the refreshment rate, which is the only parameter of our algorithms other than the step size. Our theoretical and numerical investigations highlight how poor choices of the refreshment rate can have a negative effect on the bias of some schemes. In practice it is hard to define good values of $\lambda_r$ a priori, as this is case-dependent and theory only exists for factorised targets \citep{bierkens2018highdimensional}. For these reasons, we shall focus on the robustness of the various schemes to this parameter.  
In this section we assume the following condition on $\mu_\delta$, which is motivated by Theorems \ref{thm:ergodicBPS} and \ref{thm:ergodicity_zzs}, giving existence and uniqueness of a stationary distribution $\mu_\delta$, and Theorem \ref{thm:weakerror}, leading to the conjecture that $\mu_\delta$ is a second order approximation of $\mu$. 
\begin{assumption}\label{ass:exp_inv_meas}
    The processes corresponding to our splitting schemes have an invariant distribution with density
\begin{align}\label{eq:ansatz}
    \mu_\delta(x,v) = \mu(x,v) (1-\delta^2 f_{2}(x,v)+\mathcal{O}(\delta^4)),
\end{align}
where $\mu(x,v) = \nu(v)  \pi(x)$, $\pi$ is the target and $\nu$ is a distribution
on the velocity vector.
\end{assumption}
\begin{remark}
{In Section \ref{sec:ergodicity} we discuss cases where a splitting scheme may admit more than one invariant measure. In such cases it is not immediately clear what the expansion \eqref{eq:ansatz} means. 
In order to make \eqref{eq:ansatz} consistent as $\delta \to 0$, in those cases we consider $\mu_\delta$ as the limit of the law of the splitting scheme as the number of steps tends to infinity when the process is started according to $\mu$.}
\end{remark}
\begin{remark}
In order to establish Assumption \ref{ass:exp_inv_meas} rigorously one would typically rely on estimates on the semigroup, for instance establishing decay of its derivatives. For diffusion processes analogous results have been established using a variety of techniques \citep{CrisanOttobre,Kopec16}, but adapting these techniques to PDMPs is challenging. One of the main reasons for this is that PDE based approaches often make use of smoothing and, unlike diffusion processes, PDMPs do not have smoothing properties (for example, the proof in \citet{Kopec16} relies on use of the Bismut-Elworthy formula, which is not true for PDMP). One result in this direction is Theorem~5.13 in \citet{bertazzi2021approximations}, which establishes convergence of the first order derivative for the semigroup of a ZZS. Notice that in order to establish \eqref{eq:ansatz} we would also require higher order derivatives.
\end{remark}

Finding a function $f_2$ as in \eqref{eq:ansatz} allows us to compare the asymptotic bias of different splitting schemes of BPS and ZZS and is thus crucial to determine which splitting scheme to use for MCMC computation. 
Due to the complexity of the equations, we obtain an explicit expression for $f_2$ only in specific cases such as the one-dimensional setting. The extension to the higher dimensional setting is left for future work.


In Section \ref{sec:result_bias} we give the general, analytic expression for $f_2$ for one-dimensional splitting schemes in the case in which the velocity takes values $\pm 1$. Observe that in this context the BPS with velocity on the unit sphere and the ZZS coincide, hence this result applies to either process. For the explicit statement we refer the reader to Proposition \ref{prop:f_2_1D}, which is obtained following the approach of \cite{Leimkuhler_rational}.
In Section \ref{sec:onedimtargets_expinvmeas} we give a heuristic argument suggesting that the scheme \textbf{RDBDR}, that is Algorithm \ref{alg:Metropolis_RDBDR_BPS} in the case of BPS, performs well compared to the alternatives and is robust to poor choices of the refreshment rate. We support this argument considering a one-dimensional standard normal target distribution and plotting the TV distance to the truth as given by Proposition \ref{prop:f_2_1D}, as well as giving the results from a numerical simulation. We remark that in the case of ZZS the refreshment rate can be set to $0$ and thus we prefer the splitting \textbf{DBD}.
In Section \ref{sec:inv_meas_RDBDR} we follow a different approach and fully characterise the invariant measure of splitting scheme \textbf{RDBDR} with $\lambda_r\geq 0$ in the one-dimensional case. The result, which can be found in Proposition \ref{prop:mu_delta1D}, is obtained verifying that \textbf{RDBDR} is skew-reversible with respect to a particular perturbation of the true target.


\subsection{Main result}\label{sec:result_bias}

In order to obtain $f_2$ we follow the approach of \cite{Leimkuhler_rational}, which we now briefly illustrate before stating our main result.
First, using the Baker-Campbell-Hausdorff (BCH) formula (see e.g. \citet{bonfiglioli_bch}) we can find $\cL_2$ such that
\begin{equation}\notag
    \mathbb{E}_{x,v}[f(\overline{X}_\delta,\overline{V}_\delta)] = f(x,v)+\delta \mathcal{L}f(x,v) + \delta^3\mathcal{L}_2f(x,v)+\mathcal{O}(\delta^4).
\end{equation}
Here $\cL$ is the infinitesimal generator of the continuous time process. 
Integrating both sides with respect to $\mu_\delta$ and using that $\mu_\delta$ is an invariant measure for the splitting scheme we obtain
\begin{align*}
    \int f(x,v) \mu_\delta(x,v)dxdv &= \int f(x,v) \mu_\delta(x,v)dxdv+\delta \int \mathcal{L}f(x,v) \mu_\delta(x,v)dxdv  \\
    & \quad + \delta^3\int \mathcal{L}_2f(x,v)\mu_\delta(x,v)dxdv+\mathcal{O}(\delta^4).
\end{align*}
Substituting $\mu_\delta$ with the expansion \eqref{eq:ansatz} we have
\begin{align*}
      0&= \delta \int \mathcal{L}f(x,v)\mu(x,v) dxdv-\delta^3 \int \mathcal{L}f(x,v)\mu(x,v)  f_{2}(x,v)dxdv \\
      & \quad + \delta^3\int \mathcal{L}_2f(x,v)\mu(x,v) dxdv+\mathcal{O}(\delta^4)
\end{align*}
and since $\mu$ is an invariant measure for BPS we have $\int \cL f d\mu=0$ which gives
\begin{equation}\label{eq:main_expansion_invmeas}
    \mathcal{L}^* (\mu f_2) = \mathcal{L}_2^* \mu.
\end{equation}
Here $\cL^*$ and $\cL_2^*$ are the adjoints of $\cL$ and $\cL_2$ in $L^2$ with respect to Lebesgue measure\footnote{Note that one could alternatively work in the weighted space $L^2(\mu)$. This proved useful in the case of Langevin diffusions, as shown in \citet{LeimkuhlerMatthewsStoltz}.}. 
A compatibility condition is required to ensure that there is a unique solution to \eqref{eq:main_expansion_invmeas}. Because both $\mu_\delta$ and $\mu$ are probability densities, integrating \eqref{eq:ansatz} gives the requirement 
\begin{equation}\label{eq:compatibility_condition}
    \int f_2(x,v) \mu(x,v)  dx dv = 0.
\end{equation}

Solving the problem defined by \eqref{eq:main_expansion_invmeas}-\eqref{eq:compatibility_condition} is much more complicated for PDMPs than for the Langevin case considered in \citet{Leimkuhler_rational}. This is due to the difficult expressions for the adjoint $\cL^*$, which in general is an integro-differential operator. Nonetheless, we are able to obtain $f_2$ restricting to the one dimensional case with velocity in $\{\pm 1\}$. We are ready to state the main result of this section.

\begin{proposition}\label{prop:f_2_1D}
Consider the one-dimensional setting with state space $\mathbb{R}\times \{ \pm 1\}$ and target distribution $\mu(x,v) = \pi(x)\nu(v)$ with $\pi \propto \exp(-\pot)$ and $\nu = \Unif(\{\pm 1\})$. Let $\lambda_r \geq 0$ be the refreshment rate. 
Then the function $f_2$ that solves \eqref{eq:main_expansion_invmeas}-\eqref{eq:compatibility_condition} is
\begin{align*}
    f_2(x,+1) &= f_2^+(0) + \int_0^x \left( \left(\frac{\lambda_r}{2} +(-\partial \pot(y))_+ \right) g(y)-\frac{\cL^*_2 \mu(y,+1)}{\mu(y,+1)} \right)dy, \\
    f_2(x,-1) & = f_2(x,+1) +g(x),
\end{align*}
where 
\begin{align*}
     g(x) &= \exp \left( \pot(x)\right)\int_{-\infty}^x
     \left( \frac{\cL^*_2 \mu(y,+1)}{\mu(y,+1)} + \frac{\cL^*_2 \mu(y,-1)}{\mu(y,-1)} \right) \exp(-\pot(y))dy, \\
     f_2^+(0) & = -\int_{-\infty}^\infty \Big(  \frac{g(x)}{2} +  \int_0^x \left( \left( \frac{\lambda_r}{2} + (-\partial \pot(y))_+ \right)  g(y)-\frac{\cL^*_2 \mu(y,+1)}{\mu(y,+1)}\right)dy \Big)\pi(x)dx.
\end{align*}
\end{proposition}
\begin{proof}
The proof can be found in Appendix \ref{sec:compute_f2_1d}.
\end{proof}

Therefore, $f_2$ can be obtained for any splitting scheme by plugging the corresponding $\cL_2^*$ and the particular choice of target $\pi$ in Proposition \ref{prop:f_2_1D}. 
For example, in the case of \RDBDR an application of Proposition \ref{prop:f_2_1D} and substitution of the appropriate term $\cL_2^*$ (see Proposition \ref{prop:expansion_adjoint_bps} in Appendix \ref{app:computing_cl2*}) gives
\begin{equation}\label{eq:f_2_RDBDR}
        f_2(x,+1) = f_2(x,-1) = \frac{1}{24}\left( \int_{-\infty}^\infty \pot''(y)\pi(y)dy -  \pot''(x) \right).
    \end{equation}
We observe that in the one dimensional case the second order term of the bias of scheme \RDBDR is always independent of the refreshment rate and of $v$. 

\subsection{{Comparison of splitting schemes}}\label{sec:onedimtargets_expinvmeas}
{Proposition \ref{prop:f_2_1D} gives the analytic expression for $f_2(x,v)$, but it remains difficult to conclude which scheme has the smallest bias for a given target due to the complexity of the operators $\cL_2^*$ (see Proposition \ref{prop:expansion_adjoint_bps} in Appendix \ref{app:computing_cl2*}). Here we give a heuristic argument suggesting that scheme \textbf{RDBDR} is in general a reasonable choice, and support this in the standard Gaussian case from a theoretical and numerical point of view (see \Cref{fig:bias_inv_measure_gauss}). }

{First recall that, neglecting refreshment events, ZZS and BPS can be interpreted as limits of lifted Metropolis-Hastings algorithms based on the deterministic proposal mechanism $(x,v)\mapsto(x+v\delta,v)$, where in fact the jump mechanism arises as limits of the acceptance-rejection steps \citep{BierkensRoberts}. Hence, the jump part \textbf{B} of ZZS and BPS corrects the error incurred by the drift part \textbf{D} and it is then sensible to expect a smaller bias for schemes which divide the drift term in two half steps, interrupted by a correction step \textbf{B}, as opposed to taking a full step of \textbf{D} all at once. Moreover, intuitively it seems best to introduce the refreshment part \textbf{R} without disrupting the interaction between parts \textbf{D} and \textbf{B}. This motivates incorporating refreshment half-steps at the beginning and end of each iteration. For these reasons we expect scheme \textbf{RDBDR} (or alternatively \textbf{DBD} in the case of ZZS) to have the smallest bias. }

\begin{figure}[t]
\begin{subfigure}[t]{0.48\textwidth}
    \includegraphics[width=\textwidth]{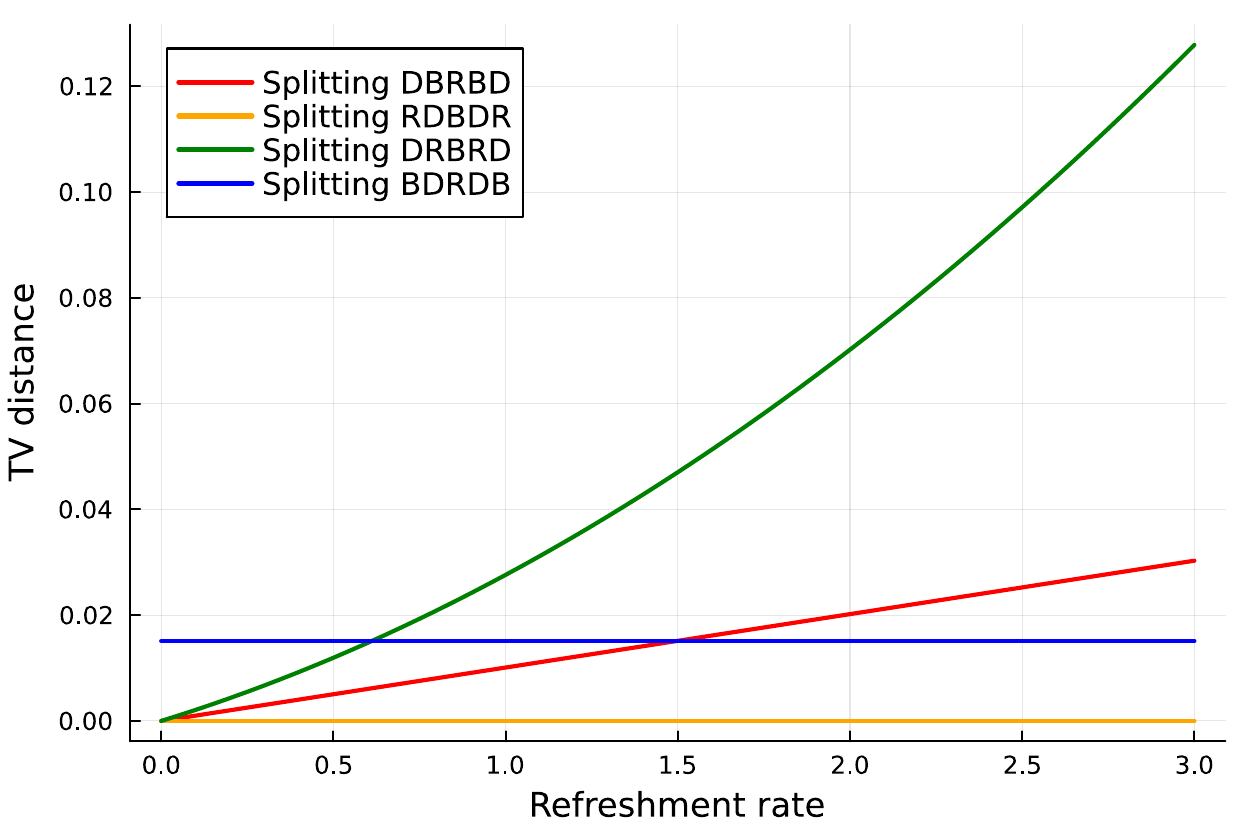}
    \label{fig:TVdist_with_refreshment}
\end{subfigure}
\hfill
\begin{subfigure}[t]{0.48\textwidth}
    \includegraphics[width=\textwidth]{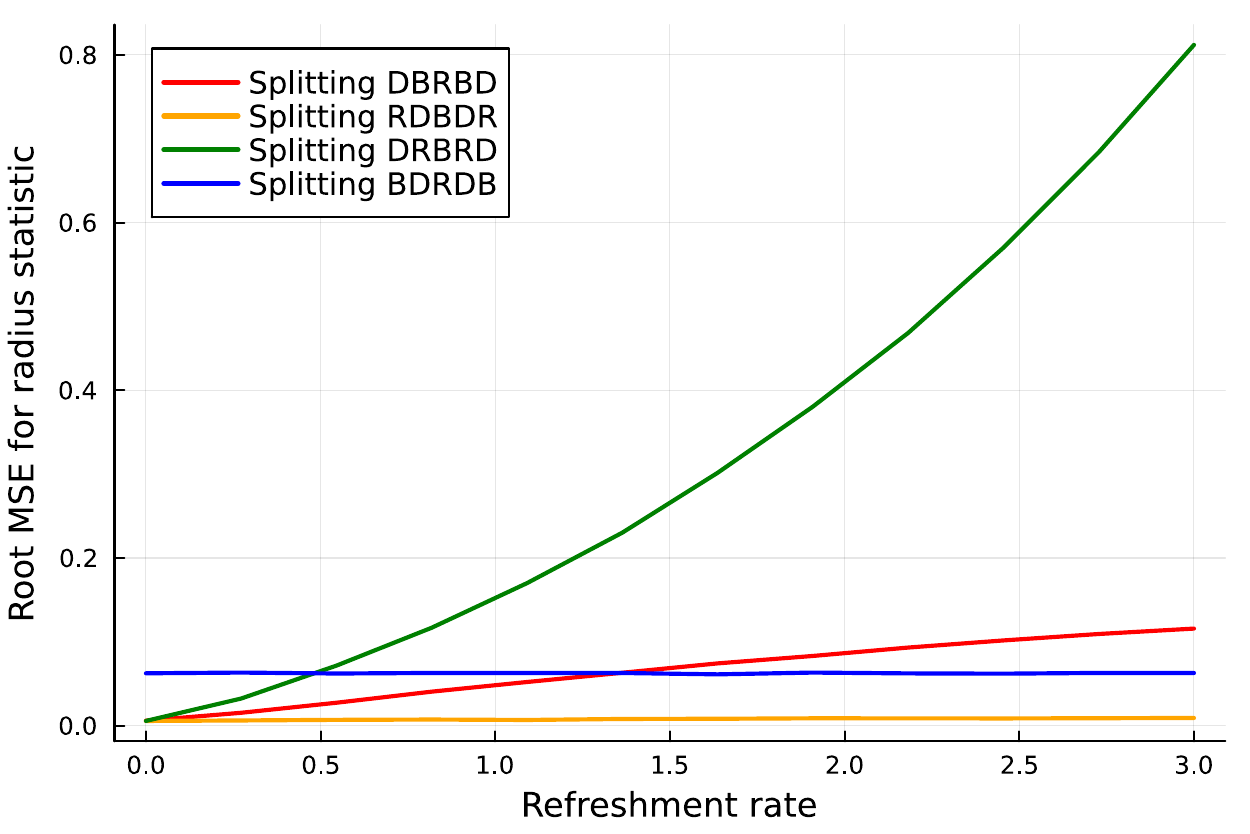}
    \label{fig:bps_empiricalerrror_1d}
\end{subfigure}
\caption{\emph{Left}: total variation distance to the target up to second order obtained with Proposition \ref{prop:f_2_1D}, and \emph{right}: square root MSE in the estimation of the radius statistic, i.e. $x^2$, for a one-dimensional standard normal target distribution. In both plots $\delta = 0.5$, while for the right plot the number of iterations is $N=2 \cdot 10^5$, the position is initialised at the target distribution and the velocity from the uniform distribution on the unit sphere, and the experiment is repeated $300$ times.}
\label{fig:bias_inv_measure_gauss}
\end{figure}

{In \Cref{fig:bias_inv_measure_gauss} we consider a standard normal target and confirm our intuitive arguments comparing scheme \textbf{RDBDR} with schemes \textbf{DRBRD} and \textbf{DBRBD}, which include the refreshment part in between the \textbf{DBD} part, and also with scheme \textbf{BDRDB}, which violates our intuitive principles but is in fact equivalent to \textbf{RDBDR} up to a shift of a half-step.  All these schemes have the same cost of one gradient computation per iteration, since in \textbf{BDRDB} it is sufficient to keep track of the gradient at the previous iteration.
In \Cref{fig:bias_inv_measure_gauss} we consider both the error incurred in the estimation of a test statistic and the theoretical TV distance between the limiting distribution of the schemes and the standard normal distribution. The TV distance is derived from the analytic expression of $f_2$ as follows. By marginalising $\mu_\delta$ with respect to $\nu = \Unif(\{\pm 1\})$ we obtain $\pi_\delta(x) = \pi(x) (1-(\nicefrac{\delta^2}{2})(f_2(x,+1)+f_2(x,-1))) + \mathcal{O}(\delta^4).$
Hence we can express the TV distance between $\pi$ and $\pi_\delta$  as
\begin{equation}\label{eq:tvdist_expinvmeas}
    \lVert \pi- \pi_\delta \rVert_{TV}=  \frac{\delta^2}{2} \sup_A \left\lvert \int_A(f_2(x,+1)+f_2(x,-1))\pi(x)dx \right\rvert + \mathcal{O}(\delta^4).
\end{equation}
The $\delta^2$ contribution of the TV distance can be computed by plugging in the expressions for $f_2$ found in Proposition \ref{prop:f_2_1D}, where we substitute for each splitting scheme the corresponding operator $\cL_2^\star$ obtained in Proposition \ref{prop:expansion_adjoint_bps} in Appendix \ref{app:computing_cl2*}.} 

The test statistic we use for this example is the radius statistic, i.e.  $T(x)=\lVert x\rVert$ the Euclidean norm of the position component. For each time algorithm we estimate the radius statistic using the ergodic average of that chain, i.e.
\begin{equation*}
    \hat{T}(\bar{X}) = \frac{1}{N} \sum_{n=1}^N T(\bar{X}_{t_n})
\end{equation*}
this is compared to the true radius $\pi(T)=1$. By repeating this over $M=300$ independent realisations $\bar{X}^1,\ldots,\bar{X}^M$ of the algorithm we obtain an estimate for the root mean square error (MSE) as
\begin{equation*}
    \sqrt{\frac{1}{M}\sum_{m=1}^M|\hat{T}(\bar{X}^m)-\pi(T)|^2}.
\end{equation*}

{\Cref{fig:bias_inv_measure_gauss} suggests that indeed \textbf{RDBDR} compares well to the other schemes. In particular, its bias is independent of the refreshment rate, contrarily to schemes \textbf{DRBRD} and \textbf{DBRBD}. Finally, scheme \textbf{BDRDB} has a positive bias which is independent of the choice of $\lambda_r$. 
We notice that the theoretical and numerical behaviour show similar dependence on the structure of the scheme and on the refreshment rate, even though the plots in \Cref{fig:bias_inv_measure_gauss} show a different metric for each case.
We refer the reader to \Cref{sec:proofs_propositions_invmeas} for further comparisons, which corroborate our heuristic arguments further.}

\subsection{Characterisation of the invariant measure of \textbf{RDBDR} in one dimension}\label{sec:inv_meas_RDBDR}
In fact, in 1D, for the scheme \textbf{RDBDR}, we can get an explicit expression for the invariant measure. Note that the result is independent of $\lambda_r$ and holds also for the case $\lambda_r=0,$ in which case the scheme coincides with \textbf{DBD}.
\begin{proposition}\label{prop:mu_delta1D}
    Consider the scheme \textbf{RDBDR} for BPS or ZZS in one dimension, where the velocity is refreshed from $\nu = \Unif(\{\pm 1\})$. For some $x\in\R$ and $\delta>0$, define the probability distribution $\mu_\delta$ with support on the grid $\{y\in\R:y=x+n\delta,\, n\in\mathbb{Z}\}\times \{\pm 1\}$ given by
    $$\mu_\delta(y,v)\propto e^{-\pot_\delta(y)},$$
    where $\pot_\delta(x) =\psi(x)$ and for $y=x+nv\delta$, $n\in\N$
    $$\pot_\delta(y) = \psi(x) + \delta  v \sum_{\ell=1}^{n} \pot'(x+(\ell-1/2)v\delta).$$
    Then the distribution $\mu_\delta$ is stationary for the chain which is initialised at $x$ and with step size $\delta$.
    Moreover, under the conditions of Theorem \ref{thm:ergodicity_zzs} we obtain that $\mu_\delta$ is ergodic, in the sense that for all bounded functions
    \[\lim_{N\to\infty} \frac{1}{N}\sum_{n=1}^{N} f(\Xbar_{t_n},\Vbar_{t_n}) = \mu_\delta(f) \quad \mathbb{P}_{x,v}-\text{a.s.}\]
    
\end{proposition}
\begin{proof}
The proof can be found in Appendix \ref{subsec:proof_mu_delta1D}.
\end{proof}


Once again it is clear that the scheme is unbiased in the Gaussian case $\pot(x)=x^2/(2\sigma^2)$ (in the sense that $\psi_\delta(y) = \psi(y)$ for all $y =x+n\delta v$, i.e. the BPS is ergodic with respect to the restriction of the true Gaussian target to the grid, and moreover the target measure is invariant for the scheme). More generally, for $y=x+vn\delta$ with $n\in\mathbb{N}$ we get
\begin{eqnarray*}
    \psi(y) 
    &= &  \psi(x) + \sum_{\ell=1}^n \int_{-\delta/2}^{\delta/2} \psi'(x+v(\ell-1/2) \delta+u) \dd u  \\
    & = & 
    \psi_\delta(y) +  \frac12 \sum_{\ell=1}^n \int_{-\delta/2}^{\delta/2} u^2 \psi^{(3)}(x+v(\ell-1/2) \delta) \dd u + O(n \delta^5) \\
    & = & \psi_\delta(y) +  \frac{\delta^2}{24} \int_x^y \psi^{(3)}(u)\dd u  + O( \delta^4|x-y|)\,.
\end{eqnarray*}
Setting $x=0$ this gives $\pot_\delta=\pot+\delta^2 f_2 + O(\delta^4)$ with $f_2(y) = (\nicefrac{1}{24}) (\pot''(y)- \pot''(0))$, which is the same as what follows from Proposition \ref{prop:f_2_1D} (see Equation \eqref{eq:f_2_RDBDR}) since the term $f_2^+(0)$ in \eqref{eq:f_2_RDBDR} was introduced to make $\exp(-\pot)(1+\delta^2 f_2)$ a probability distribution and would appear also in the present context. Hence Propositions \ref{prop:f_2_1D} and \ref{prop:mu_delta1D} agree.

\section{{Scaling of the rejection probability of adjusted algorithms}}\label{sec:scaling_rej_prob}
In this section we study the average rejection probability of Algorithms \ref{alg:Metropolis_DBD_ZZS} and \ref{alg:Metropolis_RDBDR_BPS}, focusing in particular on its dependence on the step size. The main results of the section are Proposition \ref{prop:meanacceptrate_zzs} and Proposition \ref{prop:meanacceptrate_bps}, dealing with ZZS and BPS respectively. In both cases we consider the average rejection probability, where the initial state is fixed and the randomness is in the proposal for the next state, and prove it has dependence on the step size of order three. {This behaviour is akin to each step of the leapfrog integrator used in HMC}, as shown by \citet{stoltz_trstanova}.

We begin considering the average rejection rate of the Metropolis adjusted ZZS (Algorithm \ref{alg:Metropolis_DBD_ZZS}).
\begin{proposition}\label{prop:meanacceptrate_zzs}
    Suppose $\pot$ is smooth and its derivatives are growing at most polynomially and let $\delta\in[0,\delta_0]$ for $\delta_0>0$. The average rejection probability of Algorithm \ref{alg:Metropolis_DBD_ZZS} satisfies
    $$\E[1-\alpha((x,v),(\tilde X,\tilde V))] = \delta^3 G(x,v) + \tilde G(x,v,\delta),$$
    where $G=G_1+G_2$,
    \begin{align*}
        &G_1(x,v)  =  \frac{1}{2} \max\left(0,\sum_{i=1}^d \lambda_i(x,v) v_i \sum_{k\neq i}  v_k \partial_{ik}\pot(x) \right) \\
        & G_2(x,v) = \frac{1}{24} \max\left(0,-\sum_{\alpha:\lvert \alpha\rvert =3} D^\alpha\pot(x) v^\alpha\right),
    \end{align*}
    while $\tilde G(x,v,\delta)\sim\mathcal{O}(\delta^4)$ for any $x,v$ and grows at most polynomially in $x$ for any $\delta\in[0,\delta_0]$.
\end{proposition}
\begin{proof}
    The proof can be found in Appendix \ref{sec:proofs_rejectionprob}.
\end{proof}

In the next example we apply Proposition \ref{prop:meanacceptrate_zzs} to a factorised target, for which we find that in stationarity the expected value of $G$ is proportional to $\sqrt{d}$ for large $d.$
\begin{example}[Factorised target distribution]
    Consider a potential of the form $\pot(x)=\sum_{i=1}^d \pot_i(x_i).$  Proposition \ref{prop:meanacceptrate_zzs} gives for $(x,v)\sim\mu$ 
    $$G(x,v) \stackrel{d}{=} \frac{1}{24} \max\left(0,\sum_{i=1}^d v_i \pot_i^{(3)}(x_i) \right). $$
    Clearly, this equals zero if $\pot$ is an independent Gaussian distribution, in which case by the proof of the Proposition we also find $\tilde G=0$ and \textbf{DBD} has the correct stationary distribution. Alternatively, consider the assumption $\lvert\pot_i^{(3)}\rvert\leq M$. In this case $\max(0,\sum_{i=1}^d v_i \pot_i^{(3)}(x_i) )\leq M\max(0,\sum_{i=1}^d v_i)$ and we can write $\sum_{i=1}^d v_i =  2\textnormal{Bin}(d,1/2) - d $
    where the equality is in distribution and $\textnormal{Bin}$ denotes the binomial distribution. In the large $d$ regime $\sum_{i=1}^d v_i$ is approximately distributed as a centred Gaussian distribution with variance $d$, hence
    \[\mathbb{E}_\mu\left[G(x,v)\right] \approx \frac{M}{24} \delta^3 \sqrt{d}, \]
    which means that the leading order of the acceptance rate is stable if $\delta$ scales as $d^{-\frac 1 6}$.
\end{example}

Now we focus on the Metropolis adjusted BPS, stating a similar result to Proposition~\ref{prop:meanacceptrate_zzs}. 
\begin{proposition}\label{prop:meanacceptrate_bps}
    Suppose $\pot$ is smooth and its derivatives are growing at most polynomially, and moreover that for the points for which $\lVert\nabla\pot(x)\rVert=0$ it holds that $\det(\nabla^2\pot(x))\neq 0.$ Let $\delta\in[0,\delta_0]$ for $\delta_0>0$. Consider an initial condition $x$ which is a draw from a distribution that is absolutely continuous with respect to $\pi$. Then the average rejection probability of Algorithm \ref{alg:Metropolis_RDBDR_BPS} satisfies
    $$\E[1-\alpha((x,v),(\tilde X,\tilde V))] = \delta^3 G(x,v) + \tilde G(x,v,\delta),$$
    where $G=G_1+G_2$,
    \begin{align*}
        & G_1(x,v) =  \frac{1}{8}\lambda(x,v)\max\left( 0,\langle v,\nabla^2 \pot(x) v\rangle - \langle R(x) v,\nabla^2 \pot(x) R(x)v\rangle\right) \\
        & G_2(x,v) = \frac{1}{24} \max\left( 0,-\sum_{\alpha:\lvert \alpha\rvert =3} D^\alpha\pot(x) v^\alpha\right),
    \end{align*}
    while $\tilde G(x,v,\delta)\sim o(\delta^3)$ for any $x,v$ and grows at most polynomially in $x$ for any $\delta\in[0,\delta_0]$.
\end{proposition}
\begin{proof}
    The proof can be found in Appendix \ref{sec:proofs_rejectionprob}.
\end{proof}

\section{Numerical experiments}\label{sec:numerical}
In this section we discuss some numerical simulations for the proposed samplers. The codes for all these experiments can be found at \url{https://github.com/andreabertazzi/splittingschemes_PDMP}.

\subsection{Image deconvolution using a total variation prior}\label{sec:imaging_experiments}

In this section we test the unadjusted ZZS (Algorithm \ref{alg:splitting_DBD_ZZS}) on an imaging inverse problem, which we solve with a Bayesian approach. In the following we shall refer to an image either as a $N\times N$ matrix or as a vector of length $d=N^2$, which is obtained by placing each column of the matrix below the previous one. In both cases each entry corresponds to a pixel.
Now denote as $x\in \R^d$ the image we are interested in estimating and $y\in \R^d$ the observation. The observation is related to $x$ via the statistical model
\begin{equation*}
    y=Ax+\xi,
\end{equation*}
where $A$ is a $d\times d$-dimensional matrix which may be degenerate and ill-conditioned and $\xi$ a $d$-dimensional Gaussian random variable with mean zero and variance $\sigma^2 I_d$. 
The forward problem we consider is given by a blurring operator, i.e. $A$ acts by a discrete convolution with a kernel $h$. In our examples $h$ will be a uniform blur operator with blur length $9$.
The \emph{likelihood} of $y$ given $x$ is
\begin{equation*}
    \begin{aligned}
        &p(y|x) \propto e^{-f_y(x)},\quad \textnormal{for } f_y(x) = \frac{1}{2\sigma^2} \lVert Ax-y\rVert^2.
    \end{aligned}
\end{equation*}
As \emph{prior distribution} on $x$ we choose the total variation prior: $ p(x) \propto e^{-g(x)},$ where $g(x) =\theta \lVert x\rVert_{TV}$ for $\theta>0$ and $\lVert x\rVert_{TV}$ is the total variation of the image $x$ (see \cite{rudin1992nonlinear}) and is given by
\begin{equation*}
    \lVert x\rVert_{TV} := \sum_{i,j=1}^{N-1} (\lvert x_{i+1,j}-x_{i,j}\rvert +\lvert x_{i,j+1}-x_{i,j}\rvert ).
\end{equation*}
{The total variation prior is a log concave non-differential improper prior often used as a benchmark for Bayesian Imaging problems \citep{PMZ20,oliveira2009adaptive, P16}.}
The total variation prior corresponds to the $\ell^1$-norm of the discrete gradient of the image and promotes piecewise constant reconstructions. Note that this prior is not smooth and hence we cannot directly apply the gradient based algorithms such as the unadjusted Langevin algorithm (ULA) or ZZS. Therefore we approximate $g$ with a Moreau-Yosida envelope
\begin{equation*}
    g^\lambda(x)  = \min_{z\in \R^d} \left\{g(z)+\frac{1}{2\lambda}\lVert x-z\rVert^2\right\}.
\end{equation*}
By \citet[Proposition 12.19]{rockafellar2009variational} we have that $g^\lambda$ is Lipschitz differentiable with Lipschitz constant $\lambda^{-1}$ and 
\begin{align*}
    &\nabla g^{\lambda} (x) = \frac{1}{\lambda}(x-\prox_g^\lambda (x)),\quad \textnormal{where }\,\, \prox_g^\lambda (x )=  \arg\min_{z\in \R^d} \left\{g(z)+\frac{1}{2\lambda}\lVert x-z\rVert^2\right\}.
\end{align*}
Using Bayes theorem, we have the \emph{posterior distribution}
\begin{equation*}
    p(x| y) \propto e^{-f_y(x)-\theta g^{\lambda}(x)}.
\end{equation*}
We select the optimal $\theta$ by using the SAPG algorithm \citep{vidal2020maximum,de2020maximum} and we choose $\lambda$ based on the guidelines given in \citet{DurmusMoulinesPereyra}, which set {$\lambda=5/L_f$} where $L_f$ is the Lipschitz constant of $f_y$. {Despite this being a log concave probability distribution}, sampling from this model using MCMC schemes is difficult because $x$ is usually very high dimensional and the problem is ill-conditioned. {In order to be able to characterise the nature of the conditioning we introduce a strongly convex term to the posterior, and consider as target distribution}
\begin{eqnarray}\label{eq:posterior_tv}
    {\pi(x) \propto e^{-f_y(x)-\theta g^{\lambda}(x) -\frac{1}{2}m\lVert x\rVert^2}}.
\end{eqnarray}
In this case the unadjusted Langevin algorithm can be very expensive to run since the step size is limited by $2/L$, where $L=L_f+\lambda^{-1}{+m}$ is the Lipschitz constant of $\nabla \log \pi$. The values of the parameters in the considered example are summarised in Table~\ref{tab:imaging_params}. 

We are interested in drawing samples from the distribution \eqref{eq:posterior_tv}, and in particular we compare the unadjusted ZZS (Algorithm \ref{alg:splitting_DBD_ZZS}, abbreviated as UZZS in the plots), ULA, the continuous ZZS, as well as {the discretization of the underdamped Langevin Algorithm considered by \citet{KZSS}, which is a strongly second order accurate integrator abbreviated as UBU in the plots.} 

To implement the continuous ZZS we can compute the Lipschitz constant of the gradient of the negative log-posterior, $L$, and thus we can implement the exact ZZS using the Poisson thinning technique based on the simple bound
\begin{equation}\label{eq:bounds_zzs_tv}
    \lambda_i(x+vt,v) \leq t L \sqrt{d}  + \lambda_i(x,v).
\end{equation}

In order to compare the computational cost of the continuous ZZS to the unadjusted ZZS, {ULA and UBU} we count each proposal for an event time obtained by Poisson thinning as a gradient evaluation and thus as an iteration. Indeed, an update of the computational bounds requires the evaluation of $\lambda_i(x,v)$ {for some $i$ and thus the proximal operator has to be calculated. This is the most relevant cost in the algorithm so we view it as equivalent cost to one step of ULA, UZZS and UBU}. 
To estimate the posterior mean for the continuous ZZS we compute the time average $T^{-1}\int_0^T X_t dt$. For each of these algorithms ULA, UZZS and continuous ZZS we have used $10^6$ gradient evaluations to obtain the estimates, {this includes a burn in phase which uses $10\%$ of the iterations}.

In order to have the best convergence speed for ULA we use a step size of {$L^{-1}$}, for UZZS there is not a stability barrier so we may take much larger step size and for these experiments we use ${2L^{-1/2}}$ as the step size.  {For UBU the performance based on theoretical guarantees is not competitive since this algorithm scales poorly with the conditioning number \citep{cheng2018underdamped, durmus2016sampling}, which is very large in these examples. However we have implemented UBU with a friction parameter $\gamma=2$, which is outside the range covered by the theoretical guarantees, and with step size $L^{-\frac{1}{2}}$. Note that although the convergence rate for the discretisation is not known for this regime, the continuous SDE converges with a rate $O(\sqrt{m})$ for $m$-strongly log-concave probability measures \citep{Cau23}. Therefore using a step-size of $O(L^{-\frac{1}{2}})$ has the potential of achieving a convergence rate $O(\kappa^{-\frac{1}{2}})$.}  For comparison of the bias we have used a long run ($10^7$ iterations) of the MYMALA algorithm \citep{DurmusMoulinesPereyra} which produces unbiased samples of $\pi$ including the Moreau-Yosida approximation of $g$, and we use these samples as our reference for the true distribution. Note that MYMALA mixes substantially slower than ULA, UZZS, and UBU hence is not competitive with these algorithms in terms of convergence speed.

\begin{table}[t]
    \centering
    \begin{tabular}{c|c|c|c|c|c|c|}
        $\sigma$ & L & d & $\lambda$ & $\theta$ & $m$\\
        \hline
        $0.0024$ & $2.12\times 10^{5}$ & $65536$ & $2.83\times 10^{-5}$ & $10.74$ & 1\\
    \end{tabular}
    \caption{{Summary of the problem parameters for Section~\ref{sec:imaging_experiments}.}}
    \label{tab:imaging_params}
\end{table}
\begin{table}[ht]
    \centering
    \begin{tabular}{|c|p{3.5cm}p{3.5cm}p{3.5cm}|}
    \hline
        Algorithm & {IMMSE} of posterior mean vs $x$ & {IMMSE} of posterior mean vs MALA & {IMMSE} of posterior standard deviation vs MALA\\
         \hline 
         ULA & $1.9 \times 10^{-3}$ & $6.4\times 10^{-5}$ & $3.1\times 10^{-5}$\\
         UZZS & $1.9 \times 10^{-3}$ & $\mathbf{5.9\times 10^{-5}}$ & $2.9\times 10^{-5}$\\
         UBU & $\mathbf{1.9 \times 10^{-3}}$ & $5.9\times 10^{-5}$ & $\mathbf{2.9\times 10^{-5}}$\\
         cts ZZS & $5.2 \times 10^{-3}$ & $2.7\times 10^{-3}$ & $2.7\times 10^{-3}$\\
         \hline
    \end{tabular}
    \caption{A summary of the error calculated by the {IMMSE} for two different statistics: the posterior mean and the pixelwise posterior standard deviation. We compare the posterior mean against the ground truth $x$ and against the posterior mean obtained using a long run of MYMALA. For the pixelwise posterior standard deviation we compare against a long run of MYMALA.}
    \label{tab:imaging_results}
\end{table}

{Figure \ref{fig:cman_tv} shows} the ground truth image $x$, the observed data $y$, the estimated posterior mean using the different samplers and the true posterior mean obtained using a long run of MYMALA. These figures show that the posterior mean obtained via {UZZS, UBU or ULA appear visually} the same as the true posterior mean whereas the continuous {ZZS sampler has} clearly not converged after $10^6$ gradient evaluations. {We have plotted the standard deviation for each pixel which provides an indication
of the level of confidence in each pixel value, as measured by the model} 
and the results are shown in Figure~\ref{fig:stdev}. Using the long run of MYMALA for comparison we see that ULA and UZZS provide a good estimate of the standard deviation of the model, {however, looking closely at the boundary of the image one can see that ULA, UBU and UZZS are overestimating the standard deviation at these pixels}. {As a measure of distance between two images we use the image mean square error (IMMSE) defined as}
\begin{equation*}
{    \mathrm{IMMSE}(x,y)  = \frac{1}{N^2}\sum_{i,j=1}^N |x_{i,j}-y_{i,j}|^2}
\end{equation*}
The {IMMSE} between the pixelwise standard deviation is reported in Table \ref{tab:imaging_results} along with the {IMMSE} of the posterior mean, these show UZZS consistently has smaller error than ULA and both have significantly smaller {IMMSE} than continuous ZZS.

\begin{figure}
\centering
\begin{subfigure}[t]{0.24\linewidth}
    \centering
    \includegraphics[width=\linewidth]{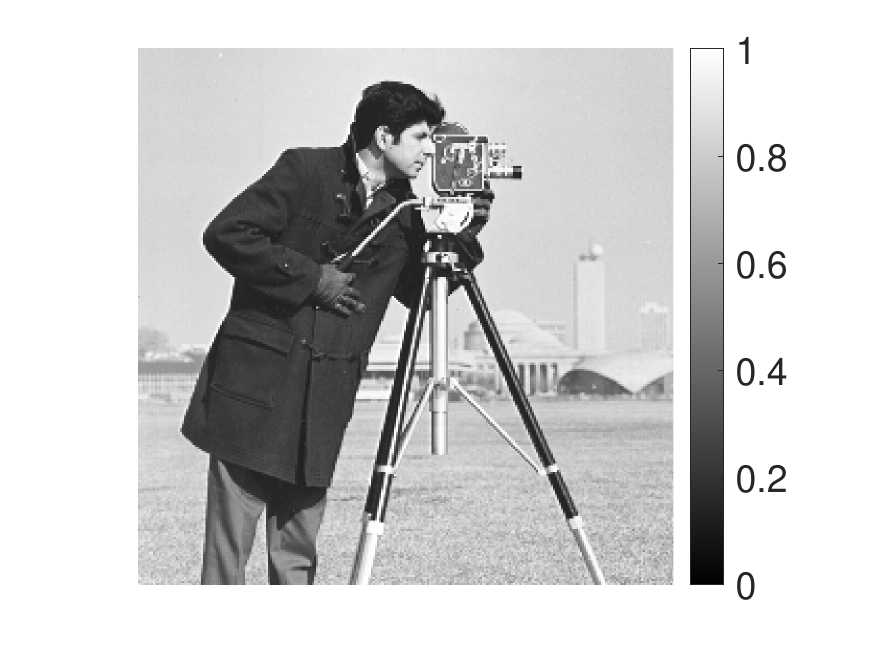}
    \caption{Ground truth, $x$}
\end{subfigure}
\begin{subfigure}[t]{0.24\linewidth}
    \centering
    \includegraphics[width=\linewidth]{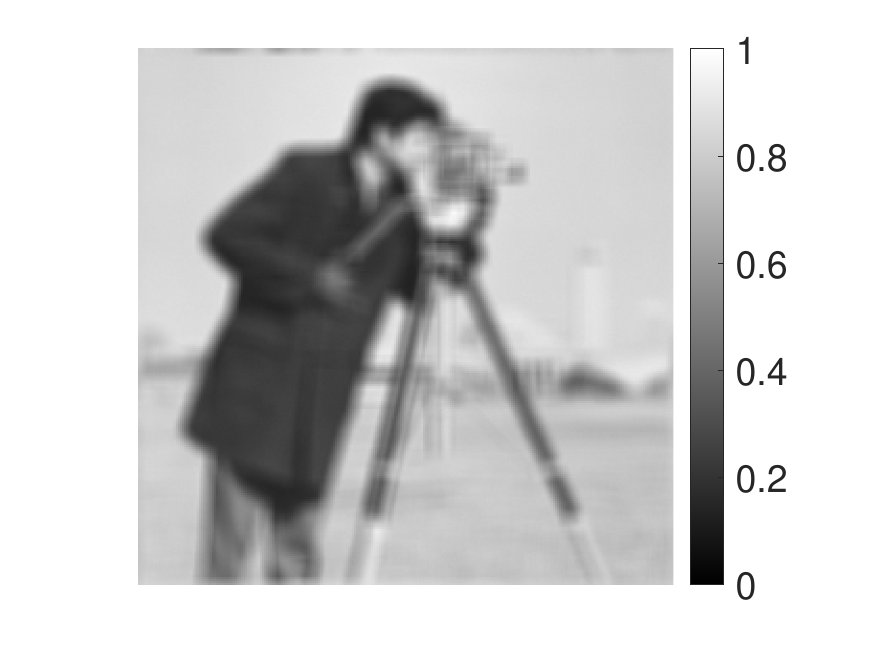}
    \caption{Data, $y$}
\end{subfigure}
\begin{subfigure}[t]{0.24\linewidth}
    \centering
    \includegraphics[width=\linewidth]{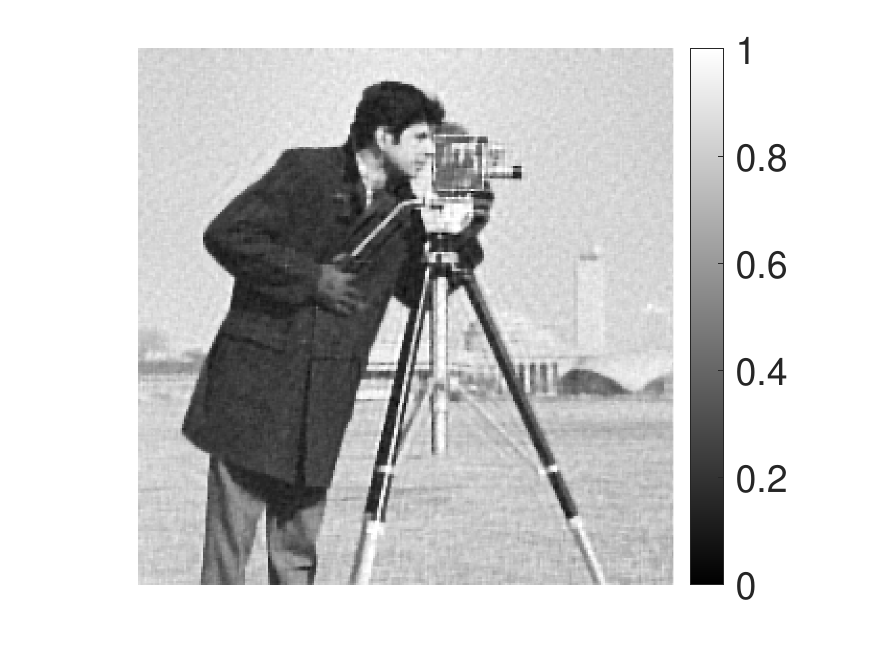}
    \caption{MYMALA}
\end{subfigure}\\
\begin{subfigure}[b]{0.24\linewidth}
    \centering
    \includegraphics[width=\linewidth]{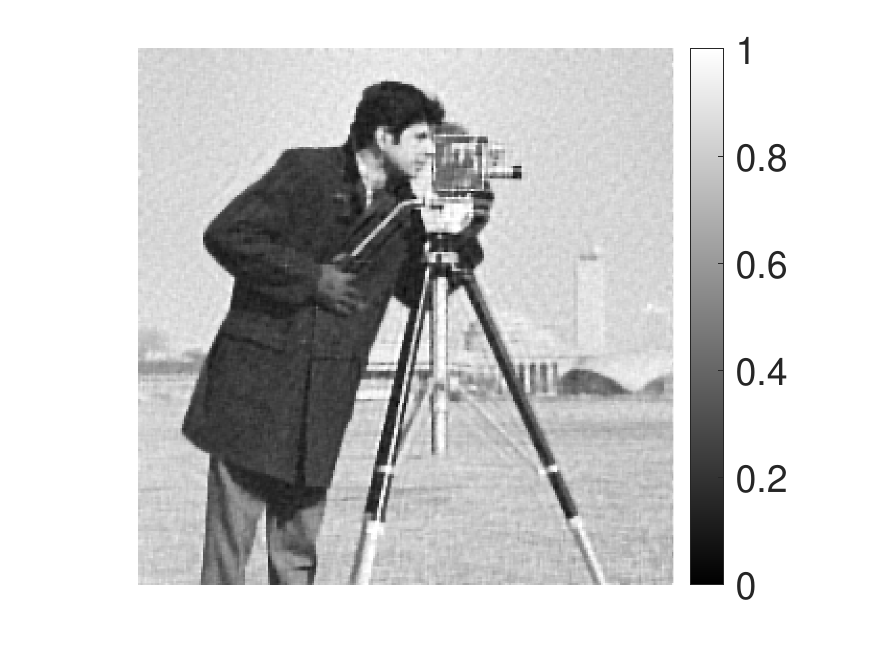}
    \caption{ULA}
\end{subfigure}
\begin{subfigure}[b]{0.24\linewidth}
    \centering
    \includegraphics[width=\linewidth]{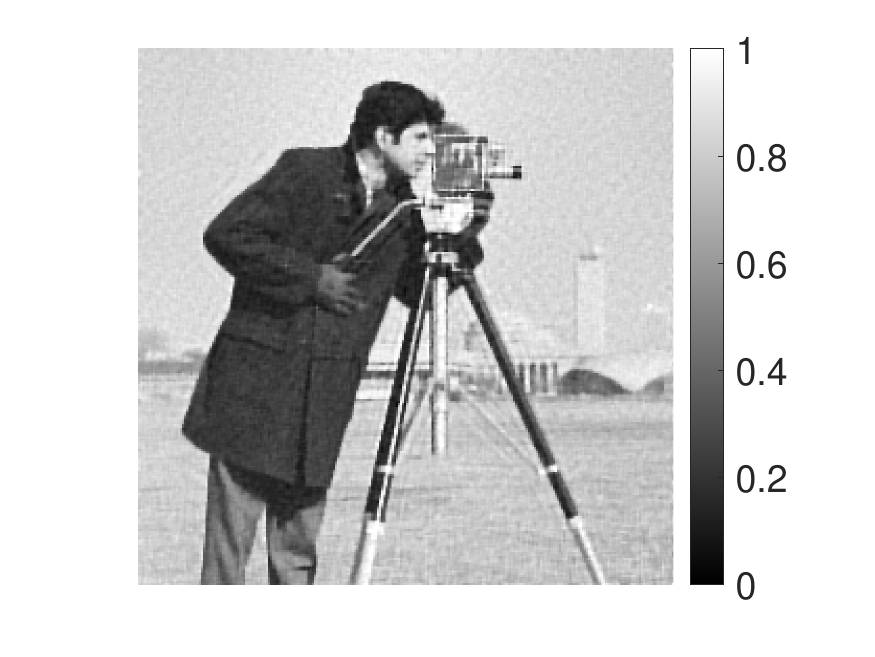}
    \caption{UBU}
\end{subfigure}
\begin{subfigure}[b]{0.24\linewidth}
    \centering
    \includegraphics[width=\linewidth]{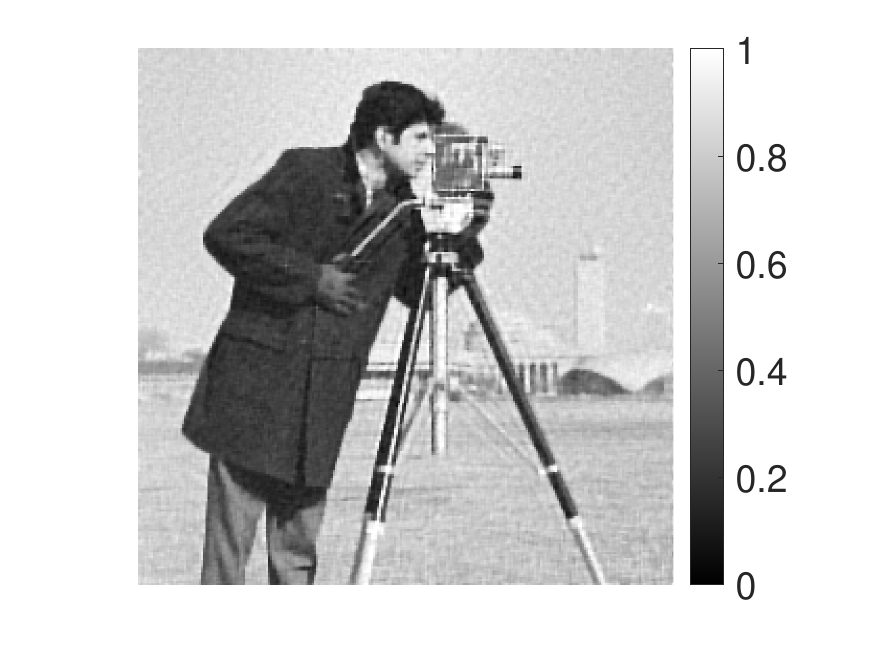}
    \caption{UZZS}
\end{subfigure}
\begin{subfigure}[b]{0.24\linewidth}
    \centering
    \includegraphics[width=\linewidth]{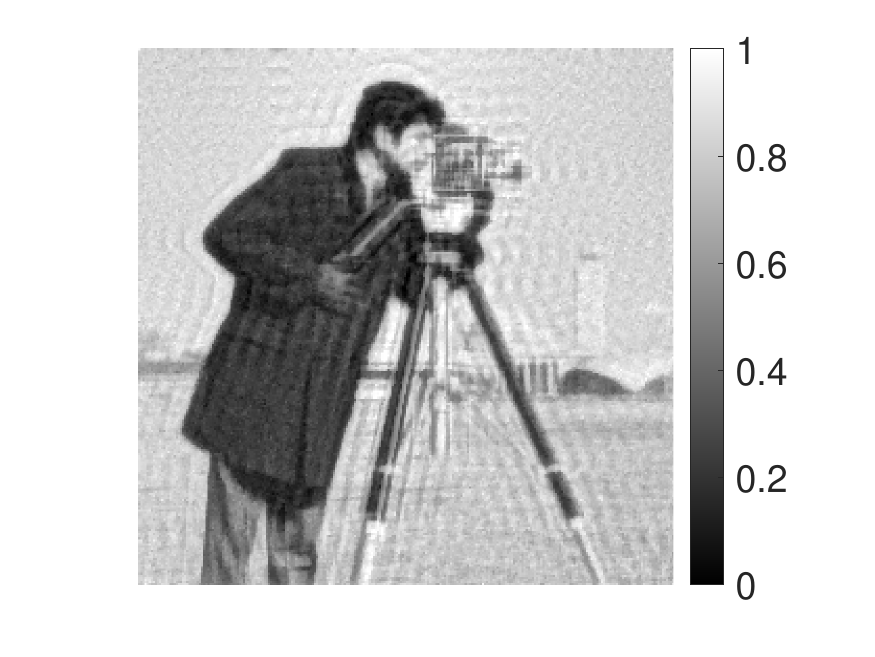}
    \caption{ZZS continuous}
\end{subfigure}
\caption{Results for the reconstruction of cameraman image using a TV prior. The first row shows the original image $x$, the data $y$ and the posterior mean using a long run of MYMALA. Each panel (d)-(i) shows the posterior mean obtained after $10^6$ iterations with the labelled algorithm.}
\label{fig:cman_tv}
\end{figure}
\begin{figure}[t]
    \centering
    \begin{subfigure}[t]{0.24\linewidth}
        \includegraphics[width=\linewidth]{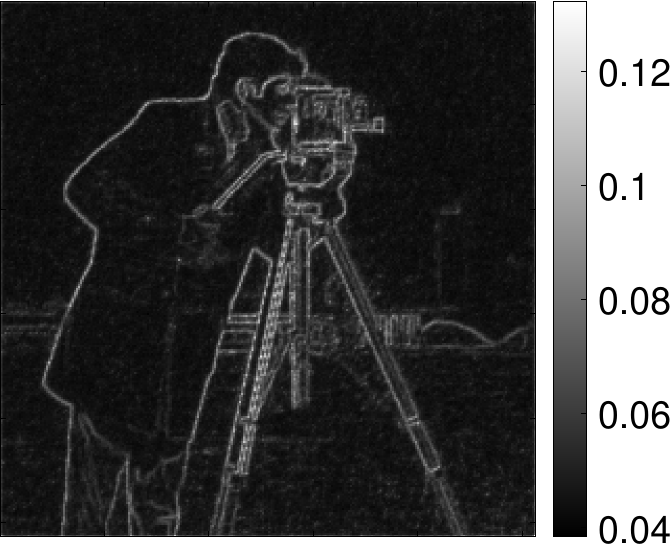}
        \caption{ULA}
    \end{subfigure}
    \begin{subfigure}[t]{0.24\linewidth}
        \includegraphics[width=\linewidth]{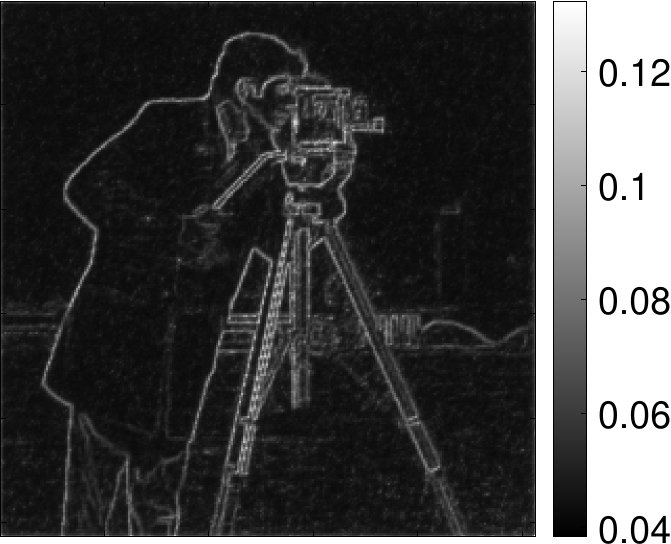}
        \caption{UBU}
    \end{subfigure}
    \begin{subfigure}[t]{0.24\linewidth}
        \includegraphics[width=\linewidth]{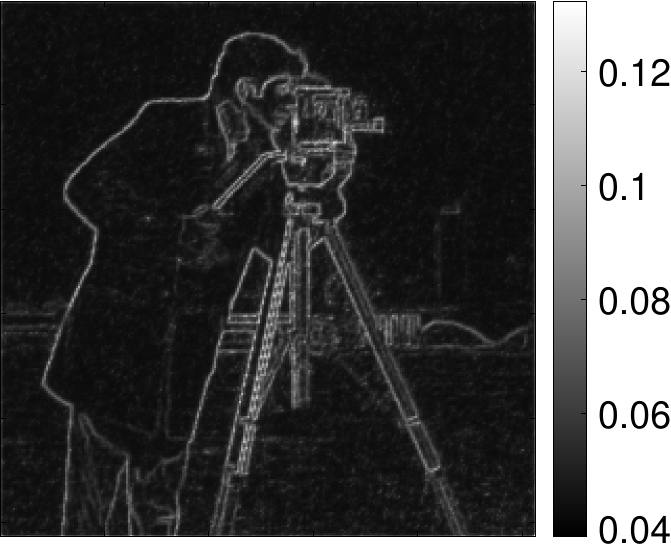}
        \caption{UZZS}
    \end{subfigure}
    \begin{subfigure}[t]{0.24\linewidth}
        \includegraphics[width=\linewidth]{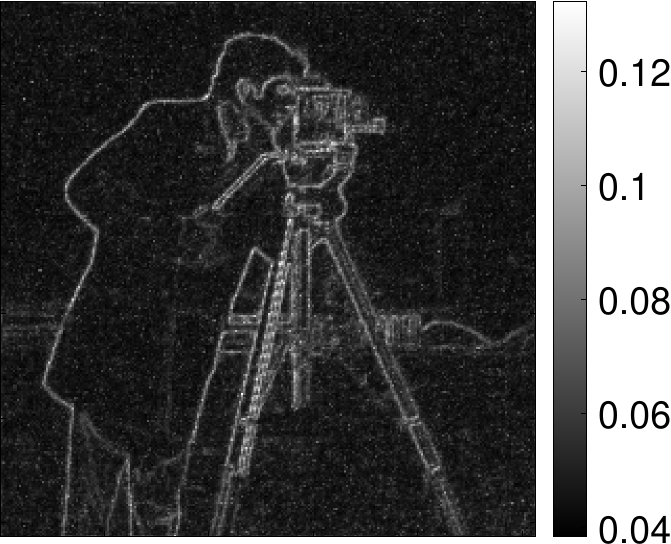}
        \caption{MYMALA}
    \end{subfigure}
\caption{Results for the pixelwise standard deviation for the cameraman image using a TV prior. 
}
\label{fig:stdev}
\end{figure}
\begin{figure}[t]

\begin{subfigure}{0.49\textwidth}
    \centering
    \includegraphics[width=0.85\textwidth]{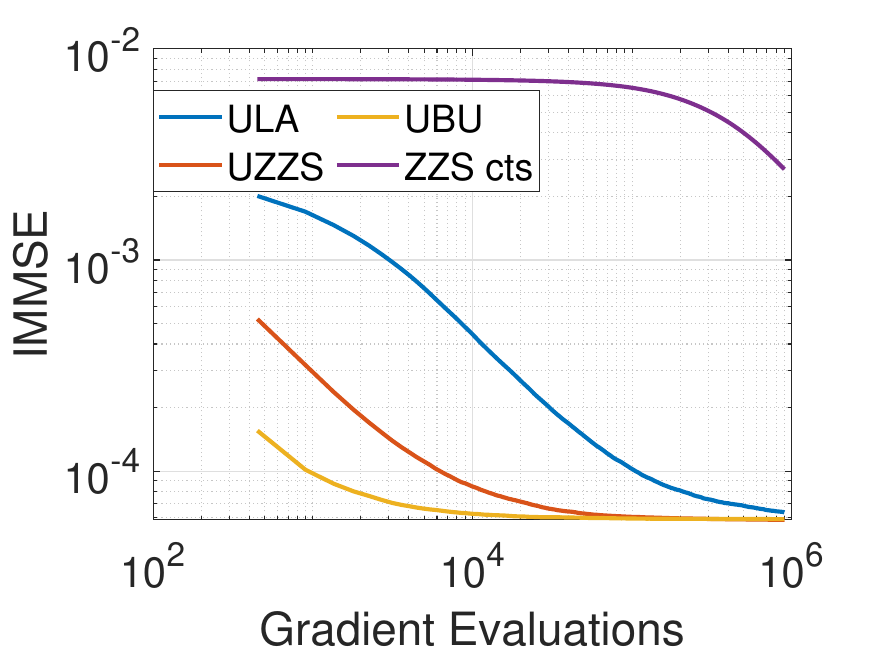}
\end{subfigure}
\begin{subfigure}{0.49\textwidth}
    \centering
    \includegraphics[width=0.85\textwidth]{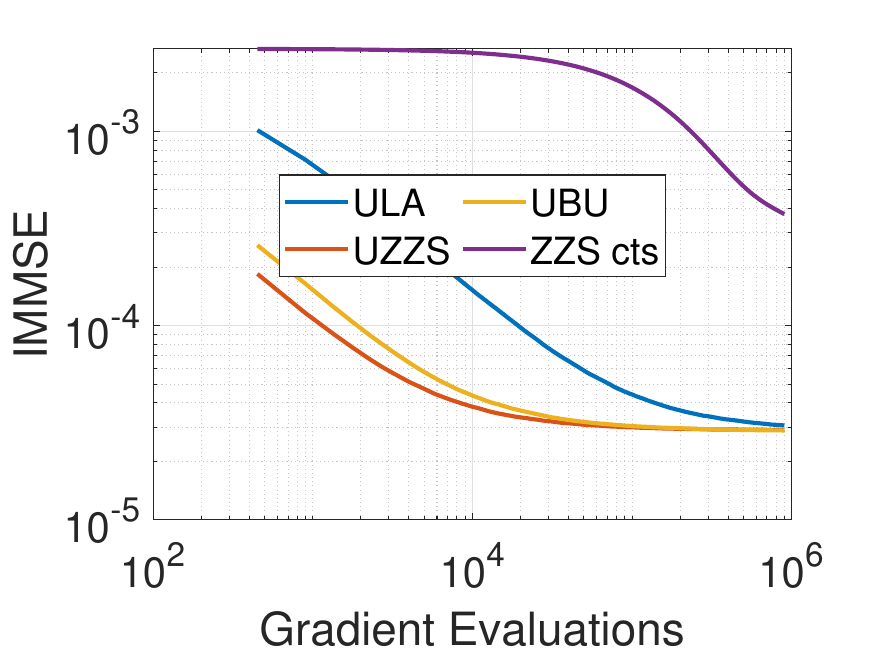}
\end{subfigure}
\caption{{IMMSE} between posterior mean (\emph{left}) and pixelwise posterior {standard deviation} (\emph{right}) estimated by the algorithms {ULA, UBU, UZZS, and the continuous ZZS} using a long run of MYMALA as a reference for the truth.}
\label{fig:mse_mala}
\end{figure}
The convergence is further investigated in Figure \ref{fig:mse_mala}. Figure \ref{fig:mse_mala} shows the {IMMSE} using MYMALA as the truth for the mean and pixelwise variance by each algorithm (ULA, {UBU}, UZZS and continuous ZZS). These show that UZZS converges the fastest of the {four} algorithms in terms of the {standard deviation while UBU performs the best for the first moment}.  
This is likely due to the fact that the step size of ULA must be very small or otherwise the process explodes to infinity, while for UZZS larger step sizes can be selected and in these experiments the step size is $500$ times larger for UZZS. 
This constitutes a major difference because every iteration is computationally intensive since the dimension is very high (see Table \ref{tab:imaging_params}) and notably each iteration involves solving an optimisation problem, which in our simulations is solved by the Chambolle-Pock algorithm \citep{chambolle2011first}.

Finally, let us compare the unadjusted ZZS with the continuous time ZZS. It is clear from our experiments that ZZS performs poorly compared to its discretisation. The reason is twofold. First, the major drawback of Poisson thinning using the bounds \eqref{eq:bounds_zzs_tv} is that a considerable proportion of the proposed event times are rejected (in our examples the rejection rate is around $70-80\%$). Moreover, the rates $\lambda_i$ are very large in the current framework and the process can have even $10^9$ switches per continuous time unit. {In comparison, the ULA requires $2\times 10^5$ iterations per unit time, while UZZS and UBU require $460$ iterations per unit time for this example.} This means that {significantly more} gradient computations are required to travel a reasonable distance { for the continuous time ZZS} and thus the process itself is expensive to run. The combination of these two phenomena implies an important loss of efficiency, which explains the results of our simulations.

\subsection{{Chain of interacting particles}}\label{subsec:example_particles}

In this section we consider a problem which will serve as an illustration of a typical context where ZZS is favoured with respect to other samplers. This toy model presents features that are similar to the molecular system considered in \citet{weisman}, where splitting schemes involving velocity bounces have proven efficient. We consider a chain of $N$ particles in 1D, labelled $1$ to $N$. The particles interact through two potentials: a chain interaction, where the particle $i$ interacts with the particles $i-1$ and $i+1$; and a mean-field interaction, where each particle interacts with all the others. For $x\in\R^N$, the potential is thus of the form
\[\psi(x) = \sum_{i=1}^{N-1} V(x_i-x_{i+1}) + \frac{1}{2N} \sum_{i,j=1}^N W(x_i-x_j)\,, \]
where $V$ is the chain potential and $W$ is the mean-field potential.  In the following we take
\[V(s) = s^4, \qquad W(s) = -\sqrt{1+s^2}\,,\]
for $s\in\R$, i.e. the chain interaction is an anharmonic quartic potential which constrains two consecutive particles in the chain to stay close, while the mean-field interaction induces a repulsion from the rest of the system. Although this specific $\psi$ is an academic example meant for illustration purpose, its general form is classical in statistical physics. 

This target distribution presents two significant challenges: the term representing the mean-field interactions requires $O(N^2)$ computations, which becomes impractical for large $N$; the quartic potential causes numerical instabilities for standard diffusion-based samplers, which then require truncations of their drifts to avoid diverging at infinity. Our samplers are able to tackle both challenges without any modifications.

We shall consider as test statistic the empirical variance of the system of particles around their barycentre, that is
\begin{equation}\label{eq:test_particles}
    v(x)= \frac1{2N^2} \sum_{i,j=1}^N \po x_i-x_j\pf^2\,.
\end{equation}
Notice that $\psi$ is invariant by translation of the whole system, so that $e^{-\psi}$ is not integrable on $\R^N$. However, we are interested in a translation-invariant function, so we consider $e^{-\psi}$ as a probability density on the subspace $\{x\in \R^N,\bar x =0\}$, which amounts to looking at the system of particles from its centre of mass. In practice, we run particles in $\R^N$  without constraining their barycentre to zero, which does not change the output as long as we estimate the expectations of translation-invariant functions. 

The forces $\nabla \psi$ can be decomposed in two parts, one of which (the chain interaction) is unbounded and not globally Lipschitz but is relatively cheap to compute (with a complexity $O(N)$), while the second part (the mean-field interaction) is bounded but numerically expensive (with a complexity $O(N^2)$). If this decomposition is not taken into account and we simply run a classical MCMC sampler based on the computation of $\na \psi$, then the step size is required to be very small due to the non-Lipschitz part of the forces, leading to a poor numerical efficiency since  the costly mean-field force has to be computed at  each step. Alternatively, in order to simulate a continuous-time PDMP (e.g. the BPS) via thinning for this model, the non-Lipschitz part of the potential would induce high-order polynomial bounds on the jump rate, leading to many jump proposals and a poor numerical efficiency. This issue would be even more critical with 3D particles and singular potentials such as the Lennard-Jones one considered in~\citet{weisman}.

Now, as was already discussed in Section~\ref{subsec:subsamp} for subsampling, PDMPs and their splitting schemes can be used with a splitting of the forces\footnote{The word \emph{splitting} is here used for two different notions which should not be confused; one the one hand, the Strang splitting~\eqref{eq:Strang} used to define a discrete-time scheme, and on the other hand the decomposition of the forces $\na \psi$  in several parts, each part being treated with a jump mechanism. It is possible to do a splitting scheme of a PDMP without splitting the forces, as it is possible to sample by thinning a continuous-time PDMP with splitted forces (e.g. the ZZS where $\na\psi$ is splitted on the canonical basis). By contrast, for multi-time-step methods for HMC, splitting forces is directly related to splitting the operator $-\nabla\psi\cdot  \na_v$.}. Let us illustrate the idea for ZZS. We consider a ZZS where the switching rate of the $i$-th velocity has the form
\[\lambda_i(x,v) = \po v_i (V'(x_i - x_{i+1})-V'(x_{i-1}-x_i))\pf_+  + \frac{1}N  \sum_{j\neq i} \po v_i W'(x_i-x_j)\pf_+, \]
where for the particles $1$ and $N$ we set $x_0=x_1$ and $x_{N+1}=x_N$ to cancel out the corresponding terms. The corresponding continuous-time ZZS has the correct invariant measure (once centred) as shown by \citet{ZZ}. We consider our \textbf{DBD} splitting to approximate this ZZS.
In particular, we shall take advantage of the fact that $|W'(s)| \leqslant 1$ for all $s\in\R$ to use the Poisson thinning technique \citep{lewis_shedler_thinning} to deal with the mean-field interactions.
We can sample the jump times of the $i$-th velocity as follows. First, we sample two jump times with rates respectively $\po v_i (V'(x_i - x_{i+1})-V'(x_{i-1}-x_i))\pf_+$ and $1$. The latter rate is an upper bound for the mean field force. If both times are larger than the step size $\delta$, then the velocity is not flipped. Else, if the time corresponding to the first rate is smaller than $\delta$ and than that corresponding to the second, then we flip the $i$-th velocity. Alternatively, if the second time is smaller than $\delta$ and than the first, we draw $J\sim \Unif(\{1,\dots,N\})$ and we flip the sign of the $i$-th velocity with probability $\po v_i W'(x_i-x_J)\pf_+$. Since in this case the rates are not canonical due to the splitting of forces, more than one jump per component is possible and thus this procedure is repeated until the end of the time step has been reached.
This results in $O(1)$ computations per particle on average, hence $O(N)$ for the whole system.
For BPS, we consider the splitting scheme \textbf{RDBDR}, where part \textbf{B} uses similar ideas as above. The main difference lies in the fact that jumps due to the mean-field interaction are obtained using the reflection operator that is defined by the same force that caused the event time. Pseudo-codes for the jump part of ZZS and BPS can be found in Appendix~\ref{sec:pseudocodes_particles}.

We shall compare our methods to a suitable Hamiltonian Monte Carlo (HMC) algorithm. First of all, we consider the variant of HMC that has refreshments from the Laplace distribution \citep{Nishimura05122024}, since this is more suitable to deal with the quartic interaction \citep{Livingstone_Faulkner}. Then, we introduce a splitting of the forces and construct a multi-time-step method in accordance to the ideas introduced e.g. in \citet{tuckerman1992reversible,Morrone_etal,Neal,isokin,shahbaba2014split,lagardere2019pushing}. The general idea is to split the generator as $\mathcal L = \mathcal L_1 + \mathcal L_2$,  where $\mathcal L_1$ represents the Hamiltonian dynamics with only the ``cheap" part of the forces $\na \psi$, denoted as $\na \psi_1$, while $\cL_2 = \nabla \psi_2 \cdot \na_v$ where $\na\psi_2$ represents the most expensive part of the forces. Given a step-size $\delta$ and some $K \in \mathbb N$, one starts with a Strang splitting approximation
\[e^{\delta \mathcal L} \simeq e^{\frac{\delta}2 \mathcal L_2}\po e^{\frac{\delta}{K} \mathcal L_1}\pf^K e^{\frac{\delta}2 \mathcal L_2}\,.\]
Then, we use a splitting scheme to approximate $e^{\frac{\delta_1}{K} \mathcal L_1}$, and we take a stochastic gradient version of the expensive force in $\cL_2$, that is we draw $J\sim\Unif(\{1,\dots,N\})$ and we consider only the force between each particle and the $J$-th particle.
This approximation is then repeated $M$ times for some $M\in\mathbb N$. The reason we do this stochastic gradient approximation is that otherwise HMC is prohibitively slow due to the $\mathcal O(N^2)$ complexity of the mean-field interaction. The pseudo-code for this HMC algorithm is given in Appendix~\ref{sec:pseudocodes_particles}.

\begin{figure}[t]
\centering
\begin{subfigure}{0.99\textwidth}
  \centering
    \includegraphics[width=0.49\textwidth]{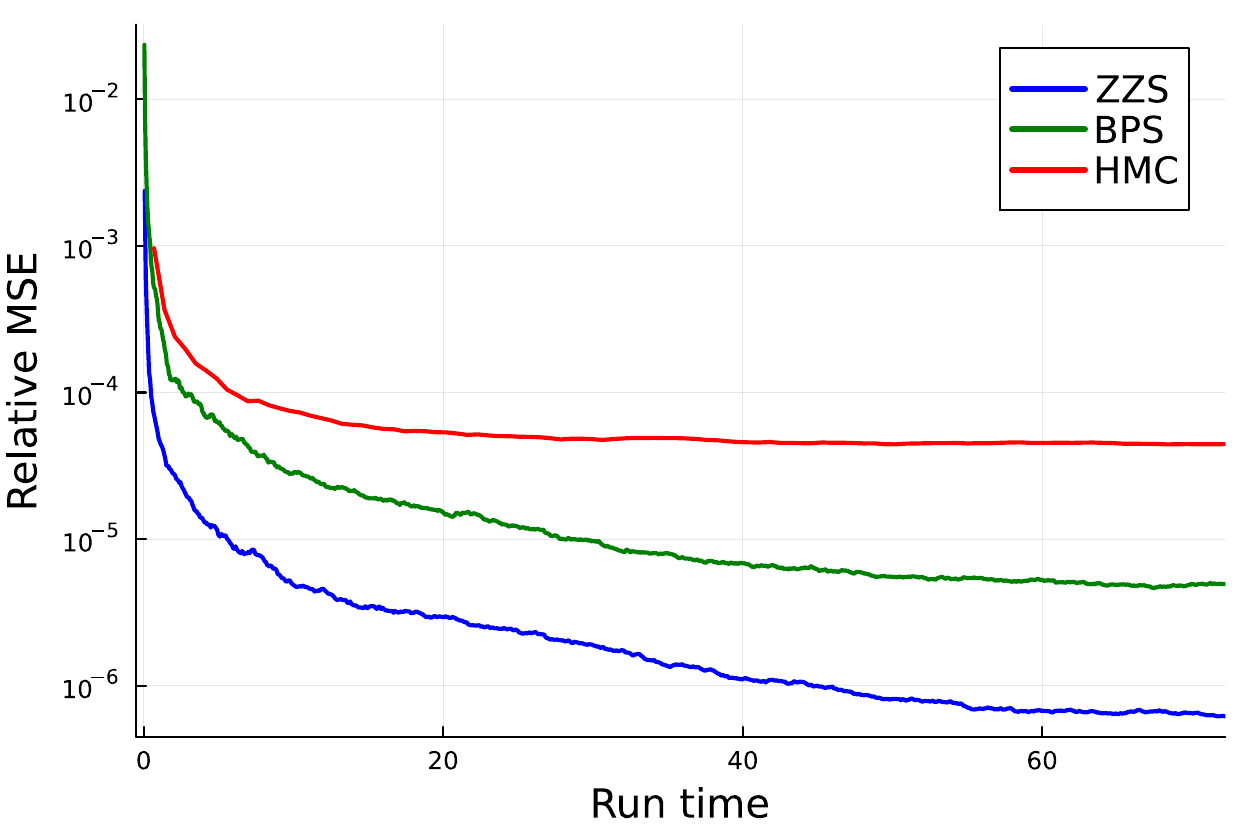}
    \hfill
    \includegraphics[width=0.49\textwidth]{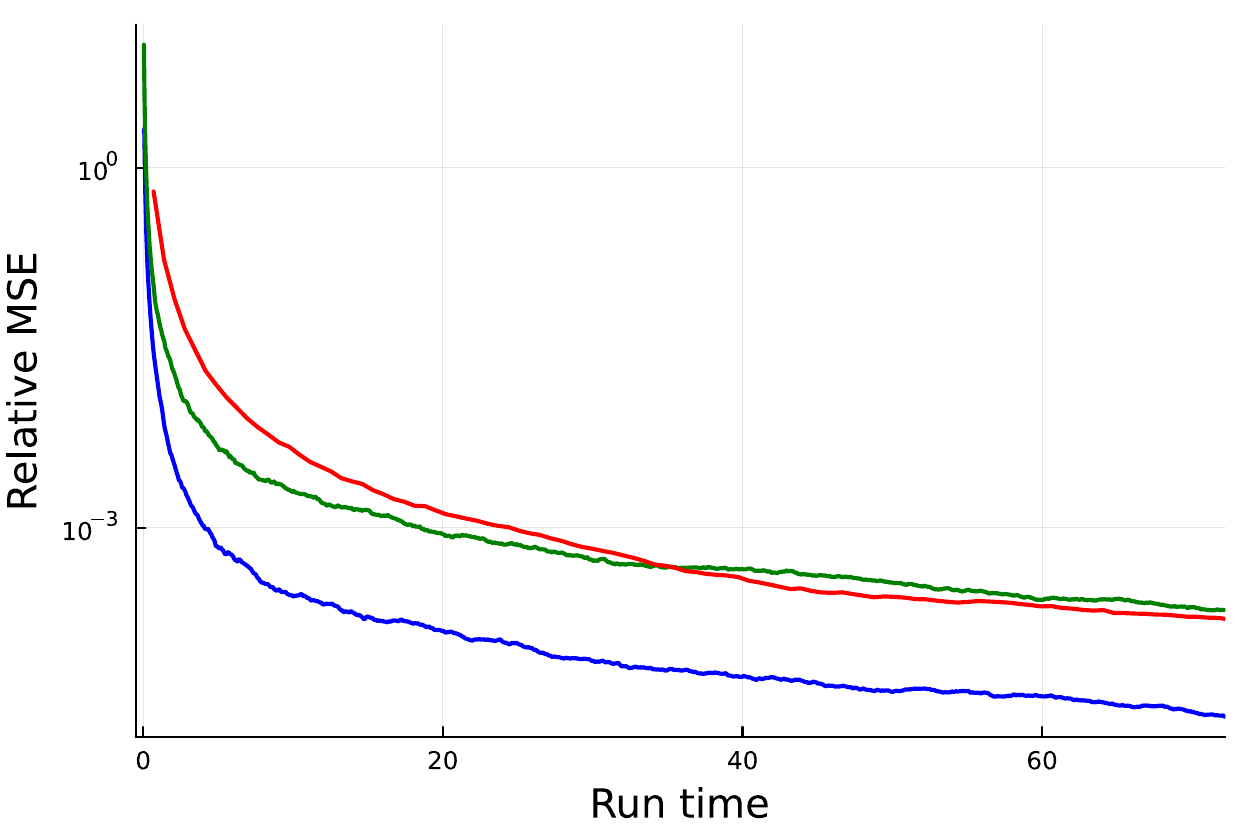}
    \caption{Results for $N=25$. The ZZS has step-size $1e-2$, BPS has step-size $5e-3$ and refreshment rate $\lambda_r=1$, while HMC has step size $3.3e-3$, $M=5$, $K=3.$ The plots show the average over $100$ simulations. }
\end{subfigure} 

\vspace{5pt}

\begin{subfigure}{0.99\textwidth}
  \centering
    \includegraphics[width=0.49\textwidth]{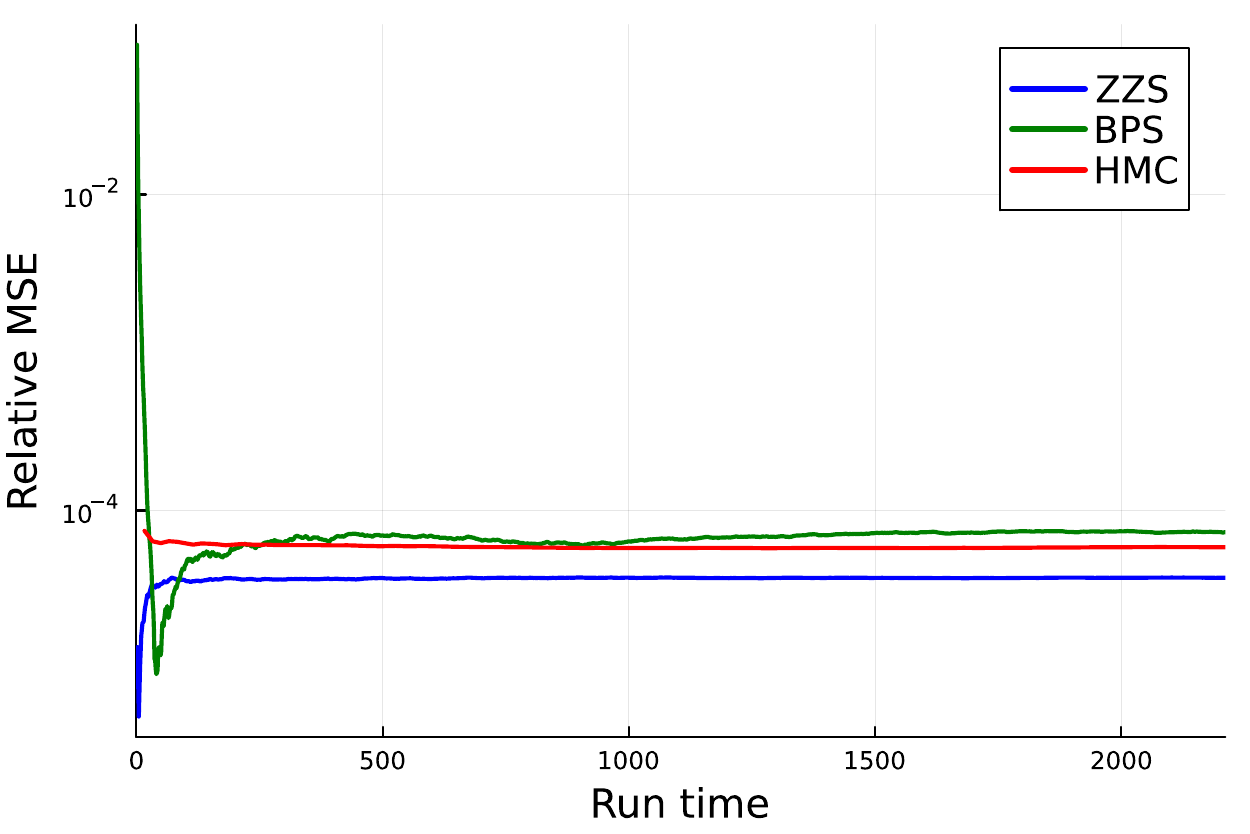}
    \hfill
    \includegraphics[width=0.49\textwidth]{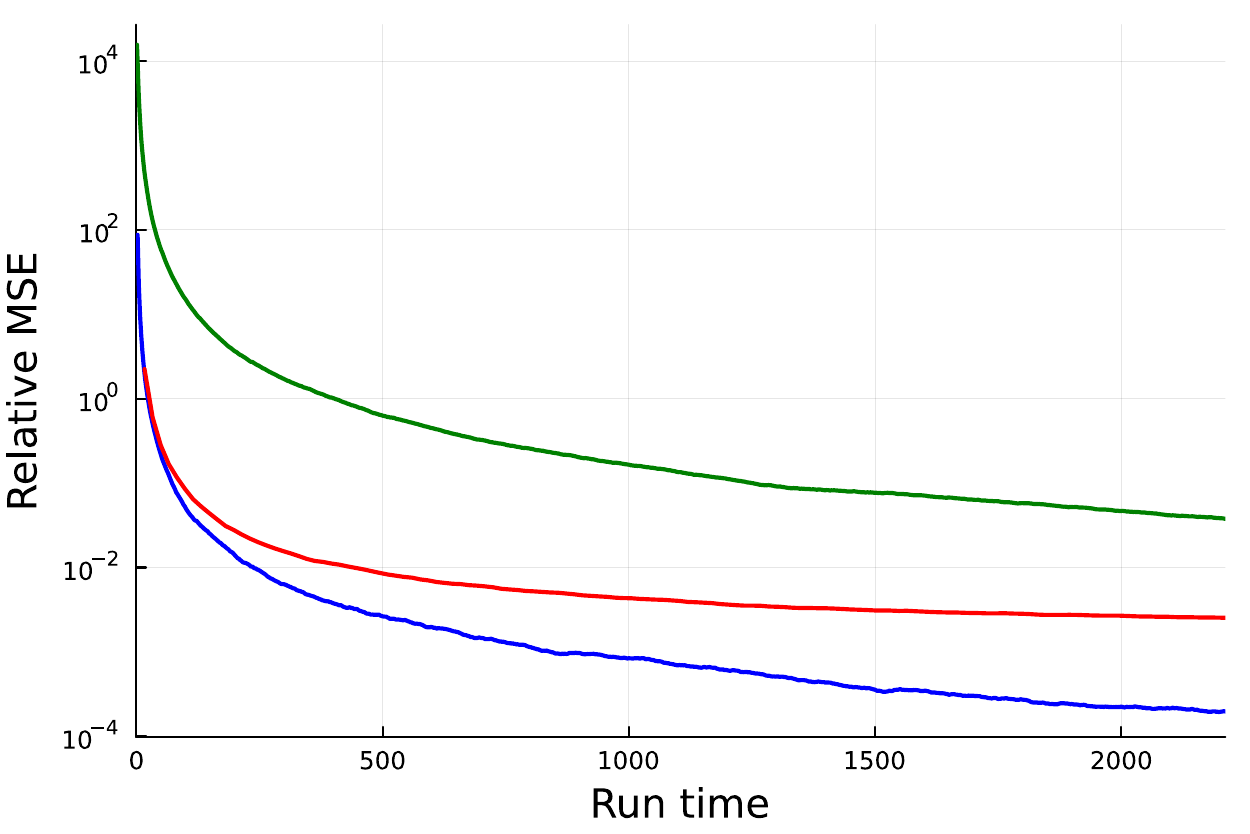}
    \caption{Results for $N=100$. The ZZS has step-size $1e-2$, BPS has step-size $5e-3$ and refreshment rate $\lambda_r=1$, while HMC has step size $3e-2$, $M=5$, $K=3.$ The plots show the average over $5$ simulations.}
\end{subfigure} 

\caption{Numerical simulations in the setting of Section \ref{subsec:example_particles}. In all plots the $y$-axis shows the relative mean square error (MSE), that is the MSE normalised by the square of the true value, which in this case corresponds to a long run of the Metropolis-adjusted BPS. The \emph{left} and \emph{right} plots show the relative MSE for the estimation respectively of the mean and variance of the test function \eqref{eq:test_particles}. The initial position of each particle is obtain drawing from the standard Gaussian distribution.}
\label{fig:var_and_varvar_particles_N=25}
\end{figure}

We observe that the ZZS and the HMC algorithm described above are obtained with similar ideas, but applied in a different order. Indeed, the ZZS was derived first splitting the forces at the level of the continuous-time process and then applying our splitting scheme, whereas for the HMC algorithm the forces were split at the level of the discretisation (cf. previous footnote). Hence, in the case of ZZS we do not introduce an additional numerical error since we are approximating a continuous-time PDMP that has the right invariant distribution. Moreover, ZZS naturally adapts the (random) step-size for the expensive part of the forces by leveraging the Poisson thinning technique, avoiding tuning an additional parameter that arises from multi-time-step procedures. For more comparisons between random jumps and deterministic multi-time-step splitting methods in some practical cases, we refer to \cite{weisman} and to \cite{gouraudJumps} which was released after (and motivated by) the present work.

The results of our simulations are shown in Figure~\ref{fig:var_and_varvar_particles_N=25}. The parameters of all samplers are obtained by performing a grid search over the parameter space.
It is clear from Figure~\ref{fig:var_and_varvar_particles_N=25} that ZZS gives cheaper and more accurate estimates of the mean and variance of the empirical variance in both cases considered. In particular, we see that ZZS clearly outperforms HMC and BPS. This holds even though the calibration of ZZS is considerably simpler since it involves only the tuning of the step size, while for HMC and BPS one has to tune additionally respectively $K$ and $M$, and $\lambda_r$. We remark that, in our simulations, BPS is robust to ``reasonable" choices of $\lambda_r$.
BPS is competitive for $N=25$ particles, but for higher $N$ it requires a step size that is too small to match the accuracy of ZZS, making its convergence slow. This is possibly due to the stochasticity in the reflection operator of BPS corresponding to the mean field interactions. More refined schemes for BPS might improve its performance, but these are outside of the scope of this paper, where we are interested in testing our general-purpose splitting schemes.


\acks{\sloppy 
The authors thank the three anonymous referees for their valuable comments, which helped substantially improve this article.
AB acknowledges funding from the Dutch Research Council (NWO) as part of the research programme `Zigzagging through computational barriers' with project number 016.Vidi.189.043. AB also acknowledges funding by the European Union (ERC-2022-SyG, 101071601). Views and opinions expressed are however those of the authors only and do not necessarily reflect those of the European Union or the European Research Council Executive Agency. Neither the European Union nor the granting authority can be held responsible for them. PD acknowledges funding from the Engineering and Physical Sciences Research Council (EPSRC) grant EP/V006177/1. PM acknowledges funding from the  French ANR grant 
SWIDIMS (ANR-20-CE40-0022) and from the European Research Council (ERC) under the European Union’s Horizon 2020 research and innovation program (grant agreement No 810367),
project EMC2.}


\newpage

\appendix


\section{Pseudo-codes for the algorithms with subsampling}\label{sec:pseudocodes_subsampling}
Here we give pseudo-codes for the ZZS algorithms with subsampling described in Section~\ref{subsec:subsamp}. Algorithms~\ref{alg:zzs_PT} and \ref{alg:zzs_woPT} are suited for scenarios where the target distribution is of the form $\pi(x) \propto\exp(- \frac{1}{M} \sum_{j=1}^M \pot_j(x) )$. Algorithm~\ref{alg:zzs_PT} in addition requires availability of a function $\beta_i(x,v_i)$ such that 
\begin{equation*}
    (v_i \partial_i \pot_j(x))_+ \leq \beta_i(x,v_i)
\end{equation*}
for all $j\in\{1,\dots,M\}.$ These can be extended to the case of BPS by using a stochastic version of the reflection operator, which is of the form 
\begin{equation*}
    R_j(x)v=v-2\frac{\langle v,\nabla_x\pot_j(x)\rangle}{\lvert\nabla_x\pot_j(x)\rvert^2} \nabla_x\pot_j(x)
\end{equation*}
when the jump is caused by the $j$-th term of $\pot$, that is $\pot_j$.

\begin{algorithm}
\DontPrintSemicolon
\caption{Part \textbf{B} for the ZZS with subsampling and Poisson thinning}
\label{alg:zzs_PT}
\KwIn{Initial condition $(x,v)\in\mathbb{R}^d\times\{\pm 1\}^d$, step size $\delta$.}
\KwOut{Updated velocity vector $v$.}
\For{$i \gets 1$ \KwTo $d$}{
    $t \gets 0$\;
    Compute the Poisson thinning bound $\beta_i(x,v_i)$\;
    $\tau \sim \Exp(\beta_i(x,v_i))$\;
    \While{$\tau \leq \delta - t$}{
            $t \gets t + \tau$\;
            $J \sim \Unif(\{1,\dots,M\})$\;
            $U\sim\Unif[0,1]$\;
            \If{$U \leq  \frac{(v_i\partial_i \pot_J(x))_+}{\beta_i(x,v_i)}$}{
                $v_i \gets -v_i$\;
            }
            $\tau \sim \Exp(\beta_i(x,v_i))$\;
            
    }
}
\Return $v$\;
\end{algorithm}

\begin{algorithm}
\DontPrintSemicolon
\caption{Part \textbf{B} for the ZZS with subsampling without Poisson thinning}
\label{alg:zzs_woPT}
\KwIn{Initial condition $(x,v)\in\mathbb{R}^d\times\{\pm 1\}^d$, step size $\delta$.}
\KwOut{Updated velocity vector $v$.}
\For{$i \gets 1$ \KwTo $N$}{
    $t \gets 0$\;
    $J \sim \Unif(\{1,\dots,M\})$\;
    $\tau \sim \Exp((v_i\partial_i \pot_J(x))_+)$\;
    \While{$\tau \leq \delta - t$}{
            $t \gets t + \tau$\;
            $v_i \gets -v_i$\;
            $J \sim \Unif(\{1,\dots,M\})$\;
            $\tau \sim \Exp((v_i\partial_i \pot_J(x))_+)$\;
    }
}
\Return $v$\;
\end{algorithm}

\section{Proofs of Section \ref{sec:CVsplitting}}
\subsection{Proof of Theorem \ref{thm:weakerror}}\label{sec:proof_weakerror}

Fix $g\in \C_\vf^{2,0}\cap D(\cL)$. By a telescoping sum we have
\begin{equation*}
     \mathbb{E}_z[g(\Zbar_{t_n})] -\mathbb{E}_z[g(Z_{t_n})]= \sum_{k=0}^{n-1}(\mathbb{E}_z[\cP_{t_n-t_{k+1}}g(\Zbar_{t_{k+1}})] -\mathbb{E}_z[\cP_{t_n-t_k}g(\Zbar_{t_k})]).  
\end{equation*}
For each $k\in \{0,\ldots, n-1\}$, set $f_k(y,s) = \cP_{t_n-t_{k}-s}g(y)$ then we have
\begin{equation*}
     \mathbb{E}_z[g(\Zbar_{t_n})] -\mathbb{E}_z[g(Z_{t_n})] = \sum_{k=0}^{n-1} \mathbb{E}_z[f_k(\Zbar_{t_{k+1}},\delta)-f_k(\Zbar_{t_k},0)].  
\end{equation*}
By conditioning on $\Zbar_{t_k}$ it is sufficient to prove that
\begin{equation}\label{eq:boundkthterm}
    \lvert \mathbb{E}_z[f_k(\Zbar_{\delta},\delta)]-f_k(z,0)\rvert \leq  R (1+\lvert z\rvert^M) \lVert g\rVert_{\C_\vf^{2,0}}\delta^3.
\end{equation}
Indeed if we have that \eqref{eq:boundkthterm} holds then by Assumption \ref{ass:momentcond} we have
\begin{align*}
    \lvert \mathbb{E}_z[g(Z_{t_n})] -\mathbb{E}_z[g(\Zbar_{t_n})]\rvert &\leq C\delta^3\sum_{k=0}^{n-1}e^{R(t_n-t_k)}\mathbb{E}_z[ \lf(\Zbar_{t_k})] \\
    &\leq C \lVert g\rVert_{\C_\vf^{2,0}} e^{Rt_n} \delta^3 n \overline{G}_M(z),
\end{align*}
which gives the desired result. It remains to show that \eqref{eq:boundkthterm} holds. 

As done in \citet{bertazzi2021approximations} we rewrite the lhs as
\begin{align}\label{eq:lhsexpansion}
    \mathbb{E}_z[f_k(\Zbar_{\delta},\delta)]-f_k(z,0) & =\mathbb{E}_z[f_k(\Zbar_{\delta},\delta)] - f_k(\varphi_{\delta}(z),\delta)+f_k(\varphi_{\delta}(z),\delta)-f_k(z,0).
\end{align}
In particular, with identical steps to \citet{bertazzi2021approximations} we can rewrite the last two terms on the left hand side of \eqref{eq:lhsexpansion} using the fundamental theorem of calculus and that $\partial_s f_k(z,s) = -\cL f_k(z,s)$: 
\begin{align*}
    f_k(\varphi_{\delta}(z),\delta)-f_k(z,0) = - \int_0^{\delta} \lambda(\varphi_r(z))[Q(f_k(\cdot,r))(\varphi_{r}(z))-f_k(\varphi_r(z),r)]dr.
\end{align*}
Then we compute the expectation in the right hand side of \eqref{eq:lhsexpansion}, collecting a term for the case of no jumps, a single jump and the case of multiple jumps
\begin{align*}
    &  \mathbb{E}_z[f_k(\Zbar_{\delta},\delta)]-f_k(z,\delta) =  \\
    & = \int \int_0^{\delta} Q(\varphi_{\delta/2}(z),d\tilde{z}) \Big( f_k(\varphi_{\delta/2}(\tilde{z}),\delta) - f_k(\varphi_\delta(z),\delta) \Big) \lambda(\varphi_{\delta/2}(z))e^{-s\lambda(\varphi_{\delta/2}(z))} e^{-(\delta-s)\lambda(\tilde{z})} ds \label{eq:we_term1}\tag{$\dagger$} \\
    & \quad + \sum_{\ell=2}^\infty \mathbb{E}_z[(f_k(\Zbar_{\delta},\delta)-f_k(\varphi_\delta(z),\delta)) \mathbbm{1}_{\{\ell\text{ events}\}}] \label{eq:we_term2}\tag{$\ddag$} \\
    & \quad - \int_0^{\delta} \lambda(\varphi_r(z))[Q(f_k(\cdot,r))(\varphi_{r}(z))-f_k(\varphi_r(z),r)]dr \label{eq:we_term3}\tag{$\ddag\dagger$}.
\end{align*}
Observe that the sum in the second term \eqref{eq:we_term2} can be truncated from $\ell=3$ onward as we only wish to get an order $\delta^3$ local error. Indeed, we have $\lvert f_k\rvert \leq \lVert g\rVert_\infty$ and hence
\begin{align*}
    & \left\lvert\sum_{\ell=3}^\infty \mathbb{E}_z[(f_k(\Zbar_{\delta},\delta)-f_k(\varphi_\delta(z),\delta)) \mathbbm{1}_{\{\ell\text{ events}\}}]\right\rvert  \leq 2\lVert g \rVert_\infty  \mathbb{P}_z(\ell\geq 3 \text{ events})\\ 
    & \leq 2\lVert g\rVert_\infty \int \int_0^\delta \int_0^{\delta-s_1} \int_0^{\delta-s_1-s_2} \lambda(\varphi_{\delta/2}(z)) e^{-s_1\lambda(\varphi_{\delta/2}(z)) } \lambda(z_1) e^{-s_2\lambda(z_1) } \lambda(z_2) e^{-s_3\lambda(z_2) } \\
    &  \quad Q(\varphi_{\delta/2}(z),dz_1)Q(z_1,dz_2) Q(z_2,dz_3) \,ds_1 ds_2 ds_3 \\
    & \leq 2\lVert g\rVert_\infty \int (1-e^{-\delta \lambda(\varphi_{\delta/2}(z))})(1-e^{-\delta \lambda(z_1)})(1-e^{-\delta \lambda(z_2)}) Q(\varphi_{\delta/2}(z),dz_1)Q(z_1,dz_2) \\
    & \leq 2 \delta^3 \lVert g\rVert_\infty \int \lambda(\varphi_{\delta/2}(z)) \lambda(z_1) \lambda(z_2) Q(\varphi_{\delta/2}(z),dz_1)Q(z_1,dz_2) \\
    & \leq 2 \delta^3 \lVert g\rVert_\infty \int \lambda(\varphi_{\delta/2}(z)) \lambda(z_1) Q(\varphi_{\delta/2}(z),dz_1)Q\lambda(z_1)\\
    & \leq 2 \delta^3 \lVert g\rVert_\infty \, \lambda(\varphi_{\delta/2}(z)) Q(\lambda(\cdot) Q\lambda(\cdot))(\varphi_{\delta/2}(z)) 
\end{align*}
where we used that $1-\exp(-z)\leq z$ for $z\geq 0$. By Assumption \ref{ass:kernelexp} we have that $\lambda Q(\lambda Q\lambda)$ is polynomially bounded and therefore we can bound \eqref{eq:we_term2} by
\begin{align*}
    \eqref{eq:we_term2} &= \int Q(\varphi_{\delta/2}(z),dz_1) Q(z_1,dz_2) \Big( f_k(\varphi_{\delta/2}(z_2),\delta)-f_k(\varphi_\delta(z),\delta) \Big) \cdot \\
    & \quad \cdot \int_0^\delta \int_0^{\delta-s_1} \lambda(\varphi_{\delta/2}(z))e^{-s_1\lambda(\varphi_{\delta/2}(z))}  \lambda(z_1)e^{-s_2\lambda(z_1)}  e^{-(\delta-s_1-s_2 )\lambda(z_2)} ds_2 \,ds_1 \\
    & \quad + \mathcal{O}(\lVert g\rVert_\infty (1+\lvert z\rvert^{3m_\lambda})\delta^3) .
\end{align*}
Here and throughout we use the notation $F(z,\delta,g)=\mathcal{O}(\lVert g\rVert_{\C_\vf^2} (1+\lvert z\rvert^{m})\delta^n)$ to mean that
\begin{equation*}
    \limsup_{\delta\to 0} \sup_{z\in E}\sup_{g}\frac{\lvert F(z,\delta,g)\rvert }{\lVert g\rVert_{\C_\vf^2} \delta^n(1+\lvert z\rvert^m)} \leq C.
\end{equation*}

We Taylor expand several terms in order to verify that the local error is of order $\delta^3$. We use repeatedly the following expansions:
\begin{align*}
      &\lambda(\varphi_s(z)) = \lambda(z) + s D_\vf \lambda(z)+ 
        s^2R(z,\tilde{s};\lambda) ,\\
      &f_k(\varphi_s(z),\delta) = f_k(z,\delta) + sD_\vf f_k(z,\delta) + s^2R(z,\tilde{s};f_k) ,\\
     &R(z,\tilde{s};g)= D_\Phi^2g(\varphi_{\tilde{s}}(z))/2,\\
    & f_k(z,s) = f_k(z,0) -s \cL f_k(z,0) + s^2 \cL^2 f_k(z,\tilde{s})/2,
\end{align*}
for some $\tilde{s}\in [0,s]$ (note that $\tilde{s}$ may vary with each term so when we use these expansions we include an index to distinguish different incidents of $\tilde{s}$). 
Note that by Assumption \ref{ass:derivative_estimate} we have $\lVert f_k\rVert_{\C_\vf^2} \leq Ce^{R(t_n-t_k)}(1+\lvert z\rvert^{m_\cP}) \lVert g\rVert_{\C_\vf^{2,0}}$ which gives us a bound on the remainder terms.
Applying the expansions above to \eqref{eq:we_term1} we obtain
\begin{align*}
    & \eqref{eq:we_term1}  = \int  Q(\varphi_{\delta/2}(z),d\tilde{z}) \Big( f_k(\tilde{z},0) -\delta \cL f_k(\tilde{z},0) + \frac{\delta^2}{2} \cL^2 f_k(\tilde{z},\tilde{s}_2) + \frac{\delta}{2} D_\vf f_k(\tilde{z},\delta)  \\
    & + \frac{\delta^2}{8}R(\tilde{z},\tilde{s}_2;f_k)- f_k(z,0) +\delta \cL f_k(z,0) - \frac{\delta^2}{2} \cL^2 f_k(z,\tilde{s}_3) - \delta D_\vf f_k(z,\delta) - \frac{\delta^2}{2} R(z,\tilde{s}_3;f_k)\Big)\\
    &\int_0^{\delta} \Big(\lambda(z) + \frac{\delta}{2}D_\vf\lambda(z) + \frac{1}{8}\delta^2R(z,\tilde{s}_4;\lambda) \Big)\\
    & \Big( 1 -s\lambda(\varphi_{\delta/2}(z))-(\delta-s)\lambda(\tilde{z}) + \frac{1}{2}( s\lambda(\varphi_{\delta/2}(z))+(\delta-s)\lambda(\tilde{z}))^2 e^{-\eta} \Big) ds \\
    = & \int  Q(\varphi_{\delta/2}(z),d\tilde{z})\Big( f_k(\tilde{z},0) - f_k(z,0)  \Big) \int_0^{\delta} \Big(\lambda(z) + \frac{\delta}{2}D_\vf \lambda(z)  \Big) \Big( 1 -s\lambda(z)-(\delta-s)\lambda(\tilde{z})  \Big) ds\\
    & +\delta\int  Q(\varphi_{\delta/2}(z),d\tilde{z})\Big( - \cL f_k(\tilde{z},0) + \frac{1}{2} D_\vf f_k(\tilde{z},0) + \cL f_k(z,0)  - D_\vf f_k(z,0) \Big)\\
    & \int_0^{\delta} \Big(\lambda(z) + \frac{\delta}{2}D_\vf \lambda(z)  \Big) \Big( 1 -s\lambda(z)-(\delta-s)\lambda(\tilde{z})  \Big) ds+ e^{R(t_n-t_k)}\mathcal{O}(\lVert g\rVert_{\C_\vf^{2,0}}(1+\lvert z\rvert^M)\delta^3) 
\end{align*}
where we used $$\eta \in [0,s\lambda(\varphi_{\delta/2}(z))+(\delta-s)\lambda(\tilde{z})] $$ in the first equality and further Taylor expansions to obtain the second equality. Here $M=3m_{\lambda}+m_{\cP}$.
Now using Assumption \ref{ass:kernelexp} we can expand the $Q$ term
\begin{align*}
    \eqref{eq:we_term1} 
    = & \int  \Big(Q(z,d\tilde{z}) +\frac{\delta}{2}D_\vf Q(z,d\tilde{z})  \Big)\Big( f_k(\tilde{z},0) - f_k(z,0)  \Big) \\
    & \quad \int_0^{\delta} \Big(\lambda(z) + \frac{\delta}{2}D_\vf \lambda(z)  \Big) \Big( 1 -s\lambda(z)-(\delta-s)\lambda(\tilde{z})  \Big) ds\\
    & +\delta\int  Q(z,d\tilde{z})\Big( - \cL f_k(\tilde{z},0) + \frac{1}{2} D_\vf f_k(\tilde{z},0) + \cL f_k(z,0)  - D_\vf f_k(z,0) \Big)\\
    & \int_0^{\delta} \Big(\lambda(z) + \frac{\delta}{2}D_\vf \lambda(z)  \Big) \Big( 1 -s\lambda(z)-(\delta-s)\lambda(\tilde{z})  \Big) ds+ e^{R(t_n-t_k)}\mathcal{O}(\lVert g\rVert_{\C_\vf^{2,0}}(1+\lvert z\rvert^M)\delta^3)  
\end{align*}


Term \eqref{eq:we_term2} can be expanded as
\begin{align*}
    \eqref{eq:we_term2}  =& \int Q(\varphi_{\delta/2}(z),d\tilde{z})Q(z_1,dz_2)  ( f_k(z_2,0) -f_k(z,0) )  \!\!\int_0^\delta \int_0^{\delta-s_1}\!\!\!\!\! \big(\lambda(z) + \delta D_\vf(\lambda) (\varphi_{\tilde{s}_4}(z))  \big) \lambda(z_1) \\
    & \Big(1+(-s_1\lambda(\varphi_{\delta/2}(z)) -s_2\lambda(z_1) -(\delta-s_1-s_2 )\lambda(z_2)) e^{-\xi} \Big) ds_2 \,ds_1 \\
    &+  e^{R(t_n-t_k)}\mathcal{O}(\lVert g\rVert_{\C_\vf^{1,0}} (1+\lvert z\rvert^{m_{\cP}+3m_\lambda}) \delta^3) \\
    = & \frac{\delta^2}{2} \int Q(\varphi_{\delta/2}(z),d\tilde{z})Q(z_1,dz_2) \Big( f_k(z_2,0) -f_k(z,0) \Big)   \lambda(z) \lambda(z_1) \\
    &+ e^{R(t_n-t_k)}\mathcal{O}(\lVert g\rVert_{\C_\vf^{1,0}} (1+\lvert z\rvert^{m_{\cP}+3m_\lambda}) \delta^3)\\
    = & \frac{\delta^2}{2} \int Q(z,dz_1)Q(z_1,dz_2) \Big( f_k(z_2,0) -f_k(z,0) \Big)   \lambda(z) \lambda(z_1) \\
    &+ e^{R(t_n-t_k)}\mathcal{O}(\lVert g\rVert_{\C_\vf^{1,0}} (1+\lvert z\rvert^{m_{\cP}+3m_\lambda}) \delta^3),
\end{align*}
where $$\xi \in [0,s_1\lambda(\varphi_{\delta/2}(z)) +s_2\lambda(z_1) +(\delta-s_1-s_2 )\lambda(z_2)].$$

By Assumption \ref{ass:kernelexp} we can expand the term \eqref{eq:we_term3} as follows:
\begin{align*}
    \eqref{eq:we_term3} = & -\int_0^\delta \lambda (\varphi_r(z)) \left(Qf_k(\cdot,r)(z) + r \int D_\vf Q(z,d\tilde{z})f_k(\tilde{z},r) -f_k(\varphi_r(z),r) \right) dr \\
    & \quad + e^{R(t_n-t_k)}\mathcal{O}(\delta^3\lVert g\rVert_{C_\vf^{2,0}}(1+\lvert z\rvert^{m_\lambda})).
\end{align*}
By Taylor's theorem 
\begin{align*}
    \eqref{eq:we_term3} = & -\int_0^\delta \lambda (\varphi_r(z)) \left(Qf_k(\cdot,0)(z)-r\int Q(z,d\tilde{z}) \cL f_k(\tilde{z},0) + r \int D_\vf Q(z,d\tilde{z})(f_k(\tilde{z},0)-r\cL f_k(\tilde{z},\tilde{r}))\right).\\
    &\left. -\left(f_k(z,0)+rD_{\vf}f_k(z,0)-r\cL f_k(z,0)\right) \right) dr \\
    &+ e^{R(t_n-t_k)}\mathcal{O}(\delta^3\lVert g\rVert_{C_\vf^{2,0}}(1+\lvert z\rvert^{m_\lambda+m_{\cP}}).
\end{align*}
Note that $\int D_\vf Q(z,d\tilde{z})\cL f_k(\tilde{z},\tilde{r}) = Q(D_\vf \cL f_k(\cdot,\tilde{r}))(z) = e^{R(t_n-t_k)}\mathcal{O}((1+\lvert z\rvert^{m_{\cP}})\lVert g\rVert_{\C_\vf^{2,0}})$. Using this and Taylor expanding $\lambda(\varphi_r(z))$ we have
\begin{align*}
    \eqref{eq:we_term3} = & -\int_0^\delta \lambda (z) \Big(Qf_k(\cdot,0)(z)-r\int Q(z,d\tilde{z}) \cL f_k(\tilde{z},0) + r \int D_\vf Q(z,d\tilde{z})f_k(\tilde{z},0)\\
    & -\left(f_k(z,0)+rD_{\vf}f_k(z,0)-r\cL f_k(z,0)\right) \Big) dr -\int_0^\delta rD_{\vf}\lambda (z) \left(Qf_k(\cdot,0)(z)-f_k(z,0) \right) dr \\
    &+ e^{R(t_n-t_k)}\mathcal{O}(\delta^3\lVert g\rVert_{C_\vf^{2,0}}(1+\lvert z\rvert^{m_\lambda+m_{\cP}}).
\end{align*}
Evaluating the integral over $r$
\begin{align*}
    \eqref{eq:we_term3} = & - \lambda (z) \left(Qf_k(\cdot,0)(z)\delta-\frac{1}{2}\delta^2\int Q(z,d\tilde{z}) \cL f_k(\tilde{z},0) + \frac{1}{2}\delta^2 \int D_\vf Q(z,d\tilde{z})f_k(\tilde{z},0)\right.\\
    &\left. -\left(f_k(z,0)\delta+\frac{1}{2}\delta^2 D_{\vf}f_k(z,0)-\frac{1}{2}\delta^2\cL f_k(z,0)\right) \right) - \frac{1}{2}\delta^2D_{\vf}\lambda (z) \left(Qf_k(\cdot,0)(z)-f_k(z,0) \right)\\
    &+ e^{R(t_n-t_k)}\mathcal{O}(\delta^3\lVert g\rVert_{C_\vf^{2,0}}(1+\lvert z\rvert^{m_\lambda+m_{\cP}}).
\end{align*}

\textbf{First order terms.} Terms of order $\delta$ appear only in \eqref{eq:we_term1} and \eqref{eq:we_term3} and clearly they cancel out.

\textbf{Second order terms.} In \eqref{eq:we_term1} we can further expand terms of the form $D_\vf (f)(\Tilde{z},\delta)$ and rearrange as
\begin{align*}
   \text{Order } \delta^2 \text{ of } \eqref{eq:we_term1}  =& 
    \delta^2 \Bigg[ \int \lambda(z) \Big( -Q(z,d\tilde{z})  \cL f_k(\tilde{z},0)  + Q(z,d\tilde{z}) \frac{1}{2} D_\vf f_k(\tilde{z},0) \\
    &  +  \frac{1}{2}D_\vf Q(z,d\tilde{z}) (f_k(\tilde{z},0)-f_k(z,0)) \Big)\\
   &   + \lambda(z) ( \cL f_k(z,0) -   D_\vf f_k(z,0)\rangle ) \\
   &  + \int  Q(z,d\tilde{z}) (f_k(\tilde{z},0)- f_k(z,0)) 
   \Big(-\frac{1}{2}\lambda(z)(\lambda(\tilde{z})+ \lambda(z)) + \frac{1}{2}D_\vf\lambda(z)  \Big) \Bigg] \\ 
   = & \delta^2 \Bigg[ \int \lambda(z) \Big( -Q(z,d\tilde{z})  \cL f_k(\tilde{z},0)  + Q(z,d\tilde{z}) \frac{1}{2} D_\vf f_k(\tilde{z},0)\rangle \\
    &  + \underbrace{\frac{1}{2}D_\vf Q(z,d\tilde{z}) (f_k(\tilde{z},0)-f_k(z,0))}_{\textnormal{Term } A} \Big)\\
   &   +\underbrace{\lambda(z) \Big( \cL f_k(z,0) - D_\vf f_k(z,0)) -\frac{1}{2} \lambda(z) \int Q(z,d\tilde{z}) (f_k(\tilde{z},0)-f_k(z,0))  \Big) }_{\textnormal{Term } B}\\
   & + \underbrace{\frac{1}{2}D_\vf \lambda(z) \int  Q(z,d\tilde{z}) (f_k(\tilde{z},0)-f_k(z,0))}_{\textnormal{Term } C}  \\
   &  -\frac{1}{2} \int  Q(z,d\tilde{z}) (f_k(\tilde{z},0)-\underbrace{f_k(z,0)}_{\textnormal{Term } D})  
   \lambda(z)\lambda(\tilde{z}) \Bigg].
\end{align*}
For term \eqref{eq:we_term2} we have
\begin{align*}
    \text{Order } \delta^2 \text{ of } \eqref{eq:we_term2} & =\frac{1}{2}\delta^2 \int Q(z,dz_1) Q(z_1,dz_2) ( f_k(z_2,0) -\underbrace{f_k(z,0)}_{\textnormal{Term } D} ) \lambda(z)  \lambda(z_1).
\end{align*}
Similarly for \eqref{eq:we_term3} we have 
\begin{align*}
    \text{Order } \delta^2 \text{ of } \eqref{eq:we_term3}  = &  - \frac{\delta^2}{2} \Bigg( \underbrace{\int D_\vf (Q)(z)\lambda(z) (f_k(\tilde{z},0)-f_k(z,0)) }_{\textnormal{Term } A} \\
    & + \underbrace{\int Q(z,d\tilde{z}) D_\vf (\lambda) (z) (f_k(\tilde{z},0)-f_k(z,0)) }_{\textnormal{Term } C}\\
    & + \int Q(z,d\tilde{z}) \lambda(z) (-\cL f_k(\tilde{z},0) +\underbrace{ \cL f_k(z,0)-D\vf (f_k)(z,0)}_{\textnormal{Term } B} \Bigg).        
\end{align*}
After cancellations we obtain
\begin{align*}
    & \text{Order } \delta^2 \text{ of } \eqref{eq:we_term1}+\eqref{eq:we_term2}+\eqref{eq:we_term3} = \delta^2 \lambda(z) \Bigg( \int \Big( -Q(z,d\tilde{z})  \cL f_k(\tilde{z},0)  + Q(z,d\tilde{z}) \frac{1}{2} D_\vf (f_k)(\Tilde{z},0)\Big)  \\
    & -\frac{1}{2} \int Q(z,d\tilde{z}) f_k(\tilde{z},0) \lambda(\tilde{z}) + \frac{1}{2}\int Q(z,d\tilde{z}) Q(\tilde{z},dz_2)  f_k(z_2,0) \lambda(\tilde{z}) +\frac{1}{2} \int Q(z,d\tilde{z})  \cL f_k(\tilde{z},0) \Bigg)\\
    & =   \frac{1}{2}\delta^2 \lambda(z) \Bigg( \int Q(z,d\tilde{z})\Big( -  \cL f_k(\tilde{z},0)  + D_\vf (f_k)(\Tilde{z},0)  + \lambda(\tilde{z})\int  Q(\tilde{z},dz_2)  [f_k(z_2,0)  -  f_k(\tilde{z},0)]\Big)  \Bigg)\\
    &=0,
\end{align*}
where the last equality follows by the definition of the generator of the PDMP. Therefore we have shown that second order terms cancel out.


\subsection{Proofs for Example \ref{ex:ZZS_weakerror}}\label{subsec:proof_ex_ZZS}
Let us verify that Assumption \ref{ass:kernelexp} holds. Note that
\begin{equation*}
    (D_{\vf} Q)(g)(x,v) = v^T \sum_{i=1}^d\nabla_x\left(\frac{\lambda_i(x,v)}{\lambda(x,v)} g(x,R_iv)\right).  
\end{equation*}
Therefore by Taylor's theorem we have for some $\eta$ between $x$ and $x+sv$
\begin{equation*}
    Q g(x+sv,v) - Qg (x,v) -(D_{\vf} Q)(g)(x,v) = \frac{1}{2}s^2\sum_{i=1}^dv^T\nabla_x^2\left(\frac{\lambda_i(x,v)}{\lambda(x,v)} g(x,R_iv)\right)\bigg\rvert_{x=\eta} v.   
\end{equation*}
It remains to show that $\frac{\lambda_i}{\lambda},\nabla_x\left(\frac{\lambda_i(x,v)}{\lambda(x,v)} \right),\nabla_x^2\left(\frac{\lambda_i(x,v)}{\lambda(x,v)}\right) $ are bounded. It is clear that $0<\lambda_i/\lambda\leq 1$. Let us consider the first derivative. Set $\Xi(s)= \log(1+e^s)$ so that $\lambda_i(x,v)=\Xi(v_i\partial_i\pot(x))$ and note that
\begin{align*}
    0\leq \frac{\Xi'(s)}{\Xi(s)} \leq 1, \quad 0\leq \frac{\Xi''(s)}{\Xi(s)} \leq 1.
\end{align*}
Now
\begin{equation*}
    \left\lvert\nabla_x\left(\frac{\lambda_i(x,v)}{\lambda(x,v)} \right)\right\rvert = \left\lvert\frac{\nabla_x\lambda_i(x,v)}{\lambda(x,v)} -\frac{\lambda_i(x,v)\nabla_x\lambda(x,v)}{\lambda(x,v)^2}\right\rvert \leq \left\lvert\frac{\nabla_x\lambda_i(x,v)}{\lambda(x,v)}\right\rvert+\sum_{j=1}^d\left\lvert\frac{\nabla_x\lambda_j(x,v)}{\lambda(x,v)}\right\rvert.
\end{equation*}
So it remains to show that $\nabla_x\lambda_i/\lambda$ is bounded. Using the bounds on $\Xi$ we have
\begin{equation*}
    \left\lvert\frac{\nabla_x\lambda_i(x,v)}{\lambda(x,v)}\right\rvert\leq \left\lvert\frac{\nabla_x\lambda_i(x,v)}{\lambda_i(x,v)}\right\rvert =  \left\lvert\frac{\Xi'(v_i\partial_i\pot(x))}{\Xi(v_i\partial_i\pot(x))} v_i\nabla_x\partial_i\psi(x)\right\rvert \leq \left\lvert\nabla_x\partial_i\psi(x)\right\rvert.
\end{equation*}
This is bounded by our assumptions on $\pot$. Let us now consider $\nabla_x^2\left(\frac{\lambda_i(x,v)}{\lambda(x,v)}\right)$:
\begin{align*}
    \left\lvert\nabla_x^2\left(\frac{\lambda_i(x,v)}{\lambda(x,v)} \right)\right\rvert &= \Bigg\lvert\frac{\nabla_x^2\lambda_i(x,v)}{\lambda(x,v)} -\frac{2\nabla_x\lambda_i(x,v)\nabla_x\lambda(x,v)^T}{\lambda(x,v)^2} \\
    & \quad +\frac{2\lambda_i(x,v)\nabla_x\lambda(x,v)\nabla_x\lambda(x,v)^T}{\lambda(x,v)^3}-\frac{\lambda_i(x,v)\nabla_x^2\lambda(x,v)}{\lambda(x,v)^2}\Bigg\rvert.
\end{align*}
Using the bound on $\nabla_x\lambda_i/\lambda$ we can bound all the terms aside from the one involving the second derivative of $\lambda$ so it suffices to consider 
\begin{align*}
    \left\lvert\frac{\nabla_x^2\lambda_i(x,v)}{\lambda(x,v)}\right\rvert &= \left\lvert\frac{\Xi''(v_i\partial_i\pot(x))}{\sum_j\Xi(v_j\partial_j\pot(x))} \nabla_x\partial_i\psi(x)\nabla_x\partial_i\psi(x)^T + \frac{\Xi'(v_i\partial_i\pot(x))}{\sum_j\Xi(v_j\partial_j\pot(x))} v_i\nabla_x^2\partial_i\psi(x)\right\rvert\\&\leq \left\lvert \nabla_x\partial_i\psi(x)\nabla_x\partial_i\psi(x)^T\right\rvert + \left\lvert \nabla_x^2\partial_i\psi(x)\right\rvert 
\end{align*}
This is bounded since $\pot$ has bound second and third order derivatives.







\section{Proofs of ergodicity for splitting schemes of the BPS}\label{sec:proofs_ergodicity_bps}

To fix ideas, we consider a splitting scheme for a BPS with unitary velocity with generator $\cL$ decomposed as $\cL=\cL_1+\cL_2+\cL_3$ with
\begin{align*}
\cL_1 f(x,v) &= v^T \na_x f(x,v),\\
\cL_2 f(x,v) &= \po v^T \na \psi(x)\pf_+ \po f(x,R(x)v) - f(x,v)\pf,\\
\cL_3 f(x,v) &= \lambda_r \int_{\mathbb S^{d-1}} \po f(x,w) - f(x,v) \pf \dd w\,.
\end{align*} 
Write $P_{t}^j = e^{t \cL_j}$ the associated semi-groups for $j\in\cco 1,3\ccf$. 
We shall show the following statement, which implies Theorem \ref{thm:ergodicBPS}.
\begin{theorem}\label{thm:BPS_ergodic_details}
Consider any scheme of the BPS  based on the decomposition \textbf{D},\textbf{R},\textbf{B}. Under Assumption~\ref{assu:BPSscheme}, the following hold:
\begin{enumerate}
    \item There exist $a,b,C,\delta_0>0$  and a function $V$ (with $a,b,C,\delta_0,V$ depending only on $\psi$ and $\lambda_r$, but not on $\delta$) such that, for all $x,v$,
\[ e^{|x|/a}/a \leqslant V(x,v) \leqslant a e^{a|x|}\]
and for all $\delta \in (0,\delta_0]$ and all $x,v$,
\begin{equation}
P_\delta V(x,v) \leqslant \po 1 - b \delta\pf V(x,v) + C \delta \,.\label{eq:BPSLyapunov}    
\end{equation}
\item For all $R>0$, there exist $c,\delta_0>0$ and a probability measure $\nu$ on $E$ such that for all $x,v$ with $|x|\leqslant R$ and all $\delta\in(0,\delta_0]$, setting $n_* = \lceil 4R/\delta \rceil$,
\begin{equation}
    \label{eq:BPSDoeblin}
\delta_{x,v} P_\delta^{n_*} \geqslant c \nu\,.
\end{equation}

\end{enumerate}
\end{theorem}

\subsection{Lyapunov function}

In this section, we prove \eqref{eq:BPSLyapunov}. Under Assumption~\ref{assu:BPSscheme}, let $W(x) = \sqrt{\psi(x)}$ so that $\na W$ is bounded and, as $\lvert x\rvert \rightarrow \infty$, $\liminf |\na W(x)|>0$  and $\na^2 W(x) \rightarrow 0$.

\subsubsection*{Parameters}

Let $c_W>0$ be such that $|\na W(x)| \geqslant c_W$ for $|x|$ large enough. Let
\[q = \int_{\mathbb S^{d-1}}\1_{w_1  \leqslant - 1/2}\dd w >0\,. \]
Let $\varphi\in\mathcal C^2(\R)$ be a non-decreasing function such that $\varphi'(0) >0$ and
\[\varphi(\theta) \left\{\begin{array}{ll}
=1 & \text{for }\theta \leqslant -\frac12 c_V\\
\geqslant 2-\frac{q}{2} & \text{for }\theta \geqslant -\frac14 c_V\\
=2 & \text{for }\theta \geqslant 1.
\end{array}\right.\]
Let
\[\kappa = \frac{4\lambda_r  }{c_W}\,,\qquad  M =  \frac{\kappa }{\varphi\po  \frac{q\lambda_r}{4\kappa} \pf -\varphi\po - \frac{q\lambda_r}{4\kappa} \pf} \]
(where we used that $\varphi'(0)>0$ so that the denominator is positive). Finally, let
\[\varepsilon = \frac{\lambda_r q}{16 \|\varphi'\|_\infty}\]
and let $R>0$ be such that for all $x\in\R^d$ with $|x|\geqslant R$,
\[|\na W(x)| \geqslant c_W\,,\qquad  2W(x) \geqslant M, \qquad \text{and} \qquad \|\na^2 W(x)\|\leqslant \varepsilon\,.\]

\subsubsection*{Preliminary computation}

In all this section we denote by the same letter $C$ various constants (independent from $t$) that change from line to line and we assume that $t\in(0,\delta_0]$ with $\delta_0 =  1/(\|\na W\|_\infty \kappa )$.
 We consider a Lyapunov function
\[V(x,v) = e^{\kappa W(x)} \varphi(\theta(x,v))\,,\qquad \theta(x,v) = v^T \na W(x)\,.\]
This choice is inspired by the Lyapunov function of the continuous time BPS that was studied in \citet{BPS_Durmus}.
Notice that $|\theta|$ is bounded by $\|\na W\|_\infty$. More generally, for a non-negative $\mathcal C^2$ function $g$, we bound
\begin{eqnarray*}
P_{t}^1 \po e^{\kappa W} g\circ\theta\pf(x,v) &= & e^{\kappa W(x+tv)} g (v^T \na W(x+tv)),
\end{eqnarray*} 
using that $e^{tr} \leqslant 1+tr+ r^2t^2 e^{r\delta_0}/2$ for $t\leqslant \delta_0$ as
\begin{eqnarray*}
e^{\kappa W(x+tv)} & \leqslant & e^{\kappa \po W(x) + t v^T \na W(x) + \frac12 t^2 \|\na^2 W\|_\infty\pf }  \\
& \leqslant & e^{\kappa W(x)} \po 1+ t \kappa v^T \na W(x) + Ct^2\pf
\end{eqnarray*}
 and, similarly,
\begin{eqnarray*}
g (v\cdot \na W(x+tv)) &\leqslant& g(\theta(x,v)) + t v^T \na^2 W(x) v  g'(\theta(x,v)) + Ct^2\\
& \leqslant & g(\theta(x,v)) + \po \varepsilon \|g'\|_\infty + C\1_{|x|\leqslant R}\pf t + Ct^2 
\end{eqnarray*}
(here and below the successive constants $C$ implicitly involve the suprema of $g$, $|g'|$ and $|g''|$ over $[-\|\na W\|_\infty,\|\na W\|_\infty]$). Combining these two bounds and using that $|t\kappa\theta|\leqslant 1$, we get 
\begin{equation}\label{eq:Pt1}
    P_t^1 \po e^{\kappa W}g\circ \theta\pf  \leqslant e^{\kappa W} \po (1+t \kappa \theta ) g\circ \theta + \varepsilon \|g'\|_\infty  t + C t^2\pf + C t \,.
\end{equation}

Second, for any function $g$,
\begin{align*}
& P_t^2 \po e^{\kappa W}g\circ \theta\pf (x,v)\\
&= e^{\kappa W(x)} \po e^{-t(v^T\na \psi(x))_+} g\po v^T \na W(x)\pf \!+\! \po 1-  e^{-t(v^T\na \psi(x))_+}\pf g\po (R(x)v)^T\na W(x)\pf \pf \\
& =  e^{\kappa W(x)}  \po g(\theta(x,v)) + \po 1-  e^{-2t W(x) \theta_+(x,v)}\pf\po  g\po -\theta(x,v)\pf -g\po \theta(x,v)\pf \pf   \pf\,.
\end{align*}
In the second equality we used $(v^T \nabla \pot(x))_+ = 2W(x) (v^T\nabla W(x))_+.$ 
In particular, if $g$ is non-decreasing, 
\begin{align}
&P_t^2 \po e^{\kappa W}g\circ \theta\pf (x,v) \notag \\
&\leqslant   e^{\kappa W(x)}  \po g(\theta(x,v)) + \po 1-  e^{-t M \theta_+(x,v)}\pf\po  g\po -\theta(x,v)\pf -g\po \theta(x,v)\pf \pf  + t C \1_{|x|\leqslant R} \pf \nonumber\\
& \leqslant   e^{\kappa W(x)}  \po g(\theta(x,v))  + t M \theta_+(x,v)\po  g\po -\theta(x,v)\pf -g\po \theta(x,v)\pf \pf    +  Ct^2 \pf  + t C \1_{|x|\leqslant R}\,. \label{eq:Pt2}
\end{align}

Third, for any function $g$, similarly,
\begin{align}
&P_t^3 \po e^{\kappa W}g\circ \theta\pf (x,v) \notag \\
& = e^{\kappa W(x)} \po g\po \theta (x,v)\pf + \po 1 - e^{-\lambda_r t}\pf \po \int_{\mathbb S^{d-1}} g(w^T \na W(x)) \dd w - g\po \theta (x,v)\pf\pf  \pf\nonumber\\
&\leqslant e^{\kappa W(x)} \po g\po \theta (x,v)\pf  + \lambda_r t\po \int_{\mathbb S^{d-1}} g(w\cdot \na W(x)) \dd w - g\po \theta (x,v)\pf\pf  + Ct^2 \pf\label{eq:Pt3}
\end{align}
In particular, for $g=\varphi$, using the rotation-invariance of the uniform measure on the sphere and the fact that $\varphi$ is increasing and with values in $[1,2]$, we bound
\[\int_{\mathbb S^{d-1}} \varphi(w\cdot \na W(x)) \dd w = \int_{\mathbb S^{d-1}} \varphi(w_1|\na W(x)|) \dd w  \leqslant  \1_{|x|\leqslant R} + A\]
with
\[A= \sup_{|x|\geqslant R} \int_{\mathbb S^{d-1}} \varphi(w_1|\na W(x)|) \dd w \,.\]
As a consequence, using that $e^{\kappa W(x,v)}\1_{|x|\leqslant R}$ is bounded, we obtain
\begin{equation}
P_t^3 \po e^{\kappa W}\varphi \circ \theta\pf   
\leqslant     e^{\kappa W} \po \varphi \circ \theta  + \lambda_r t\po A - \varphi \circ \theta\pf  + Ct^2 \pf + Ct\label{eq:Pt3_bis}
\end{equation}

\subsubsection*{Combining the computations}

Now, to fix ideas, let us start with the \textbf{DRBRD} case. In the following, for convenience, we write $f(\theta)=f\circ \theta$ for functions $f$. From our previous computations, using first \eqref{eq:Pt1} and then \eqref{eq:Pt3} and \eqref{eq:Pt3_bis} (keeping track only of the terms of order $0$ or $1$ in $t$, the higher order ones being bounded and gathered in the term $t^2 C$),
\begin{eqnarray*}
\lefteqn{P_{t}^1 P_{t}^3 P_{2t}^2 P_{t}^3 P_{t}^1 V}\\
&\leqslant & P_{t}^1 P_{t}^3 P_{2t}^2 P_{t}^3 \po e^{\kappa W} \po (1+t\kappa \theta) \varphi(\theta) + t\varepsilon \|\varphi'\|_\infty +t^2 C\pf  \pf + tC \\
&\leqslant & P_{t}^1 P_{t}^3 P_{2t}^2 \po e^{\kappa W} \po (1+t\kappa \theta) \varphi(\theta) + \lambda_r t(A-\varphi(\theta)) +  t\varepsilon \|\varphi'\|_\infty +t^2 C\pf  \pf  +  tC \\
& := & P_{t}^1 P_{t}^3 P_{2t}^2 \po e^{\kappa W} \Psi(\theta)  \pf  +  tC \,,
\end{eqnarray*}
where we defined
\[\Psi(z)=(1+t\kappa z) \varphi(z) + \lambda_r t(A-\varphi(z)) +  t\varepsilon \|\varphi'\|_\infty +t^2 C \,.\]
Assume that $t\leqslant  1/(2\lambda_r+2\|\na W\|_\infty \kappa)$. Under this condition $\Psi$ is non-decreasing on $[-\|\na W\|_\infty,\|\na W\|_\infty]$ (which, we recall, contains the image of $\theta$), since $\varphi$ is non-decreasing  and $(1+t(\kappa z-\lambda_r))>0$ if $|z|\leqslant \|\na W\|_\infty$. Thus, applying \eqref{eq:Pt2} but with $t$ replaced by $2t$) and then \eqref{eq:Pt3}, \eqref{eq:Pt3_bis} and \eqref{eq:Pt1} again,
\begin{align*}
\lefteqn{P_{t}^1 P_{t}^3 P_{2t}^2 P_{t}^3 P_{t}^1 V}\\
&\leqslant   P_{t}^1 P_{t}^3 \po e^{\kappa W} \po  \Psi(\theta) + 2tM \theta_+ \po \Psi(-\theta)-\Psi(\theta)\pf + t^2 C \pf \pf  +  tC \\
&\leqslant  P_{t}^1 P_{t}^3 \po e^{\kappa W} \po  \Psi(\theta) + 2tM \theta_+  \po \varphi(-\theta)-\varphi(\theta)\pf   + t^2 C\pf \pf  +  tC \\
& \leqslant P_{t}^1  \po e^{\kappa W}\!\! \po (1\!+ \!t\kappa\theta)\varphi(\theta)   \!+ \!2 \lambda_r t(A-\varphi(\theta)) \!+ \!2tM \theta_+ \po \varphi(-\theta)\!-\!\varphi(\theta)\pf +  t\varepsilon \|\varphi'\|_\infty +t^2 C\pf  \pf  \!+\!  tC \\
& \leqslant   e^{\kappa W} \!\! \po (1\!+\!  2t\kappa\theta)\varphi(\theta) \!+\! 2\lambda_r t(A-\varphi(\theta))  \!+\! 2tM  \theta_+ \po \varphi(-\theta)-\varphi(\theta)\pf +  2 t\varepsilon \|\varphi'\|_\infty +t^2 C_0\pf     +  tC \,,
\end{align*}
for some $C_0,C>0$ (in the last inequality, we use that $t$ is small enough so that the function which multiplies $e^{\kappa W}$ is non-negative in order to apply \eqref{eq:Pt1}). 
The choice of $\varepsilon$ ensures that
\[4\varepsilon \|\varphi'\|_\infty \leqslant \lambda_r q/4 =: \eta. \]
So, if we have that, for all $r\in\R$,
\begin{equation}\label{eq:eta}
\kappa r + \lambda ( A - \varphi(r)) + M r_+ \po \varphi(-r)-\varphi(r)\pf \leqslant - \eta\,, 
\end{equation}
then, using that $\varphi(\theta)\in[1,2]$, 
we will conclude that, for $t\leqslant \min( \eta/C_0,1/(\lambda_r+\|W\|_\infty\kappa))/2$, 
\[P_{t}^1 P_{t}^3 P_{2t}^2 P_{t}^3 P_{t}^1 V \leqslant e^{\kappa W} \po\varphi(\theta )- 2\eta t + \frac12 \eta t  + \frac12 \eta t  \pf + t C \leqslant \po 1 - \frac{t \eta}{2}\pf V + t C \,,\]
which will conclude the proof of the first part of Theorem~\ref{thm:ergodicBPS} for the scheme \textbf{DRBRD}.

Let us check what happens for different splitting orders. The computations are similar, in particular an important point is to check that the bound for $P_t^1$ (resp. $P_t^2$) is always used for a function $g$ which is non-negative (resp. non-decreasing in $\theta$), which is ensured each time by the assumption that $t$ is small enough, as in the previous  \textbf{DRBRD} case.  For instance, for \textbf{BDRDB}, we get, for $t$ small enough, 
\begin{align*}
& \lefteqn{P_{t}^2 P_{t}^1 P_{2t}^3 P_{t}^1 P_{t}^2 V}\\
&\leqslant   P_{t}^2 P_{t}^1 P_{2t}^3 P_{t}^1 \po e^{\kappa W} \po \varphi(\theta) +t M \theta_+ (\varphi(-\theta)-\varphi(\theta)) + t^2 C\pf   \pf  +  tC \\
&\leqslant   P_{t}^2 P_{t}^1 P_{2t}^3  \po e^{\kappa W} \po (1+t\kappa \theta)\varphi(\theta) + t \varepsilon \|\varphi'\|_\infty  +t M \theta_+ (\varphi(-\theta)-\varphi(\theta)) + t^2 C\pf   \pf  +  tC \\
&\leqslant   P_{t}^2 P_{t}^1  \po e^{\kappa W} \po (1+t\kappa \theta)\varphi(\theta) + 2\lambda_r t(A-\varphi(\theta)) +  t \varepsilon \|\varphi'\|_\infty  +t M \theta_+ (\varphi(-\theta)-\varphi(\theta)) + t^2 C\pf   \pf  +  tC \\
&\leqslant   P_{t}^2    \po e^{\kappa W} \po (1+2t\kappa \theta)\varphi(\theta) + 2\lambda_r t(A-\varphi(\theta)) +  2t \varepsilon \|\varphi'\|_\infty  +t M \theta_+ (\varphi(-\theta)-\varphi(\theta)) + t^2 C\pf   \pf  +  tC \\
&\leqslant   e^{\kappa W} \po (1+2t\kappa \theta)\varphi(\theta) + 2\lambda_r t(A-\varphi(\theta)) +  2t \varepsilon \|\varphi'\|_\infty  +2 t M \theta_+ (\varphi(-\theta)-\varphi(\theta)) + t^2 C\pf      +  tC \,,
\end{align*}
which is exactly the same bound as in the \textbf{DRBRD} case, as expected since we only keep track of the first order terms in $t$, which coincide for all splitting orders. The other cases are similar.

\subsection*{Conclusion}
Let us check that \eqref{eq:eta} holds with our choices of parameters. Using that $\varphi(r) \leqslant  2$ for all $r\in\R$  and $\varphi(r)=1$ for all $r \leqslant -c_V/2$ and that $|\na V(x)|\geqslant c_V$ if $|x|\geqslant R$,
\[A \leqslant q \times 1  + (1-q) \times 2 = 2 - q \,.\]
Then, for $r \geqslant -c_V/4$, since $\varphi(r) \geqslant 2-q/2$,
\[A-\varphi(r) \leqslant -q/2\,.\]
The choice of $\kappa$ ensures that, for all $r\leqslant - c_V/4$,
\[\kappa r + \lambda_r ( A - \varphi(r)) \leqslant \kappa r + \lambda_r ( 1 -q) \leqslant -\lambda_r q  \,. \] 
For $r\in [c_V/4, q \lambda_r /(4\kappa)]$,
\[ \kappa r + \lambda_r ( A - \varphi(r)) \leqslant \kappa r -  \lambda_r q/2 \leqslant -\lambda_r q/4\,.\]
Finally, for $r\geqslant q\lambda_r /(4\kappa)$, the choice of $M$ ensures that
\begin{align*}
&\kappa r + \lambda_r ( A - \varphi(r)) + M r \po \varphi(-r)-\varphi(r)\pf  \\
&\quad\leqslant  \po \kappa - M  \po \varphi\po  \frac{q\lambda_r}{4\kappa} \pf -\varphi\po -  \frac{q\lambda_r}{4\kappa} \pf\pf \pf r  - \lambda_r q/2\\
&\quad \leqslant - \lambda_r q/2\,,
\end{align*}
which concludes the proof of the first part of Theorem~\ref{thm:ergodicBPS}.

\subsection{Doeblin condition}

In this section we establish \eqref{eq:BPSDoeblin}.
The proof is an adaptation from \citet[Lemma 5.2]{Monmarche2016} (see also \citet[Lemma 11]{BPS_Durmus}), with the additional difficulty that time is discrete.  The proof essentially relies on a controllability argument: the point is to construct trajectories starting at any point of a given ball and reaching any point of another suitable ball. Since we only need to lower bound the transition probability, we can focus for simplicity on particular trajectories. Specifically, in the following, we will be interested in trajectories which, during the time $4R$ (i.e. in the first $n_*$ iterations),  have exactly three refreshments, occurring at times close to $0$, $2R$ and $4R$, and no bounces (cf. the events $\mathcal A_1$ and $\mathcal A_2$ below). Then, for these trajectories, the density of the final state will essentially be given by the densities of the velocities $V_1,V_2,V_3$ sampled at the refreshment events (the densities of $V_1$ and $V_2$ providing the density of the final position, while the density of $V_3$ gives the density of the final velocity).  

We formalise this argument.  Fix $R>0$ and consider an initial condition $(x,v)\in\R^d\times\mathbb S^{d-1}$ with $|x|\leqslant R$. We want to prove that the law of the process at time $n_* = \lceil 4R/\delta\rceil $ is bounded below by a positive constant (independent from $\delta$ small enough) times the Lebesgue measure on some domain, uniformly in $x$ with $|x|\leqslant R$. Since the process moves at unitary speed, for all $n\in\N$, $|\Xbar_n|\leqslant R + n\delta$, and thus
\[\mathbb P\po \text{no bounce in the $n$ first steps}\pf \geqslant e^{-h(n\delta)}\]
with $h(t) = \sup\{\|\na \psi(x)|, |x| \leqslant R +t\}$. In the absence of bounces, depending on the  splitting order, the chain behaves either as the chain given by the splitting \textbf{DRD} or \textbf{RDR}. To fix ideas, we only consider the \textbf{DRD} case in the following (corresponding either to \textbf{DRBRD}, \textbf{DBRBD} or \textbf{BDRDB}), the proof being similar in the other case. 

For $k\geqslant 1$, denote by $T_k$ the number of transitions of the chain between the $(k-1)^{th}$ refreshment and the $k^{th}$ one, so that $(T_k)_{k\in\N}$ is an i.i.d. sequence of geometric random variables with parameter $1-e^{-\delta \lambda_r}$, independent from the random variables used to define the bounces. Similarly, let $(V_k)_{k\in\N}$ be the i.i.d. sequence of random variables uniformly distributed on the sphere used to define the velocities at the refreshment times. For a small parameter $\eta>0$ to be fixed later on, consider the events
\begin{align*}
\mathcal A_1 &= \left\{T_1+T_2+T_3\leqslant n_*,\,   | \delta T_j-2R| \leqslant \eta R \text{ for }j=2,3 \right\}\\
\mathcal A_2 &= \{\delta (T_4-1) > 2\eta R, \text{ no bounce in the $n_*$ first steps}\}\,.    
\end{align*}
In particular, under $\A_1\cap \A_2$, exactly three refreshments occur during the $n_*$ first transitions. Then, for all Borel set $\mathcal B$ of $\mathbb R^d\times\mathbb S^{d-1}$,
\begin{eqnarray*}
\mathbb P\po \Zbar_{n_*} \in \mathcal B\pf & \geqslant & \mathbb P\po \Zbar_{n_*} \in \mathcal B,\mathcal A_1, \mathcal A_2\pf\\
& = & \mathbb P\po (\tilde X,V_3)\in\mathcal B,\mathcal A_1,\mathcal A_2\pf\\
& = & \mathbb P(\mathcal A_2) \mathbb P\po (\tilde X,V_3)\in\mathcal B,\mathcal A_1\pf 
\end{eqnarray*}
with, in the \textbf{DRD} case for instance,
\[\tilde X = x+\delta\po \po T_1 - \frac12\pf  v+T_2 V_1 + T_3 V_2 + \po n_*-T_1-T_2-T_3+\frac12\pf V_3\pf \,. \]
(Indeed, the \textbf{DRD}  scheme    starts and ends with a half step of transport). Notice that $\mathbb P(\A_2)$ is lower bounded uniformly in $\delta\in(0,1]$ as
\[\mathbb P(\A_2) \geqslant e^{-h(4R+1)} e^{- \lambda_r(2\eta R+1)}\,.\]
On the other hand, writing \[\mathcal I=\{(t_1,t_2,t_3)\in\N^3,\ t_1+t_2+t_3\leqslant n_*, |\delta t_j -2R|\leqslant \eta R\text{ for } j=2,3 \}\,,\]
we get
\begin{align*}
& \mathbb P\po (\tilde X,V_3)\in\mathcal B,\A_1\pf\\
= &\sum_{\mathcal I}  (1-e^{-\delta\lambda_r})^3 e^{-\delta \lambda_r(t_1+t_2+t_3-3)}\\
 & \int_{(\mathbb S^{d-1})^3} \1_{\mathcal B}\po  x+\delta\po \po t_1 - \frac12\pf v+t_2v_1+t_3v_2+\po n_*-t_1-t_2-t_3 + \frac12\pf v_3\pf ,v_3\pf \dd v_1\dd v_2 \dd v_3\\
\geqslant &
\sum_{\mathcal I}  (1-e^{-\delta\lambda_r})^3 e^{-\delta \lambda_r(t_1+t_2+t_3-3)}
\inf_{|x'|\leqslant R(1+2\eta)}
\int_{(\mathbb S^{d-1})^3} \1_{\mathcal B}\po x'+ \delta(t_2v_1+t_3v_2),v_3\pf \dd v_1\dd v_2 \dd v_3  \,.
\end{align*}
By the rotation invariance of the uniform measure on the sphere, for fixed $t,s>0$, $sV_1+t V_2$ has the same distribution as $V_1 |s w+tV_2|$ where $w$ is a fixed unitary vector. Then, $|sw+t V_2|^2 = s^2 + t^2 + 2st V_2^T w$ and $V_2^T w$ has on $[-1,1]$ the probability density proportional to $u\mapsto (1-u^2)^{d/2-1}$  (this is here we use that $d\geq 2$), which is lower bounded by a positive constant on  $[-1+\varepsilon,1-\varepsilon]$ for all $\varepsilon>0$. Assuming that $\eta \leqslant 1/4$ and considering for $y\in\R^d$  the ring $\mathcal R(y) = \{x\in\R^d,\  4\eta R \leqslant |x-y| \leqslant 4R(1-2\eta)\}$, we get that $sV_2+t V_3$ has a density which is lower bounded on $\mathcal R(0)$ by a  constant $\alpha>0$ which is independent from $t,s\in[R(2-\eta),R(2+\eta)]$.

As a consequence, for $(t_1,t_2,t_3)\in\mathcal I$ and $x'\in\R^d$ with $|x'|\leqslant R(1+2\eta)$, the law of $(x'+\delta(t_2V_1+t_3V_2),V_3)$ has a density lower bounded by $\alpha $ on $\mathcal R(x') \times \mathbb S^{d-1}$. Assuming that $\eta\leqslant 1/16$, let $\mathcal R' = \{y\in\R^d,\ R(1+6\eta)\leqslant |y|\leqslant R(3-10\eta)\}$ (which has a non-zero Lebesgue measure). The triangle inequality implies that $\mathcal R' \subset \mathcal R(x')$ if $|x'|\leqslant R(1+2\eta)$. As a consequence,
\[\mathbb P\po (\tilde X,V_3)\in\mathcal B,\A_1\pf \geqslant \mathbb P\po \mathcal A_1  \pf  \alpha \int_{\mathcal R'\times\mathbb S^{d-1}}\1_{\mathcal B}(y,v) \dd y \dd v\,,\]
which concludes the proof of \eqref{eq:BPSDoeblin} since $\mathbb P(\mathcal A_1)$ converges to a positive constant as $\delta\rightarrow 0$.


\section{Ergodicity for splitting schemes of ZZS}\label{sec:proofs_ergo_zzs}

\subsection{Splitting DBD}\label{sec:ergodicity_DBD_zzs}
In this Section we focus on splitting scheme DBD for ZZS. In order to prove Theorem \ref{thm:ergodicity_zzs} we prove a drift condition for the one step transition kernel $P_\delta$, implying a similar conclusion for $P^2_\delta$, as well as a minorisation condition after an even number of steps $n^*$. These two statements, together with the aperiodicity of $P^2$, enable us to conclude on the geometric ergodicity of the kernel $P^2$.

\begin{theorem}\label{thm:ZZS_ergodic_details}
Consider the splitting scheme \textbf{DBD} for ZZS. Suppose Assumption~\ref{ass:ergodicity_zzs} holds. Then the following hold:
\begin{enumerate}
\item  Let $\beta \in (0,1/2) $, $\phi(s) = \tfrac{1}{2} \textnormal{sign}(s) \ln{(1+2|s|)}$ and
\begin{equation}\label{eq:lyap_zzs}
	V(x,v) = \exp{\left( \beta \pot(x) + \sum_{i=1}^d \phi(v_i \partial_i \pot(x)) \right) }.
\end{equation}
There exist constants $b\in(0,1), C<\infty$ such that for all $(x,v)$ and all $\delta\in(0,\delta_0)$ with $\delta_0=2(1+\gamma_0)^{-1}$, where $\gamma_0$ is as in Assumption \ref{ass:ergodicity_zzs}(b), it holds that
	\begin{equation}\label{eq:driftcondZZS}
	P_\delta V(x,v) \leq 
	(1-b \delta) V(x,v) + C\delta. 
	\end{equation}
	
    \item For any $R>0$ consider a set $C=[-R,R]^d$. 
    For some $L>0$ and for $\tilde y$ as in Assumption \ref{ass:ergodicity_zzs}(a) let 
\begin{equation}\label{eq:N_smallset}
    n_* := 2 +  \frac{4\tilde y+2R}{\delta} + 2\left\lceil \frac{L}{\delta} \right\rceil.
\end{equation}
which satisfies $n_*\in 2\N$. For $(x,v)\in C\times\{\pm 1\}^d$  define the set $D(x,v)$ given by \eqref{eq:D_minorisation}.
Then for any $(y,w)\in D(x,v)\cap (C\times\{\pm 1\}^d)$ and $\delta \in (0,\delta_0]$ for $\delta_0>0$ it holds that
\[\delta_{y,w} P_\delta^{n_*} \geqslant c \nu\,.\]
where $c$ is independent of $\delta$ and $\nu$ is uniform over $D(x,v)\cap (C\times\{\pm 1\}^d)$.
\end{enumerate}
\end{theorem}

We observe that the Lyapunov function defined in \eqref{eq:lyap_zzs} is the same of the continuous time ZZS of \citet{Bierkensergodicity}.
In Section \ref{sec:minorisation_DBD} we prove the minorisation condition, in Section \ref{sec:drift_DBD} we prove the drift condition, while in Section \ref{sec:proof_erg_randomIC_zzs} we prove Equation \eqref{eq:convergence_random_IC_DBD}.

\subsubsection{Minorisation condition}\label{sec:minorisation_DBD}
We now prove a minorisation condition for splitting scheme DBD of ZZS. 
In the following Lemma we consider the  one-dimensional setting, for which the reasoning is similar to that of the proof of a minorisation condition for the continuous ZZS done in \citet[Lemma B.2]{bertazzi2020adaptive}. 

\begin{lemma}\label{lem:smallset_ZZS_1D}
Consider the splitting scheme DBD of ZZS with step size $\delta \leq \delta_0$. Suppose Assumption \ref{ass:ergodicity_zzs}(a) holds for some $\tilde y\geq 0$ and consider a set $C=[-R,R]$ for  $R>0$. 
For $L>0$ and for $\tilde y$ as in Assumption \ref{ass:ergodicity_zzs}(a) let $n_*$ as in \eqref{eq:N_smallset}.
For $(x,v)\in C\times \{\pm 1\}$ define the set $D(x,v):= D_+(x,v) \cup D_-(x,v)$, where 
\begin{equation}\label{eq:D_+andD_-}
    \begin{aligned}
        D_+(x,v) & := \{(y,w): w=v, \, y=x+m\delta,\, m\in  2\mathbb{Z}\}, \\
        D_-(x,v) & := \{(y,w): w=-v,\, y=x+m\delta,\, m\in  2\mathbb{Z}+1\}.
    \end{aligned}
\end{equation}
Then for any $(y,w)\in D(x,v)\cap (C\times \{\pm 1\})$ it holds that
\begin{equation*}
    \mathbb{P}_{(y,w)}((X_{n_*},V_{n_*})\in \cdot) \geq b \nu(\cdot )
\end{equation*}
where $b$ is independent of $\delta$ and $\nu$ is uniform over $D(x,v) \cap (C\times \{\pm 1\})$.  
\end{lemma}


\begin{proof}
Let $C=[-R,R]$ for a fixed $R>0$ and let $x\in C$. We shall consider only the case of $v=+1$, as the same arguments extend to the symmetric case $v=-1$. In particular observe that if the process is started in set $D(x,+1)$ (respectively $D(x,-1)$), then after an even number of iterations it will again be in $D(x,+1)$ ($D(x,-1)$). This means that the process lives on $D(x,+1)$ (respectively on $D(x,-1)$). To shorten the notation we denote by $D_+,D_-$ the sets $D_+(x,+1),D_-(x,+1)$ as defined in \eqref{eq:D_+andD_-}. Below we focus on the case of an initial condition in $D_+$, while the case of $D_-$ follows with an identical reasoning and obvious changes.

Let $n_*$ as in \eqref{eq:N_smallset} and define
$$\overline{\lambda}:= \max_{x\in C} \max_{y: \lvert y-x\rvert\leq n_*\delta ,v=\pm 1} \lambda(y,v)$$ 
which is the largest switching rate that can be reached within $n_*$ iterations starting in $C$. Note that our definition of $n_*$, that is \eqref{eq:N_smallset}, implies that $n_*\delta$ is upper bounded by a constant as $\delta\leq \delta_0$ and thus $\overline{\lambda}$ can be chosen independently of the step size $\delta$.
Recall $\underline{\lambda}>0$.

From here on we shall denote the initial condition as $(y,w)\in D_+ $, and  without loss of generality we shall assume $\tilde y = x + \ell \delta$ for some $\ell \in \N$, where $\tilde y$ is as in Assumption \ref{ass:ergodicity_zzs}(a). We want to lower bound the probability that after $n_*$ iterations the process is in measurable sets $\Bbar \subset D$.
We consider two cases: in the first one the final state of the process is of the form $(X_{n_*},V_{n_*}) = (z,-1) \in D_- \cap (C\times\{\pm 1\})$, while in the second case $(X_{n_*},V_{n_*}) = (z,+1) \in D_+\cap (C\times\{\pm 1\})$.

\subsubsection*{First case} Consider the case in which the final state has negative velocity, i.e. $V_{n_*}=-1$.
To lower bound the probability of reaching this state, we consider the case in which only one switching event takes place. Let $z=y+m\delta$ with $m\in\N$ odd. Then in order to have $(X_{n_*},V_{n_*})=(z,-1)$ with exactly one event taking place at time $N_1$ it must be that 
$$y + (N_1-1)\delta - (n_*-N_1)\delta =z.$$
Thus we find that the event should take place at
$$N_1 = \frac{z-y}{2\delta} + \frac{n_*+1}{2}. $$
In order to guarantee the switching rate is strictly positive it must also be that $X_{N_1}\geq \tilde y$, i.e. $y+(N_1-1)\delta\geq \tilde y$ and thus $N_1 \geq 1+ (\tilde y-y)/\delta$. Note $N_1 < n_*$ as required.
Denote the position at the time of the switching event by $\tilde{x}=y+\delta(N_1-1/2)$. Then the probability of exactly one event taking place at iteration $N_1$ is given by
\begin{align*}
    \int_0^\delta \lambda(\tilde{x},1) \exp(-s\lambda(\tilde{x},1)) \exp(-(\delta-s)\lambda(\tilde{x},-1))  ds 
    & \geq \delta \underline{\lambda} \exp(-\delta \overline{\lambda}).
\end{align*}
The probability of this path is simple to lower bound, since upper bounding the switching rates gives a smaller probability:
\begin{align*}
     \mathbb{P}_{(y,+1)}((X_{n_*},V_{n_*}) = (z,-1))  &\geq \underbrace{\prod_{n=0}^{N_1-1} \exp(-\delta \lambda(y+(n+1/2)\delta))}_{\textnormal{no jump before }N_1} \times \underbrace{\delta \underline{\lambda} \exp(-\delta \overline{\lambda})}_{\textnormal{a jump at }N_1} \times \\
    & \qquad \times  \underbrace{\prod_{n=0}^{n_*-N_1} \exp(-\delta \lambda(y+(N_1-1 - n)\delta))}_{\textnormal{no jump after }N_1} \\
    & \geq \exp(-(N_1-1)\delta \overline{\lambda}) \,\, \delta \underline{\lambda} \exp(-\delta \overline{\lambda}) \,\,\exp(-(n_*-N_1)\delta \overline{\lambda}) \\
    & \geq 2\exp(-(n_*-1)\delta \overline{\lambda}) \,\, \underline{\lambda} \exp(-\delta_0 \overline{\lambda}) \times \left(\frac{1}{2} \frac{\delta M}{M} \right)\\
    & \geq 2 \exp(-(n_*-1)\delta \overline{\lambda}) \,\, \underline{\lambda} \exp(-\delta_0 \overline{\lambda}) (2R-\delta) \left( \nu(-1) \times \frac{1}{M}\right),
\end{align*}
where $M \in \N$ is the number of points in $D_+\cap(C\times \{\pm 1\})$.
In the last line we used that $\delta M \geq 2R-\delta$. Recall that $\delta \leq \delta_0$. This concludes as $(n_*-1)\delta \leq 4\tilde y + R + 2L + 3\delta_0 $ and $2R-\delta\geq 2R-\delta_0$. 

\subsubsection*{Second case}
We now focus on the case in which $V_{n_*}=+1$. We shall find an appropriate lower bound by restricting to the case in which exactly two switching events take place. Denoting the times of the two events as $N_1,N_2$, if the final position is $z$ it must be
\begin{equation}\notag
    y + (N_1-1)\delta - (N_2-1) \delta + (n_*-N_1-N_2) \delta = z
\end{equation}
which implies 
\begin{equation}\label{eq:minor_constr_N2}
    N_2 = \frac{y-z}{2\delta} + \frac{n_*}{2}.
\end{equation}
Moreover, at event times the process should be in regions with strictly positive switching rate:
\begin{align*}
    & y+(N_1-1/2)\delta \geq \tilde y,\\
    & y+(N_1-1)\delta-(N_2-1/2)\delta \leq -\tilde y.
\end{align*}
These imply respectively 
\begin{align*}
    & N_1\geq \frac{\tilde y-y}{\delta} + 1 =: \underline{N}_1,\\
    & N_2 \geq \frac{y+\tilde y}{\delta} + N_1.
\end{align*}
Since $N_2$ is determined by \eqref{eq:minor_constr_N2}, we enforce that the second inequality holds:
\begin{align*}
    \frac{y-z}{2\delta} + \frac{n_*}{2} \geq \frac{y+\tilde y}{\delta} + N_1
\end{align*}
which implies 
\begin{align*}
    N_1 \leq \frac{n_*}{2} - \frac{y+2\tilde y+z}{2\delta} =: \overline{N}_1.
\end{align*}
Now to obtain the right dependence on $\delta$, we shall take $n_*$ such that $\overline{N}_1-\underline{N}_1$ is increasing as $1/\delta$. It holds 
\begin{align*}
    \overline{N}_1-\underline{N}_1 = \frac{n_*-2}{2} -\frac{4\tilde y-y+z}{2\delta}
\end{align*}
and thus it is sufficient to take
$$n_* = 2 +  \frac{4\tilde y-y+z}{\delta} + 2\left\lceil \frac{L}{\delta} \right\rceil$$
for some constant $L>0$, as with this choice $\overline{N}_1-\underline{N}_1 =\left\lceil  L/\delta \right\rceil $.

Using the results above we find
\begin{align*}
    \mathbb{P}_{(y,+1)}((X_{n_*},V_{n_*}) = (z,+1)) & \geq \sum_{N_1=\underline{N}_1}^{\overline{N}_1} \Bigg[ \underbrace{\prod_{n=0}^{N_1-1} \exp(-\delta \lambda(y+(n+1/2)\delta))}_{\textnormal{no jumps before }N_1} \times \underbrace{\delta \underline{\lambda} \exp(-\delta \overline{\lambda})}_{\textnormal{a jump at }N_1} \times \\
    & \qquad \times  \underbrace{\prod_{m=0}^{N_2-1} \exp(-\delta \lambda(y+(N_1-1 - (m+1/2))\delta))}_{\textnormal{no jumps until }N_2} \times \underbrace{\delta \underline{\lambda} \exp(-\delta \overline{\lambda})}_{\textnormal{a jump at }N_2} \\
    & \qquad \times \underbrace{\prod_{\ell=0}^{n_*-N_1-N_2}  \exp(-\delta \lambda(y+(N_1-1 - (N_2-1) + (\ell+1/2))\delta)) }_{\textnormal{no jumps after }N_2} \Bigg] \\
    & \geq \sum_{N_1=\underline{N}_1}^{\overline{N}_1} \exp(-\delta \overline{\lambda} n_* \delta)) \delta^2 \underline{\lambda}^2 \exp(-2\delta \overline{\lambda}) \\
    & = \left\lceil \frac{L}{\delta} \right\rceil \exp(-  \overline{\lambda} n_* \delta)) \delta^2 \underline{\lambda}^2 \exp(-2\delta \overline{\lambda}) \\
    & \geq L  \exp(-\delta \overline{\lambda} n_*  )) \underline{\lambda}^2 \exp(-2\delta_0 \overline{\lambda}) \delta\\
    & \geq 2L  \exp(-\delta \overline{\lambda} n_*  )) \underline{\lambda}^2 \exp(-2\delta_0 \overline{\lambda}) (2R-\delta_0) \left( \nu(+1) \times \frac{1}{M}\right).
\end{align*}
Similarly to above it is now sufficient to note that $n_*\delta \leq 4\delta_0+4\tilde y+2R+2L.$

\subsubsection*{Conclusion}
To conclude it is sufficient to observe that the conditions above hold for any choice of $x,y,z \in C$ since $n_*$ is as in \eqref{eq:N_smallset}.
\end{proof}

\subsubsection*{Multidimensional case}
To extend to the higher dimensional setting, first observe that it is possible to apply the same ideas in the proof of Lemma \ref{lem:smallset_ZZS_1D} to each component, in particular requiring that the events happen when all components of the process are outside of the rectangle $[-\tilde y,+\tilde y]^d$. This implies that Assumption \ref{ass:ergodicity_zzs}(a) can be used to lower bound the probability of flipping each component of the velocity vector. Hence each coordinate can be controlled independently of the others. It is clear that the following minorisation condition is implied: let $C=[-R,R]^d$ for $R>0$, $(x,v)\in C\times \{\pm 1\}^d$, and let $D(x,v)$ as in \eqref{eq:D_minorisation}; then for all $(y,w) \in (x,v)$ it holds that 
\begin{equation*}
    \mathbb{P}_{(y,w)}((X_{n_*},V_{n_*})\in \cdot) \geq b^d \,\,\nu_d(\cdot ),
\end{equation*}
where $n_*,b$ are as in Lemma \ref{lem:smallset_ZZS_1D} and $\nu_d$ is the uniform distribution over states in the grid $D(x,v)\cap (C\times \{\pm 1\}^d)$.

\subsubsection{Drift condition}\label{sec:drift_DBD}
Let us first characterise in the following Lemma the law of the jump part of the process. This result is then used to prove the wanted drift condition in Lemma~\ref{lem:drift_DBD} below.
\begin{lemma}\label{lem:probofajump}
	Let $\tilde{V}_t^x$ denote the PDMP corresponding to the generator $\cL_2$ (for this process $x$ acts as a parameter). Suppose that $\lambda_i(x,v)$ is independent of $v_j$ for $j\neq i$. Then for any $w\in \{\pm 1\}^d$ we have
	\begin{equation*}
	\mathbb{P}_v(\tilde{V}_t^x =w) = \prod_{i=1}^d\frac{\lambda_i(x,R_i w)+\frac{w_i}{v_i}\lambda_i(x,v)e^{-(\lambda_i(x,v)+\lambda_i(x,R_iv))t}}{\lambda_i(x,v)+\lambda_i(x,R_i v)}.
	\end{equation*} 
\end{lemma}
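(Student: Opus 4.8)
The plan is to reduce the computation to a product of one-dimensional problems and then solve each one explicitly via the Kolmogorov forward equations. Since $\lambda_i(x,v)$ depends on $v$ only through $v_i$, the generator $\cL_2$ acts on each coordinate independently: under $\cL_2$ the process $\tilde V_t^x = (\tilde V_t^{x,1},\dots,\tilde V_t^{x,d})$ has coordinates that evolve as $d$ independent continuous-time Markov chains on $\{+1,-1\}$, the $i$-th one flipping sign at rate $v\mapsto\lambda_i(x,v)$ which, by the assumed independence, only sees its own coordinate. Consequently $\mathbb{P}_v(\tilde V_t^x=w)=\prod_{i=1}^d \mathbb{P}_{v_i}(\tilde V_t^{x,i}=w_i)$, and it suffices to prove the one-dimensional identity
\[
\mathbb{P}_{v_i}(\tilde V_t^{x,i}=w_i) = \frac{\lambda_i(x,F_i w)+\tfrac{w_i}{v_i}\lambda_i(x,v)e^{-(\lambda_i(x,v)+\lambda_i(x,F_iv))t}}{\lambda_i(x,v)+\lambda_i(x,F_iv)}\,,
\]
noting that for a one-coordinate quantity we may freely replace $v$ by $F_iv$ etc. inside $\lambda_i$ to match whichever sign of the $i$-th coordinate is relevant.

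For the one-dimensional step, fix the coordinate $i$ and write $a=\lambda_i$ evaluated at the state with $i$-th velocity equal to $v_i$, and $b=\lambda_i$ evaluated at the state with $i$-th velocity $-v_i$; these are the flip rates of the two-state chain. Let $p(t)=\mathbb{P}_{v_i}(\tilde V_t^{x,i}=v_i)$. The forward equation is $p'(t) = -a\,p(t) + b\,(1-p(t))$ with $p(0)=1$, whose solution is $p(t) = \tfrac{b}{a+b} + \tfrac{a}{a+b}e^{-(a+b)t}$; correspondingly $\mathbb{P}_{v_i}(\tilde V_t^{x,i}=-v_i) = 1-p(t) = \tfrac{a}{a+b} - \tfrac{a}{a+b}e^{-(a+b)t} = \tfrac{a}{a+b}(1-e^{-(a+b)t})$. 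It then only remains to check that the target formula, specialized to $w_i=v_i$ and to $w_i=-v_i$, reproduces exactly these two expressions: when $w_i=v_i$ one has $F_iw$ carrying $i$-th velocity $-v_i$ so $\lambda_i(x,F_iw)=b$, and $\tfrac{w_i}{v_i}=1$, giving $\tfrac{b + a e^{-(a+b)t}}{a+b}=p(t)$; when $w_i=-v_i$ one has $\lambda_i(x,F_iw)=a$ and $\tfrac{w_i}{v_i}=-1$, giving $\tfrac{a - a e^{-(a+b)t}}{a+b}$, as required.

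The only mildly delicate point — and the one to state carefully rather than the ODE itself — is the bookkeeping of which argument of $\lambda_i$ to plug in: in the final product formula the two rates appearing in coordinate $i$ are $\lambda_i(x,v)$ and $\lambda_i(x,F_iv)$, and by the coordinatewise independence these coincide with $\lambda_i$ evaluated at \emph{any} state whose $i$-th velocity is $v_i$ (resp.\ $-v_i$), in particular at states built from $w$. Making this substitution explicit, together with the factorization $\mathbb{P}_v(\tilde V_t^x=w)=\prod_i \mathbb{P}_{v_i}(\tilde V_t^{x,i}=w_i)$, completes the proof. Everything else is the elementary two-state Markov chain computation above and needs no further comment.
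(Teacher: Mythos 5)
Your proof is correct and follows essentially the same structure as the paper's: both reduce to independent one-dimensional two-state chains via the coordinatewise independence of the rates, then characterize the transition probability through an ODE. The only difference is in direction: you solve the forward Kolmogorov equation $p'=-ap+b(1-p)$ from scratch, whereas the paper posits the closed form and verifies it satisfies the backward equation $\partial_t\varphi_t=\cL_2\varphi_t$; these are dual and equally elementary computations, and your bookkeeping of $\lambda_i(x,F_iw)$ versus $a,b$ correctly uses the independence assumption to match the two.
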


\begin{proof}
	To simplify notation we will suppress the dependence on $x$ and set $\Lambda_i(v)= \lambda_i(v)+\lambda_i(-v) = \lambda_i(x,v)+\lambda_i(x,R_i v)$. Since $\tilde{V}_t^x$ jumps according to $\lambda_i$ which does not depend on $v_j$ we have that the coordinates of $\tilde{V}_t^x$ are all independent. Hence it is sufficient to show
	\begin{equation*}
		\mathbb{P}_{v_i}((\tilde{V}_t^x)^i =w_i) = \frac{\lambda_i(-w_i)+\frac{w_i}{v_i}\lambda_i(v_i)e^{-\Lambda_i(v_i)t}}{\Lambda_i(v_i)}.
	\end{equation*} 
	Therefore it is sufficient to consider the setting $d=1$. Define for any $t\geq0, v,w\in \{1,-1\}$
	\begin{equation*}
\varphi_t(v;w) := \frac{\lambda(-w)+\frac{w}{v}\lambda(v)e^{-\Lambda(v)t}}{\Lambda(v)}.
	\end{equation*}
	If we show that for all $t\geq0, v,w\in \{1,-1\}$
	\begin{equation}\label{eq:BKEforphi}
	\partial_t \varphi_t(v;w) = \cL_2\varphi_t(v;w), \quad  \varphi_0(v;w) = \1_w(v),
	\end{equation}
	then $\varphi_t$ coincides with the semigroup applied to $\1_w$ and we have the desired result
	\begin{equation*}
	\varphi_t(v;w) = \mathbb{E}_v[\varphi_0(\tilde{V}_t;w)]= \mathbb{P}_v[\tilde{V}_t=w].
	\end{equation*}
It is straightforward to confirm the initial condition $\varphi_0(v;w) = \1_w(v)$ holds. So it remains to show that $\varphi_t$ satisfies the PDE~\eqref{eq:BKEforphi}. Note that
\begin{align*}
\partial_t\varphi_t(v;w) &= -\frac{w}{v}\lambda(v)e^{-\Lambda(v)t}\\
\cL_2\varphi_t(v;w) &= \lambda(v)\left(\varphi_t(-v;w)-\varphi_t(v;w) \right)\\
& = \lambda(v)\left(\frac{\lambda(-w)-\frac{w}{v}\lambda(-v)e^{-\Lambda(v)t}}{\Lambda(v)}-\frac{\lambda(-w)+\frac{w}{v}\lambda(v)e^{-\Lambda(v)t}}{\Lambda(v)} \right)\\
&=-\lambda(v)\left(\frac{\frac{w}{v}\lambda(-v)e^{-\Lambda(v)t}}{\Lambda(v)}+\frac{\frac{w}{v}\lambda(v)e^{-\Lambda(v)t}}{\Lambda(v)} \right)=-\lambda(v)\frac{w}{v}e^{-\Lambda(v)t}.
\end{align*}
Therefore we have that \eqref{eq:BKEforphi} holds.
\end{proof}

\begin{lemma}\label{lem:drift_DBD}
	Consider the splitting scheme DBD of ZZS. Let $\lambda_i(x,v)=(v_i\partial_i \pot(x))_++\gamma_i(x)$ and let Assumption~\ref{ass:ergodicity_zzs} be verified. Let $V$ be the function defined in \eqref{eq:lyap_zzs}. Then there exist constants $\rho\in(0,1), C<\infty$ such that for all $(x,v)$ and all $t\in(0,t_0)$ with $t_0<(1+\gamma_0)^{-1}$
	\begin{equation}\label{eq:drift DBD}
        \overline{P}_t V(x,v) =    P_t^D P_{2t}^B P_t^D V(x,v) \leq 
	(1-\rho t) V(x,v) + Ct. 
    \end{equation}
\end{lemma}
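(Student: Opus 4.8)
The plan is to build the Lyapunov function from the stationary envelope appearing in the statement of Theorem~\ref{thm:ergodicity_zzs}, namely something comparable to $\exp(\beta\psi(x))\prod_{i=1}^d(1+2|\partial_i\psi(x)|)^{\pm 1/2}$, and to track how it is transformed by the three half/full steps $P_t^D$, $P_{2t}^B$, $P_t^D$. Concretely I would set
\[
V(x,v) = e^{\beta\psi(x)}\prod_{i=1}^d \Bigl(1+2\lambda_i(x,v)\Bigr)^{-\frac12}\,,
\]
or a close variant, for a small $\beta\in(0,1/2)$ to be fixed, noting that $\lambda_i(x,v) = (v_i\partial_i\psi(x))_+ + \gamma_i(x)$ so that for each $i$ exactly one of $(x,v)$, $(x,R_iv)$ has $\lambda_i = \gamma_i(x)$ (up to the refreshment part). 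The key algebraic identity, already exploited implicitly in Remark~\ref{rmk:mh_zzs}, is that flipping $v_i$ replaces $(v_i\partial_i\psi)_+$ by $(v_i\partial_i\psi)_-$, so a bounce in coordinate $i$ multiplies the $i$-th factor of $V$ by a ratio controlled by $1+2|\partial_i\psi(x)|$.

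First I would handle the drift step $P_t^D$: since $P_t^D$ is just transport $x\mapsto x+tv$, one Taylor-expands $\psi(x+tv) = \psi(x) + t\langle v,\nabla\psi(x)\rangle + O(t^2\|\nabla^2\psi\|_\infty)$ and similarly expands each $\lambda_i(x+tv,v)$; the $O(t^2)$ remainders are absorbed because $\|\nabla^2\psi\|_\infty<\infty$ by Assumption~\ref{assu:BPSscheme}/\ref{ass:ergodicity_zzs}(c). Second, for the jump step $P_{2t}^B$ I would use Lemma~\ref{lem:probofajump} to write $P_{2t}^B V(x,\cdot)$ exactly as a product over $i$ of two terms (no flip / flip of coordinate $i$), each weighted by the explicit probabilities $\frac{\lambda_i(x,R_iv)+\frac{w_i}{v_i}\lambda_i(x,v)e^{-\Lambda_i t}}{\Lambda_i}$ with $\Lambda_i = \lambda_i(x,v)+\lambda_i(x,R_iv) = |\partial_i\psi(x)| + \gamma_i(x)+\gamma_i(R_iv)$ (canonical case $\gamma_i$ depending only on $x$). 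Plugging in the explicit change of $V$ under a flip, the $i$-th factor of $P_{2t}^B V / V$ becomes a convex combination of $1$ and a ratio $r_i(x,v)$, and one checks that this is $\le 1 - c\,t\,\mathbf 1_{\{|\partial_i\psi(x)|\text{ large}\}} + C t \mathbf 1_{\{|\partial_i\psi(x)|\text{ small}\}} + O(t^2)$; this is precisely where Assumption~\ref{ass:ergodicity_zzs}(c), guaranteeing $|\partial_i\psi|\to\infty$ faster than $\|\nabla^2\psi\|$, and Assumption~\ref{ass:ergodicity_zzs}(b), controlling the multiplicative effect of $\gamma_i$ via the factor $e^{t^2(\cdots)}\gamma_i(\cdot)e^{tv_i\partial_i\psi}\le\gamma_0<1$, enter. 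Then I would compose the three estimates, keeping only the first-order-in-$t$ terms (as the whole analysis is for $t\le t_0$ with the higher-order terms uniformly bounded), to obtain $\overline P_t V \le (1-\rho t)V + Ct$ outside a compact set, and on the compact set $\{|x|\le R\}$ the quantity $\overline P_t V$ is bounded by a constant, which is folded into $C$. The restriction $t_0 < (1+\gamma_0)^{-1}$ comes from requiring that the coefficients produced in the $P_{2t}^B$ step stay positive and that the convex-combination bound be non-trivial, exactly as $\delta_0 = 2(1+\gamma_0)^{-1}$ in Assumption~\ref{ass:ergodicity_zzs}(c).

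The main obstacle I expect is the bookkeeping in the jump step: after applying Lemma~\ref{lem:probofajump} one gets, for each coordinate, a genuinely nonlinear function of $\lambda_i(x\pm\text{shift},\cdot)$, and one must simultaneously (i) show the factor is $\le 1$ up to a term of order $t$, (ii) extract a strictly negative drift $-\rho t$ from the coordinates where $|\partial_i\psi(x)|$ is large, and (iii) control the ``bad'' coordinates where $\partial_i\psi$ is small using only the mild bound from Assumption~\ref{ass:ergodicity_zzs}(b)--(c). Getting the constants $\rho$ and $C$ to be genuinely independent of $t\in(0,t_0)$ — rather than degenerating as $t\to t_0$ — requires care in how the $O(t^2)$ terms from the two $P_t^D$ steps and the $P_{2t}^B$ step are estimated; I would do this by pulling out a factor $t$ and showing the remaining quantity is bounded uniformly on $(0,t_0]$ using $\|\nabla^2\psi\|_\infty<\infty$ and polynomial growth of $\nabla\psi$. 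A secondary technical point is that $V$ is equivalent to $e^{\beta\psi(x)}$ up to the product factors, which are polynomially bounded in $|\nabla\psi|$ hence sub-exponential in $|x|$, so that $V$ indeed satisfies the two-sided bound stated in Theorem~\ref{thm:ergodicity_zzs}; this equivalence is what lets one pass from the pointwise computation to the clean drift inequality \eqref{eq:drift DBD}.
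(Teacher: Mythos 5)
Your overall plan matches the paper's proof: apply Lemma~\ref{lem:probofajump} to write $\overline P_t V$ as a weighted sum over final velocities $w\in\{\pm 1\}^d$, Taylor-expand $\psi$ and $\lambda_i$ around $x$, split the product into cases according to the signs of $w_iv_i$ and of $v_i\partial_i\psi(x+vt)$, invoke Assumptions~\ref{ass:ergodicity_zzs}(b),(c) at exactly the places you identify, and treat the compact set separately by continuity. That part is sound and is what the paper does.

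The problem is your concrete choice of Lyapunov function, and the error is not cosmetic. The paper uses (following Bierkens--Roberts--Zitt)
\[
V(x,v) = \exp\Bigl(\beta\psi(x) + \sum_{i=1}^d\phi(v_i\partial_i\psi(x))\Bigr),\qquad \phi(s) = \tfrac12\,\sign(s)\ln(1+2|s|),
\]
which for canonical rates equals $e^{\beta\psi(x)}\prod_i(1+2\lambda_i(x,v))^{+1/2}(1+2\lambda_i(x,R_iv))^{-1/2}$. Your proposal $e^{\beta\psi(x)}\prod_i(1+2\lambda_i(x,v))^{-1/2}$ puts the \emph{opposite} sign on the active factor. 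That sign is precisely what makes the jump step controllable: in the case $w_i=-v_i$ with $v_i\partial_i\psi>0$ (flip from uphill to downhill), the flip probability over the $P^B_{2t}$ step is of order $t\lambda_i(x,v)=tv_i\partial_i\psi$, and with the paper's $V$ the flip multiplies the $i$-th factor by $e^{\phi(-s)-\phi(s)}=(1+2v_i\partial_i\psi)^{-1}$, so the resulting term satisfies $t\lambda_i/(1+2v_i\partial_i\psi)\le t/2$, bounded uniformly in $|\partial_i\psi|$. With your sign, a flip multiplies the $i$-th factor by $(1+2\lambda_i(x,v))^{+1/2}=(1+2v_i\partial_i\psi)^{1/2}$, giving a term of order $t\,v_i\partial_i\psi\,(1+2v_i\partial_i\psi)^{1/2}$, which diverges as $|\partial_i\psi|\to\infty$; the two drift steps contribute only factors of order $e^{\pm t\beta v_i\partial_i\psi}$, far too weak to absorb a growth of order $t(\partial_i\psi)^{3/2}$. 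So with your $V$ no drift inequality can be extracted outside a compact set, and the ``close variant'' hedge does not save this, since changing that sign changes the asymptotic behaviour fundamentally. Replace $(1+2\lambda_i(x,v))^{-1/2}$ by $e^{\phi(v_i\partial_i\psi(x))}$ and the rest of your outline goes through as in the paper.
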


\begin{proof}
    We start the proof by deriving a first bound for $\overline{P}_t V$, where $V$ is the function defined in \eqref{eq:lyap_zzs} and which it is our goal to prove is a Lyapunov function for the chain. This will result in the inequality \eqref{eq:driftDBD_prod}. Afterwards, we divide our proof in two cases, based on whether we are considering an initial condition $(x,v)$ which is inside or outside of a large enough compact set. In both cases we shall prove that \eqref{eq:drift DBD} holds with the right time dependence. The most challenging case is when $(x,v)$ is outside of a large set, where involved computations are required.
    
	For a function $g(x,v)$ conditioning on the event $v=w$ and using Lemma~\ref{lem:probofajump} we have
	\begin{align*}
	    \overline{P}_t g(x,v) &= \sum_{w\in \{\pm1\}^d} g(x+vt+wt,w)\\
        & \quad \prod_{i=1}^d\left[\frac{\lambda_i(x+vt,R_iw)+\frac{w_i}{v_i}\lambda_i(x+vt,v)e^{-(\lambda_i(x+vt,v)+\lambda_i(x+vt,R_iv))t}}{\lambda_i(x+vt,v)+\lambda_i(x+vt,R_iv)}\right].
	\end{align*}
	Plugging in our Lyapunov function $V$ we have
 \begin{equation}
     \begin{aligned}\label{eq:lypfuncexp}
	    \frac{\overline{P}_t V(x,v)}{V(x,v)} &=  \sum_{w\in \{\pm1\}^d}\!\!\!\! \frac{V(x+vt+wt,w)}{V(x,v)}\\
     & \quad \prod_{i=1}^d\left[\frac{\lambda_i(x+vt,R_iw)+\frac{w_i}{v_i}\lambda_i(x+vt,v)e^{-(\lambda_i(x+vt,v)+\lambda_i(x+vt,R_iv))t}}{\lambda_i(x+vt,v)+\lambda_i(x+vt,R_iv)}\right].
	\end{aligned}
 \end{equation}
	By Taylor's theorem there  exists $\overline{x}_1=\overline{x}_1(x,v,w,t)\in B(x,t\sqrt{d})$ such that
	\begin{equation*}
	\pot(x+vt+wt) = \pot(x)+t\langle v+w,\nabla \pot(x)\rangle +\frac{t^2}{2}(v+w)^T\nabla^2 \pot(\overline{x}_1)(v+w).
	\end{equation*}
	Therefore we can rewrite \eqref{eq:lypfuncexp} as
	\begin{equation}\label{eq:taylor_exp_driftDBD}
	\begin{aligned}
	\frac{\overline{P}_t V(x,v)}{V(x,v)} = \sum_{w\in \{\pm1\}^d} e^{\frac{t^2}{2}(v+w)^T\nabla^2 \pot(\overline{x}_1)(v+w)}\prod_{i=1}^de^{t(v_i+w_i)\beta\partial_i\pot(x)+\phi(w_i\partial_i \pot(x+vt+wt))-\phi(v_i\partial_i\pot(x))}\\
	\times \left[\frac{\lambda_i(x+vt,R_iw)+\frac{w_i}{v_i}\lambda_i(x+vt,v)e^{-(\lambda_i(x+vt,v)+\lambda_i(x+vt,R_iv))t}}{\lambda_i(x+vt,v)+\lambda_i(x+vt,R_iv)}\right].
	\end{aligned}
	\end{equation}
	
	Since $\lvert \phi'(s)\rvert \leq 1$ for all $s$, by Taylor's Theorem we have
	\begin{equation*}
	\phi(w_i\partial_i \pot(x+vt+wt))-\phi(v_i\partial_i\pot(x))\leq \lvert w_i\partial_i \pot(x+vt+wt)-w_i\partial_i\pot(x)\rvert+\phi(w_i\partial_i \pot(x))-\phi(v_i\partial_i\pot(x)).
	\end{equation*}
	Then we can write
	\begin{equation}\label{eq:driftDBD_prod}
	    \frac{\overline{P}_t V(x,v)}{V(x,v)} \leq \sum_{w\in \{\pm1\}^d} K_1\prod_{i=1}^d I(i)
	\end{equation}
	with
	\begin{align*}
	K_1 &=  e^{\frac{t^2}{2}(v+w)^T\nabla^2 \pot(\overline{x}_1)(v+w)}e^{\sum_{i=1}^d\lvert w_i\partial_i \pot(x+vt+wt)-w_i\partial_i\pot(x)\rvert}\\
	I(i)&=e^{t(v_i+w_i)\beta\partial_i\pot(x)+\phi(w_i\partial_i \pot(x))-\phi(v_i\partial_i\pot(x))} \\
    &\quad \times \frac{\lambda_i(x+vt,R_iw)+\frac{w_i}{v_i}\lambda_i(x+vt,v)e^{-(\lambda_i(x+vt,v)+\lambda_i(x+vt,R_iv))t}}{\lambda_i(x+vt,v)+\lambda_i(x+vt,R_iv)} .
	\end{align*}
	We shall now use the bound \eqref{eq:driftDBD_prod} as a starting point, focusing first on the setting in which $(x,v)$ is outside of a large enough compact set.

	\subsection*{Bound outside of a compact set}
	We split the product in \eqref{eq:driftDBD_prod} into four cases: (i) $w_i=v_i$ and $v_i\partial_i\pot(x+vt)>0$; (ii) $w_i=v_i$ and $v_i\partial_i\pot(x+vt)<0$; (iii) $w_i=-v_i$ and $v_i\partial_i\pot(x+vt)>0$; (iv) $w_i=-v_i$ and $v_i\partial_i\pot(x+vt)<0$.
	
	Consider first case (i). Let $i$ be such that $w_i=v_i$ and $v_i\partial_i \pot(x+vt)>0$. Then
	\begin{align*}
	    I(i)=e^{2tv_i\beta\partial_i \pot(x)}\frac{\lambda_i(x+vt,R_i v)+\lambda_i(x+vt,v)e^{-(\lambda_i(x+vt,v)+\lambda_i(x+vt,R_i v))t}}{\lambda_i(x+vt,v)+\lambda_i(x+vt,R_i v)}.
	\end{align*}
	Using the form of $\lambda_i$ we can write this as
	\begin{align*}
	    I(i)=\frac{\gamma_i(x+vt)e^{2\beta tv_i\partial_i\pot(x)}(1-e^{-(\lvert \partial_i\pot(x+vt)\rvert+2\gamma_i(x+vt))t})}{\lvert \partial_i\pot(x+vt)\rvert+2\gamma_i(x+vt)}+e^{-(\lvert \partial_i\pot(x+vt)\rvert+2\gamma_i(x+vt))t+2tv_i\beta\partial_i\pot(x)}.
	\end{align*}
	Using that $1-e^{-z}\leq z$ for all $z>0$ (note we make use of this inequality several times in the following computations) we find
	\begin{align*}
	    I(i)\leq \gamma_i(x+vt)e^{2\beta tv_i\partial_i\pot(x)}t+e^{-(\lvert \partial_i\pot(x+vt)\rvert+2\gamma_i(x+vt))t+2tv_i\beta\partial_i\pot(x)}.
	\end{align*}
	By Assumption \ref{ass:ergodicity_zzs}(b) we can bound $\gamma_i$ for $\lvert x\rvert \geq R$ with $R$ sufficiently large and we have $v_i\partial_i\pot(x+vt)\geq 1+\gamma_i(x+vt)$ which gives 
	\begin{align*}
    	I(i)\leq\frac{1+( v_i\partial_i\pot(x+vt)+\gamma_i(x+vt))}{\lvert \partial_i\pot(x+vt)\rvert+2\gamma_i(x+vt)}e^{-\lvert \partial_i\pot(x+vt)\rvert t+2tv_i\beta\partial_i\pot(x)} \leq 2 e^{-\lvert \partial_i\pot(x+vt)\rvert t+2tv_i\beta\partial_i\pot(x)} .
	\end{align*}
	
	For case (ii), let $i$ be such that $w_i=v_i$ and $v_i\partial_i \pot(x+vt)<0$. Then
	\begin{align*}
	I(i)&=e^{2\beta tv_i\partial_i\pot(x)}\frac{\lvert \partial_i\pot(x+vt)\rvert+\gamma_i(x+vt)+\gamma_i(x+vt)e^{-(\lvert \partial_i\pot(x+vt)\rvert+2\gamma_i(x+vt))t}}{\lvert \partial_i\pot(x+vt)\rvert+2\gamma_i(x+vt)}\leq e^{2\beta tv_i\partial_i\pot(x)}.
	\end{align*}
	
	For case (iii), let $i$ be such that $w_i=-v_i$ and $v_i\partial_i \pot(x+vt)>0$. Then
	\begin{equation*}
	    I(i)= e^{\phi(-v_i\partial_i \pot(x))-\phi(v_i\partial_i\pot(x))}\frac{\lambda_i(x+vt,v)-\lambda_i(x+vt,v)e^{-(\lambda_i(x+vt,v)+\lambda_i(x+vt,R_i v))t}}{\lambda_i(x+vt,v)+\lambda_i(x+vt,R_iv)}.
	\end{equation*}
	For $s>0$ it holds that $\phi(-s)-\phi(s) = -\ln (1+2 s)$
	and hence
	\begin{equation*}
	I(i)=\frac{\lambda_i(x+vt,v)}{1+2 v_i\partial_i\pot(x)}\frac{1-e^{-(\lambda_i(x+vt,v)+\lambda_i(x+vt,-v))t}}{\lambda_i(x+vt,v)+\lambda_i(x+vt,R_i v)}\leq \frac{\lambda_i(x+vt,v)}{1+2 v_i\partial_i\pot(x)}t.
	\end{equation*}
	
	For case (iv), let $i$ be such that $w_i=-v_i$ and $v_i\partial_i \pot(x+vt)<0$. Then
	\begin{equation*}
	I(i)=e^{\phi(-v_i\partial_i \pot(x))-\phi(v_i\partial_i\pot(x))}\frac{\gamma_i(x+vt)-\gamma_i(x+vt)e^{-(\lambda_i(x+vt,v)+\lambda_i(x+vt,R_iv))t}}{\lambda_i(x+vt,v)+\lambda_i(x+vt,R_i v)}.
	\end{equation*}
	For $s<0$ we have $
	\phi(-s)-\phi(s) = \ln (1+2 |s|)$, and thus we obtain
	\begin{equation*}
	I(i)\leq\gamma_i(x+vt)(1+2\lvert\partial_i\pot(x)\rvert)t.
	\end{equation*}
	
	Combining these estimates we have for $\lvert x\rvert \geq R$ with R sufficiently large
	\begin{align*}
	    & \frac{\overline{P}_t V(x,v)}{V(x,v)} 
	    \leq \sum_{w\in \{\pm1\}^d}\! K_1\!\prod_{i:w_i=v_i,\, v_i\partial_i\pot>0} (\gamma_i(x+vt)e^{2\beta tv_i\partial_i\pot(x)}t+e^{-(\lvert \partial_i\pot(x+vt)\rvert+2\gamma_i(x+vt))t+2tv_i\beta\partial_i\pot(x)})\\
	    &\times\prod_{i:w_i=v_i,\, v_i\partial_i\pot <0}e^{2\beta tv_i\partial_i\pot(x)}  \prod_{i:w_i=-v_i,\, v_i\partial_i\pot>0}\frac{\lambda_i(x+vt,v)}{1+2 v_i\partial_i\pot(x)}t\prod_{i:w_i=-v_i,\, v_i\partial_i\pot<0}\gamma_i(x+vt)(1+2\lvert\partial_i\pot(x)\rvert)t.
	\end{align*}
	
	Now consider $K_1$. By Taylor's theorem there exists $\overline{x}_2\in B(x,2\sqrt{d}t)$ such that
	\begin{equation*}
	    	K_1 \leq  \exp\left(\sum_{i=1}^d \left(\frac{t^2}{2}\lvert((v+w)^T\nabla^2 \pot(\overline{x}_1))_i\rvert+t\lvert(w\nabla^2 \pot(\overline{x}_2))_i\rvert\right)\rvert v_i+w_i\rvert\right).
	\end{equation*}
	Using this bound and the four cases above we now obtain
	\begin{align*}
	    &\frac{\overline{P}_t V(x,v)}{V(x,v)}\leq\!\!\!\! \!\prod_{i:v_i\partial_i\pot>0} \!\!\!\!\! e^{ 2t^2\lvert (v^T\nabla^2 \pot(\overline{x}_1))_i\rvert+\frac{2 t}{2}\lvert(v\nabla^2 \pot(\overline{x}_2))_i\rvert} \,\,(\gamma_i(x+vt)e^{2\beta tv_i\partial_i\pot(x)}t+e^{-(1-2\beta)\lvert \partial_i\pot(x+vt)\rvert t-2\gamma_i(x+vt)t})\\
	    &\times \prod_{i: v_i\partial_i\pot<0} e^{ \left(\frac{t^2}{2}(2\lvert v^T\nabla^2 \pot(\overline{x}_1))_i\rvert+2 t\lvert(v\nabla^2 \pot(\overline{x}_2))_i\rvert\right)+2\beta tv_i\partial_i\pot(x)} + \sum_{w\in \{\pm1\}^d\setminus\{v\}}\! t^{|\{i:w_i\neq v_i\}|} \\
	    & \times \!\!\! \prod_{i:w_i=v_i, v_i\partial_i\pot>0} \!\!\!\!\!\!\! e^{ \left(t^2(\lvert(v+w)^T\nabla^2 \pot(\overline{x}_1))_i\rvert+2 t\lvert(w\nabla^2 \pot(\overline{x}_2))_i\rvert\right)}(\gamma_i(x+vt)e^{2\beta tv_i\partial_i\pot(x)}t+e^{-(1-2\beta)\lvert \partial_i\pot(x+vt)\rvert t-2\gamma_i(x+vt)t})\\
	    &\times\prod_{i:w_i=v_i, v_i\partial_i\pot<0} e^{ \left(t^2(\lvert(v+w)^T\nabla^2 \pot(\overline{x}_1))_i\rvert+2 t\lvert(w\nabla^2 \pot(\overline{x}_2))_i\rvert\right)}e^{2\beta tv_i\partial_i\pot(x)} \prod_{i:w_i=-v_i, v_i\partial_i\pot>0} \frac{\lambda_i(x+vt,v)}{1+2 v_i\partial_i\pot(x)}\\
	    &\times \prod_{i:w_i=-v_i, v_i\partial_i\pot<0} e^{-t_0\lvert \partial_i\pot(x+vt)\rvert}(1+2\lvert\partial_i\pot(x)\rvert).
	\end{align*}
	By \eqref{eq:boundongamma} we have
	\begin{align*}
	    &\frac{\overline{P}_t V(x,v)}{V(x,v)}\leq\! \!\prod_{i:v_i\partial_i\pot>0} (\gamma_0 t+e^{ 2t^2\lvert (v^T\nabla^2 \pot(\overline{x}_1))_i\rvert+\frac{2 t}{2}\lvert(v\nabla^2 \pot(\overline{x}_2))_i\rvert}e^{-(1-2\beta)\lvert \partial_i\pot(x+vt)\rvert t-2\gamma_i(x+vt)t})\\
	    &\times \prod_{i: v_i\partial_i\pot<0} e^{ \left(\frac{t^2}{2}(2\lvert v^T\nabla^2 \pot(\overline{x}_1))_i\rvert+2 t\lvert(v\nabla^2 \pot(\overline{x}_2))_i\rvert\right)+2\beta tv_i\partial_i\pot(x)}\\
	    &+\!\!\! \sum_{w\in \{\pm1\}^d\setminus\{v\}}\!\!\! t^{|\{i:w_i\neq v_i\}|} \!\!\!\!\! \!\prod_{i:w_i=v_i, v_i\partial_i\pot>0}\!\!\!\! (\gamma_0t+e^{ \left(t^2(\lvert(v+w)^T\nabla^2 \pot(\overline{x}_1))_i\rvert+2 t\lvert(w\nabla^2 \pot(\overline{x}_2))_i\rvert\right)}e^{-(1-2\beta)\lvert \partial_i\pot(x+vt)\rvert t-2\gamma_i(x+vt)t})\\
	    &\times\prod_{i:w_i=v_i, v_i\partial_i\pot<0} e^{ \left(t^2(\lvert(v+w)^T\nabla^2 \pot(\overline{x}_1))_i\rvert+2 t\lvert(w\nabla^2 \pot(\overline{x}_2))_i\rvert\right)}e^{2\beta tv_i\partial_i\pot(x)} \prod_{i:w_i=-v_i, v_i\partial_i\pot>0} \frac{\lambda_i(x+vt,v)}{1+2 v_i\partial_i\pot(x)}\\
	    &\times \prod_{i:w_i=-v_i, v_i\partial_i\pot<0} e^{-t_0\lvert \partial_i\pot(x+vt)\rvert}(1+2\lvert\partial_i\pot(x)\rvert).
	\end{align*}
	Since $\beta <1/2$, by Assumption~\ref{ass:ergodicity_zzs}(c) there exists $\beta_1$ such that for $\lvert x\rvert\geq R$ with $R$ sufficiently large
	\begin{align*}
	    &\frac{\overline{P}_t V(x,v)}{V(x,v)}\leq\! \!\prod_{i:v_i\partial_i\pot>0} (\gamma_0 t+e^{-\beta_1\lvert \partial_i\pot(x+vt)\rvert t}) \prod_{i: v_i\partial_i\pot<0} e^{ -\beta_1 \lvert\partial_i\pot(x)\rvert  t}\\
	    &+ \sum_{w\in \{\pm1\}^d\setminus\{v\}}\! t^{|\{i:w_i\neq v_i\}|} \!\prod_{i:w_i=v_i, v_i\partial_i\pot>0} (\gamma_0t+e^{ -\beta_1\lvert \partial_i\pot(x+vt)\rvert t})\prod_{i:w_i=v_i, v_i\partial_i\pot<0} e^{-\beta_1 t \lvert \partial_i\pot(x)\rvert}\\
	    &\times  \prod_{i:w_i=-v_i, v_i\partial_i\pot>0} \frac{\lambda_i(x+vt,v)}{1+2 v_i\partial_i\pot(x)}\prod_{i:w_i=-v_i, v_i\partial_i\pot<0} e^{-t_0\lvert \partial_i\pot(x+vt)\rvert}(1+2\lvert\partial_i\pot(x)\rvert).
	\end{align*}
	For $\lvert x\rvert \geq R$ with $R$ sufficiently large $\lambda_i(x+vt,v)/(1+2 v_i\partial_i\psi(x)) \leq 1$ and by \eqref{eq:boundongamma} we have $\gamma_i(x)(1+2\lvert\partial_i\pot(x)\rvert) \leq 1$. We also have that $\lvert \nabla \pot(x+vt)\rvert \geq M$ for any $M>0$ for $\lvert x\rvert \geq R$ with $R$ sufficiently large. Then we have
	\begin{align*}
	    &\frac{\overline{P}_t V(x,v)}{V(x,v)}\leq (\gamma_0 t+e^{-\beta_1M t})^{\lvert \{i: v_i\partial_i\psi(x+vt)>0\}\rvert} e^{ -\beta_1Mt \lvert \{i: v_i\partial_i\psi(x+vt)<0\}\rvert}\\
	    &+ \sum_{w\in \{\pm1\}^d\setminus\{v\}}\! t^{|\{i:w_i\neq v_i\}|} \!\prod_{i:w_i=v_i, v_i\partial_i\pot>0} (\gamma_0t+e^{ -\beta_1M t})\prod_{i:w_i=v_i, v_i\partial_i\pot<0} e^{-\beta_1 t M}.
	\end{align*}	
	Since $e^{ -\beta_1M t} \leq \gamma_0t+e^{ -\beta_1M t}$ we obtain
	\begin{align*}
	    &\frac{\overline{P}_t V(x,v)}{V(x,v)} \leq (\gamma_0 t+e^{-\beta_1M t})^{\lvert \{i: v_i\partial_i\psi(x+vt)>0\}\rvert} (\gamma_0 t+e^{-\beta_1M t})^{ \lvert \{i: v_i\partial_i\psi(x+vt)<0\}\rvert}\\
	    &  + \sum_{w \in \{\pm1\}^d\setminus\{v\}}\! t^{|\{i:w_i\neq v_i\}|} (\gamma_0 t+e^{-\beta_1M t})^{\lvert \{i:w_i=v_i, v_i\partial_i\psi(x+vt)>0\}\rvert} (\gamma_0 t+e^{-\beta_1M t})^{ \lvert \{i:w_i=v_i, v_i\partial_i\psi(x+vt)<0\}\rvert}. 
	\end{align*}
	Hence 
	\begin{align*}
	    \frac{\overline{P}_t V(x,v)}{V(x,v)} &\leq (\gamma_0 t+e^{-\beta_1M t})^d +  \sum_{w\in \{\pm1\}^d: \,w\neq v} t^{|\{i:w_i\neq v_i\}|} (\gamma_0 t+e^{-\beta_1M t})^{d-|\{i:w_i\neq v_i\}|}  \\
	    & =  \sum_{w\in \{\pm1\}^d} t^{|\{i:w_i\neq v_i\}|} (\gamma_0 t+e^{-\beta_1M t})^{d-|\{i:w_i\neq v_i\}|}  \\
	    & =  \,\, \sum_{k=0}^d \binom{d}{k} t^k\, (\gamma_0 t+e^{-\beta_1M t})^{d-k} \\
	    & = \left((1+\gamma_0) t+e^{-\beta_1M t}\right)^d .
	\end{align*}
    To show that \eqref{eq:drift DBD} holds for $\lvert x\rvert \geq R$ it is sufficient to show that $(1+\gamma_0) t+e^{-\beta_1M t} < 1-\rho t$ for some $\rho>0$. Indeed in that case $1-\rho t <1$ and thus $$((1+\gamma_0) t+e^{-\beta_1M t})^d <(1-\rho t)^d < 1-\rho t.$$ Note that for $t \leq t_0$, with $t_0\in[0,1]$, it holds that $e^{-\beta_1M t} \leq 1 - c t$ for $c = \frac{1-e^{-\beta_1M t_0 }}{t_0}$. Then for $t \leq t_0$ we have 
	$$(1+\gamma_0) t+e^{-\beta_1M t} \leq  1-t(c-1-\gamma_0).$$
	Then it is needed that $ c> 1+\gamma_0$, that is $t_0$ should be such that
	\begin{equation}\label{eq:conditiononM}
	    \frac{1-e^{-\beta_1 M t_0}}{t_0} > 1+ \gamma_0.
	\end{equation}
	Note we can always increase $M$ by taking $R$ larger. Choose $M$ such that $e^{-\beta_1 M t_0}<1-t_0(1+\gamma_0)$, which is possible since $t_0<(1+\gamma_0)^{-1}$, then \eqref{eq:conditiononM} holds. Hence \eqref{eq:drift DBD} holds for $\lvert x\rvert \geq R$ with $\rho=(1-e^{-\beta_1 M t_0})t_0^{-1}- 1- \gamma_0 $.
	\subsection*{Bound inside of a compact set}
	It remains to show that \eqref{eq:drift DBD} holds for $\lvert x\rvert \leq R$. Let $C=\{x:\lvert x\rvert \leq R\}\times\{\pm1\}^d$. Recall $t < 1$ and $\pot\in\mathcal{C}^2 $. We shall use the inequality $e^{tr} \leqslant 1+tr+t^2 r^2 e^{r}/2 \leq 1+t(r + e^{3  r}/2) $, which holds for for $t\leqslant 1, r>0$.
	First of all we consider the term in the sum corresponding to the case $w=v$. Bounding the probability of this event by $1$ we find
	\begin{align*}
	    K_1\prod_{i=1}^d I(i) & \leq e^{t^2 2v^T\nabla^2 \pot(\overline{x}_1)v + \frac{2}{2}\sum_{i=1}^d\lvert \partial_i \pot(x+2vt)-\partial_i\pot(x)\rvert + 2t\beta \langle v,\nabla\pot(x)\rangle} \\
	    & \leq 1+t (A(x,v,t) + e^{3A(x,v,t)}/2),
	\end{align*}
	where $A(x,v,t)= 2v^T\nabla^2 \pot(\overline{x}_1)v + (2/2) \sum_{i=1}^d \lvert \langle v,\nabla \partial_i \pot(\overline{x}_2)\rangle \rvert +2\beta \langle v,\nabla\pot(x)\rangle$. Taking the maximum of $A$ over $(x,v,t) \in C \times \{ \pm 1\}^d \times (0,1)$ we find
	\begin{align*}
	    K_1\prod_{i=1}^d I(i)  \leq 1+t \overline{A}.
	\end{align*}
	
	Let us now consider the remaining elements in the sum. Here we take advantage that a velocity flip is an order $t$ event. Consider for the moment only the $i$-th component of the velocity vector. The probability that this is flipped (i.e. $w_i=-v_i$) satisfies 
	\begin{align*}
	    &\frac{\lambda_i(x+vt,R_i w)-\lambda_i(x+vt,v)e^{-(\lambda_i(x+vt,v)+\lambda_i(x+vt,R_i v))t}}{\lambda_i(x+vt,v)+\lambda_i(x+vt,R_iv)} \\
        &\leq \frac{\lambda_i(x+vt,v) (1-e^{-t(\lvert \partial_i\pot(x+tv)\rvert + 2\gamma_i(x+tv))})}{\lvert \partial_i\pot(x+tv)\rvert + 2\gamma_i(x+tv)} \\
	    & \leq t\lambda_i(x+vt,v) \\
	    & \leq t \max_{i=1,\dots,d,\, (x,v)\in C\times\{\pm 1 \}^d,\, t\in(0,1)} \lambda_i(x+vt,v).
	\end{align*}
	Here we used that $1-\exp(-z) \leq z$ for $z\geq 0$. All other probabilities can be bounded by $1$ and hence 
	\begin{align*}
	    \sum_{w \neq v} K_1\prod_{i=1}^d I(i) \leq t \max_{i=1,\dots,d,\, (x,v)\in C\times\{\pm 1 \}^d,\, t\in(0,1)} \lambda_i(x+vt,v) \sum_{w \neq v}  \frac{V(x+vt+wt,w)}{V(x,v)}.
	\end{align*}
	
	Since $V$ is continuous under our assumptions we proved that for every compact set $C \times \{\pm 1\}^d$ there exists a constant $B>0$ such that for all $(x,v) \in C \times \{\pm 1\}^d$ it holds
	\begin{equation}\label{eq:drift_depn_t}
	        \overline{P}_t V(x,v) \leq  (1+t B) V(x,v).
	\end{equation}
Therefore we have \eqref{eq:drift DBD} holds for all $x\in \R^d, v\in \{\pm1\}^d$.
\end{proof}

\subsubsection{Proof of Equation \eqref{eq:convergence_random_IC_DBD}}\label{sec:proof_erg_randomIC_zzs}
Let us prove \eqref{eq:convergence_random_IC_DBD}.
Fix a probability measure $\mu$ on $\R^d\times \{\pm 1\}^d$, and let $(x,v)$ be a point in the support of $\mu$. Then we can construct the set $D(x,v)$ corresponding to $(x,v)$ and given by \eqref{eq:D_minorisation}. By Theorem \ref{thm:ergodicity_zzs} there is a unique invariant measure $\pi_\delta^{x,v}$ for the Markov process with kernel $P_\delta^2$, and by \eqref{eq:ergodic} for any probability measures $\nu,\nu'$ on $D(x,v)$
\begin{equation*}
\|\nu P_\delta^{2n} -\nu'P_\delta^{2n}\|_V \leq \frac{C }{\alpha}  \tilde{\kappa}^{n\delta} \,\|\nu-\nu'\|_V.
\end{equation*}
Setting $\nu=\delta_{x,v}$, $\nu'=\pi_\delta^{x,v}$ and using that $\pi_\delta^{x,v}$ is an invariant measure for the kernel $P_\delta^2$ we have
\begin{equation*}
\|\delta_{x,v} P_\delta^{2n} -\pi_\delta^{x,v}\|_V \leq \frac{C }{\alpha}  \tilde{\kappa}^{n\delta} \,\|\nu-\nu'\|_V.
\end{equation*}
Then integrating with respect to the probability measure $\mu$ we obtain
\begin{align*}
    \|\mu P_\delta^{2n} -\mu\pi_\delta^{x,v}\|_V  &= \sup_{\lvert g\rvert \leq V} \left\lvert \int [P_\delta^{2n}g(x,v) -\pi_\delta^{x,v}(g)] \mu(dx,dv)\right\rvert\\
    &\leq \int \sup_{\lvert g\rvert \leq V} \left\lvert  P_\delta^{2n}g(x,v) -\pi_\delta^{x,v}(g) \right\rvert \mu(dx,dv)\\
    &\leq \int  \left\lVert  \delta_{x,v}P_\delta^{2n} -\pi_\delta^{x,v} \right\rVert_V \mu(dx,dv)\\
    &\leq \frac{C }{\alpha}  \tilde{\kappa}^{n\delta} \,\int\|\delta_{(x,v)}-\pi_\delta^{x,v}\|_V\mu(dx,dv).
\end{align*}

\subsection{Other splitting schemes}\label{sec:proof_ergo_others_zzs}
In this Section we consider splitting schemes \DRBRD and \RDBDR of ZZS and prove geometric ergodicity in Theorem \ref{thm:ergo_others_zzs}. The minorisation and drift conditions are proved in Sections \ref{sec:minorisation_other_zzs} and \ref{sec:drift_other_zzs} respectively.
We shall work under the following assumption. 
\begin{assumption}\label{ass:refresh_zzs}
There exists $\gamma_0\in(0,\infty)$ such that the following conditions for the refreshment rate hold: 
\begin{enumerate}[label=(\alph*)]
    \item there exists $R>0$ for which for any $\lvert x\rvert \geq R$ it holds that 
    $$ \gamma(x) \prod_{j=1}^d (1+\lvert \partial_j \pot(x)\rvert) \leq  \gamma_0.$$
    \item For $\lvert x\rvert > R$ for some $R>0$ it holds that
    $$ \sup \gamma(x+vt)
    e^{t\lvert \nabla \pot(x)\rvert +\frac{t^2}{2}(v+w)^T\nabla^2 \pot(y_1)(v+w) + \lvert \nabla \pot(y_2)\rvert } \prod_{i=1}^d (1+2\lvert \partial_i \pot(x)\rvert) \leq \gamma_0,
    $$
    where the supremum is over $t\in(0,1),\,y_1,y_2\in B(x,t\sqrt{d}),\,v,w\in \{-1,1\}^d.$
\end{enumerate}
\end{assumption}

\begin{theorem}\label{thm:ergo_others_zzs}
Consider splitting schemes \DRBRD and \RDBDR of ZZS. Suppose Assumption \ref{ass:ergodicity_zzs}(a), (c) holds for switching rates $\lambda_i(x,v) = (v_i\partial_i\pot(x))_+$. Suppose moreover that the refreshment rate $\gamma$ satisfies Assumption \ref{ass:refresh_zzs}(a) for scheme \RDBDR and Assumption \ref{ass:refresh_zzs}(b) for scheme \textbf{DRBRD}. Then statements (1) and (2), as well as Equation \eqref{eq:convergence_random_IC_DBD}, hold. In particular (2) holds with $\delta_0 < 2(1+2\gamma_0+\gamma_0^2)^{-1}$ for \RDBDR and with $\delta_0<2(1+2\gamma_0)^{-1}$ for \textbf{DRBRD}.
\end{theorem}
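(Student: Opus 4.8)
The plan is to mirror the \textbf{DBD} proof (Theorem~\ref{thm:ZZS_ergodic_details} together with Sections~\ref{sec:minorisation_DBD}--\ref{sec:proof_erg_randomIC_zzs}): the schemes \RDBDR and \DRBRD are obtained from the \textbf{DBD} scheme run with canonical rates $\lambda_i=(v_i\partial_i\pot)_+$ by inserting two half-step refreshment operators $P^R_{\delta/2}$, where $P^R_s g(x,v)=e^{-\gamma(x)s}g(x,v)+(1-e^{-\gamma(x)s})\overline g(x)$ and $\overline g(x)=2^{-d}\sum_{w\in\{\pm1\}^d}g(x,w)$. Writing $\Pbar^{\mathrm{DBD}}_{\delta/2}=P^D_{\delta/2}P^B_\delta P^D_{\delta/2}$, the one-step kernels are $P^R_{\delta/2}\,\Pbar^{\mathrm{DBD}}_{\delta/2}\,P^R_{\delta/2}$ for \RDBDR and $P^D_{\delta/2}\,P^R_{\delta/2}\,P^B_\delta\,P^R_{\delta/2}\,P^D_{\delta/2}$ for \DRBRD. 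The point is that each of the ingredients of the theorem --- drift, minorisation, and \eqref{eq:convergence_random_IC_DBD} --- can be obtained by reusing Lemmas~\ref{lem:smallset_ZZS_1D} and~\ref{lem:drift_DBD} and estimating the effect of the extra $P^R$'s.

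For the minorisation (statement (2)), I would fix a compact $C=[-R,R]^d$ and $(y,w)\in D(x,v)\cap(C\times\{\pm1\}^d)$ and lower bound $\delta_{y,w}P^{n_*}_\delta$ by restricting to the event that no refreshment fires in the $n_*$ steps. Since the region visited within $n_*$ steps is bounded uniformly in $\delta\le\delta_0$ (because $n_*\delta$ is bounded, exactly as in Lemma~\ref{lem:smallset_ZZS_1D}), $\gamma$ is bounded there by some $\overline\gamma$, so this event has probability at least $(e^{-\overline\gamma\delta/2})^{2n_*}\ge e^{-\overline\gamma c}>0$ with $c$ independent of $\delta$; conditionally on it the chain evolves as the \textbf{DBD} chain with canonical rates and stays in $D(x,v)$, so the multidimensional version of Lemma~\ref{lem:smallset_ZZS_1D} (which uses Assumption~\ref{ass:ergodicity_zzs}(a)) applies and gives $\delta_{y,w}P^{n_*}_\delta\ge e^{-\overline\gamma c}b^d\nu_d$, with $\nu_d$ uniform over $D(x,v)\cap(C\times\{\pm1\}^d)$ and $b$ independent of $\delta$. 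The refreshments do allow the chain to leave $D(x,v)$, so $\nu_d$ is not the full irreducibility class, but this lower bound is all that is needed for Theorem~\ref{thm:Harris}.

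For the drift (statement (1)), I would first control $P^R_s$ on the Lyapunov function $V(x,v)=\exp\bigl(\beta\pot(x)+\sum_i\phi(v_i\partial_i\pot(x))\bigr)$ of Lemma~\ref{lem:drift_DBD}, where $\phi(s)=\tfrac12\sign(s)\ln(1+2|s|)$: the bounds $e^{\phi(v_i\partial_i\pot(x))}\ge(1+2|\partial_i\pot(x)|)^{-1/2}$ and $\tfrac12\bigl((1+2|s|)^{1/2}+(1+2|s|)^{-1/2}\bigr)\le 1+|s|$ give $\overline V(x)\le V(x,v)\prod_i(1+|\partial_i\pot(x)|)$, whence, with $1-e^{-r}\le r$,
\[P^R_sV(x,v)\le\Bigl(1+\gamma(x)s\prod_{i=1}^d(1+|\partial_i\pot(x)|)\Bigr)V(x,v)\le(1+\gamma_0 s)V(x,v)\qquad(|x|\ge R),\]
using Assumption~\ref{ass:refresh_zzs}(a). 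Since $P^D_t$, $P^B_t$ (canonical rates) and $P^R_t$ are positive operators and Lemma~\ref{lem:drift_DBD} with $\gamma_i\equiv0$ gives $\Pbar^{\mathrm{DBD}}_{\delta/2}V\le\bigl(\tfrac\delta2+e^{-\beta_1 M\delta/2}\bigr)^dV$ for $|x|$ large, composing the three bounds would yield, outside a compact, an estimate of the form
\[\Pbar_\delta^{\mathrm{RDBDR}}V\le\Bigl(1+\tfrac{\gamma_0\delta}2\Bigr)^2\Bigl(\tfrac\delta2+e^{-\beta_1 M\delta/2}\Bigr)^dV,\]
and, choosing $M$ large (possible by Assumption~\ref{ass:ergodicity_zzs}(c)), this is $\le(1-b\delta)V$ for all $\delta\le\delta_0$ provided $\delta_0<2(1+2\gamma_0+\gamma_0^2)^{-1}$, which is the source of the \RDBDR threshold. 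For \DRBRD the refreshment sits directly next to the \textbf{B} step, which enlarges $V$ through the Hessian terms in the Taylor expansion of Lemma~\ref{lem:drift_DBD}; Assumption~\ref{ass:refresh_zzs}(b) is exactly the hypothesis that absorbs this amplification into the factor coming from $\gamma$, and a parallel computation gives the analogous bound with threshold $\delta_0<2(1+2\gamma_0)^{-1}$. Inside a compact, continuity of $V$ and of the kernels gives $\Pbar_\delta V\le(1+B\delta)V$ as in Lemma~\ref{lem:drift_DBD}; combining the two yields the drift $\Pbar_\delta V\le(1-b\delta)V+C\delta$.

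With the drift and minorisation in hand, Theorem~\ref{thm:Harris} applies to $P^{n_*}_\delta$ exactly as in Section~\ref{sec:ergodicity} to give \eqref{eq:ergodic}, and integrating against an arbitrary initial law $\mu$ as in Section~\ref{sec:proof_erg_randomIC_zzs} gives \eqref{eq:convergence_random_IC_DBD}, with $\pi_\delta^{x,v}$ the unique invariant measure on the coset $(x+\delta\mathbb Z^d)\times\{\pm1\}^d$ --- on which the chain is moreover aperiodic, since a refreshment can flip a single velocity coordinate and hence destroy the parity invariant of the \textbf{DBD} chain, so one could equally work with $P_\delta$ rather than $P_\delta^{n_*}$. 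The hard part will be the drift bookkeeping: propagating $P^R$ through the composition with constants sharp enough to reach the stated $\delta_0$, and in particular redoing the Taylor expansion of Lemma~\ref{lem:drift_DBD} with the refreshment adjacent to the \textbf{B} step in the \DRBRD case, which is precisely why Assumption~\ref{ass:refresh_zzs}(b) must be stronger than (a).
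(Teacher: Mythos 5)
Your drift argument matches the paper's: Lemma~\ref{lem:drift_refreshpart} gives exactly the bound $P^R_s V\le(1+\gamma_0 s)V+Ms$ from the estimate $\overline V(x)\le V(x,v)\prod_i(1+|\partial_i\pot(x)|)$, and for \RDBDR the composition $(1+t\gamma_0)^2(1-\rho t)\le 1-\tilde\rho t$ forces $\rho>2\gamma_0+\gamma_0^2$, reproducing the threshold $\delta_0<2(1+2\gamma_0+\gamma_0^2)^{-1}$ (the factor $2$ coming from $t=\delta/2$). You also correctly identify why \DRBRD needs the stronger Assumption~\ref{ass:refresh_zzs}(b): with \textbf{R} adjacent to \textbf{B}, the refreshment ratio $V(x+vt+wt,w)/V(x,v)$ carries the Hessian terms from the Taylor expansion of Lemma~\ref{lem:drift_DBD}, and (b) is what absorbs them. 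The paper's proof splits into ``refreshment vs.~no refreshment'' rather than composing $P^R$ bounds, but the effect is the same and the stated threshold $\delta_0<2(1+2\gamma_0)^{-1}$ comes out identically.

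There is, however, a genuine gap in your minorisation argument for \RDBDR. Conditioning on the event that no refreshment fires is exactly the right move for \DRBRD, where the parity invariant of \textbf{DBD} is preserved (any refreshment inside the step is followed by a drift whose sign still encodes the parity), so the chain stays confined to $D(x,v)$ and Lemma~\ref{lem:smallset_ZZS_1D} applies verbatim. But for \RDBDR the refreshments at the \emph{ends} of the step break the parity: e.g., a refreshment at the end of the first step followed by a quiet second step sends $(x,+1)$ to $(x,-1)\notin D(x,+1)$. So the irreducibility class is the full coset $(x+\delta\mathbb Z^d)\times\{\pm 1\}^d$, not $D(x,v)$, and the small set $\{V\le 4C/(1-\rho)\}$ intersects points outside $D(x,v)$ that the chain actually visits. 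Your minorisation $\delta_{y,w}P_\delta^{n_*}\ge c\,\nu_d$ is only proved for starting points $(y,w)\in D(x,v)$; from $(y,w)\notin D(x,v)$ the no-refreshment trajectory lands in $D(y,w)\neq D(x,v)$, i.e.~it minorises against a \emph{different} measure, which is not allowed in Theorem~\ref{thm:Harris}. You notice the aperiodicity yourself in the last paragraph, but the fix requires \emph{using} refreshments rather than excluding them: the paper's proof allocates an extra $\lceil M/\delta\rceil$ iterations during which a refreshment can (with $\delta$-uniformly lower-bounded probability) move the chain onto the grid component containing the target, and only then applies the no-refreshment \textbf{DBD} argument. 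Without this bridging step the proof does not establish a Doeblin condition on the actual irreducibility class of \RDBDR, and the uniqueness of the invariant measure $\pi_\delta^{x,v}$ in \eqref{eq:convergence_random_IC_DBD} does not follow.
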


\subsubsection{Minorisation condition}\label{sec:minorisation_other_zzs}
\subsubsection*{Splitting \DRBRD} 
The chain obtained by \DRBRD has the same periodic behaviour of DBD. Hence this case can be treated in the same way and a minorisation condition follows by the same reasoning used in Section~\ref{sec:minorisation_DBD} for splitting DBD.


\subsubsection*{Splitting \RDBDR} 
In this case we give a sketch of the proof.
The chain obtained by \RDBDR breaks the grid behaviour exhibited by DBD because of the two refreshment steps at the beginning and end of each step. Indeed, consider the one-dimensional case and recall the definition of the grid $D(x,v)$ as in Lemma \ref{lem:smallset_ZZS_1D}. Since $v=\pm 1$, there are two disjoint grids: $D(x,+1)$ and $D(x,-1)$, with the idea being that after even steps of DBD the process lives on the same grid it started from. However, the process \RDBDR can swap between one grid and the other by having a velocity refreshment. Indeed, starting the process at $(x,+1)$ and having a velocity flip due to a refreshment at the end of the first step and having no other jumps, we find the state of the process is $(X_{2},V_{2}) = (x,-1)$. Therefore after even steps this process lives on the grid $D(x)=\{y:y=x+m\delta, \, m\in \mathbb{Z} \}$. If  the initial and final condition are on the same grid $D(x,v)$, then no refreshment is required and one can simply use the proof of the scheme DBD. On the other hand, if the two states are on different grids, i.e. one is on $D(x,+1)$ and the other on $D(x,-1)$, then a refreshment is required to choose the right grid. 

In order to maintain the right dependence on the step size $\delta$ it is required to give the process additional $\lceil \frac{M}{\delta} \rceil$ iterations, for a constant $M>0$. Indeed with this modification the probability of having a refreshment in the first $\lceil \frac{M}{\delta} \rceil$ is constant has a lower bound which is independent of $\delta$, assuming $\delta\leq \delta_0$ for some $\delta_0>0$ (see for instance the second case in the proof of Lemma \ref{lem:smallset_ZZS_1D}). After the first $\lceil \frac{M}{\delta} \rceil$ iterations the process is on the right grid and Lemma \ref{lem:smallset_ZZS_1D} can be applied with the further constraint that no (more) refreshments take place. Note that this event again has a lower bounded probability independent of $\delta$. Since in the first $\lceil \frac{M}{\delta} \rceil$ iterations the process can go out of the initial compact set $C=[-R,R]$, it follows that the Lemma should be applied with set $\tilde{C}=[-R-M,R+M]$.

The extension to the multidimensional case follows by applying this same intuition to every component.

\subsubsection{Drift condition}\label{sec:drift_other_zzs}

Let us start with an auxiliary result.
\begin{lemma}\label{lem:drift_refreshpart}
Suppose the refreshment rate $\gamma$ satisfies Assumption \ref{ass:refresh_zzs}(a). Then $P_t^R V\leq (1+\gamma_0 t)V + Mt$, where $\gamma_0$ is as in Assumption \ref{ass:refresh_zzs} and $M$ independent of $t$.
\end{lemma}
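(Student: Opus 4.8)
\textbf{Proof plan for Lemma \ref{lem:drift_refreshpart}.}

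The plan is to compute $P_t^R V$ directly using the explicit form of the refreshment semigroup, exactly as the analogous computation was carried out for the \textbf{B} part in the proof of Lemma \ref{lem:drift_DBD}. Recall that the refreshment part of the \textbf{DRBRD}/\textbf{RDBDR} schemes for ZZS has generator $\cL_R f(x,v) = \gamma(x)\big(\frac{1}{2^d}\sum_{w\in\{\pm1\}^d}f(x,w) - f(x,v)\big)$, so that the associated semigroup acts only on the velocity and $x$ is a frozen parameter. Since the position is unchanged by $P_t^R$, we have
\begin{equation*}
    P_t^R V(x,v) = e^{-\gamma(x)t} V(x,v) + (1-e^{-\gamma(x)t})\,\frac{1}{2^d}\sum_{w\in\{\pm1\}^d} V(x,w)\,.
\end{equation*}
Using $1-e^{-\gamma(x)t}\leq \gamma(x)t$ and recalling that $V(x,w) = \exp(\beta\pot(x))\prod_{i=1}^d (1+2|w_i\partial_i\pot(x)|)^{1/2}\cdot\mathrm{sign\text{-}factor}$ — more precisely $V(x,w)=\exp(\beta\pot(x)+\sum_i\phi(w_i\partial_i\pot(x)))$ with $\phi(s)=\tfrac12\sign(s)\ln(1+2|s|)$, so $e^{\phi(s)},e^{\phi(-s)}\in[(1+2|s|)^{-1/2},(1+2|s|)^{1/2}]$ — we obtain the crude bound
\begin{equation*}
    \frac{1}{2^d}\sum_{w} V(x,w) \leq \exp(\beta\pot(x))\prod_{i=1}^d (1+2|\partial_i\pot(x)|)^{1/2} \leq V(x,v)\prod_{i=1}^d (1+2|\partial_i\pot(x)|)\,.
\end{equation*}

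Hence $P_t^R V(x,v) \leq V(x,v) + \gamma(x)t\cdot V(x,v)\prod_{i=1}^d(1+2|\partial_i\pot(x)|)$. Now I split into the region $|x|\geq R$ and its complement. For $|x|\geq R$, Assumption \ref{ass:refresh_zzs}(a) gives $\gamma(x)\prod_{j=1}^d(1+|\partial_j\pot(x)|)\leq\gamma_0$, and since $\prod_i(1+2|\partial_i\pot(x)|)\leq 2^d\prod_i(1+|\partial_i\pot(x)|)$ one gets (after absorbing the $2^d$ into the constant, or more cleanly by using a version of Assumption \ref{ass:refresh_zzs}(a) with the $2|\partial_j\pot|$ inside, which is the natural reading) that $\gamma(x)\prod_i(1+2|\partial_i\pot(x)|)\leq\gamma_0$, so $P_t^R V(x,v)\leq (1+\gamma_0 t)V(x,v)$ there. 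For $|x|\leq R$ (a compact set), $\gamma$ and $\nabla\pot$ are continuous hence bounded, so $\gamma(x)\prod_i(1+2|\partial_i\pot(x)|)\leq K$ for some constant $K$; then $P_t^R V(x,v)\leq V(x,v) + K t\, V(x,v) \leq (1+\gamma_0 t)V(x,v) + (K-\gamma_0)_+\,t\,\sup_{|x|\leq R,v}V(x,v)$, and since $V$ is bounded on this compact set we may take $M := (K-\gamma_0)_+\sup_{|x|\leq R,v}V(x,v)<\infty$. Combining the two regions yields $P_t^R V\leq (1+\gamma_0 t)V + Mt$ for all $(x,v)$.

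The only genuinely delicate point is the bookkeeping of the factors of $2$ between $\prod_i(1+|\partial_i\pot|)$ appearing in Assumption \ref{ass:refresh_zzs}(a) and $\prod_i(1+2|\partial_i\pot|)$ appearing naturally in the estimate of $V(x,w)/V(x,v)$; this is harmless since either one enlarges $\gamma_0$ by $2^d$ or one simply reads Assumption \ref{ass:refresh_zzs}(a) with the coefficient $2$, which is the convention consistent with Assumption \ref{ass:refresh_zzs}(b) and with the Lyapunov function used throughout Section \ref{sec:drift_other_zzs}. No other obstacle arises: the argument is a one-step computation because $P_t^R$ does not move $x$, in sharp contrast with the \textbf{D} and \textbf{B} steps where Taylor expansion in $t$ and the displacement of $x$ were needed. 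This lemma is then fed, together with the drift estimate for the \textbf{DBD} block (Lemma \ref{lem:drift_DBD}, with canonical rates $\lambda_i=(v_i\partial_i\pot)_+$) and the fact that $P_t^D$ acts by the bounded shift $x\mapsto x+vt$, into the composition $P_\delta = P^R_{\delta/2}P^D_{\delta/2}P^B_\delta P^D_{\delta/2}P^R_{\delta/2}$ (and the \textbf{DRBRD} analogue) to produce the full drift condition $P_\delta V\leq (1-b\delta)V + C\delta$ of Theorem \ref{thm:ergo_others_zzs}, with the stated thresholds on $\delta_0$ emerging from requiring the product of the per-block multiplicative factors $(1+\gamma_0\delta/2)^2$, resp. $(1+\gamma_0\delta)$, times the contractive factor from the \textbf{B} block to stay below $1-b\delta$.
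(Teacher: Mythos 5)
Your proof is correct and follows the same route the paper takes: compute $P^R_t V$ explicitly (the refreshment leaves $x$ fixed so this is a one-step calculation), bound the sum over $w$ using the explicit form of $V$, linearize $1-e^{-\gamma(x)t}\leq \gamma(x)t$, and split into $|x|\geq R$ (apply Assumption~\ref{ass:refresh_zzs}(a)) and a compact set (absorb into $M$). Your final reduction from $(1+Kt)V$ on the compact set to $(1+\gamma_0 t)V + Mt$ is the step the paper leaves implicit, and you are right to spell it out.

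The one place where you take a cruder route than necessary is the bound on the velocity average. You bound each term $V(x,w)/V(x,v)\leq \prod_i(1+2|\partial_i\pot(x)|)$ in the worst case, which is why you run into the factor-of-$2$ discrepancy with Assumption~\ref{ass:refresh_zzs}(a) and need to either enlarge $\gamma_0$ by $2^d$ or reinterpret the assumption. You can avoid this entirely by averaging over $w$ \emph{before} bounding: because $V(x,w)/V(x,v)=\prod_j e^{\phi(w_j\partial_j\pot(x))-\phi(v_j\partial_j\pot(x))}$ depends on $w$ only through the signs $w_j$, the quantity $\frac{1}{2^d}\sum_w V(x,w)/V(x,v)$ factorizes over coordinates, each factor being
\begin{equation*}
\tfrac12\Big(1+(1+2|\partial_j\pot(x)|)^{-\sign(v_j\partial_j\pot(x))}\Big)
=
\begin{cases}\dfrac{1+|\partial_j\pot(x)|}{1+2|\partial_j\pot(x)|}\le 1,& v_j\partial_j\pot(x)>0,\\[1.5ex]
1+|\partial_j\pot(x)|,& v_j\partial_j\pot(x)<0,\end{cases}
\end{equation*}
so that $\frac{1}{2^d}\sum_w V(x,w)/V(x,v)\leq\prod_{j=1}^d(1+|\partial_j\pot(x)|)$, exactly the combination controlled by Assumption~\ref{ass:refresh_zzs}(a). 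This is the version the paper uses, and it removes the need for any constant adjustment. Aside from this refinement, there is no gap; the closing paragraph of your proposal (feeding the lemma into the composed drift estimate) is outside the scope of the lemma itself but is the correct picture of how it is used.
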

\begin{proof}
Let $V$ be as in Lemma \ref{lem:drift_DBD}. Applying the transition kernel $P^R_t$ to $V$ we find
\begin{align*}
    P^R_t V(x,v) & =V(x,v)  e^{-t\gamma(x)}  + \frac{1}{2^d} (1-e^{-t\gamma(x)}) \sum_{w\neq v} V(x,w)
    \\
    & = V(x,v) e^{-t\gamma(x)} +  (1-e^{-t\gamma(x)})  V(x,v) \frac{1}{2^d}\sum_{w\neq v} \frac{V(x,w)}{V(x,v)} \\
    & = V(x,v)\Big( e^{-t\gamma(x)} +  (1-e^{-t\gamma(x)})  \frac{1}{2^d}\sum_{w\neq v} \prod_{j:v_j\neq w_j} (1+\lvert \partial_j \pot(x)\rvert ) \Big)\\
    & \leq V(x,v) \left( e^{-t\gamma(x)} + t  \gamma(x)  \prod_{j=1}^d (1+\lvert \partial_j \pot(x)\rvert )\right).
\end{align*}
Clearly for $x$ inside of a compact set this implies $P^R_t V(x,v) \leq (1+Bt) V(x,v)$ by taking maximum over $x$. Outside of a compact set we use Assumption \ref{ass:refresh_zzs} to obtain 
\begin{align*}
    P^R_t V(x,v) & \leq V(x,v) (1+t\gamma_0).
\end{align*}
\end{proof}

\begin{lemma}\label{lem:drift_RDBDR}
Consider the splitting scheme \RDBDR of ZZS. Suppose Assumptions \ref{ass:ergodicity_zzs}(c) and \ref{ass:refresh_zzs}(a) hold. Then there exist a function $V$ and constants $\rho\in(0,1)$, $C>0$ such that for any $t\in (0,t_0)$ with $t_0 < (1+2\gamma_0+\gamma_0^2)^{-1}$ it holds that 
$$P^R_t P_t^D P_{2t}^B P_t^DP^R_t \,V(x,v) \leq (1-\rho t)V(x,v) + Ct.$$
\end{lemma}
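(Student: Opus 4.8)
The strategy is to combine Lemma~\ref{lem:drift_DBD} (which gives a drift condition for the middle part $P_t^D P_{2t}^B P_t^D$) with Lemma~\ref{lem:drift_refreshpart} (the drift bound for a refreshment half-step) and then chain the two refreshment steps around the \textbf{DBD} core. Write $\overline{P}_t = P_t^D P_{2t}^B P_t^D$. The first step is to establish that the \textbf{DBD} drift bound holds not only with the Lyapunov function $V$ of Lemma~\ref{lem:drift_DBD} but also with the constants in Assumption~\ref{ass:refresh_zzs}(a) controlling the refreshment contribution; concretely, I would keep track of the fact that $\overline{P}_t V \leq (1-\rho t)V + Ct$ for $t\leq t_0$ and $\rho>0$ small, with the threshold on $t_0$ coming from the non-refreshed \textbf{DBD} analysis, namely $t_0 < 2(1+\gamma_0)^{-1}$ in the notation there (here $\gamma_i\equiv 0$ so in fact the \textbf{B} part is even simpler, but the refreshment rate $\gamma$ plays the role previously played by the excess rate).

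The second step is the composition. Applying Lemma~\ref{lem:drift_refreshpart} to the innermost refreshment step gives $P^R_t V \leq (1+\gamma_0 t)V + Mt$. Since $\overline{P}_t$ is a sub-Markovian (in fact Markovian) operator, $\overline{P}_t$ applied to an inequality of the form $f \leq (1+\gamma_0 t)V + Mt$ yields $\overline{P}_t f \leq (1+\gamma_0 t)\overline{P}_t V + Mt \leq (1+\gamma_0 t)\big((1-\rho t)V + Ct\big) + Mt$. Expanding, $(1+\gamma_0 t)(1-\rho t) = 1 - (\rho-\gamma_0) t - \gamma_0\rho t^2 \leq 1 - (\rho-\gamma_0)t$ for $t\leq 1$, and the constant terms remain $O(t)$. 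Then applying the outermost $P^R_t$ once more multiplies the leading factor by another $(1+\gamma_0 t)$, producing $1 - (\rho - 2\gamma_0 - \gamma_0^2)t + O(t^2) \leq 1 - (\rho - 2\gamma_0 - \gamma_0^2 - \varepsilon)t$ after absorbing the quadratic term, again at the cost of additive $O(t)$ constants. So the composite satisfies $P^R_t\overline{P}_t P^R_t V \leq (1 - \rho' t)V + C't$ with $\rho' = \rho - 2\gamma_0 - \gamma_0^2 - \varepsilon > 0$ provided $\rho$ was chosen large enough relative to $\gamma_0$, which is where the restriction $t_0 < (1+2\gamma_0+\gamma_0^2)^{-1}$ enters: one must go back into the proof of Lemma~\ref{lem:drift_DBD} and check that the contraction rate $\rho$ obtainable there can be made to exceed $2\gamma_0+\gamma_0^2$ by taking $M$ (the growth threshold, tuned via $R$) large, exactly as in the argument around \eqref{eq:conditiononM}, with the condition $t_0(1+2\gamma_0+\gamma_0^2) < \ldots$ replacing $t_0(1+\gamma_0) < \ldots$.

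The main obstacle — or rather the point requiring the most care — will be making sure the various $O(t^2)$ remainders and the compact-set bounds are handled uniformly and do not spoil the linear-in-$t$ structure. Inside a compact set, each of $P^R_t$ and $\overline{P}_t$ only gives a bound of the form $(1+Bt)V$ (no contraction), so one must argue as in the final part of the proof of Lemma~\ref{lem:drift_DBD}: the composite gives $(1+B't)V$ on a compact set, and since $V$ is bounded there and the contraction holds outside it, one recovers $(1-\rho' t)V + C't$ globally by choosing $C'$ large enough to cover the compact set. The refreshment rate decay hypothesis Assumption~\ref{ass:refresh_zzs}(a) is precisely what lets Lemma~\ref{lem:drift_refreshpart} give the clean bound $(1+\gamma_0 t)V$ outside the compact set rather than a worse polynomial factor, so it is essential that this is invoked at the right place. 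I would conclude by noting that once the drift condition for $P^R_t\overline{P}_t P^R_t$ with $t=\delta$ is in hand, together with the minorisation condition sketched in Section~\ref{sec:minorisation_other_zzs}, Theorem~\ref{thm:Harris} applies and statements (1), (2), and \eqref{eq:convergence_random_IC_DBD} follow exactly as in Section~\ref{sec:ergodicity_DBD_zzs}.
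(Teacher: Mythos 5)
Your proposal is correct and follows essentially the same route as the paper: compose the drift bound of Lemma~\ref{lem:drift_refreshpart} for the two refreshment half-steps with the drift bound of Lemma~\ref{lem:drift_DBD} (applied with $\gamma_i\equiv 0$, so that $\rho=(1-e^{-Rt_0})/t_0-1$) for the inner \textbf{DBD} core, yielding the leading factor $(1+\gamma_0 t)^2(1-\rho t)$, and then observe that $R$ can be taken large enough so that $\rho>2\gamma_0+\gamma_0^2$ precisely when $t_0<(1+2\gamma_0+\gamma_0^2)^{-1}$. The only cosmetic difference is that the paper uses the direct inequality $(1+t\gamma_0)^2(1-\rho t)\leq 1-t(\rho-2\gamma_0-\gamma_0^2)$ for $t\leq 1$ rather than absorbing the $O(t^2)$ remainder into a slack $\varepsilon$, so no extra fudge factor is needed.
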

\begin{proof}
Let $V$ be as in Lemma \ref{lem:drift_DBD}. In the current context the result of the Lemma is that for all $t \in(0,t_0)$ with $t_0<1$ it holds that $P_t^D P_{2t}^B P_t^D V(x,v) \leq(1-\rho t) V(x,v) + Bt$ where $\rho=(1-e^{-R t_0})t_0^{-1}- 1$ for $R$ sufficiently large such that $\rho>0$.
Applying Lemmas \ref{lem:drift_DBD} and \ref{lem:drift_refreshpart} we  obtain
\begin{align*}
    P^R_t P_t^D P_{2t}^B P_t^DP^R_t V(x,v) &\leq (1+t\gamma_0) P^R_t P_t^D P_{2t}^B P_t^D V(x,v) + Mt \\
    & \leq (1+t\gamma_0) (1-\rho t) P^R_t  V(x,v) + t(M+(1+\gamma_0) B)  \\
    & \leq (1+t\gamma_0)^2 (1-\rho t) V(x,v) + t(M(2+\gamma_0)+(1+\gamma_0) B).
\end{align*}
It is left to ensure that $(1+t\gamma_0)^2 (1-\rho t) \leq (1-\tilde{\rho}t)$ for $\tilde{\rho}>0$. We have
\begin{align*}
    (1+t\gamma_0)^2 (1-\rho t) \leq (1-t(\rho-2\gamma_0 - \gamma_0^2)).
\end{align*}
Hence it is needed that
$$ \frac{(1-e^{-R t_0})}{t_0}- 1 > 2\gamma_0 + \gamma_0^2$$
and thus that $e^{-R t_0} < 1-t_0(1+2\gamma_0+\gamma_0^2),$ which is valid as $R$ can be taken as large as needed and  $t_0 <(1+2\gamma_0+\gamma_0^2)^{-1}.$ 
\end{proof}

\begin{lemma}
Consider the splitting scheme \DRBRD of ZZS.
Suppose Assumptions \ref{ass:ergodicity_zzs}(c) and \ref{ass:refresh_zzs} hold. Then there exist a function $V$ and constants $\rho\in(0,1)$, $C>0$ such that for any $t\in (0,t_0)$ with $t_0 < (1+2\gamma_0)^{-1}$ it holds that 
$$P_t^D P_t^R P_{2t}^B P_t^R P_t^D \,V(x,v) \leq (1-\rho t)V(x,v) + Ct.$$
\end{lemma}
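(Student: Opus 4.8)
The plan is to mirror the proof of Lemma~\ref{lem:drift_RDBDR}, replacing the role of the outer refreshment half-steps by a careful combination of the drift estimate for $P^D_t P^B_{2t} P^D_t$ (from Lemma~\ref{lem:drift_DBD}) with the refreshment drift estimate $P^R_t V \leq (1+\gamma_0 t)V + Mt$ (Lemma~\ref{lem:drift_refreshpart}), now sandwiched \emph{inside} the deterministic steps rather than outside. First I would fix the Lyapunov function $V$ from Lemma~\ref{lem:drift_DBD}, namely $V(x,v)=\exp(\beta\psi(x)+\sum_i\phi(v_i\partial_i\psi(x)))$ with $\phi(s)=\tfrac12\sign(s)\ln(1+2|s|)$ and $\beta\in(0,1/2)$. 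The difficulty compared with the \textbf{RDBDR} case is that the refreshment step $P^R_t$ is now applied to $P^B_{2t}P^D_t V$ rather than to $V$ itself, and a $P^D_t$ is applied to the result; so the bound $P^R_t V \leq (1+\gamma_0 t)V+Mt$ cannot be used directly after the inner $P^D_t$ — one must control $P^R_t$ of a function that is not exactly $V$.

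The key observation I would use to get around this is that $P^R_t$ only acts on the velocity variable, that $V$ has the product structure $V(x,v)=e^{\beta\psi(x)}\prod_{i=1}^d(1+2|\partial_i\psi(x)|)^{\sign(v_i\partial_i\psi(x))/2}$, and that the ratio $V(x,w)/V(x,v)$ for $w\neq v$ is bounded above by $\prod_{j:\,w_j\neq v_j}(1+2|\partial_j\psi(x)|)$. Hence for any function of the form $h(x,v) = a(x)\,V(x,v)$ with $a\geq 0$ one still gets $P^R_t h(x,v) = a(x)P^R_t V(x,v) \leq a(x)V(x,v)\big(e^{-t\gamma(x)} + t\gamma(x)\prod_{j=1}^d(1+2|\partial_j\psi(x)|)\big)$, which under Assumption~\ref{ass:refresh_zzs}(a) is $\leq (1+\gamma_0 t)h(x,v)$ for $|x|$ large. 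But the intermediate object $P^B_{2t}P^D_t V$ is not of this exact form; however it can be \emph{dominated} by such a form: exactly as in the proof of Lemma~\ref{lem:drift_DBD}, after the half-step of transport and the full bounce step one has $P^B_{2t}P^D_tV(x,v)\leq K_1(x,v,t)\,\tilde V(x,v)$ where $\tilde V(x,v)= V(x+tv,\cdot)$-type expressions only modify $x$-dependent prefactors, and the crucial point is that the bounce kernel acting on the velocity preserves the product structure. So the plan is: (i) expand $P^D_t P^R_t P^B_{2t} P^R_t P^D_t V$ from the inside out; (ii) use Lemma~\ref{lem:drift_refreshpart} style estimates for each $P^R_t$ factor, paying the price of one factor $(1+\gamma_0 t)$ each time and an additive $O(t)$; (iii) use Lemma~\ref{lem:drift_DBD} for the central $P^D_t P^B_{2t} P^D_t$, which inside a compact set gives $(1+Bt)$ and outside gives $(1-\rho_0 t)$ with $\rho_0 = (1-e^{-\beta_1 M t_0})/t_0 - 1$; and (iv) combine to get, for $|x|$ large, a bound $(1+\gamma_0 t)^2(1-\rho_0 t)V(x,v)+Ct \leq (1-t(\rho_0-2\gamma_0))V(x,v)+Ct$, which is a genuine contraction provided $\rho_0>2\gamma_0$, i.e. provided we may choose $M$ (hence $R$) large enough that $e^{-\beta_1 M t_0}<1-t_0(1+2\gamma_0)$; this is possible precisely when $t_0<(1+2\gamma_0)^{-1}$. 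Inside the compact set, boundedness and continuity of $V$ give $P_t^D P_t^R P_{2t}^B P_t^R P_t^D V\leq (1+Bt)V$ as in Lemma~\ref{lem:drift_DBD}, so $\overline P_t V\leq (1-\rho t)V+Ct$ globally after enlarging $C$.

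Concretely, the step I expect to require the most care, and which I would treat as the main obstacle, is justifying that the $P^R_t$ factor applied after the inner $P^D_t$ can still be bounded by a factor $(1+\gamma_0 t)$ times the same Lyapunov quantity: one must verify that the function $P^B_{2t}P^D_t V$ is pointwise dominated by $C\cdot e^{\beta\psi(x+\cdot)}\prod_i(1+2|\partial_i\psi(\cdot)|)^{\pm1/2}$-type expressions uniformly, so that the ratio-of-$V$ bound $V(x,w)/V(x,v)\le\prod_{j:w_j\ne v_j}(1+2|\partial_j\psi(x)|)$ can be combined with Assumption~\ref{ass:refresh_zzs}(b) — note that (b), not (a), is the relevant hypothesis here precisely because the refreshment rate is now evaluated after a transport step $x\mapsto x+tv$ and the Hessian/gradient correction terms of the bounce step are present, which is why the statement of Theorem~\ref{thm:ergo_others_zzs} invokes Assumption~\ref{ass:refresh_zzs}(b) for \DRBRD. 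Once that domination is in place, the remaining arithmetic — collecting powers of $(1+\gamma_0 t)$, applying Lemma~\ref{lem:drift_DBD}, and checking $(1+\gamma_0 t)^2(1-\rho_0 t)\le 1-\tilde\rho t$ with $\tilde\rho = \rho_0-2\gamma_0>0$ under $t_0<(1+2\gamma_0)^{-1}$ — is routine and parallel to the end of the proof of Lemma~\ref{lem:drift_RDBDR}. Finally, Equation~\eqref{eq:convergence_random_IC_DBD} and statements (1)--(2) of Theorem~\ref{thm:ZZS_ergodic_details} for \DRBRD follow verbatim as in Section~\ref{sec:proof_erg_randomIC_zzs}, using the minorisation condition already established in Section~\ref{sec:minorisation_other_zzs} for \DRBRD and the drift condition just proved, together with Theorem~\ref{thm:Harris}.
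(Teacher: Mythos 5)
The high-level structure you propose—Lyapunov function $V$ from Lemma~\ref{lem:drift_DBD}, a factor $(1+\gamma_0 t)$ for each refreshment layer, the drift constant $\rho_0$ from the DBD scheme in the middle, and the final requirement $\rho_0>2\gamma_0$ leading to $t_0<(1+2\gamma_0)^{-1}$—is correct, and you also correctly single out Assumption~\ref{ass:refresh_zzs}(b) as the hypothesis that must be used for this scheme. However, the step you flag as the main obstacle is precisely where your plan and the paper's proof diverge, and your sketch of that step has a genuine gap.

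You propose to peel off \emph{both} $P^R_t$ factors one at a time, each via a bound of the form $P^R_t h \leq (1+\gamma_0 t)h + Mt$, arriving at the factored estimate $(1+\gamma_0 t)^2 P^D_t P^B_{2t} P^D_t V + Ct$. For the outermost (rightmost, chronologically last) refreshment this is fine, since it is applied to $P^D_t V = V(\cdot+t\cdot,\cdot)$, a straightforward perturbation of $V$. But for the refreshment sandwiched between $\mathbf{D}$ and $\mathbf{B}$ you would need $P^R_t(P^B_{2t}P^D_t V)\leq (1+\gamma_0 t)P^B_{2t}P^D_t V + Mt$, and the function $h=P^B_{2t}P^D_t V$ is a mixture over velocity configurations $w'$ with bounce probabilities $p_{v\to w'}$; the ratio $h(x,u)/h(x,v)$ cannot be controlled simply by the ratio $V(x,u)/V(x,v)$ because the mixing weights depend on $v$. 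Your suggested fix — to dominate $P^B_{2t}P^D_t V$ pointwise by an expression of the form $a(x)\,V(x,v)$, which would make the $P^R_t$ estimate routine — is neither shown nor obviously true, and that is the crux of the argument rather than a formality.

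The paper's proof handles this refreshment differently: after the first peeling, it \emph{conditions on the refreshment event} in the remaining $P^D_t P^R_t P^B_{2t} P^D_t V$. With probability $e^{-t\gamma(x+vt)}$ no refreshment occurs, and the evolution reduces exactly to $P^D_t P^B_{2t}P^D_t V$, to which Lemma~\ref{lem:drift_DBD} applies; with probability $1-e^{-t\gamma(x+vt)}\leq t\gamma(x+vt)$ a refreshment does occur, and the contribution is bounded \emph{directly} against $V(x,v)$ via Assumption~\ref{ass:refresh_zzs}(b), yielding an additive term $t\gamma_0 V(x,v)$. This avoids any need to compare $P^R_t(P^B_{2t}P^D_t V)$ with $P^B_{2t}P^D_t V$. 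The resulting estimate is $(1+\gamma_0 t)\bigl(1-\rho t+\gamma_0 t\bigr)V + Ct$ rather than $(1+\gamma_0 t)^2(1-\rho t)V+Ct$; these agree to first order and both lead to the condition $\rho>2\gamma_0$, but the former is what the proof can actually deliver. So the missing idea is the conditioning: there is no central $P^D_t P^B_{2t} P^D_t$ that can simply be factored out of $P^D_t P^R_t P^B_{2t} P^R_t P^D_t$; it emerges only as the no-refreshment branch of a case split.
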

\begin{proof}
Let $V$ be as in Lemma \ref{lem:drift_DBD}. Observe that by Lemma \ref{lem:drift_refreshpart} we have that 
\begin{align*}
    P_t^R P_t^D V(x,v) & = P_t^R V(x+vt,v) \leq (1+\gamma_0t ) V(x+vt,v) + Mt
\end{align*}
and thus $P_t^R P_t^D V(x,v) \leq (1+\gamma_0t) P_t^D V(x,v) + Mt.$
Then 
\begin{align*}
    P_t^D P_t^R P_{2t}^B P_t^R P_t^D V(x,v) & \leq (1+\gamma_0t) P_t^D P_t^R P_{2t}^B P_t^D V(x,v) + Mt
\end{align*}
and 
\begin{align*}
    P_t^D P_t^R P_{2t}^B P_t^D V(x,v) & = e^{-t\gamma(x+vt)} P_t^D P_{2t}^B P_t^D V(x,v) \\
    & \quad + V(x,v) (1-e^{-t\gamma(x+vt)})\sum_{w\in \{\pm 1\}^d} \frac{1}{2^d} \frac{V(x+vt+wt,w)}{V(x,v)}.
\end{align*}
The first term corresponds to the case of no refreshments, while in the second term a refreshment takes place. For the first term we can directly apply Lemma \ref{lem:drift_DBD}, which in the current context shows that for $t<t_0<1$ it holds $P_t^D P_{2t}^B P_t^D V(x,v) \leq (1-\rho t) V(x,v) + Mt$ for $\rho=(1-e^{-\beta_1 M t_0})t_0^{-1}- 1.$ The second term can be rewritten as in \eqref{eq:taylor_exp_driftDBD}, that is for $\overline{x}_1=\overline{x}_1(x,v,w,t)\in B(x,t\sqrt{d})$
\begin{align*}
    &\frac{V(x+vt+wt,w)}{V(x,v)} \\
    & \quad = \exp\left(\beta(\pot(x+vt+wt)-\pot(x)) + \sum_{i=1}^d (\phi(w_i\partial_i \pot(x+vt+wt))-\phi(v_i\partial_i\pot(x)) )\right) \\
    & \quad = e^{t\lvert \nabla \pot(x)\rvert +\frac{t^2}{2}(v+w)^T\nabla^2 \pot(\overline{x}_1)(v+w) + \lvert \nabla \pot(\overline{x}_2)\rvert } \prod_{i=1}^d (1+2\lvert \partial_i \pot(x)\rvert).
\end{align*}
Using Assumption~\ref{ass:refresh_zzs}(b) we find 
\begin{align*}
    (1-e^{-t\gamma(x+vt)})\sum_{w\in \{\pm 1\}^d} \frac{1}{2^d} \frac{V(x+vt+wt,w)}{V(x,v)} &\leq t\gamma(x+vt) \sum_{w\in \{\pm 1\}^d} \frac{1}{2^d} \frac{V(x+vt+wt,w)}{V(x,v)} \\
    & \leq  t \gamma_0.
\end{align*}
Therefore we have shown 
\begin{align*}
    P_t^D P_t^R P_{2t}^B P_t^R P_t^D V & \leq (1+\gamma_0t) (1-\rho t + t\gamma_0) V  
    + \tilde{M} t \leq (1-t(\rho - 2\gamma_0)) V + \tilde{M} t.
\end{align*}
Hence it is sufficient to ensure that $\rho>2\gamma_0$, which can be done similarly to the proof of Lemma \ref{lem:drift_RDBDR}.
\end{proof}

\section{Proofs of Section \ref{sec:expansion_mu}}\label{app:exp_inv_meas}
In this section we collect statements and proofs that are not included in Section \ref{sec:expansion_mu}. 

\subsection{The generator of BPS and its adjoint}
The generator of BPS and its adjoint play an important role in the proofs below and therefore we recall them in this section.
Recall the generator of BPS:
\begin{align*}
	\cL_{BPS} f(x,v) &= \langle v, \nabla_x f(x,v)\rangle + \lambda_1(x,v)[f(x,R(x)v)-f(x,v)]\\
    & \quad +\lambda_r \int \big(f(x,w) - f(x,v)\big) \nu(w)dw.
\end{align*}
The adjoint of $\cL_{BPS}$ in $L^2$, that is the operator $\cL^*_{BPS}$ such that $\int g\cL_{BPS} f=\int f \cL^*_{BPS} g$, is given by
\begin{align*}
    \cL^*_{BPS} g(x,v) 
    & = -\langle v,\nabla_x g(x,v)\rangle  + ((g\lambda_1)(x,R(x)v)-(g\lambda_1)(x,v)) \\& \quad + \lambda_r \left(\nu(v)\int g(x,y)dy  - g(x,v)\right),
\end{align*}
which can be written as $\cL^*_{BPS} g(x,v) =( \cL^*_{D} +\cL^*_{B}+\cL^*_{R}) g(x,v)$ for
\begin{align*}
    & \cL^*_{D}g(x,v) = -\langle v,\nabla_x g(x,v)\rangle, \\
    & \cL^*_{B} g(x,v)= g(x,R(x)v)\lambda_1(x, R(x)v) - g(x,v) \lambda_1(x, v), \\ 
    & \cL^*_{R} g(x,v)= \lambda_r \left(\nu(v)\int g(x,y)dy  - g(x,v)\right).
\end{align*}

\subsection{Proof of Proposition \ref{prop:f_2_1D}}\label{sec:compute_f2_1d}
We start by focusing on the left hand side of \eqref{eq:main_expansion_invmeas}, i.e. $\cL^*_{BPS}(\mu f_2)$. We find since $\mu$ is rotationally invariant in $v$
\begin{align*}
    \cL^*_{BPS} (\mu f_2) (x,v) &= \mu(x,v) \Big\{ \langle v,\nabla \pot(x)\rangle f_2(x,v) - \langle v,\nabla_x f_2(x,v)\rangle + (-\langle v,\nabla \pot(x)\rangle)_+ f_2(x,R(x)v)\\
    & \quad -\langle v,\nabla \pot(x)\rangle_+ f_2(x,v) + \lambda_r \int f_2(x,y)\nu(dy) - \lambda_r f_2(x,v) \Big\}.
\end{align*}
We shall consider the case of $v=\pm 1$, hence $\nu = (1/2) \delta_{+1}+(1/2)\delta_{-1}$. In particular this choice satisfies Assumption \ref{ass:velocity_distribution} below. Introduce the notation $f^+_2(x)=f_2(x,1)$, $f^-_2(x)=f_2(x,-1)$. We have in the 1-dimensional setting
\begin{align*}
    \cL^*_{BPS} (\mu f_2) (x,+1) &= -\mu(x,+1) \Big\{  (f_2^+)' (x) +((-\pot' (x))_++\lambda_r/2 )f_2^+(x) - (\lambda_r/2 +(-\pot' (x))_+)f^-_2(x)\Big\}, \\
    \cL^*_{BPS} (\mu f_2) (x,-1) &= +\mu(x,-1) \Big\{  (f_2^-)' (x) + ((+\pot' (x))_+ + \lambda_r/2 )f^+_2(x)-(\lambda_r/2 +(+\pot' (x))_+ )f_2^-(x) \Big\}.
\end{align*}
Define function $h$ such that $h=\cL^*_2 \mu$, and also $h^+(x)=h(x,+1)$ and $h^-(x)=h(x,-1)$.
Therefore we wish to solve the following system of ODEs
\begin{equation}\label{eq:system_1d}
    \begin{cases}
    (f_2^+)' (x) = -(\lambda_r/2+(-\pot' (x))_+)f_2^+(x) + (\lambda_r/2 +(-\pot' (x))_+)f^-_2(x)-h^+(x),\\
    (f_2^-)' (x)  =- (\lambda_r/2 +(+\pot' (x))_+)f^+_2(x)+(\lambda_r/2+(+\pot' (x))_+ )f_2^-(x)+h^-(x),
    \end{cases}
\end{equation}
with compatibility condition \eqref{eq:compatibility_condition}, which in this case can be written as
\begin{equation}\label{eq:compatibility_1d}
    \int_{-\infty}^\infty (f_2^+(x)+f_2^-(x)) \pi(x)  dx  = 0
\end{equation}
with $\pi(x)=  \mu(x,1)+\mu(x,-1)$.
Let us find a solution to \eqref{eq:system_1d} for a generic (continuous and locally lipschitz) function $h$. Start by subtracting the first line to the second line in \eqref{eq:system_1d}:
\begin{equation}\label{eq:system1d_step1}
    (f_2^-)' (x)-(f_2^+)' (x) = ((\pot' (x))_+-(-\pot' (x))_+)(f_2^-(x)-f_2^+(x)) + h_s(x),
\end{equation}
where $h_s(x)=h^+(x)+h^-(x)$. Define $g=f^-_2-f_2^+$ and notice that $(\pot' (x))_+-(-\pot' (x))_+=\pot' (x)$. Then we can rewrite \eqref{eq:system1d_step1} as
\begin{equation}\notag
    g' (x) = \pot' (x) g(x) + h_s(x).
\end{equation}
Solving this ODE using an integrating factor we find
\begin{equation}\notag
    g(x) = \exp\left(\pot(x)\right)\lim_{y\to-\infty} \left[ \exp\left(-\pot(y)\right) g(y) \right] +\exp\left(\pot(x)\right)\int_{-\infty}^x h_s(y) \exp(-\pot(y))dy .
\end{equation}
Recall that $g=f^--f^+$ and $f^+,f^-$ satisfy \eqref{eq:compatibility_1d}. In order for $f_2$ to define a proper density we require
\begin{equation*}
    \int_{-\infty}^\infty g(x) \pi(x)dx < \infty.
\end{equation*}
For this to hold it must be that $\lim_{y\to-\infty} \exp\left(-\pot(y)\right) g(y)=0$ and thus
\begin{equation}\label{eq:1d_g}
    g(x) = \exp\left(\pot(x)\right)\int_{-\infty}^x h_s(y) \exp(-\pot(y))dy .
\end{equation}
Since $f_2^-(x) = f_2^+(x) + g(x)$ and plugging this in the first equation of \eqref{eq:system_1d} we obtain the ODE
\begin{equation}\notag
    (f_2^+)' (x) = (\lambda_r/2 +(-\pot' (x))_+)g(x)-h^+(x)
\end{equation}
which can be integrated as
\begin{equation}\label{eq:1d_f2+}
    f_2^+(x) = f_2^+(0) + \int_0^x \left( (\lambda_r/2 +(-\pot' (y))_+)g(y)-h^+(y)\right)dy.
\end{equation}
It follows that
\begin{equation}\label{eq:1d_f2-}
\begin{aligned}
    f_2^-(x) & = f_2^+(0) + \int_0^x \left( (\lambda_r/2 +(-\pot' (y))_+)g(y)-h^+(y)\right)dy \\ 
    & \quad +\exp\left(\pot(x)\right)\int_{-\infty}^x h_s(y) \exp(-\pot(y))dy.
    \end{aligned}
\end{equation}
Finally we compute $f^+_2(0)$ enforcing the compatibility condition \eqref{eq:compatibility_1d}. Plugging \eqref{eq:1d_f2+} and \eqref{eq:1d_f2-} in \eqref{eq:compatibility_1d} we find the condition
\begin{align}\label{eq:cond_f2+}
    f_2^+(0) & = -\int_{-\infty}^\infty \Big(  g(x)/2 +  \int_0^x \left( (\lambda_r/2 +(-\pot' (y))_+)g(y)-h^+(y)\right)dy \Big)\pi(x)dx.
\end{align}

\subsection{Computing $\cL^*_2$}\label{app:computing_cl2*}
In this section we obtain the terms $\cL^*_2$ for the splitting schemes \textbf{DBRBD}, \textbf{RDBDR}, \textbf{DRBRD}, \textbf{BDRDB}. This is necessary in order to find the analytic expression of $f_2$ given by Proposition \ref{prop:f_2_1D}. We shall focus on the case of BPS with a general dimensionality of the process. Recall that in the one-dimensional case this coincides with the ZZS.

Let us consider the following assumption on the invariant distribution of the velocity vector.
\begin{assumption}\label{ass:velocity_distribution}
The invariant measure for the velocity component $\nu$ satisfies the following conditions:
\begin{enumerate}[]
    \item \emph{Invariance under rotations}: $\nu(w)=\nu(v)$ for any $v,w$ such that $\lvert v\rvert = \lvert w \rvert$;
    \item \emph{Mean zero}: $\mathbb{E}_\nu[V]=0$;
    \item \emph{Isotropic}: 
    for some $b>0$ it holds that $\textnormal{Cov}_\nu(V) = b I.$
\end{enumerate}
\end{assumption}
These properties hold for instance if $\nu$ is the standard Gaussian distribution, as well as if $\nu$ is the uniform on the unit sphere (in that case $b=1/d$). We find the following expressions for $\cL_2^*.$
\begin{proposition}\label{prop:expansion_adjoint_bps}
Let Assumption \ref{ass:velocity_distribution} hold and define 
\begin{align*}
    &A(x,v)=\frac{3}{2}\lambda_r \Big(b(\lvert \nabla\pot(x)\rvert^2 -\Delta \pot(x))  + 2\langle v,\nabla \pot(x) \rangle \lambda_1(x,R(x)v) + \langle v,\nabla^2 \pot(x)v\rangle\Big),\\
    & B(x,v) = \frac{3}{2} \lambda_1(x,R(x)v) \Big( \langle v,\nabla^2 \pot(x) v\rangle - \langle R(x)v,\nabla^2 \pot(x) R(x)v\rangle\Big)+ \frac{1}{2} \langle v,\nabla_x (\langle v,\nabla^2 \pot(x)v\rangle)\rangle ,\\
    & C(x,v)=3\lambda_1(x,R(x)v)\big( -2\langle v,\nabla^2 \pot(x) v\rangle + \langle v,\nabla \pot(x)\rangle^2\big) -  \langle v,\nabla(\langle v,\nabla^2 \pot(x) v\rangle)\rangle,\\
    & D(x,v) = \frac{3}{2}\lambda_r \Big(b(\lvert \nabla\pot(x)\rvert^2 -\Delta \pot(x)) +\! \langle v,\nabla^2 \pot(x)v\rangle  +\langle v,\nabla \pot(x) \rangle \big(3\lambda_1(x,R(x)v)+\lambda_1(x,v)\big)\Big).
\end{align*}
The splitting scheme \DBRBD satisfies
\begin{align*}
    \cL^*_2 \mu(x,v) &= \frac{\mu(x,v)}{12} \Big(A(x,v)+B(x,v) \Big).
\end{align*}
The splitting scheme \textbf{RDBDR} satisfies
\begin{align*}
    \cL^*_2 \mu(x,v) & = \frac{\mu(x,v)}{12} B(x,v).
\end{align*}
The splitting scheme \textbf{DRBRD} satisfies
\begin{align*}
    \cL^*_2 \mu(x,v) & = \frac{\mu(x,v)}{12} \Bigg( D(x,v)  + B(x,v) + \frac{3}{2}\lambda_r^2 \langle v,\nabla \pot(x)\rangle \Bigg).
\end{align*}
The splitting scheme \textbf{BDRDB} satisfies 
\begin{align*}
    \cL^*_2 \mu(x,v) &= \frac{\mu(x,v)}{12} \Big( -A(x,v)+ C(x,v) \Big).
\end{align*}
\end{proposition}

\begin{remark}
    Clearly, if $\cL^*_2 \mu = 0$ then $f_2$ must be a constant that satisfies \eqref{eq:compatibility_condition} and hence it must be that $f_2=0$, i.e. the second order term in $\mu_\delta$ is zero. This is the case for instance for scheme \textbf{RDBDR} when the target is a multidimensional standard Gaussian. Indeed in Section  \ref{sec:metropolis_adjusted_algorithms} we proved that \textbf{RDBDR} is unbiased for standard Gaussian targets, thus this is a consistent result.
    In the same setting, we observe that $f_2=0$ for schemes \textbf{DBRBD} and \textbf{DRBRD}  when the refreshment rate is $\lambda_r=0$. This is an expected result, as when $\lambda_r=0$ these schemes coincide with \textbf{RDBDR}. In Figure~\ref{fig:bps_err_rad_1d_zerorefresh} we confirm that, for a one-dimensional standard Gaussian and when $\lambda_r=0$, the scheme \textbf{DBD} is unbiased, while the scheme \textbf{BDB} is of second order.

\end{remark}

\begin{figure}[t]
    \centering
    \includegraphics[width=0.45\textwidth]{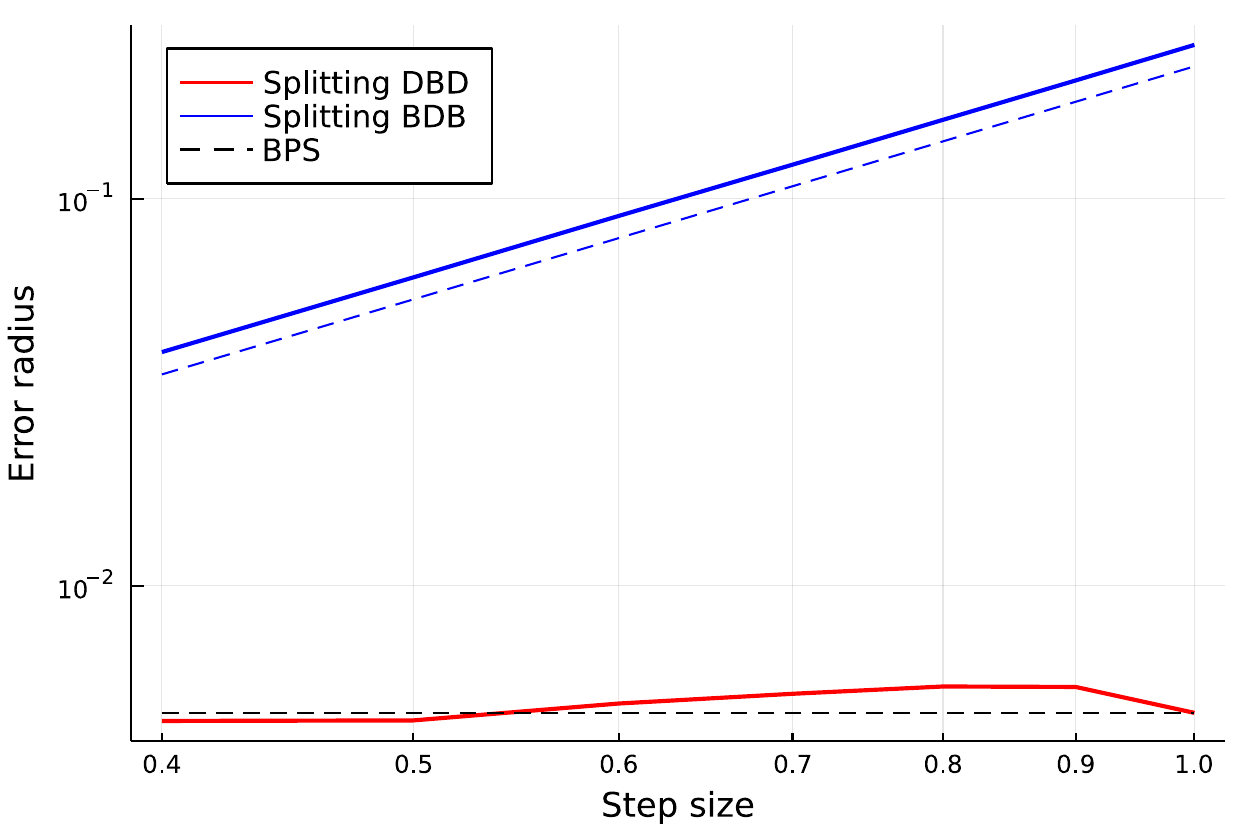}
    \caption{Error for the radius statistic for a one-dimensional standard Gaussian target. Here  $\lambda_r = 0$ for both schemes \textbf{DBD} and \textbf{BDB}. The \emph{dashed, blue line} corresponds to second order convergence. The time horizon is fixed to $T=10^5$ and the number of iterations is $N=T/\delta$.}
    \label{fig:bps_err_rad_1d_zerorefresh}
\end{figure}

In Proposition \ref{prop:expansion_adjoint_bps} we consider symmetric splitting schemes of the form
$$e^{\delta \cL_S} = e^{\frac{\delta}{2}\cL_A} e^{\frac{\delta}{2}\cL_B}e^{\delta\cL_C}e^{\frac{\delta}{2}\cL_B}e^{\frac{\delta}{2}\cL_A}.$$
For such schemes, the Baker-Campbell-Haussdorff formula gives that the adjoint of the generator of the chain satisfies the following decomposition:
\begin{align*}
    \cL^*_S & = \cL^* + \frac{\delta^2}{12} \Big([\cL^*_C,[\cL^*_C,\cL^*_A+\cL^*_B]] + [\cL^*_B,[\cL^*_B,\cL^*_A]]+[\cL^*_C,[\cL^*_B,\cL^*_A]] + [\cL^*_B,[\cL^*_C,\cL^*_A]]   \\
    & \quad -\frac{1}{2} [\cL^*_B,[\cL^*_B,\cL^*_C]] -\frac{1}{2} [\cL^*_A,[\cL^*_A,\cL^*_C]] -\frac{1}{2} [\cL^*_A,[\cL^*_A,\cL^*_B]]\Big) + \mathcal{O}(\delta^4)\\
    & = \cL^* + \delta^2 \cL^*_2 + \mathcal{O}(\delta^4).
\end{align*}
where $\cL^*= \cL^*_A+\cL^*_B+\cL^*_C$ is the adjoint of the generator of the process with generator $\cL= \cL_A+\cL_B+\cL_C$. In order to prove Proposition \ref{prop:expansion_adjoint_bps} it is then sufficient to compute the commutators above for the four schemes we are interested in.
In Section \ref{sec:compute_commutators} we obtain the first and second order commutators of BPS, while in Section \ref{sec:skeleton_proof_expadjoint} we use the BCH formula and the obtained results to prove Proposition \ref{prop:expansion_adjoint_bps}.

\subsubsection{Computing the commutators of BPS}\label{sec:compute_commutators}
In this section we compute the first and second order commutators for the various components of the adjoint of the BPS. 


In the following we shall rely on the following equalities
\begin{align*}
    & \cL^*_D \mu(x,v) = \langle v,\nabla \pot(x) \rangle \mu(x,v), \\
    & \cL^*_B \mu(x,v) = -  \langle v,\nabla \pot(x)\rangle \mu(x,v), \\
    & \cL^*_R \mu(x,v) =  0.
\end{align*}
To obtain $\cL^*_D \mu(x,v)$ we used the trivial, but useful, identity
\begin{align*}
    &\nabla_x \mu(x,v) = -\nabla \pot(x) \mu(x,v).
\end{align*}
The following lemma groups two other useful identities.
\begin{lemma}\label{lem:identities_switchingrates}
For $\lambda(x,v)=\langle v,\nabla \pot(x)\rangle_+$ it holds that
\begin{align}
    & \label{eq:lambdaR-lambda} \lambda_1(x,R(x)v)-\lambda_1(x,v) = -\langle v,\nabla \pot(x)\rangle,\\ 
    & \lambda_1(x,R(x)v)+\lambda_1(x,v) = +\lvert \langle v,\nabla \pot(x)\rangle \rvert.\label{eq:lambdaR+lambda}
\end{align}
\end{lemma}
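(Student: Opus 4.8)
The plan is to verify both identities by a one-line substitution of the definition of the reflection operator $R(x)$ into the canonical rate $\lambda_1(x,v)=\langle v,\nabla\pot(x)\rangle_+$, together with the elementary scalar identities $a_+-(-a)_+=a$ and $a_++(-a)_+=|a|$ valid for every $a\in\R$ (both checked by distinguishing the cases $a\geq 0$ and $a<0$).

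First I would compute the effect of $R(x)$ on the relevant inner product. Recall from Example~\ref{ex:BPSintro} that
\[
R(x)v = v - 2\frac{\langle v,\nabla\pot(x)\rangle}{|\nabla\pot(x)|^2}\,\nabla\pot(x),
\]
so taking the inner product with $\nabla\pot(x)$ gives
\[
\langle R(x)v,\nabla\pot(x)\rangle = \langle v,\nabla\pot(x)\rangle - 2\langle v,\nabla\pot(x)\rangle = -\langle v,\nabla\pot(x)\rangle .
\]
Consequently $\lambda_1(x,R(x)v) = \big(\langle R(x)v,\nabla\pot(x)\rangle\big)_+ = \big(-\langle v,\nabla\pot(x)\rangle\big)_+$. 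Setting $a=\langle v,\nabla\pot(x)\rangle$ and using the two scalar identities, one gets $\lambda_1(x,R(x)v)-\lambda_1(x,v) = (-a)_+ - a_+ = -a$, which is \eqref{eq:lambdaR-lambda}, and $\lambda_1(x,R(x)v)+\lambda_1(x,v) = (-a)_+ + a_+ = |a|$, which is \eqref{eq:lambdaR+lambda}.

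There is essentially no obstacle: the only point worth a remark is the degenerate case $\nabla\pot(x)=0$, where $R(x)$ is taken to be the identity; then $\lambda_1(x,R(x)v)=\lambda_1(x,v)=0$ and both identities reduce to $0=0$, so the statement still holds (and in any case these identities are only invoked where $\lambda_1>0$, i.e. $\nabla\pot(x)\neq 0$).
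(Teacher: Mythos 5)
Your proof is correct and is exactly the calculation the paper has in mind when it writes ``The proof is trivial'': substitute the reflection formula to get $\langle R(x)v,\nabla\pot(x)\rangle=-\langle v,\nabla\pot(x)\rangle$ and then apply the scalar identities $(-a)_+-a_+=-a$ and $(-a)_++a_+=|a|$. The remark about the degenerate case $\nabla\pot(x)=0$ is a reasonable extra precaution, consistent with how the lemma is actually used.
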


\textbf{First order commutators.}
Let us start  computing the three first order commutators $[\cL^*_B,\cL^*_D]$, $[\cL^*_R,\cL^*_D]$, and $[\cL^*_R,\cL^*_B]$, which are essential to compute higher order commutators. This is done below respectively in Lemmas \ref{lem:BD_bps}, \ref{lem:RD_bps}, \ref{lem:RB_bps}.

\begin{lemma}\label{lem:BD_bps}
Let $g$ be a suitable function. It holds that
\begin{align*}
    [\cL^*_B,\cL^*_D] g(x,v) & = - \langle R(x)v,(\nabla_x g)(x,R(x)v)\rangle \lambda_1(x,R(x)v) + \langle v,\nabla_x g(x,v)\rangle \lambda_1(x,v)\\
    & \quad + \langle v, \nabla_x\Big(g(x,R(x)v)\lambda_1(x, R(x)v) - g(x,v) \lambda_1(x, v) \Big) \rangle.
\end{align*}
In particular if $g=\mu$
\begin{align*}
    [\cL^*_B,\cL^*_D] \mu(x,v) & = \mu(x,v) \Big( \langle v, \nabla \pot (x)\rangle \big(\langle v,\nabla \pot(x)\rangle - \lvert \langle v,\nabla \pot(x)\rangle \rvert \big) -\langle v,\nabla^2 \pot(x))v\rangle \Big).
\end{align*}
\end{lemma}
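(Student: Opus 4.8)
The statement is a direct computation of the commutator $[\cL_B^*,\cL_D^*]g = \cL_B^*\cL_D^* g - \cL_D^*\cL_B^* g$ using the explicit formulas for $\cL_B^*$ and $\cL_D^*$ recorded in Section~\ref{sec:adjoint_bps}, followed by the specialization $g=\mu$. The plan is therefore essentially bookkeeping: first establish the general identity, then substitute.

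\emph{Step 1: the general commutator.} Recall $\cL_D^* g(x,v) = -\langle v,\nabla_x g(x,v)\rangle$ and $\cL_B^* g(x,v) = g(x,R(x)v)\lambda_1(x,R(x)v) - g(x,v)\lambda_1(x,v)$. I would compute $\cL_B^*(\cL_D^* g)$ by applying the bounce operator to the function $(y,w)\mapsto -\langle w,\nabla_y g(y,w)\rangle$: this gives
\[
-\langle R(x)v,(\nabla_x g)(x,R(x)v)\rangle\lambda_1(x,R(x)v) + \langle v,\nabla_x g(x,v)\rangle\lambda_1(x,v).
\]
Next $\cL_D^*(\cL_B^* g)(x,v) = -\langle v,\nabla_x\big(g(x,R(x)v)\lambda_1(x,R(x)v) - g(x,v)\lambda_1(x,v)\big)\rangle$, where the gradient $\nabla_x$ acts on the full $x$-dependence, including through the reflection operator $R(x)$. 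Subtracting yields exactly the claimed formula for $[\cL_B^*,\cL_D^*]g$. This part is immediate and needs no cleverness.

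\emph{Step 2: specialization to $g=\mu$.} Here I would use $\mu(x,v)=\nu(v)\pi(x)$ with $\pi\propto e^{-\pot}$, so $\nabla_x\mu(x,v) = -\nabla\pot(x)\mu(x,v)$, together with the rotation-invariance of $\nu$ (hence $\nu(R(x)v)=\nu(v)$ since $|R(x)v|=|v|$). The term $-\langle R(x)v,(\nabla_x\mu)(x,R(x)v)\rangle\lambda_1(x,R(x)v)$ becomes $\langle R(x)v,\nabla\pot(x)\rangle\mu(x,v)\lambda_1(x,R(x)v) = -\langle v,\nabla\pot(x)\rangle\mu(x,v)\lambda_1(x,R(x)v)$, using $\langle R(x)v,\nabla\pot(x)\rangle = -\langle v,\nabla\pot(x)\rangle$ from the definition of $R$. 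The term $\langle v,\nabla_x\mu(x,v)\rangle\lambda_1(x,v) = -\langle v,\nabla\pot(x)\rangle\mu(x,v)\lambda_1(x,v)$. For the last bracket, I observe $\cL_B^*\mu(x,v) = -\langle v,\nabla\pot(x)\rangle\mu(x,v)$ (recorded in Section~\ref{sec:adjoint_bps}), so $-\langle v,\nabla_x(\cL_B^*\mu)(x,v)\rangle = -\langle v,\nabla_x(-\langle v,\nabla\pot(x)\rangle\mu(x,v))\rangle = \langle v,\nabla^2\pot(x)v\rangle\mu(x,v) + \langle v,\nabla\pot(x)\rangle\langle v,\nabla_x\mu(x,v)\rangle = \langle v,\nabla^2\pot(x)v\rangle\mu(x,v) - \langle v,\nabla\pot(x)\rangle^2\mu(x,v)$. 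Collecting all three contributions and using $\lambda_1(x,R(x)v)+\lambda_1(x,v) = |\langle v,\nabla\pot(x)\rangle|$ from Lemma~\ref{lem:identities_switchingrates} (equation~\eqref{eq:lambdaR+lambda}), the $\langle v,\nabla\pot(x)\rangle^2$ terms combine with the $\lambda_1$ terms to produce $\langle v,\nabla\pot(x)\rangle\big(\langle v,\nabla\pot(x)\rangle - |\langle v,\nabla\pot(x)\rangle|\big)$, leaving $-\langle v,\nabla^2\pot(x)v\rangle$ from the remaining piece; this matches the claimed expression.

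\emph{Main obstacle.} There is no conceptual difficulty; the only place to be careful is the differentiation through $R(x)$ in $\nabla_x\big(g(x,R(x)v)\lambda_1(x,R(x)v)\big)$ and the sign conventions when simplifying with the identities of Lemma~\ref{lem:identities_switchingrates}. Since for $g=\mu$ one can route the whole last bracket through the already-known identity $\cL_B^*\mu = -\langle v,\nabla\pot\rangle\mu$ rather than differentiating $R(x)$ directly, this obstacle is sidestepped in the specialization and only the bookkeeping of the positive-part function $\lambda_1(x,R(x)v)$ versus $\lambda_1(x,v)$ remains.
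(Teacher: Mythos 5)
Your plan is correct and follows essentially the same route as the paper: expand $[\cL_B^*,\cL_D^*] g = \cL_B^*\cL_D^* g - \cL_D^*\cL_B^* g$ by direct computation, then specialize to $g=\mu$ using $\nabla_x\mu=-\nabla\pot\,\mu$, the rotation invariance of $\nu$, $\langle R(x)v,\nabla\pot\rangle=-\langle v,\nabla\pot\rangle$, and the observation that the third bracket is just $\nabla_x(\cL_B^*\mu)$, so that one never has to differentiate through $R(x)$. The paper does exactly this, with the only cosmetic difference that it rewrites $-\langle v,\nabla\pot\rangle(\lambda_1(x,R(x)v)+\lambda_1(x,v))$ as $\lambda_1^2(x,R(x)v)-\lambda_1^2(x,v)$ via \eqref{eq:lambdaR-lambda}, whereas you use \eqref{eq:lambdaR+lambda} directly; either way the algebra is identical.

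However, there is a sign slip in your Step 2 that makes the written intermediate expression inconsistent with your final summary. The third bracket in the general commutator formula (which you wrote correctly in Step 1) carries a plus sign: it equals
\[
+\langle v,\nabla_x(\cL_B^*\mu)(x,v)\rangle
=
+\langle v,\nabla_x\big({-}\langle v,\nabla\pot(x)\rangle\mu(x,v)\big)\rangle
=
-\langle v,\nabla^2\pot(x)v\rangle\mu(x,v)+\langle v,\nabla\pot(x)\rangle^2\mu(x,v)\,,
\]
whereas you introduced a spurious leading minus (writing $-\langle v,\nabla_x(\cL_B^*\mu)\rangle$) and obtained $\langle v,\nabla^2\pot\,v\rangle\mu-\langle v,\nabla\pot\rangle^2\mu$, which is the negative of the correct quantity. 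Your closing sentence — the $\langle v,\nabla\pot\rangle^2$ term combining with the $\lambda_1$ terms to give $\langle v,\nabla\pot\rangle(\langle v,\nabla\pot\rangle-|\langle v,\nabla\pot\rangle|)$ and the Hessian contributing $-\langle v,\nabla^2\pot\,v\rangle$ — is only consistent with the corrected sign, so the argument as described reaches the right answer, but the displayed third-term computation contradicts it and should be fixed.
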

\begin{remark}
Alternative ways to write $ [\cL^*_B,\cL^*_D] \mu(x,v)$ can be found using the identities in Lemma \ref{lem:identities_switchingrates}. We find
\begin{align*}
    [\cL^*_B,\cL^*_D] \mu(x,v)  & = \mu(x,v) \Big(\lambda_1^2(x,R(x)v)-\lambda_1^2(x,v)+\langle v,\nabla \pot(x)\rangle^2 - \langle v,\nabla^2 \pot(x)v\rangle\Big) \\
    & = \mu(x,v) \Big(\lambda_1^2(x,R(x)v)-\lambda_1^2(x,v)+\langle v,(\nabla \pot(x)\nabla \pot(x)^T -\nabla^2 \pot(x))v\rangle\Big).
\end{align*}
\end{remark}
\begin{proof}
We have
\begin{align*}
    [\cL^*_B,\cL^*_D] g(x,v) & =  \cL^*_B(-\langle v,\nabla_x g(x,v)\rangle) - \cL^*_D ( g(x,R(x)v)\lambda_1(x, R(x)v) - g(x,v) \lambda_1(x, v) ) \\
    & = - \langle R(x)v,(\nabla_x g)(x,R(x)v)\rangle \lambda_1(x,R(x)v) + \langle v,\nabla_x g(x,v)\rangle \lambda_1(x,v)\\
    & \quad + \langle v, \nabla_x\Big(g(x,R(x)v)\lambda_1(x, R(x)v) - g(x,v) \lambda_1(x, v) \Big) \rangle
\end{align*}
and hence
\begin{align*}
    [\cL^*_B,\cL^*_D] \mu(x,v) & =  
    - \mu(x,v) \langle v,\nabla \pot(x)\rangle (\lambda_1(x,R(x)v)+\lambda_1(x,v))\\
    & \quad + \langle v, \nabla_x\Big( \mu(x,v)(\lambda_1(x,R(x)v)-\lambda_1(x,v)) \Big)\rangle\\
    & = \mu(x,v) (\lambda_1^2(x,R(x)v)-\lambda_1^2(x,v))  \\ 
    & \quad - \langle v , \nabla_x \Big( \mu(x,v) \langle v,\nabla \pot(x)\rangle \Big)\rangle.
\end{align*}
Then note that
\begin{align*}
    \langle v , \nabla_x \Big( \mu(x,v) \langle v,\nabla \pot(x)\rangle \Big)\rangle & = \langle v,\nabla^2 \pot(x) v - \nabla \pot(x) \langle v,\nabla \pot(x)\rangle\, \rangle \, \mu(x,v).
\end{align*}
and hence
\begin{align*}
    [\cL^*_B,\cL^*_D] \mu(x,v) & = \mu(x,v) \Big( \langle v, \nabla \pot (x) \big(\langle v,\nabla \pot(x)\rangle - \lvert \langle v,\nabla \pot(x)\rangle \rvert \big)\rangle -\langle v,\nabla^2 \pot(x))v\rangle \Big).
\end{align*}
\end{proof}


\begin{lemma}\label{lem:RD_bps}
Let $g$ be a suitable function. It holds that
\begin{align*}
    [\cL^*_R,\cL^*_D] g(x,v)  & = \lambda_r \nu(v) \Big( \langle v,\int \nabla_x g(x,y)dy\rangle - \int \langle y,\nabla_x g(x,y)\rangle dy \Big).
\end{align*}
In particular if $g=\mu$
\begin{align*}
    [\cL^*_R,\cL^*_D] \mu(x,v) & = -\lambda_r \langle v,\nabla \pot(x) \rangle \mu(x,v).
\end{align*}
\end{lemma}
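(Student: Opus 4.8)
\textbf{Proof proposal for Lemma~\ref{lem:RD_bps}.}

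The plan is to compute the commutator $[\cL^*_R,\cL^*_D] = \cL^*_R\cL^*_D - \cL^*_D\cL^*_R$ directly from the definitions
\[
\cL^*_D g(x,v) = -\langle v,\nabla_x g(x,v)\rangle, \qquad
\cL^*_R g(x,v) = \lambda_r\left(\nu(v)\int g(x,y)\,dy - g(x,v)\right),
\]
which were recorded in Section~\ref{sec:adjoint_bps}. First I would apply $\cL^*_R$ to $\cL^*_D g$: since $\cL^*_D g(x,v) = -\langle v,\nabla_x g(x,v)\rangle$, we get
\[
\cL^*_R\cL^*_D g(x,v) = \lambda_r\left(-\nu(v)\int \langle y,\nabla_x g(x,y)\rangle\,dy + \langle v,\nabla_x g(x,v)\rangle\right),
\]
where I use that in the integral term the velocity variable of $\cL^*_D g$ is the dummy variable $y$, not $v$. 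Next I would apply $\cL^*_D$ to $\cL^*_R g$; since $\cL^*_D$ only differentiates in $x$ and $\nu(v)$ does not depend on $x$, and the integral $\int g(x,y)\,dy$ is a function of $x$ alone,
\[
\cL^*_D\cL^*_R g(x,v) = \lambda_r\left(-\nu(v)\,\langle v,\int \nabla_x g(x,y)\,dy\rangle + \langle v,\nabla_x g(x,v)\rangle\right).
\]
Subtracting, the $\langle v,\nabla_x g(x,v)\rangle$ terms cancel and one is left with
\[
[\cL^*_R,\cL^*_D] g(x,v) = \lambda_r\nu(v)\left(\langle v,\int \nabla_x g(x,y)\,dy\rangle - \int \langle y,\nabla_x g(x,y)\rangle\,dy\right),
\]
which is the first claimed identity.

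For the specialization to $g=\mu$, I would use the identity $\nabla_x\mu(x,y) = -\nabla\pot(x)\mu(x,y)$ recorded in Section~\ref{sec:adjoint_bps}, together with $\mu(x,y)=\nu(y)\pi(x)$ and the normalization $\int\nu(y)\,dy = 1$ and mean-zero property $\int y\,\nu(y)\,dy = 0$ from Assumption~\ref{ass:velocity_distribution}. The first term gives $\langle v,\int\nabla_x\mu(x,y)\,dy\rangle = -\langle v,\nabla\pot(x)\rangle\int\mu(x,y)\,dy = -\langle v,\nabla\pot(x)\rangle\,\pi(x)$, while the second term gives $\int\langle y,\nabla_x\mu(x,y)\rangle\,dy = -\pi(x)\,\langle \int y\,\nu(y)\,dy,\nabla\pot(x)\rangle = 0$. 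Hence
\[
[\cL^*_R,\cL^*_D]\mu(x,v) = \lambda_r\nu(v)\left(-\langle v,\nabla\pot(x)\rangle\pi(x)\right) = -\lambda_r\langle v,\nabla\pot(x)\rangle\mu(x,v),
\]
using $\nu(v)\pi(x)=\mu(x,v)$. This completes the proof.

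The computation is entirely routine; there is no real obstacle. The only point requiring a little care is bookkeeping of which variable is "active" — in $\cL^*_R\cL^*_D g$ the operator $\cL^*_D$ acts \emph{before} the refreshment integral, so inside the integral it is $\langle y,\nabla_x g(x,y)\rangle$ that appears, whereas in $\cL^*_D\cL^*_R g$ the refreshment happens first and then the $x$-gradient hits the already-integrated function, producing $\langle v,\int\nabla_x g(x,y)\,dy\rangle$. Keeping these straight is the whole content of the argument; everything else is substitution and the elementary moment properties of $\nu$.
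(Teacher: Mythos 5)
Your proof is correct and follows essentially the same direct-computation route as the paper: expand both $\cL^*_R\cL^*_D g$ and $\cL^*_D\cL^*_R g$ from the definitions, subtract, and then specialize using $\nabla_x\mu(x,y)=-\nabla\pot(x)\mu(x,y)$ together with the normalization and mean-zero properties of $\nu$. The only cosmetic difference is that the paper, in the specialization, first invokes $\cL^*_D\mu=\langle v,\nabla\pot\rangle\mu$ and $\cL^*_R\mu=0$ and then applies $\cL^*_R$ to the former, whereas you substitute directly into the general identity; both are the same calculation.
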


\begin{proof}
We find
\begin{align*}
    [\cL^*_R,\cL^*_D] g(x,v) & =- \cL^*_R(\langle v, \nabla_x g(x,v)\rangle) - \cL^*_D\left(\lambda_r \left(\nu(v)\int g(x,y)dy  - g(x,v)\right)\right) \\
    & = -\lambda_r \left(\nu(v)\int \langle y, \nabla_x g(x,y)\rangle dy  - \langle v, \nabla_x g(x,v)\rangle\right) \\
    & \quad + \lambda_r \langle v, \nabla_x (\nu(v)\int g(x,y)dy  - g(x,v))\rangle \\
    & = \lambda_r \nu(v) \Big( \langle v,\int \nabla_x g(x,y)dy\rangle - \int \langle y,\nabla_x g(x,y)\rangle dy \Big)
\end{align*}
and thus
\begin{align*}
    [\cL^*_R,\cL^*_D] \mu(x,v) & = \cL^*_R (\langle v,\nabla \pot(x) \rangle \mu(x,v) ) = -\lambda_r \langle v,\nabla \pot(x) \rangle \mu(x,v)
\end{align*}
\end{proof}


\begin{lemma}\label{lem:RB_bps}
Let $g$ be a suitable function. It holds that
\begin{align*}
    [\cL^*_R,\cL^*_B] g(x,v)  & = \lambda_r \Bigg(\nu(v)\int (g(x,R(x)y)\lambda_1(x,R(x)y)- g(x,y)\lambda_1(x,y) )dy  \\
    & \quad + \left(\nu(v)\int g(x,y)dy \right) \langle v,\nabla \pot(x)\rangle \Bigg).
\end{align*}
In particular if $g=\mu$
\begin{align*}
   [\cL^*_R,\cL^*_B] \mu(x,v)=-[\cL^*_R,\cL^*_D] \mu(x,v)  = \lambda_r  \langle v,\nabla \pot(x)\rangle \mu(x,v).
\end{align*}
\end{lemma}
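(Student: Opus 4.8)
The plan is to compute both terms of the commutator $[\cL^*_R,\cL^*_B]g = \cL^*_R(\cL^*_B g) - \cL^*_B(\cL^*_R g)$ directly from the definitions of $\cL^*_R$ and $\cL^*_B$ recorded in Section~\ref{sec:adjoint_bps}, following the same pattern used for the two previous first-order commutators in Lemmas~\ref{lem:BD_bps} and~\ref{lem:RD_bps}.

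First I would expand $\cL^*_R(\cL^*_B g)(x,v) = \lambda_r\bigl(\nu(v)\int \cL^*_B g(x,y)\,dy - \cL^*_B g(x,v)\bigr)$, substituting $\cL^*_B g(x,y) = g(x,R(x)y)\lambda_1(x,R(x)y) - g(x,y)\lambda_1(x,y)$ both inside the integral and in the pointwise term. Next I would expand $\cL^*_B(\cL^*_R g)(x,v) = (\cL^*_R g)(x,R(x)v)\,\lambda_1(x,R(x)v) - (\cL^*_R g)(x,v)\,\lambda_1(x,v)$; the one simplification needed here is that, since $\nu$ is rotation-invariant (Assumption~\ref{ass:velocity_distribution}) and $|R(x)v| = |v|$ because $R(x)$ is a reflection, $\nu(R(x)v) = \nu(v)$, so that $(\cL^*_R g)(x,R(x)v) = \lambda_r\bigl(\nu(v)\int g(x,y)\,dy - g(x,R(x)v)\bigr)$.

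Subtracting the two expressions, the terms $\lambda_r g(x,R(x)v)\lambda_1(x,R(x)v)$ and $\lambda_r g(x,v)\lambda_1(x,v)$ cancel between $\cL^*_R\cL^*_B g$ and $\cL^*_B\cL^*_R g$, leaving $\lambda_r\nu(v)\int\bigl[g(x,R(x)y)\lambda_1(x,R(x)y) - g(x,y)\lambda_1(x,y)\bigr]\,dy$ together with $-\lambda_r\bigl(\nu(v)\int g(x,y)\,dy\bigr)\bigl(\lambda_1(x,R(x)v) - \lambda_1(x,v)\bigr)$. Applying identity~\eqref{eq:lambdaR-lambda}, namely $\lambda_1(x,R(x)v) - \lambda_1(x,v) = -\langle v,\nabla\pot(x)\rangle$, to the last factor yields exactly the claimed general formula.

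For the specialization to $g = \mu$, I would use that $\mu(x,v) = \nu(v)\pi(x)$, so $\int \mu(x,y)\,dy = \pi(x)$ and $\nu(v)\pi(x) = \mu(x,v)$, which handles the second summand. For the first summand, the change of variables $y \mapsto R(x)y$ — a linear involution with $|\det R(x)| = 1$ — shows that $\int g(x,R(x)y)\lambda_1(x,R(x)y)\,dy = \int g(x,y)\lambda_1(x,y)\,dy$ for any $g$, so the integral term vanishes identically; hence $[\cL^*_R,\cL^*_B]\mu(x,v) = \lambda_r\langle v,\nabla\pot(x)\rangle\mu(x,v)$. There is no genuine obstacle here: the only points requiring care are the invariance of $\nu$ under $R(x)$ and the measure-preserving change of variables, both immediate from $R(x)$ being an orthogonal reflection.
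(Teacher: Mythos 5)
Your proof of the general commutator identity follows exactly the paper's route: expand both $\cL^*_R(\cL^*_B g)$ and $\cL^*_B(\cL^*_R g)$, note $\nu(R(x)v)=\nu(v)$ by rotation-invariance, cancel the two pointwise terms $\lambda_r g(x,R(x)v)\lambda_1(x,R(x)v)$ and $\lambda_r g(x,v)\lambda_1(x,v)$, and collect the remaining $\lambda_1(x,R(x)v)-\lambda_1(x,v)$ via identity~\eqref{eq:lambdaR-lambda}. That part is correct and matches the paper's proof step for step.

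For the specialization $g=\mu$, your argument is correct but genuinely different from the paper's. The paper observes $\cL^*_R\mu=0$ and $\cL^*_B\mu = -\langle v,\nabla\pot\rangle\mu$, so that $[\cL^*_R,\cL^*_B]\mu = \cL^*_R\big(-\langle v,\nabla\pot\rangle\mu\big)$, and then exploits the \emph{mean-zero} hypothesis of Assumption~\ref{ass:velocity_distribution} to kill the integral term. You instead substitute $g=\mu$ into the general formula and use \emph{measure-preservation} of the reflection $R(x)$ (a consequence of rotation-invariance) to annihilate the integral term by the change of variables $y\mapsto R(x)y$. Both are valid, and they appeal to different parts of Assumption~\ref{ass:velocity_distribution}. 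A side benefit of your route is that the change-of-variables argument actually shows the first summand of the general formula vanishes for \emph{every} suitable $g$, not just $g=\mu$ (the same mechanism is redone from scratch in~\eqref{eq:helper_1} in the proof of Lemma~\ref{lem:BRB_bps}); the paper does not note this simplification, so the formula as stated carries a redundant term.
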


\begin{proof}
Compute
\begin{align*}
    [\cL^*_R,\cL^*_B] g(x,v) & = \cL^*_R( g(x,R(x)v)\lambda_1(x,R(x)v)- g(x,v)\lambda_1(x,v)) \\
    & \quad - \cL^*_B\left(\lambda_r \left(\nu(v)\int g(x,y)dy  - g(x,v)\right)\right)\\
    & = \lambda_r \Big(\nu(v)\int (g(x,R(x)y)\lambda_1(x,R(x)y)- g(x,y)\lambda_1(x,y) )dy  \\
    & \quad - \underbrace{g(x,R(x)v)\lambda_1(x,R(x)v)}_A+ \underbrace{g(x,v)\lambda_1(x,v)}_B\Big)\\
    & \quad - \lambda_r \Bigg(  \left(\nu(R(x)v)\int g(x,y)dy  - \underbrace{g(x,R(x)v)}_A\right)\lambda_1(x,R(x)v) \\
    & \quad -\left(\nu(v)\int g(x,y)dy  - \underbrace{g(x,v)}_B\right)\lambda_1(x,v) \Bigg).
\end{align*}
It is now sufficient to cancel out the terms denoted by A and B that appear twice with opposite signs to obtain the final statement.
To compute $g=\mu$, we can use that $\cL_D + \cL_B$ and $\cL_R$ both preserve the invariant distribution, hence $[\cL^*_R,\cL^*_B] \mu(x,v)=-[\cL^*_R,\cL^*_D] \mu(x,v)$.
\end{proof}

\textbf{Higher order commutators.}
Let us now compute higher order commutators.

\begin{lemma}\label{lem:BRD_bps}
It holds that
\begin{align*}
   [\cL^*_B,[\cL^*_R,\cL^*_D]] \mu(x,v) & = \lambda_r \mu(x,v) \Big(\langle v,\nabla \pot(x) \rangle\Big(   \lambda_1(x,R(x)v) +  \lambda_1(x,v)\Big) \\
    &   \quad +b\,\trace\big( \nabla \pot(x) (\nabla \pot(x))^T-\nabla^2 \pot(x)\big) \Big).
\end{align*}
\end{lemma}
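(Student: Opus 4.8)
The goal is to compute the second-order commutator $[\cL^*_B,[\cL^*_R,\cL^*_D]]\mu$. The natural strategy is to use the already-established first-order result of Lemma~\ref{lem:RD_bps}, namely $[\cL^*_R,\cL^*_D]\mu(x,v) = -\lambda_r\langle v,\nabla\pot(x)\rangle\mu(x,v)$, and then apply $\cL^*_B$ to this explicit function. So first I would set $g(x,v):=[\cL^*_R,\cL^*_D]\mu(x,v) = -\lambda_r\langle v,\nabla\pot(x)\rangle\mu(x,v)$, and compute $[\cL^*_B,[\cL^*_R,\cL^*_D]]\mu = \cL^*_B g - [\cL^*_R,\cL^*_D](\cL^*_B\mu)$. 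The second term is handled using $\cL^*_B\mu(x,v) = -\langle v,\nabla\pot(x)\rangle\mu(x,v)$ (from Section~\ref{sec:adjoint_bps}), which is proportional to $g/\lambda_r$; in fact $\cL^*_B\mu = g/\lambda_r$, so $[\cL^*_R,\cL^*_D](\cL^*_B\mu) = \tfrac{1}{\lambda_r}[\cL^*_R,\cL^*_D]g$ — but it's cleaner to just directly apply the definition $[\cL^*_R,\cL^*_D]h = \cL^*_R\cL^*_D h - \cL^*_D\cL^*_R h$ to $h = \cL^*_B\mu$.

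The key computational step is evaluating $\cL^*_B$ on functions of the form $h(x,v) = a(x,v)\mu(x,v)$ where $a$ is a polynomial in $v$ with $\pot$-dependent coefficients. Recall $\cL^*_B h(x,v) = h(x,R(x)v)\lambda_1(x,R(x)v) - h(x,v)\lambda_1(x,v)$. Since $\mu(x,R(x)v) = \mu(x,v)$ (rotation invariance of $\nu$ and $|R(x)v| = |v|$), this becomes $\mu(x,v)\big(a(x,R(x)v)\lambda_1(x,R(x)v) - a(x,v)\lambda_1(x,v)\big)$. For the term $\cL^*_B g$ with $a(x,v) = -\lambda_r\langle v,\nabla\pot(x)\rangle$, I use that $\langle R(x)v,\nabla\pot(x)\rangle = -\langle v,\nabla\pot(x)\rangle$ (the defining property of the reflection, also behind Lemma~\ref{lem:identities_switchingrates}), so $a(x,R(x)v) = -a(x,v)$, giving $\cL^*_B g = -\lambda_r\langle v,\nabla\pot(x)\rangle\mu(x,v)\big(-\lambda_1(x,R(x)v) - \lambda_1(x,v)\big) = \lambda_r\langle v,\nabla\pot(x)\rangle(\lambda_1(x,R(x)v)+\lambda_1(x,v))\mu(x,v)$. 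The other piece, $-[\cL^*_R,\cL^*_D](\cL^*_B\mu)$, requires applying $\cL^*_D$ and $\cL^*_R$ to $-\langle v,\nabla\pot(x)\rangle\mu(x,v)$; applying $\cL^*_D = -\langle v,\nabla_x\cdot\rangle$ produces a Hessian term $\langle v,\nabla^2\pot(x)v\rangle$ and a $\langle v,\nabla\pot(x)\rangle^2$ term, and then the refreshment operator $\cL^*_R$ integrates $v$ against $\nu$, turning $\langle v,\nabla^2\pot(x)v\rangle$ into $b\,\trace(\nabla^2\pot(x))$ and $\langle v,\nabla\pot(x)\rangle^2$ into $b\,\trace(\nabla\pot(x)\nabla\pot(x)^T) = b|\nabla\pot(x)|^2$ using the isotropy $\textnormal{Cov}_\nu(V) = bI$ from Assumption~\ref{ass:velocity_distribution}; the mean-zero property kills the linear-in-$v$ contributions. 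Collecting these yields precisely the $b\,\trace(\nabla\pot(x)(\nabla\pot(x))^T - \nabla^2\pot(x))$ term.

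I expect the main obstacle to be bookkeeping rather than conceptual: carefully tracking which terms survive the $\cL^*_R$ integration against $\nu$ (only the even-in-$v$ pieces, scaled by $b$, plus the constant ones) and making sure the reflection-sign identities are applied to the right factors. A subtlety worth double-checking is that $\cL^*_D$ acting on $\mu(x,v)$ via $\nabla_x$ also differentiates the coefficient $\langle v,\nabla\pot(x)\rangle$, producing cross terms; one must be careful to combine $\nabla_x(\langle v,\nabla\pot(x)\rangle\mu) = (\langle v,\nabla^2\pot(x)\cdot\rangle - \langle v,\nabla\pot(x)\rangle\nabla\pot(x))\mu$ correctly. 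Once all terms are assembled and the two contributions $\cL^*_B g$ and $-[\cL^*_R,\cL^*_D]\cL^*_B\mu$ are added, the linear-in-$v$ terms from the two pieces should combine to give the stated $\langle v,\nabla\pot(x)\rangle(\lambda_1(x,R(x)v)+\lambda_1(x,v))$ coefficient, and the $\cL^*_R$-averaged quadratic terms give the trace term, matching the claimed formula.
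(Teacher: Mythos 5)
Your proposal is correct and takes essentially the same route as the paper: expand the double commutator as $\cL^*_B([\cL^*_R,\cL^*_D]\mu) - [\cL^*_R,\cL^*_D](\cL^*_B\mu)$, use Lemma~\ref{lem:RD_bps} for the explicit form $-\lambda_r\langle v,\nabla\pot\rangle\mu$, apply $\cL^*_B$ together with $\langle R(x)v,\nabla\pot(x)\rangle = -\langle v,\nabla\pot(x)\rangle$ and $\mu(x,R(x)v)=\mu(x,v)$ to get the $\lambda_1$-sum term, and let the $\nu$-average (mean zero, isotropy) produce the $b\,\trace(\cdot)$ term. The only cosmetic difference is that you would unwind $[\cL^*_R,\cL^*_D]$ into $\cL^*_R\cL^*_D - \cL^*_D\cL^*_R$ for the second piece, whereas the paper invokes the general identity from Lemma~\ref{lem:RD_bps} directly; both arrive at the same cancellations (the $-\lambda_r g$ part of $\cL^*_R$ and the $\cL^*_D\cL^*_R$ term annihilate each other, leaving only the $\nu$-averaged trace), which is precisely the bookkeeping you flag as the main obstacle.
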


\begin{proof}
Applying Lemma \ref{lem:RD_bps} we obtain
\begin{align*}
    &[\cL^*_B,[\cL^*_R,\cL^*_D]] \mu(x,v)=  -\lambda_r \cL^*_B \Big(  \langle v,\nabla \pot(x) \rangle \mu(x,v) \Big) + [\cL^*_R,\cL^*_D] \Big( \langle v,\nabla \pot(x)\rangle \mu(x,v) \Big) \\
    & = -\lambda_r\Big(  \langle R(x)v,\nabla \pot(x) \rangle \mu(x,v)\lambda_1(x,R(x)v) -  \langle v,\nabla \pot(x) \rangle \mu(x,v) \lambda_1(x,v)\Big) \\
    & \quad + \lambda_r \nu(v) \Big( \langle v,\nabla_x \int  (\langle y,\nabla \pot(x)\rangle \mu(x,y))dy\rangle - \int \langle y,\nabla_x (\langle y,\nabla \pot(x)\rangle \mu(x,y))\rangle dy \Big)\\
    & = \lambda_r \mu(x,v)\langle v,\nabla \pot(x) \rangle\Big(   \lambda_1(x,R(x)v) +  \lambda_1(x,v)\Big) \\
    & \quad - \lambda_r \mu(x,v) \Big(\int (\langle y,\nabla^2 \pot(x) y\rangle  -\langle y,\nabla \pot(x)\rangle^2) \nu(dy) \Big)\\
    & = \lambda_r \mu(x,v) \Big(\langle v,\nabla \pot(x) \rangle\Big(   \lambda_1(x,R(x)v) +  \lambda_1(x,v)\Big) \\
    &   \quad +b\,\trace\big( \nabla \pot(x) (\nabla \pot(x))^T-\nabla^2 \pot(x)\big) \Big).
\end{align*}
Note that in the last line we used that $\langle a,b\rangle^2 = \langle a, b b^T a\rangle$ and that
\begin{equation}
\begin{aligned}\label{eq:trace_computation}
    \int \langle y,(\nabla \pot(x)\nabla \pot(x)^T -\nabla^2 \pot(x))y\rangle \nu(dy) &= \sum_{j=1}^d \sum_{\ell=1}^d (\nabla \pot(x)\nabla \pot(x)^T -\nabla^2 \pot(x))_{j\ell} \int (y_j y_\ell) \nu(dy) \\
    & =b\,\trace\Big(\nabla \pot(x)\nabla \pot(x)^T -\nabla^2 \pot(x)\Big)
\end{aligned}
\end{equation}
which is a consequence of Assumption \ref{ass:velocity_distribution}.
\end{proof}

\begin{lemma}\label{lem:RRB_bps}
It holds that
\begin{align*}
   [\cL^*_R,[\cL^*_R,\cL^*_B]] \mu(x,v) & = -\lambda_r^2 \mu(x,v)\langle v,\nabla \pot(x)\rangle.
\end{align*}
\end{lemma}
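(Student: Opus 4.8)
The plan is to compute the iterated commutator $[\cL^*_R,[\cL^*_R,\cL^*_B]]\mu$ by applying $\cL^*_R$ to the already-established formula for $[\cL^*_R,\cL^*_B]\mu$ from Lemma~\ref{lem:RB_bps}, namely $[\cL^*_R,\cL^*_B]\mu(x,v) = \lambda_r\langle v,\nabla\pot(x)\rangle\mu(x,v)$, while keeping track of the two pieces of the commutator bracket. Explicitly, $[\cL^*_R,[\cL^*_R,\cL^*_B]]\mu = \cL^*_R\big([\cL^*_R,\cL^*_B]\mu\big) - [\cL^*_R,\cL^*_B]\big(\cL^*_R\mu\big)$. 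The second term vanishes immediately because $\cL^*_R\mu = 0$ (established in Section~\ref{sec:adjoint_bps}), so only the first term survives.

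First I would substitute $g = [\cL^*_R,\cL^*_B]\mu$ into the definition $\cL^*_R g(x,v) = \lambda_r\big(\nu(v)\int g(x,y)\,dy - g(x,v)\big)$. With $g(x,y) = \lambda_r\langle y,\nabla\pot(x)\rangle\mu(x,y)$, the integral term is $\lambda_r\nu(v)\int \lambda_r\langle y,\nabla\pot(x)\rangle\mu(x,y)\,dy$. Since $\mu(x,y) = \pi(x)\nu(y)$ and $\nu$ has mean zero (Assumption~\ref{ass:velocity_distribution}), this integral is zero: $\int \langle y,\nabla\pot(x)\rangle\nu(y)\,dy = \langle \mathbb{E}_\nu[V],\nabla\pot(x)\rangle = 0$. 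Hence only the $-g(x,v)$ part contributes, giving $\cL^*_R g(x,v) = -\lambda_r g(x,v) = -\lambda_r^2\langle v,\nabla\pot(x)\rangle\mu(x,v)$, which is exactly the claimed identity.

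There is essentially no obstacle here — this is a short and routine computation, strictly easier than the first-order commutators in Lemmas~\ref{lem:BD_bps}--\ref{lem:RB_bps}, since the rotation/reflection structure has already been absorbed into the clean form of $[\cL^*_R,\cL^*_B]\mu$ and the only facts needed are $\cL^*_R\mu = 0$ and the mean-zero property of $\nu$. The one point to state carefully is why the integral term drops out, so I would phrase the proof to make explicit use of Assumption~\ref{ass:velocity_distribution} at that step.

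\begin{proof}
By the Jacobi-type expansion of the bracket,
\[
[\cL^*_R,[\cL^*_R,\cL^*_B]]\mu = \cL^*_R\big([\cL^*_R,\cL^*_B]\mu\big) - [\cL^*_R,\cL^*_B]\big(\cL^*_R\mu\big).
\]
Since $\cL^*_R\mu = 0$, the second term vanishes. By Lemma~\ref{lem:RB_bps} we have $[\cL^*_R,\cL^*_B]\mu(x,v) = \lambda_r\langle v,\nabla\pot(x)\rangle\mu(x,v)$, so, writing $g(x,v) := \lambda_r\langle v,\nabla\pot(x)\rangle\mu(x,v)$,
\[
\cL^*_R g(x,v) = \lambda_r\Big(\nu(v)\int g(x,y)\,dy - g(x,v)\Big) = \lambda_r^2\nu(v)\int \langle y,\nabla\pot(x)\rangle\mu(x,y)\,dy - \lambda_r g(x,v).
\]
Using $\mu(x,y) = \nu(y)\pi(x)$ and the mean-zero property $\mathbb{E}_\nu[V] = 0$ from Assumption~\ref{ass:velocity_distribution}, the integral equals $\pi(x)\langle \mathbb{E}_\nu[V],\nabla\pot(x)\rangle = 0$. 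Therefore
\[
[\cL^*_R,[\cL^*_R,\cL^*_B]]\mu(x,v) = -\lambda_r g(x,v) = -\lambda_r^2\langle v,\nabla\pot(x)\rangle\mu(x,v),
\]
as claimed.
\end{proof}
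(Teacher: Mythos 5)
Your proof is correct and follows essentially the same route as the paper: expand the bracket, kill the second term using $\cL^*_R\mu=0$, then apply $\cL^*_R$ to the expression from Lemma~\ref{lem:RB_bps} and observe the integral term vanishes by the mean-zero property of $\nu$. The only difference is that you spell out the mean-zero step explicitly, which the paper leaves implicit.
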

\begin{proof}
Since $\cL^*_R \mu(x,v)=0$ we easily find
\begin{align*}
    [\cL^*_R,[\cL^*_R,\cL^*_B]]\mu(x,v) &= \cL^*_R(\lambda_r\langle v,\nabla \pot(x)\rangle \mu(x,v)) = -\lambda_r^2 \mu(x,v)\langle v,\nabla \pot(x)\rangle.
\end{align*}
\end{proof}

\begin{lemma}\label{lem:RRD_bps}
It holds that
\begin{align*}
   [\cL^*_R,[\cL^*_R,\cL^*_D]] \mu(x,v) & = \lambda_r^2 \mu(x,v)\langle v,\nabla \pot(x)\rangle.
\end{align*}
\end{lemma}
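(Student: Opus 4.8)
The plan is to mirror exactly the short computation used for Lemma~\ref{lem:RRB_bps}, exploiting the fact that $\cL^*_R$ annihilates the invariant measure. First I would expand the nested commutator via the Jacobi-type identity
\[
[\cL^*_R,[\cL^*_R,\cL^*_D]]\mu = \cL^*_R\big([\cL^*_R,\cL^*_D]\mu\big) - [\cL^*_R,\cL^*_D]\big(\cL^*_R\mu\big),
\]
and then invoke the identity $\cL^*_R\mu(x,v)=0$ established in Section~\ref{sec:adjoint_bps}, which kills the second term. This reduces the problem to computing $\cL^*_R$ applied to $[\cL^*_R,\cL^*_D]\mu$.

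Next I would substitute the value of $[\cL^*_R,\cL^*_D]\mu$ obtained in Lemma~\ref{lem:RD_bps}, namely $[\cL^*_R,\cL^*_D]\mu(x,v) = -\lambda_r\langle v,\nabla\pot(x)\rangle\mu(x,v)$, so that we must evaluate $\cL^*_R\big(\langle v,\nabla\pot(x)\rangle\mu(x,v)\big)$. Using the explicit form $\cL^*_R g(x,v) = \lambda_r\big(\nu(v)\int g(x,y)\,dy - g(x,v)\big)$ and the factorisation $\mu(x,y)=\nu(y)\pi(x)$, the integral term becomes $\pi(x)\big\langle \int y\,\nu(dy),\nabla\pot(x)\big\rangle$, which vanishes by the mean-zero property in Assumption~\ref{ass:velocity_distribution}. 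Hence $\cL^*_R\big(\langle v,\nabla\pot(x)\rangle\mu(x,v)\big) = -\lambda_r\langle v,\nabla\pot(x)\rangle\mu(x,v)$.

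Combining the two steps yields
\[
[\cL^*_R,[\cL^*_R,\cL^*_D]]\mu(x,v) = \cL^*_R\big(-\lambda_r\langle v,\nabla\pot(x)\rangle\mu(x,v)\big) = \lambda_r^2\langle v,\nabla\pot(x)\rangle\mu(x,v),
\]
which is the claimed identity. There is essentially no obstacle here: the only point requiring care is bookkeeping the sign and the order of the commutator in the Jacobi expansion, and checking that the hypotheses of Assumption~\ref{ass:velocity_distribution} (specifically mean-zero of $\nu$) are what makes the averaged term drop — exactly as in the proof of Lemma~\ref{lem:RRB_bps}, so I would simply remark on this parallel rather than reprove it.
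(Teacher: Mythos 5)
Your proof is correct and follows essentially the same route as the paper: both exploit $\cL^*_R\mu=0$ to reduce the nested commutator to $\cL^*_R$ applied to $[\cL^*_R,\cL^*_D]\mu=-\lambda_r\langle v,\nabla\pot\rangle\mu$, and both use the mean-zero property of $\nu$ to evaluate that. The paper merely shortcuts the last step by citing Lemma~\ref{lem:RRB_bps} together with the identity \eqref{eq:note_R_B+D} instead of redoing the one-line computation of $\cL^*_R(\langle v,\nabla\pot(x)\rangle\mu)$ as you do.
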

\begin{proof}
The result follows from Lemma \ref{lem:RRB_bps}.
\end{proof}

\begin{lemma}\label{lem:RBD_bps}
It holds that
\begin{align*}
    [\cL^*_R,[\cL^*_B,\cL^*_D]] \mu(x,v) &= \lambda_r \mu(x,v) \Bigg(  b\,\trace \Big(\nabla \pot(x)\nabla \pot(x)^T -\nabla^2 \pot(x)\Big) \\
    & \quad -\Big(\lambda_1^2(x,R(x)v)-\lambda_1^2(x,v)+\langle v,(\nabla \pot(x)\nabla \pot(x)^T -\nabla^2 \pot(x))v\rangle\Big)\Bigg)
\end{align*}
\end{lemma}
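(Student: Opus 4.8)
\textbf{Proof proposal for Lemma~\ref{lem:RBD_bps}.}

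The plan is to expand the nested commutator $[\cL^*_R,[\cL^*_B,\cL^*_D]]$ by applying it to $\mu$, using the already-computed first-order commutator $[\cL^*_B,\cL^*_D]$ from Lemma~\ref{lem:BD_bps}. Recall the alternative form from the remark following that lemma:
\begin{equation*}
[\cL^*_B,\cL^*_D]\mu(x,v) = \mu(x,v)\Big(\lambda_1^2(x,R(x)v)-\lambda_1^2(x,v)+\langle v,(\nabla\pot(x)\nabla\pot(x)^T-\nabla^2\pot(x))v\rangle\Big).
\end{equation*}
So I would write $[\cL^*_B,\cL^*_D]\mu = \mu\cdot G$, where $G(x,v)$ is this scalar function, and then compute $\cL^*_R(\mu G)$. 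Since $\cL^*_R(\mu h)(x,v)=\lambda_r\big(\nu(v)\int \mu(x,y)h(x,y)\,dy - \mu(x,v)h(x,v)\big)$ for any function $h$, and by Assumption~\ref{ass:velocity_distribution} the measure $\mu(x,\cdot)=\nu(\cdot)\pi(x)$ is exactly the one integrated against, the key computation reduces to evaluating $\int G(x,y)\nu(dy)$.

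The heart of the calculation is therefore the velocity average of $G$. The quadratic term $\langle v,(\nabla\pot\,\nabla\pot^T-\nabla^2\pot)v\rangle$ averages to $b\,\trace(\nabla\pot\,\nabla\pot^T-\nabla^2\pot)$ exactly as in \eqref{eq:trace_computation}. For the term $\lambda_1^2(x,R(x)v)-\lambda_1^2(x,v)$, I would use Lemma~\ref{lem:identities_switchingrates}: writing $\lambda_1(x,R(x)v)+\lambda_1(x,v)=|\langle v,\nabla\pot(x)\rangle|$ and $\lambda_1(x,R(x)v)-\lambda_1(x,v)=-\langle v,\nabla\pot(x)\rangle$, we get $\lambda_1^2(x,R(x)v)-\lambda_1^2(x,v) = -\langle v,\nabla\pot(x)\rangle\,|\langle v,\nabla\pot(x)\rangle|$. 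This is an odd function of $v$ (it changes sign under $v\mapsto -v$), so by the rotation-invariance/mean-zero part of Assumption~\ref{ass:velocity_distribution}, $\int\big(\lambda_1^2(x,R(x)y)-\lambda_1^2(x,y)\big)\nu(dy)=0$. Hence $\int G(x,y)\nu(dy)=b\,\trace(\nabla\pot(x)\nabla\pot(x)^T-\nabla^2\pot(x))$.

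Putting this together:
\begin{align*}
[\cL^*_R,[\cL^*_B,\cL^*_D]]\mu(x,v) &= \lambda_r\Big(\nu(v)\pi(x)\!\int G(x,y)\nu(dy) - \mu(x,v)G(x,v)\Big)\\
&= \lambda_r\mu(x,v)\Big(b\,\trace\big(\nabla\pot(x)\nabla\pot(x)^T-\nabla^2\pot(x)\big) - G(x,v)\Big),
\end{align*}
which, upon substituting the expression for $G$, is exactly the claimed formula. I do not anticipate a genuine obstacle here; the only point requiring care is the parity argument killing the $\lambda_1^2$-difference term under the $\nu$-average, and being consistent about the two equivalent forms of $[\cL^*_B,\cL^*_D]\mu$ (the one with $\langle v,\nabla\pot\rangle(\langle v,\nabla\pot\rangle - |\langle v,\nabla\pot\rangle|)$ versus the one with $\lambda_1^2(x,R(x)v)-\lambda_1^2(x,v)$), so that the residual term $-G(x,v)$ matches the form written in the statement.
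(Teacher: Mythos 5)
Your proof is correct and follows essentially the same route as the paper: apply $\cL^*_R$ to the alternative form of $[\cL^*_B,\cL^*_D]\mu$ from the remark after Lemma~\ref{lem:BD_bps}, note that the quadratic term averages to the trace via \eqref{eq:trace_computation}, and kill the $\lambda_1^2$-difference integral. The only cosmetic difference is in that last step: you rewrite $\lambda_1^2(x,R(x)v)-\lambda_1^2(x,v)=-\langle v,\nabla\pot(x)\rangle\,|\langle v,\nabla\pot(x)\rangle|$ and invoke oddness under $v\mapsto -v$ (valid since rotation-invariance of $\nu$ gives $\nu(v)=\nu(-v)$), whereas the paper splits the integral over the half-spaces $\{\pm\langle y,\nabla\pot\rangle\geq 0\}$ and uses the change of variables $y'=R(x)y$; both arguments are sound and amount to the same symmetry.
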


\begin{proof}
Taking advantage of Lemma \ref{lem:BD_bps}
\begin{align*}
    & [\cL^*_R,[\cL^*_B,\cL^*_D]] \mu(x,v) = \cL^*_R \Big(\mu(x,v) \Big(\lambda_1^2(x,R(x)v)-\lambda_1^2(x,v)+\langle v,(\nabla \pot(x)\nabla \pot(x)^T -\nabla^2 \pot(x))v\rangle\Big) \Big)\\
    & \quad = \lambda_r \mu(x,v)\Bigg( \int \Big(\lambda_1^2(x,R(x)y)-\lambda_1^2(x,y)+\langle y,(\nabla \pot(x)\nabla \pot(x)^T -\nabla^2 \pot(x))y\rangle\Big) \nu(dy)\\
    & \qquad - \Big(\lambda_1^2(x,R(x)v)-\lambda_1^2(x,v)+\langle v,(\nabla \pot(x)\nabla \pot(x)^T -\nabla^2 \pot(x))v\rangle\Big)\Bigg).
\end{align*}
Observe that for $A=\{y:\langle y,\nabla \pot(x)\rangle \geq 0\}$ we have
\begin{align*}
    \int (\lambda_1^2(x,R(x)y)-\lambda_1^2(x,y))\nu(dy) &= \int_{A^C} \langle y,\nabla \pot(x)\rangle^2 \nu(y)dy-\int_{A} \langle y,\nabla \pot(x)\rangle^2 \nu(y)dy\\
    & = 0.
\end{align*}
This can be seen by the change of variables $y'=R(x)y$ in the first integral. The result then follows by using \eqref{eq:trace_computation}.
\end{proof}

\begin{lemma}\label{lem:BRB_bps}
It holds that
\begin{align*}
    [\cL^*_B,[\cL^*_R,\cL^*_B]] \mu(x,v) &  = -\lambda_r \mu(x,v) \langle v,\nabla \pot(x)\rangle (\lambda_1(x,R(x)v)+\lambda_1(x,v)).
\end{align*}
\end{lemma}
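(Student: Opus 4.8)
\textbf{Proof plan for Lemma~\ref{lem:BRB_bps}.}
The plan is to compute $[\cL^*_B,[\cL^*_R,\cL^*_B]]\mu$ directly, exactly as in the preceding lemmas, by using the already-established formula for the inner commutator $[\cL^*_R,\cL^*_B]\mu$ from Lemma~\ref{lem:RB_bps} and then applying $\cL^*_B$ to the result. By Lemma~\ref{lem:RB_bps} we have $[\cL^*_R,\cL^*_B]\mu(x,v) = \lambda_r \langle v,\nabla\pot(x)\rangle \mu(x,v)$, so the only task is to evaluate $\cL^*_B\big(\lambda_r \langle v,\nabla\pot(x)\rangle \mu(x,v)\big)$. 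Recall $\cL^*_B g(x,v) = g(x,R(x)v)\lambda_1(x,R(x)v) - g(x,v)\lambda_1(x,v)$; applying this to $g(x,v) = \lambda_r\langle v,\nabla\pot(x)\rangle\mu(x,v)$ and using that $\mu(x,R(x)v) = \mu(x,v)$ (rotation invariance of $\nu$ in the velocity, together with $|R(x)v| = |v|$) gives
\[
[\cL^*_B,[\cL^*_R,\cL^*_B]]\mu(x,v) = \lambda_r\mu(x,v)\Big(\langle R(x)v,\nabla\pot(x)\rangle\lambda_1(x,R(x)v) - \langle v,\nabla\pot(x)\rangle\lambda_1(x,v)\Big).
\]

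The next step is to simplify the bracket using the reflection identity $\langle R(x)v,\nabla\pot(x)\rangle = -\langle v,\nabla\pot(x)\rangle$, which follows from the definition of $R$ and is the same fact used throughout the paper (e.g. in Lemma~\ref{lem:identities_switchingrates} via \eqref{eq:lambdaR-lambda}). This turns the bracket into $-\langle v,\nabla\pot(x)\rangle\big(\lambda_1(x,R(x)v) + \lambda_1(x,v)\big)$, which immediately yields the claimed formula
\[
[\cL^*_B,[\cL^*_R,\cL^*_B]]\mu(x,v) = -\lambda_r\mu(x,v)\langle v,\nabla\pot(x)\rangle\big(\lambda_1(x,R(x)v)+\lambda_1(x,v)\big).
\]
One could alternatively rewrite $\lambda_1(x,R(x)v)+\lambda_1(x,v) = |\langle v,\nabla\pot(x)\rangle|$ by \eqref{eq:lambdaR+lambda}, but the form stated in the lemma is the most convenient for later use in the BCH expansion, so I would leave it as is.

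There is essentially no obstacle here: the computation is a one-line application of $\cL^*_B$ followed by the reflection identity, and it parallels exactly the structure of Lemmas~\ref{lem:BRD_bps}, \ref{lem:RRB_bps}, \ref{lem:RRD_bps} and \ref{lem:RBD_bps}. The only point requiring a moment's care is the use of $\mu(x,R(x)v) = \mu(x,v)$, which is justified by Assumption~\ref{ass:velocity_distribution} (invariance of $\nu$ under rotations, hence under the orthogonal map $R(x)$), and the fact that no derivative falls on $\mu$ because $\cL^*_B$ involves no differentiation in $x$. Thus the proof is short and purely algebraic.
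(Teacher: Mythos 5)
Your proof computes only one half of the double commutator and silently drops the other half. By definition,
\[
[\cL^*_B,[\cL^*_R,\cL^*_B]]\mu \;=\; \cL^*_B\big([\cL^*_R,\cL^*_B]\mu\big) \;-\; [\cL^*_R,\cL^*_B]\big(\cL^*_B\mu\big),
\]
and you evaluate only the first term, $\cL^*_B\big(\lambda_r\langle v,\nabla\pot\rangle\mu\big)$. The analogy you draw with Lemma~\ref{lem:RRB_bps} does not carry over: there the second term vanishes because $\cL^*_R\mu = 0$, which the paper makes explicit. Here, by contrast, $\cL^*_B\mu = -\langle v,\nabla\pot(x)\rangle\mu(x,v) \neq 0$, so $-[\cL^*_R,\cL^*_B](\cL^*_B\mu) = [\cL^*_R,\cL^*_B]\big(\langle v,\nabla\pot\rangle\mu\big)$ is a genuine additional contribution that must be computed before one can claim the stated formula.

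It happens that this extra term vanishes, so your final answer is correct, but establishing the vanishing is not immediate. Using the formula for $[\cL^*_R,\cL^*_B]g$ with $g=\langle v,\nabla\pot\rangle\mu$, one gets two pieces: the term $\mu(x,v)\int\langle y,\nabla\pot(x)\rangle\,\nu(dy)\,\langle v,\nabla\pot(x)\rangle$, which is zero because $\nu$ has mean zero, and an integral of the form $\int\big(\langle R(x)y,\nabla\pot\rangle\lambda_1(x,R(x)y) - \langle y,\nabla\pot\rangle\lambda_1(x,y)\big)\nu(dy)$, which is zero only after invoking the reflection identity \eqref{eq:lambdaR-lambda} and rotation invariance of $\nu$ (this is exactly the content of \eqref{eq:helper_1}). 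Both steps use Assumption~\ref{ass:velocity_distribution} in an essential way; without the zero-mean and isotropy hypotheses they would fail. Your proof needs to add the second term and show it vanishes; as written, you have derived the correct expression only by coincidence, because the piece you neglected happens to integrate to zero.
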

\begin{proof}
Consider now
\begin{align*}
    [\cL^*_B,[\cL^*_R,\cL^*_B]] \mu(x,v) & = \cL^*_B(\lambda_r \langle v,\nabla \pot(x)\rangle \mu(x,v))+[\cL^*_R,\cL^*_B](\langle v,\nabla \pot(x)\rangle \mu(x,v))\\
    & = \lambda_r \mu(x,v) \Bigg( \langle R(x) v,\nabla \pot(x)\rangle \lambda_1(x,R(x)v)-\langle v,\nabla \pot(x) \rangle \lambda_1(x,v) \\
    & \quad + \int\Big( \langle R(x)y,\nabla \pot(x)\rangle \lambda_1(x,R(x)y\rangle -\langle y,\nabla \pot(x)\rangle \lambda_1(x,y)\Big)\nu(dy)\\
    & \quad +\int(\langle y,\nabla \pot(x)\rangle \nu(dy) \langle v,\nabla \pot(x)\rangle \Bigg).
\end{align*}
The last term equals zero as $\nu$ has mean zero. Then observe that by Identity \eqref{eq:lambdaR-lambda}
\begin{equation}
\begin{aligned}\label{eq:helper_1}
    & \int\Big( \langle R(x)y,\nabla \pot(x)\rangle \lambda_1(x,R(x)y) -\langle y,\nabla \pot(x)\rangle \lambda_1(x,y)\Big)\nu(dy) =\\
    & \quad = - \int \langle y,\nabla \pot(x)\rangle (\lambda_1(x,R(x)y)+\lambda_1(x,y))\nu(dy)\\
    & \quad = - \Big(\int \lambda_1(x,R(x)y)^2 \nu(dy) - \int\lambda_1(x,y)^2 \nu(dy) \Big)\\
    &\quad = 0,
\end{aligned}
\end{equation}
where the last equality follows by invariance under rotation of $\nu$ as required in Assumption \ref{ass:velocity_distribution}.
Hence we have obtained the statement.
\end{proof}

\begin{lemma}\label{lem:BBD_bps}
It holds that
\begin{align*}
    [\cL^*_B,[\cL^*_B,\cL^*_D]]\mu(x,v) &=2\mu(x,v)\lambda_1(x,R(x)v)  \Big( \langle v,\nabla \pot(x)\rangle^2 - \langle v,\nabla^2 \pot(x)v\rangle \\
    & \quad - \langle R(x)v,\nabla \pot^2 (x) R(x)v\rangle\Big).
\end{align*}
\end{lemma}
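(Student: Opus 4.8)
The goal is to compute the double commutator $[\cL_B^*,[\cL_B^*,\cL_D^*]]$ applied to $\mu$.  The strategy is exactly the one already used for all the preceding lemmas in this section: start from the known expression for $[\cL_B^*,\cL_D^*]\mu$ established in Lemma~\ref{lem:BD_bps}, and then apply $\cL_B^*$ to it.  Recall from Lemma~\ref{lem:BD_bps} that
\[
[\cL_B^*,\cL_D^*]\mu(x,v) = \mu(x,v)\,G(x,v), \qquad
G(x,v) := \langle v,\nabla\pot(x)\rangle\big(\langle v,\nabla\pot(x)\rangle - \lvert\langle v,\nabla\pot(x)\rangle\rvert\big) - \langle v,\nabla^2\pot(x)v\rangle,
\]
or equivalently $G(x,v) = \lambda_1^2(x,R(x)v) - \lambda_1^2(x,v) + \langle v,(\nabla\pot(x)\nabla\pot(x)^T-\nabla^2\pot(x))v\rangle$ via Lemma~\ref{lem:identities_switchingrates}.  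Since $\cL_B^*(\mu h)(x,v) = \mu(x,R(x)v)\lambda_1(x,R(x)v)h(x,R(x)v) - \mu(x,v)\lambda_1(x,v)h(x,v)$ and $\mu(x,R(x)v)=\mu(x,v)$ (the reflection preserves $|v|$ and $x$, and $\mu(x,v)=\nu(v)\pi(x)$ with $\nu$ rotation invariant), this gives
\[
[\cL_B^*,[\cL_B^*,\cL_D^*]]\mu(x,v) = \mu(x,v)\Big(\lambda_1(x,R(x)v)\,G(x,R(x)v) - \lambda_1(x,v)\,G(x,v)\Big).
\]

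The remaining work is purely algebraic simplification of $\lambda_1(x,R(x)v)G(x,R(x)v) - \lambda_1(x,v)G(x,v)$.  The key observations are: first, $R(x)$ is an involution, so $R(x)R(x)v=v$ and hence $\lambda_1(x,R(x)R(x)v)=\lambda_1(x,v)$; second, the form $\langle v,Av\rangle$ for a symmetric matrix $A$ satisfies $\langle R(x)v, A R(x)v\rangle$ which does not simplify trivially, so one keeps the Hessian term as $\langle R(x)v,\nabla^2\pot(x)R(x)v\rangle$; third, using $\langle R(x)v,\nabla\pot(x)\rangle = -\langle v,\nabla\pot(x)\rangle$ (the defining property of the reflection, already invoked in Example~\ref{ex:BPSintro}), the terms $\langle v,\nabla\pot(x)\rangle(\langle v,\nabla\pot(x)\rangle-|\langle v,\nabla\pot(x)\rangle|)$ transform in a controlled way.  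Writing $s=\langle v,\nabla\pot(x)\rangle$ so that $\lambda_1(x,v)=s_+$ and $\lambda_1(x,R(x)v)=(-s)_+=s_-$, one has $s_+ s_- = 0$ pointwise, which kills several cross terms.  After substituting and using $s_+(s^2 - s|s|) = s_+(s^2-s^2)=0$ and similar identities, the only surviving contributions are the Hessian terms multiplied by the reflected rate, leading to
\[
\lambda_1(x,R(x)v)\big(\langle v,\nabla^2\pot(x)v\rangle - \langle R(x)v,\nabla^2\pot(x)R(x)v\rangle\big)
+ \text{(terms of the form } s_\pm s^2\text{)},
\]
which should collapse, after careful bookkeeping of the $\lambda_1(x,R(x)v)\langle v,\nabla\pot(x)\rangle^2$ term, to the claimed
\[
2\mu(x,v)\lambda_1(x,R(x)v)\Big(\langle v,\nabla\pot(x)\rangle^2 - \langle v,\nabla^2\pot(x)v\rangle - \langle R(x)v,\nabla^2\pot(x)R(x)v\rangle\Big).
\]

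The main obstacle is simply keeping track of signs and of which argument ($v$ versus $R(x)v$) each $\lambda_1$ and each quadratic form carries; in particular one must be careful that $\lambda_1(x,R(x)v)G(x,R(x)v)$ expands $G$ at the argument $R(x)v$, so its ``$\lambda_1^2$'' piece becomes $\lambda_1^2(x,v) - \lambda_1^2(x,R(x)v)$ (a sign flip) while its Hessian piece becomes $\langle R(x)v,(\nabla\pot(x)\nabla\pot(x)^T-\nabla^2\pot(x))R(x)v\rangle = \langle v,\nabla\pot(x)\rangle^2 - \langle R(x)v,\nabla^2\pot(x)R(x)v\rangle$, using again $\langle R(x)v,\nabla\pot(x)\rangle=-s$.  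Combining with the $\lambda_1(x,v)$-weighted copy of $G(x,v)$ and the vanishing of $s_+s_-$ is the heart of the computation; no conceptual difficulty arises beyond this, and the result is stated as Lemma~\ref{lem:BBD_bps}.
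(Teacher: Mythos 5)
There is a genuine gap: the identity you use, namely
\[
[\cL^*_B,[\cL^*_B,\cL^*_D]]\mu \stackrel{?}{=} \cL^*_B\bigl([\cL^*_B,\cL^*_D]\mu\bigr),
\]
is false. For any operators $A,B$ and test function $f$ one has $[A,[A,B]]f = A\bigl([A,B]f\bigr) - [A,B]\bigl(Af\bigr)$, and the second term vanishes only when $Af=0$. That trick applies for the refreshment operator since $\cL^*_R\mu=0$, but here $\cL^*_B\mu(x,v)=-\langle v,\nabla\pot(x)\rangle\mu(x,v)\neq 0$, so you must retain the extra piece $+[\cL^*_B,\cL^*_D]\bigl(\langle v,\nabla\pot\rangle\mu\bigr)$. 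If you carry out the algebra you outline for $\cL^*_B\bigl([\cL^*_B,\cL^*_D]\mu\bigr)$ alone, the $s_+s_-=0$ cancellations leave
\[
\mu(x,v)\Bigl(\lambda_1(x,v)\langle v,\nabla^2\pot(x)v\rangle-\lambda_1(x,R(x)v)\langle R(x)v,\nabla^2\pot(x)R(x)v\rangle\Bigr),
\]
which (note the mixed $\lambda_1(x,v)$ and $\lambda_1(x,R(x)v)$ weights, and the absence of any $\langle v,\nabla\pot\rangle^2$ term) does not reduce to the claimed lemma — it is only the term $(*)$ in the paper's proof.

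The paper therefore also computes the missing piece $(**)=[\cL^*_B,\cL^*_D]\bigl(\langle v,\nabla\pot\rangle\mu\bigr)$, and this is the genuinely laborious part: you cannot reuse the specialized formula for $[\cL^*_B,\cL^*_D]\mu$, but must substitute $g=\langle v,\nabla\pot\rangle\mu$ into the full expression of Lemma~\ref{lem:BD_bps} and differentiate the product $\langle v,\nabla\pot(x)\rangle\,\lvert\langle v,\nabla\pot(x)\rangle\rvert\,\mu(x,v)$ in $x$. That step is precisely where the surviving $2\lambda_1(x,R(x)v)\langle v,\nabla\pot\rangle^2$ term and the additional Hessian contributions (with correctly matched $\lambda_1$ weights) originate, and only after adding $(*)$ and $(**)$ does everything collapse to $2\mu\,\lambda_1(x,R(x)v)\bigl(\langle v,\nabla\pot\rangle^2-\langle v,\nabla^2\pot v\rangle-\langle R(x)v,\nabla^2\pot R(x)v\rangle\bigr)$. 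Your ``careful bookkeeping'' hedge at the end would not rescue the argument — a single application of $\cL^*_B$ simply cannot produce the needed terms.
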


\begin{proof}
By Lemma \ref{lem:BD_bps} we find
\begin{align*}
    [\cL^*_B,[\cL^*_B,\cL^*_D]]\mu(x,v) &=\cL^*_B\Big( \mu(x,v)(\lambda_1^2(x,R(x)v)-\lambda_1^2(x,v) + \langle v,\nabla \pot(x)\rangle^2 -\langle v,\nabla^2 \pot(x) v\rangle)\Big) \tag{*} \label{eq:BBD_term1}\\
    & \quad + [\cL^*_B,\cL^*_D] \Big(\langle v,\nabla \pot(x)\rangle \mu(x,v)\Big).\tag{**} \label{eq:BBD_term2}
\end{align*}
Let us treat the two terms separately, starting with \eqref{eq:BBD_term1}. After applying $\cL^*_B$ and using that $R(x)(R(x)v) = v$ the first term becomes
\begin{align*}
    \eqref{eq:BBD_term1} &= \mu(x,v) \Big[ \Big( (\lambda_1^2(x,v)-\lambda_1^2(x,R(x)v) + \langle R(x)v,\nabla \pot(x)\rangle^2 -\langle R(x)v,\nabla^2 \pot(x) R(x)v\rangle \Big)\lambda_1(x,R(x)v)\\
    & \quad - \Big( \lambda_1^2(x,R(x)v)-\lambda_1^2(x,v) + \langle v,\nabla \pot(x)\rangle^2 -\langle v,\nabla^2 \pot(x) v\rangle) \Big)\lambda_1(x,v) \Big]\\
    & = \mu(x,v) \Big[ (\lambda_1^2(x,v)-\lambda_1^2(x,R(x)v)) (\lambda_1(x,R(x)v)+\lambda_1(x,v)) \\
    & \quad + \langle v,\nabla \pot(x)\rangle^2 (\lambda_1(x,R(x)v)-\lambda_1(x,v))\\
    & \quad -\langle R(x)v,\nabla^2 \pot(x) R(x)v\rangle \lambda_1(x,R(x)v)+\langle v,\nabla^2 \pot(x) v\rangle \lambda_1(x,v)\Big].
\end{align*}
Using Identity \eqref{eq:lambdaR-lambda} we obtain that
\begin{align*}
    & \langle v,\nabla \pot(x)\rangle^2 (\lambda_1(x,R(x)v)-\lambda_1(x,v)) = (\lambda_1^2(x,v)-\lambda_1^2(x,R(x)v)) (\lambda_1(x,R(x)v)+\lambda_1(x,v))
\end{align*}
and thus cancelling out the corresponding terms in \eqref{eq:BBD_term1} it follows that
\begin{align*}
    \eqref{eq:BBD_term1}= \mu(x,v) \Big( \langle v,\nabla^2 \pot(x) v\rangle \lambda_1(x,v)-\langle R(x)v,\nabla^2 \pot(x) R(x)v\rangle \lambda_1(x,R(x)v)\Big).
\end{align*}
Focusing now on \eqref{eq:BBD_term2}, we apply Lemma \ref{lem:BD_bps} to find
\begin{align*}
    \eqref{eq:BBD_term2} & = -\langle R(x)v,\nabla_x \big(\langle v,\nabla \pot(x)\rangle \mu(x,v)\big)(x,R(x)v)\rangle \lambda_1(x,R(x)v)\\
    & \quad +\langle v,\nabla_x\big( \langle v,\nabla \pot(x)\rangle \mu(x,v)\big)\rangle \lambda_1(x,v)\\
    & \quad +\langle v,\nabla_x\Big(\langle R(x)v,\nabla \pot(x)\rangle \mu(x,v)\lambda_1(x,R(x)v)-\langle v,\nabla \pot(x)\rangle \mu(x,v) \lambda_1(x,v)\Big)\rangle.
\end{align*}
Recalling that $$\nabla_x(\langle v,\nabla \pot(x)\rangle \mu(x,v))=\mu(x,v)(\nabla^2 \pot(x) v - \nabla \pot(x)\langle v,\nabla \pot(x)\rangle),$$ we find 
\begin{align*}
    \eqref{eq:BBD_term2} & = \mu(x,v)\Big[\left(-\langle R(x)v, \nabla^2 \pot(x) R(x)v\rangle + \langle v,\nabla \pot(x)\rangle^2 \right) \lambda_1(x,R(x)v)\\
    & \quad +\left( \langle v,\nabla^2 \pot(x) v\rangle - \langle v, \nabla \pot(x)\rangle^2 \right)\lambda_1(x,v)\Big]\\
    & \quad -\langle v,\nabla_x\Big(\langle v,\nabla \pot(x)\rangle \mu(x,v)\lvert \langle v,\nabla \pot(x)\rangle\rvert\Big)\rangle   .
\end{align*}
In particular we used Lemma \ref{lem:identities_switchingrates} to write the last term more compactly. The derivative in the last term can be computed as follows
\begin{align*}
    &-\langle v,\nabla_x\Big(\langle v,\nabla \pot(x)\rangle \mu(x,v)\lvert \langle v,\nabla \pot(x)\rangle\rvert\Big)\rangle   =\\
    & =-\mu(x,v) \langle v, \nabla^2 \pot(x) v \lvert \langle v,\nabla \pot(x)\rangle\rvert  -\nabla \pot(x) \langle v,\nabla \pot(x)\rangle \lvert \langle v,\nabla \pot(x)\rangle\rvert\\
    & \quad + \langle v,\nabla \pot(x) \rangle \text{sign}(\langle v,\nabla \pot(x)\rangle) \nabla^2\pot(x)v\rangle\\
    & = -\mu(x,v)\Big( -\langle v,\nabla \pot(x)\rangle^2 \lvert \langle v,\nabla \pot(x)\rangle\rvert + \langle v,\nabla^2 \pot(x)v\rangle \left(\lvert \langle v,\nabla \pot(x)\rangle\rvert + \langle v,\nabla \pot(x) \rangle \text{sign}(\langle v,\nabla \pot(x)\rangle)\right)\Big)\\
    & = -\mu(x,v)\Big( -\langle v,\nabla \pot(x)\rangle^2 \lvert \langle v,\nabla \pot(x)\rangle\rvert + \langle v,\nabla^2 \pot(x)v\rangle 2\lvert \langle v,\nabla \pot(x)\rangle\rvert \Big)\\
    & = \mu(x,v)\lvert \langle v,\nabla \pot(x)\rangle\rvert \big( \langle v,\nabla \pot(x)\rangle^2  - 2\langle v,\nabla^2 \pot(x)v\rangle  \big).
\end{align*}
Hence re-applying Lemma \ref{lem:identities_switchingrates} we find
\begin{align*}
    \eqref{eq:BBD_term2} & =\mu(x,v)\Big[-\langle R(x)v, \nabla^2 \pot(x) R(x)v\rangle \lambda_1(x,R(x)v)\\
    & \quad +2 \langle v, \nabla \pot(x)\rangle^2 \lambda_1(x,R(x)v) -(2\lambda_1(x,R(x)v)+\lambda_1(x,v))\langle v,\nabla^2 \pot(x)v\rangle \Big].
\end{align*}
The proof is now concluded by summing \eqref{eq:BBD_term1} and \eqref{eq:BBD_term2}.
\end{proof}

\begin{lemma}\label{lem:DRB_bps}
It holds that
\begin{align*}
    [\cL^*_D,[\cL^*_R,\cL^*_B]] \mu(x,v)
    & = \lambda_r \mu(x,v) \left(\langle v, \nabla \pot(x)\rangle^2 - \langle v,\nabla^2 \pot(x)v\rangle\right). 
\end{align*}
\end{lemma}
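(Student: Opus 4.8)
The plan is to compute the double commutator $[\cL^*_D,[\cL^*_R,\cL^*_B]]\mu$ directly by composing the explicit formulas already derived. First I would take the inner commutator from Lemma~\ref{lem:RB_bps}, which when applied to $\mu$ gives $[\cL^*_R,\cL^*_B]\mu(x,v) = \lambda_r \langle v,\nabla\pot(x)\rangle \mu(x,v)$. Then the task reduces to evaluating $\cL^*_D\bigl(\lambda_r\langle v,\nabla\pot(x)\rangle\mu(x,v)\bigr) - [\cL^*_R,\cL^*_B]\bigl(\cL^*_D\mu(x,v)\bigr)$. For the first term, $\cL^*_D$ acts only via $-\langle v,\nabla_x(\cdot)\rangle$, so I would use the identity $\nabla_x(\langle v,\nabla\pot(x)\rangle\mu(x,v)) = \mu(x,v)(\nabla^2\pot(x)v - \nabla\pot(x)\langle v,\nabla\pot(x)\rangle)$ (already invoked in the proof of Lemma~\ref{lem:BBD_bps}), yielding $-\lambda_r\mu(x,v)\bigl(\langle v,\nabla^2\pot(x)v\rangle - \langle v,\nabla\pot(x)\rangle^2\bigr)$.

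For the second term, recall from Section~\ref{sec:adjoint_bps} that $\cL^*_D\mu(x,v) = \langle v,\nabla\pot(x)\rangle\mu(x,v)$, so I need $[\cL^*_R,\cL^*_B]$ applied to the function $g(x,v) = \langle v,\nabla\pot(x)\rangle\mu(x,v)$. Here I would plug into the general formula of Lemma~\ref{lem:RB_bps}: the result is $\lambda_r\bigl(\nu(v)\int(g(x,R(x)y)\lambda_1(x,R(x)y) - g(x,y)\lambda_1(x,y))dy + (\nu(v)\int g(x,y)dy)\langle v,\nabla\pot(x)\rangle\bigr)$. The term $\int g(x,y)dy = \int \langle y,\nabla\pot(x)\rangle\mu(x,y)dy = 0$ by the mean-zero property in Assumption~\ref{ass:velocity_distribution}. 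For the remaining integral, substituting $g$ and using $\lambda_1(x,R(x)y)+\lambda_1(x,y) = |\langle y,\nabla\pot(x)\rangle|$ together with $\langle R(x)y,\nabla\pot(x)\rangle = -\langle y,\nabla\pot(x)\rangle$, the integrand becomes $-\langle y,\nabla\pot(x)\rangle|\langle y,\nabla\pot(x)\rangle|\,\mu(x,y)$; splitting into $\{y:\langle y,\nabla\pot(x)\rangle\geq 0\}$ and its complement and using the change of variables $y'=R(x)y$ (as in Equation~\eqref{eq:helper_1} and the proof of Lemma~\ref{lem:RBD_bps}), this integral vanishes by rotation invariance of $\nu$. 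Hence the second term contributes $0$, and combining gives $[\cL^*_D,[\cL^*_R,\cL^*_B]]\mu(x,v) = \lambda_r\mu(x,v)\bigl(\langle v,\nabla\pot(x)\rangle^2 - \langle v,\nabla^2\pot(x)v\rangle\bigr)$, which is exactly the claimed identity.

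I expect the main obstacle to be bookkeeping rather than conceptual: one must be careful about which variable the reflection operator $R(x)$ acts on when $g$ itself depends on $v$ through $\langle v,\nabla\pot(x)\rangle$, and about correctly applying the general (not the $g=\mu$) version of Lemma~\ref{lem:RB_bps}. A secondary point to verify carefully is that all the ``symmetrization'' integrals over $\nu$ genuinely vanish — this relies on the full strength of Assumption~\ref{ass:velocity_distribution} (mean zero and rotation invariance), and the change of variables $y\mapsto R(x)y$ preserves $\nu$ precisely because $R(x)$ is an orthogonal reflection. Once these are checked, everything else is routine differentiation using the product rule and the standard identity $\nabla_x\mu(x,v) = -\nabla\pot(x)\mu(x,v)$.
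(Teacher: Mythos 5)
Your proposal is correct and follows essentially the same path as the paper: expand the double commutator using $[\cL^*_R,\cL^*_B]\mu = \lambda_r\langle v,\nabla\pot\rangle\mu$ from Lemma~\ref{lem:RB_bps}, compute the $\cL^*_D$ part via $\nabla_x(\langle v,\nabla\pot\rangle\mu) = \mu(\nabla^2\pot\, v - \nabla\pot\langle v,\nabla\pot\rangle)$, and observe that the remaining term vanishes by the rotation-invariance argument of \eqref{eq:helper_1}. One small step you leave implicit (and should state when writing it up) is that $\mu(x,R(x)y)=\mu(x,y)$, which is what lets you factor $\mu(x,y)$ out of $g(x,R(x)y)\lambda_1(x,R(x)y)-g(x,y)\lambda_1(x,y)$; this follows from $R(x)$ being orthogonal and $\nu$ being rotation-invariant under Assumption~\ref{ass:velocity_distribution}.
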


\begin{proof}
Consider now $[\cL^*_D,[\cL^*_R,\cL^*_B]]$:
\begin{align*}
    [\cL^*_D,[\cL^*_R,\cL^*_B]] \mu(x,v) & = \cL^*_D(\lambda_r\langle v,\nabla \pot(x)\rangle \mu(x,v)) - [\cL^*_R,\cL^*_B](\langle v,\nabla \pot(x)\rangle \mu(x,v)) \\
    & = -\langle v,\nabla_x(\lambda_r \langle v,\nabla \pot(x)\rangle \mu(x,v))\rangle\\
    & \quad -\lambda_r\Big( \mu(x,v)\int (-\langle y,\nabla \pot(x)\rangle)( \lambda_1(x,R(x)y)+ \lambda_1(x,y))\nu(dy)\Big)\\
    & = \lambda_r \mu(x,v) \left(\langle v, \nabla \pot(x)\rangle^2 - \langle v,\nabla^2 \pot(x)v\rangle\right). 
\end{align*}
In particular we used  that
\begin{align*}
    & \int (\langle y,\nabla \pot(x)\rangle)( \lambda_1(x,R(x)y)+ \lambda_1(x,y))\nu(dy) = \int \lambda_1(x,y)^2\nu(dy)-\int \lambda_1(x,R(x)y)^2\nu(dy)=0
\end{align*}
which was shown in \eqref{eq:helper_1}.
\end{proof}

\begin{lemma}\label{lem:DRD_bps}
It holds that
\begin{align*}
    [\cL^*_D,[\cL^*_R,\cL^*_D]] \mu(x,v) &= \lambda_r \mu(x,v) \Big( \langle v,\nabla^2 \pot(x) v\rangle -\langle v,\nabla \pot(x)\rangle^2\\
    & \quad + b\,\trace\left(\nabla^2 \pot(x) -\nabla \pot(x)\nabla \pot(x)^T\right)\Big)
\end{align*}
\end{lemma}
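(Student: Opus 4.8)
The plan is to compute the second-order commutator $[\cL_D^*,[\cL_R^*,\cL_D^*]]\mu$ directly, using the first-order result of Lemma~\ref{lem:RD_bps}, namely $[\cL_R^*,\cL_D^*]\mu(x,v) = -\lambda_r \langle v,\nabla\pot(x)\rangle \mu(x,v)$, exactly as was done for the analogous lemmas (e.g. Lemmas~\ref{lem:BRD_bps}, \ref{lem:DRB_bps}, \ref{lem:DRD_bps} — wait, this \emph{is} Lemma~\ref{lem:DRD_bps}). So concretely: apply $\cL_D^*$ to the function $g(x,v) := -\lambda_r\langle v,\nabla\pot(x)\rangle \mu(x,v)$ and subtract $[\cL_R^*,\cL_D^*]$ applied to $h(x,v) := \cL_D^*\mu(x,v) = \langle v,\nabla\pot(x)\rangle\mu(x,v)$. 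That is,
\[
[\cL_D^*,[\cL_R^*,\cL_D^*]]\mu = \cL_D^*\big([\cL_R^*,\cL_D^*]\mu\big) - [\cL_R^*,\cL_D^*]\big(\cL_D^*\mu\big).
\]

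First I would handle the term $\cL_D^*([\cL_R^*,\cL_D^*]\mu)(x,v) = -\lambda_r\cL_D^*(\langle v,\nabla\pot(x)\rangle\mu(x,v)) = \lambda_r\langle v,\nabla_x(\langle v,\nabla\pot(x)\rangle\mu(x,v))\rangle$. Using the identity $\nabla_x(\langle v,\nabla\pot(x)\rangle\mu(x,v)) = \mu(x,v)(\nabla^2\pot(x)v - \nabla\pot(x)\langle v,\nabla\pot(x)\rangle)$, already invoked repeatedly in the preceding lemmas, this becomes $\lambda_r\mu(x,v)(\langle v,\nabla^2\pot(x)v\rangle - \langle v,\nabla\pot(x)\rangle^2)$. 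Next I would handle $[\cL_R^*,\cL_D^*](\langle v,\nabla\pot(x)\rangle\mu(x,v))$ using the general formula from Lemma~\ref{lem:RD_bps}: for a generic $g$,
\[
[\cL_R^*,\cL_D^*]g(x,v) = \lambda_r\nu(v)\Big(\langle v,\textstyle\int\nabla_x g(x,y)\,dy\rangle - \int\langle y,\nabla_x g(x,y)\rangle\,dy\Big).
\]
Applying this with $g(x,y) = \langle y,\nabla\pot(x)\rangle\mu(x,y)$: the first inner term vanishes after integration in $y$ because $\nu$ has mean zero (Assumption~\ref{ass:velocity_distribution}); the second inner term gives $\lambda_r\nu(v)\int\langle y,\nabla_x(\langle y,\nabla\pot(x)\rangle\mu(x,y))\rangle\,dy$ with the above gradient identity, producing $-\lambda_r\mu(x,v)\int(\langle y,\nabla^2\pot(x)y\rangle - \langle y,\nabla\pot(x)\rangle^2)\nu(dy)$, which by the trace computation~\eqref{eq:trace_computation} equals $-\lambda_r\mu(x,v)\,b\,\trace(\nabla^2\pot(x) - \nabla\pot(x)\nabla\pot(x)^T)$.

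Subtracting, the $v$-dependent quadratic pieces combine to $\lambda_r\mu(x,v)(\langle v,\nabla^2\pot(x)v\rangle - \langle v,\nabla\pot(x)\rangle^2)$ and the trace piece contributes $+\lambda_r\mu(x,v)\,b\,\trace(\nabla^2\pot(x) - \nabla\pot(x)\nabla\pot(x)^T)$, which is exactly the claimed expression. The calculation is entirely mechanical and I do not anticipate a real obstacle; the only points requiring care are (i) correctly tracking the sign conventions in the Jacobi-identity-style expansion $[\cL_D^*,[\cL_R^*,\cL_D^*]] = \cL_D^*[\cL_R^*,\cL_D^*] - [\cL_R^*,\cL_D^*]\cL_D^*$ with $\cL_D^*$ acting on $\mu$ before the outer bracket, and (ii) remembering that the general first-order formula of Lemma~\ref{lem:RD_bps} must be used on the \emph{function} $h = \cL_D^*\mu$ rather than on $\mu$ itself, since $\cL_R^* h \neq 0$ in general even though $\cL_R^*\mu = 0$. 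The writeup will simply display the two terms, invoke the gradient identity and~\eqref{eq:trace_computation}, and conclude.

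\begin{proof}
We use the expansion
\[
[\cL^*_D,[\cL^*_R,\cL^*_D]] \mu = \cL^*_D\big([\cL^*_R,\cL^*_D]\mu\big) - [\cL^*_R,\cL^*_D]\big(\cL^*_D\mu\big).
\]
By Lemma~\ref{lem:RD_bps} we have $[\cL^*_R,\cL^*_D]\mu(x,v) = -\lambda_r\langle v,\nabla\pot(x)\rangle\mu(x,v)$, and recall $\cL^*_D\mu(x,v) = \langle v,\nabla\pot(x)\rangle\mu(x,v)$. Using the identity
\[
\nabla_x\big(\langle v,\nabla\pot(x)\rangle\mu(x,v)\big) = \mu(x,v)\big(\nabla^2\pot(x)v - \nabla\pot(x)\langle v,\nabla\pot(x)\rangle\big),
\]
the first term is
\begin{align*}
\cL^*_D\big([\cL^*_R,\cL^*_D]\mu\big)(x,v) &= \lambda_r\langle v,\nabla_x\big(\langle v,\nabla\pot(x)\rangle\mu(x,v)\big)\rangle \\
&= \lambda_r\mu(x,v)\big(\langle v,\nabla^2\pot(x)v\rangle - \langle v,\nabla\pot(x)\rangle^2\big).
\end{align*}
For the second term, we apply the general formula of Lemma~\ref{lem:RD_bps} with $g(x,y) = \langle y,\nabla\pot(x)\rangle\mu(x,y)$:
\[
[\cL^*_R,\cL^*_D]g(x,v) = \lambda_r\nu(v)\Big(\langle v,\int\nabla_x g(x,y)\,dy\rangle - \int\langle y,\nabla_x g(x,y)\rangle\,dy\Big).
\]
Since $\nabla_x g(x,y) = \mu(x,y)\big(\nabla^2\pot(x)y - \nabla\pot(x)\langle y,\nabla\pot(x)\rangle\big)$ and $\nu$ has mean zero, the first inner integral gives a vector proportional to $\int y\,\nu(dy)$ contracted against itself, hence vanishes. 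For the second,
\[
\int\langle y,\nabla_x g(x,y)\rangle\,dy = \int\big(\langle y,\nabla^2\pot(x)y\rangle - \langle y,\nabla\pot(x)\rangle^2\big)\nu(dy)\,\mu(x,v)/\nu(v),
\]
after writing $\mu(x,y) = \pi(x)\nu(y)$ and using $\mu(x,v) = \pi(x)\nu(v)$. By \eqref{eq:trace_computation} this equals $b\,\trace\big(\nabla^2\pot(x) - \nabla\pot(x)\nabla\pot(x)^T\big)\,\mu(x,v)/\nu(v)$, so
\[
[\cL^*_R,\cL^*_D]\big(\cL^*_D\mu\big)(x,v) = -\lambda_r\mu(x,v)\,b\,\trace\big(\nabla^2\pot(x) - \nabla\pot(x)\nabla\pot(x)^T\big).
\]
Subtracting yields
\[
[\cL^*_D,[\cL^*_R,\cL^*_D]]\mu(x,v) = \lambda_r\mu(x,v)\Big(\langle v,\nabla^2\pot(x)v\rangle - \langle v,\nabla\pot(x)\rangle^2 + b\,\trace\big(\nabla^2\pot(x) - \nabla\pot(x)\nabla\pot(x)^T\big)\Big),
\]
which is the claimed identity.
\end{proof}
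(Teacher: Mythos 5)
Your proof is correct and follows essentially the same route as the paper: expand the double commutator via $\cL^*_D[\cL^*_R,\cL^*_D]\mu - [\cL^*_R,\cL^*_D]\cL^*_D\mu$, use Lemma~\ref{lem:RD_bps} for both the specialized value of $[\cL^*_R,\cL^*_D]\mu$ and the general formula applied to $\cL^*_D\mu$, then invoke \eqref{eq:trace_computation}. The paper compresses the two terms into a single display; you write them out separately, but the content and the points where mean-zero and rotation-invariance are used are identical.
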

\begin{proof}
By Lemma \ref{lem:RD_bps}
\begin{align*}
    [\cL^*_D,[\cL^*_R,\cL^*_D]] \mu(x,v) & = -\lambda_r\cL_D^*(\langle v,\nabla \pot(x)\rangle \mu(x,v)) - [\cL^*_R,\cL^*_D](\langle v,\nabla \pot(x) \rangle \mu(x,v))\\
    & = \lambda_r \mu(x,v) \Bigg( \langle v,\nabla^2 \pot(x) v\rangle -\langle v,\nabla \pot(x)\rangle^2 \\
    & \quad + \int (\langle y,\nabla^2 \pot(x) y\rangle -\langle y,\nabla \pot(x) \rangle^2 )\nu(dy) \Bigg) .
\end{align*}
The statement follows by Equation \eqref{eq:trace_computation}.
\end{proof}

\begin{lemma}\label{lem:DBD_bps}
It holds that
\begin{align*}
    [\cL^*_D,[\cL^*_B,\cL^*_D]] &= \mu(x,v) \Big( -4 \langle v,\nabla \pot(x)\rangle^2  \lambda_1(x,R(x)v) + 7\langle v,\nabla^2 \pot(x) v\rangle\lambda_1(x,R(x)v) \\
    & \quad + \langle v,\nabla_x(\langle v,\nabla^2 \pot(x))v\rangle)\rangle  + \langle R(x)v, \nabla^2 \pot(x) R(x)v\rangle \lambda_1(x,R(x)v) \Big).
\end{align*}
\end{lemma}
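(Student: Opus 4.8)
The plan is to compute the double commutator $[\cL^*_D,[\cL^*_B,\cL^*_D]]\mu$ directly, reusing Lemma~\ref{lem:BD_bps} for the inner commutator, since it already provides
\[
[\cL^*_B,\cL^*_D]\mu(x,v) = \mu(x,v)\Big(\lambda_1^2(x,R(x)v)-\lambda_1^2(x,v)+\langle v,\nabla\pot(x)\rangle^2-\langle v,\nabla^2\pot(x)v\rangle\Big).
\]
First I would apply $\cL^*_D = -\langle v,\nabla_x\cdot\rangle$ to this expression. Writing $G(x,v) := [\cL^*_B,\cL^*_D]\mu(x,v)$ and recalling $\nabla_x\mu(x,v) = -\nabla\pot(x)\mu(x,v)$, we get
\[
\cL^*_D G(x,v) = -\langle v,\nabla_x G(x,v)\rangle = \langle v,\nabla\pot(x)\rangle G(x,v)/\mu(x,v)\cdot\mu(x,v) - \mu(x,v)\langle v,\nabla_x\big(G/\mu\big)(x,v)\rangle,
\]
so the task reduces to differentiating the explicit scalar $G/\mu = \lambda_1^2(x,R(x)v)-\lambda_1^2(x,v)+\langle v,\nabla\pot(x)\rangle^2-\langle v,\nabla^2\pot(x)v\rangle$ in the direction $v$. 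The terms $\lambda_1^2(x,R(x)v)-\lambda_1^2(x,v)$ can be handled via the identities in Lemma~\ref{lem:identities_switchingrates}: $\lambda_1(x,R(x)v)-\lambda_1(x,v)=-\langle v,\nabla\pot(x)\rangle$ and $\lambda_1(x,R(x)v)+\lambda_1(x,v)=|\langle v,\nabla\pot(x)\rangle|$, so their difference equals $-\langle v,\nabla\pot(x)\rangle\,|\langle v,\nabla\pot(x)\rangle|$; its directional derivative involves $\nabla^2\pot(x)v$ contracted against $v$ and the sign function, exactly as in the analogous computation inside the proof of Lemma~\ref{lem:BBD_bps}.

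Next I would carry out the derivative term by term. For $\langle v,\nabla\pot(x)\rangle^2$ we get $\langle v,\nabla_x\langle v,\nabla\pot(x)\rangle^2\rangle = 2\langle v,\nabla\pot(x)\rangle\langle v,\nabla^2\pot(x)v\rangle$. For $\langle v,\nabla^2\pot(x)v\rangle$ we get the third-derivative term $\langle v,\nabla_x(\langle v,\nabla^2\pot(x)v\rangle)\rangle$, which is precisely the term appearing in the claimed answer. For $-\langle v,\nabla\pot(x)\rangle\,|\langle v,\nabla\pot(x)\rangle|$, using $\partial_s(s|s|)=2|s|$ at $s=\langle v,\nabla\pot(x)\rangle$ gives a contribution $-2|\langle v,\nabla\pot(x)\rangle|\langle v,\nabla^2\pot(x)v\rangle$. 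Then the $\langle v,\nabla\pot(x)\rangle G/\mu$ piece contributes $\langle v,\nabla\pot(x)\rangle$ times the four summands of $G/\mu$, i.e. $\langle v,\nabla\pot(x)\rangle\big(-\langle v,\nabla\pot(x)\rangle|\langle v,\nabla\pot(x)\rangle| + \langle v,\nabla\pot(x)\rangle^2 - \langle v,\nabla^2\pot(x)v\rangle\big)$. I would then rewrite all occurrences of $|\langle v,\nabla\pot(x)\rangle|$ and $\langle v,\nabla\pot(x)\rangle^2$ back in terms of $\lambda_1(x,R(x)v)$ and $\lambda_1(x,v)$ via Lemma~\ref{lem:identities_switchingrates} (noting $\langle v,\nabla\pot(x)\rangle|\langle v,\nabla\pot(x)\rangle| = (\lambda_1(x,v)-\lambda_1(x,R(x)v))(\lambda_1(x,R(x)v)+\lambda_1(x,v)) = \lambda_1^2(x,v)-\lambda_1^2(x,R(x)v)$, and $\langle R(x)v,\nabla^2\pot(x)R(x)v\rangle$ for the reflected Hessian term) to collect everything into the form stated: $-4\langle v,\nabla\pot(x)\rangle^2\lambda_1(x,R(x)v) + 7\langle v,\nabla^2\pot(x)v\rangle\lambda_1(x,R(x)v) + \langle v,\nabla_x(\langle v,\nabla^2\pot(x)v\rangle)\rangle + \langle R(x)v,\nabla^2\pot(x)R(x)v\rangle\lambda_1(x,R(x)v)$, all multiplied by $\mu(x,v)$.

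The main obstacle I anticipate is bookkeeping the asymmetry between $v$ and $R(x)v$: the directional derivative $\langle v,\nabla_x(\cdot)\rangle$ acts on expressions where $R(x)$ itself depends on $x$, so one must be careful about whether $\nabla^2\pot$ appears contracted with $v$ or with $R(x)v$, and about the sign-function contributions when $\langle v,\nabla\pot(x)\rangle$ changes sign. The cleanest route is probably to avoid differentiating $R(x)$ explicitly by writing $\lambda_1^2(x,R(x)v)-\lambda_1^2(x,v)$ as the $v$-symmetric/antisymmetric combination above before differentiating, exactly as was done in Lemma~\ref{lem:BBD_bps}; this isolates the only genuinely new ingredient, the third-order term $\langle v,\nabla_x(\langle v,\nabla^2\pot(x)v\rangle)\rangle$, and keeps the rest as algebraic manipulation of $\lambda_1$'s. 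I would also double-check the numerical coefficients ($-4$ and $7$) by specializing to the one-dimensional case where everything is explicit, as a consistency test against Proposition~\ref{prop:f_2_1D}. Finally, with this lemma in hand together with Lemmas~\ref{lem:BRD_bps}--\ref{lem:DRB_bps}, the BCH expansion of $\cL^*_2$ for each splitting order assembles $\cL^*_2\mu$ as a prescribed linear combination of these double commutators, which yields Proposition~\ref{prop:expansion_adjoint_bps}.
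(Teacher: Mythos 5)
Your strategy omits half of the commutator. You write $[\cL^*_D,[\cL^*_B,\cL^*_D]]\mu = \cL^*_D G$ where $G = [\cL^*_B,\cL^*_D]\mu$, but the correct expansion is
\begin{equation*}
[\cL^*_D,[\cL^*_B,\cL^*_D]]\mu \;=\; \cL^*_D\big([\cL^*_B,\cL^*_D]\mu\big) \;-\; [\cL^*_B,\cL^*_D]\big(\cL^*_D\mu\big),
\end{equation*}
and the second term does not vanish, since $\cL^*_D\mu = \langle v,\nabla\pot(x)\rangle\mu \neq 0$. In fact the second piece $[\cL^*_B,\cL^*_D]\big(\langle v,\nabla\pot(x)\rangle\mu\big)$ is exactly what the paper calls $(\dag\dag)$, and the paper observes it coincides with the term $(**)$ already worked out inside the proof of Lemma~\ref{lem:BBD_bps}. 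Evaluating it requires the \emph{general} formula for $[\cL^*_B,\cL^*_D]$ acting on an arbitrary function, which involves $R(x)v$ substitutions and the derivative of $R(x)$; it cannot be recovered from $G/\mu$ since that scalar has already been reduced to $v$-only combinations of $\langle v,\nabla\pot(x)\rangle$, $|\langle v,\nabla\pot(x)\rangle|$ and $\langle v,\nabla^2\pot(x)v\rangle$.

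To see why the omission matters numerically: carrying out your computation of $\cL^*_D G$ gives
\begin{equation*}
\cL^*_D G \;=\; \mu\Big(-2\langle v,\nabla\pot\rangle^2\lambda_1(x,R(x)v) + \langle v,\nabla^2\pot(x)v\rangle\big(5\lambda_1(x,R(x)v)-\lambda_1(x,v)\big) + \langle v,\nabla_x\langle v,\nabla^2\pot(x)v\rangle\rangle\Big),
\end{equation*}
which has coefficient $-2$ (not $-4$), a Hessian prefactor $5\lambda_1(x,R(x)v)-\lambda_1(x,v)$ (not $7\lambda_1(x,R(x)v)$), and no $\langle R(x)v,\nabla^2\pot(x)R(x)v\rangle$ term at all. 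It is precisely $(\dag\dag)$, which equals $\mu\big(-\langle R(x)v,\nabla^2\pot R(x)v\rangle\lambda_1(x,R(x)v) + 2\langle v,\nabla\pot\rangle^2\lambda_1(x,R(x)v) - (2\lambda_1(x,R(x)v)+\lambda_1(x,v))\langle v,\nabla^2\pot v\rangle\big)$, whose subtraction supplies the missing reflected-Hessian term, doubles the $\langle v,\nabla\pot\rangle^2$ coefficient, and cancels the $\lambda_1(x,v)$ contribution to give the advertised $7\lambda_1(x,R(x)v)$. You even name the reflected-Hessian term as one you would ``collect'' into the answer, but your derivation has no source for it --- that is the signal that a step is missing. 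Once you add the second commutator term (ideally by recycling the computation from Lemma~\ref{lem:BBD_bps} as the paper does), the rest of your plan is sound.
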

\begin{proof}
By Lemma \ref{lem:BD_bps} together with Lemma \ref{lem:identities_switchingrates}
\begin{align*}
    [\cL^*_D,[\cL^*_B,\cL^*_D]]\mu(x,v) &= \cL^*_D \Big( \mu(x,v) \Big( \langle v, \nabla \pot (x) \rangle  \big(\langle v,\nabla \pot(x)\rangle - \lvert \langle v,\nabla \pot(x)\rangle \rvert  \big) -\langle v,\nabla^2 \pot(x)v\rangle \Big)\Big) \label{eq:DBD_term1}\tag{\dag}\\ 
    & \quad - [\cL^*_B,\cL^*_D](\langle v,\nabla \pot(x)\rangle \mu(x,v)).\tag{\dag\dag} \label{eq:DBD_term2}\\
    & = \eqref{eq:DBD_term1} - \eqref{eq:DBD_term2}.
\end{align*}
Consider the two terms separately, starting from the first one:
\begin{align*}
    \eqref{eq:DBD_term1} &= \mu(x,v) \langle v,\nabla \pot(x)\rangle \Big( \langle v, \nabla \pot (x) \rangle\big(\langle v,\nabla \pot(x)\rangle - \lvert \langle v,\nabla \pot(x)\rangle \rvert \big) -\langle v,\nabla^2 \pot(x))v\rangle \Big)\\
    & \quad - \mu(x,v)\langle v, 2\langle v,\nabla \pot(x)\rangle \nabla^2 \pot(x)v - 2\nabla^2 \pot(x) v \lvert \langle v,\nabla \pot(x)\rangle \rvert -\nabla_x(\langle v,\nabla^2 \pot(x))v\rangle )\rangle\\
    & = \mu(x,v) \Big(-2\langle v,\nabla \pot(x)\rangle^2 \lambda_1(x,R(x)v) +\langle v,\nabla^2 \pot(x)\rangle (-3\langle v,\nabla \pot(x)\rangle  +2\lvert \langle v,\nabla \pot(x)\rangle \rvert)\\
    & \quad + \langle v,\nabla_x(\langle v,\nabla^2 \pot(x))v\rangle)\rangle \Big)
\end{align*}
The second term \eqref{eq:DBD_term2} is the same as term \eqref{eq:BBD_term2} in the proof of Lemma \ref{lem:BBD_bps}.
The statement follows taking the difference of the two terms \eqref{eq:DBD_term1} and \eqref{eq:DBD_term2} and using Lemma \ref{lem:identities_switchingrates}.
\end{proof}

\subsubsection{Proof of Proposition \ref{prop:expansion_adjoint_bps}}\label{sec:skeleton_proof_expadjoint}
Now we apply the expressions for the commutators derived in Section \ref{sec:compute_commutators}, thus obtaining the expressions for $\cL^*_2$ of Proposition \ref{prop:expansion_adjoint_bps}. 

Let us start with splitting \textbf{DBRBD}:
\begin{align*}
    &\cL^*_{2} \mu(x,v)  =  \frac{1}{12} \Big([\cL^*_R,[\cL^*_R,\cL^*_D+\cL^*_B]] + [\cL^*_B,[\cL^*_B,\cL^*_D]]+[\cL^*_R,[\cL^*_B,\cL^*_D]] + [\cL^*_B,[\cL^*_R,\cL^*_D]]   \\
    & \quad -\frac{1}{2} [\cL^*_B,[\cL^*_B,\cL^*_R]] -\frac{1}{2} [\cL^*_D,[\cL^*_D,\cL^*_R]] -\frac{1}{2} [\cL^*_D,[\cL^*_D,\cL^*_B]] \Big) \\
    & = \frac{\mu(x,v)}{12} \Bigg( \frac{3}{2}\lambda_r \Big(b\,\text{tr}\big(\nabla \pot(x) \nabla \pot(x)^T - \nabla^2 \pot(x)\big) + 2\langle v,\nabla \pot(x) \rangle \lambda_1(x,R(x)v) + \langle v,\nabla^2 \pot(x)v\rangle\Big)\\
    & \quad + \frac{3}{2} \lambda_1(x,R(x)v) \Big( \langle v,\nabla^2 \pot(x) v\rangle - \langle R(x)v,\nabla^2 \pot(x) R(x)v\rangle\Big)+ \frac{1}{2} \langle v,\nabla_x (\langle v,\nabla^2 \pot(x)v\rangle)\rangle \Bigg). 
\end{align*}
Then focus on splitting \textbf{BDRDB}:
\begin{align*}
    & \cL^*_{2} \mu(x,v)  =  \frac{1}{12} \Big([\cL^*_R,[\cL^*_R,\cL^*_D+\cL^*_B]] + [\cL^*_D,[\cL^*_D,\cL^*_B]] + [\cL^*_R,[\cL^*_D,\cL^*_B]] + [\cL^*_D,[\cL^*_R,\cL^*_B]]  \\
    & \quad -\frac{1}{2} [\cL^*_D,[\cL^*_D,\cL^*_R]] -\frac{1}{2} [\cL^*_B,[\cL^*_B,\cL^*_R]] -\frac{1}{2} [\cL^*_B,[\cL^*_B,\cL^*_D]] \Big) \\
    & = \frac{\mu(x,v)}{12} \Bigg( -\frac{3}{2}\lambda_r \Big(b\,\text{tr}\big(\nabla \pot(x) \nabla \pot(x)^T - \nabla^2 \pot(x)\big) + 2\langle v,\nabla \pot(x) \rangle \lambda_1(x,R(x)v) + \langle v,\nabla^2 \pot(x)v\rangle\Big)\\
    & \quad + 3\lambda_1(x,R(x)v)\Big( -2\langle v,\nabla^2 \pot(x) v\rangle + \langle v,\nabla \pot(x)\rangle^2\Big) - \langle v,\nabla(\langle v,\nabla^2 \pot(x) v\rangle)\rangle\Bigg).
\end{align*}
Consider now \textbf{RDBDR}:
\begin{align*}
    & \cL^*_{2} \mu(x,v)  =  \frac{1}{12} \Big([\cL^*_B,[\cL^*_B,\cL^*_R+\cL^*_D]] + [\cL^*_D,[\cL^*_D,\cL^*_R]] + [\cL^*_B,[\cL^*_D,\cL^*_R]] + [\cL^*_D,[\cL^*_B,\cL^*_R]]  \\
    & \quad -\frac{1}{2} [\cL^*_D,[\cL^*_D,\cL^*_B]] -\frac{1}{2} [\cL^*_R,[\cL^*_R,\cL^*_B]] -\frac{1}{2} [\cL^*_R,[\cL^*_R,\cL^*_D]] \Big) \\
    & = \frac{\mu(x,v)}{12} \Bigg( \frac{3}{2} \lambda_1(x,R(x)v) \Big( \langle v,\nabla^2 \pot(x) v\rangle - \langle R(x)v,\nabla^2 \pot(x) R(x)v\rangle\Big)+ \frac{1}{2} \langle v,\nabla_x (\langle v,\nabla^2 \pot(x)v\rangle)\rangle\Bigg).
\end{align*}
Finally focus on \textbf{DRBRD}:
\begin{align*}
    & \cL^*_{2} \mu(x,v)  =  \frac{1}{12} \Big([\cL^*_B,[\cL^*_B,\cL^*_D+\cL^*_R]] + [\cL^*_R,[\cL^*_R,\cL^*_D]] + [\cL^*_B,[\cL^*_R,\cL^*_D]] + [\cL^*_R,[\cL^*_B,\cL^*_D]]   \\
    & \quad -\frac{1}{2} [\cL^*_R,[\cL^*_R,\cL^*_B]] -\frac{1}{2} [\cL^*_D,[\cL^*_D,\cL^*_B]] -\frac{1}{2} [\cL^*_D,[\cL^*_D,\cL^*_R]]\Big) \\
    & = \frac{\mu(x,v)}{12}  \Bigg( \frac{3}{2}\lambda_r \Big(b\,\text{tr}\big(\nabla \pot(x) \nabla \pot(x)^T - \nabla^2 \pot(x)\big) + \langle v,\nabla \pot(x) \rangle \big(3\lambda_1(x,R(x)v)+\lambda_1(x,v)\big) \\
    & \quad + \langle v,\nabla^2 \pot(x)v\rangle\Big)+ \frac{3}{2} \lambda_1(x,R(x)v)\Big( \langle v,\nabla^2 \pot(x) v\rangle - \langle R(x) v,\nabla^2 \pot(x) R(x) v\rangle\Big) \\
    & \quad +\frac{1}{2} \langle v,\nabla(\langle v,\nabla^2 \pot(x) v\rangle)\rangle + \frac{3}{2}\lambda_r^2 \langle v,\nabla \pot(x)\rangle \Bigg).
\end{align*}

\subsection{Application of Proposition \ref{prop:f_2_1D} to three one-dimensional targets}\label{sec:proofs_propositions_invmeas}

{In this section we give analytic expressions for $f_2$ for the four splitting schemes considered in \Cref{sec:onedimtargets_expinvmeas} for three different one-dimensional target distributions, together with various numerical simulations.}

{The heuristic argument of \Cref{sec:onedimtargets_expinvmeas} and the numerical simulations of Figure~\ref{fig:allschemes_stdgauss_1d} suggest that schemes having \textbf{DBD} as their limit as the refreshment rate goes to zero have a smaller bias in the $x$ component compared to those that converge to \textbf{BDB}.
Similarly to \Cref{sec:onedimtargets_expinvmeas}, here we focus on schemes \textbf{RDBDR}, \textbf{DBRBD}, \textbf{DRBRD}, as well as \textbf{BDRDB}. For these four schemes we computed $\cL_2^*$ in Proposition \ref{prop:expansion_adjoint_bps} and give the corresponding analytic expressions of $f_2$ for three one-dimensional targets: a standard normal distribution (see Proposition \ref{prop:f2_gauss1d}), the distribution corresponding to the potential $\pot(x)=x^4$ (see Proposition \ref{prop:f2_nonlipschitz}), and the Cauchy distribution (see Proposition \ref{prop:f2_cauchy}). 
The results, both according to the theory and numerical simulations, are shown in Figure~\ref{fig:bias_inv_measure}.}
\begin{figure}
\begin{subfigure}{0.49\textwidth}
  \centering
    \includegraphics[width=\textwidth]{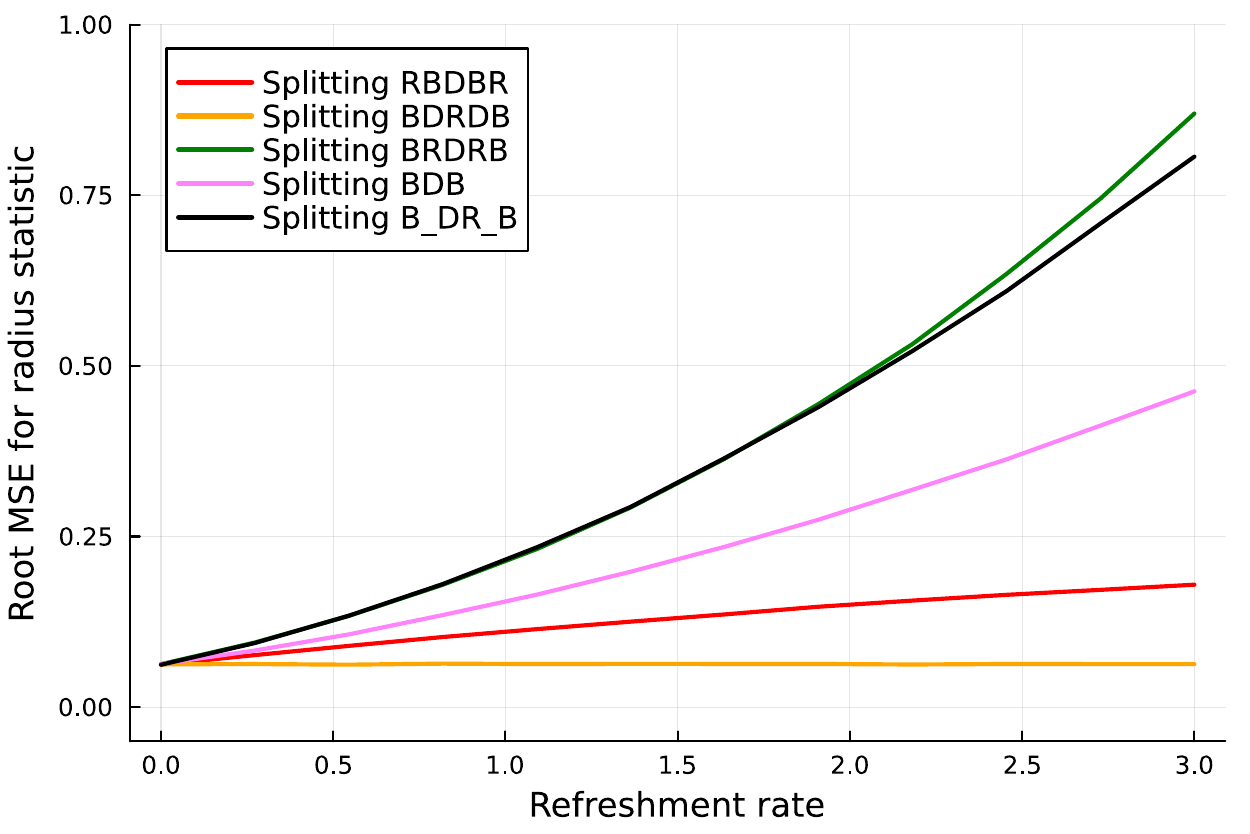}
\end{subfigure}
    \begin{subfigure}{0.49\textwidth}
  \centering
    \includegraphics[width=\textwidth]{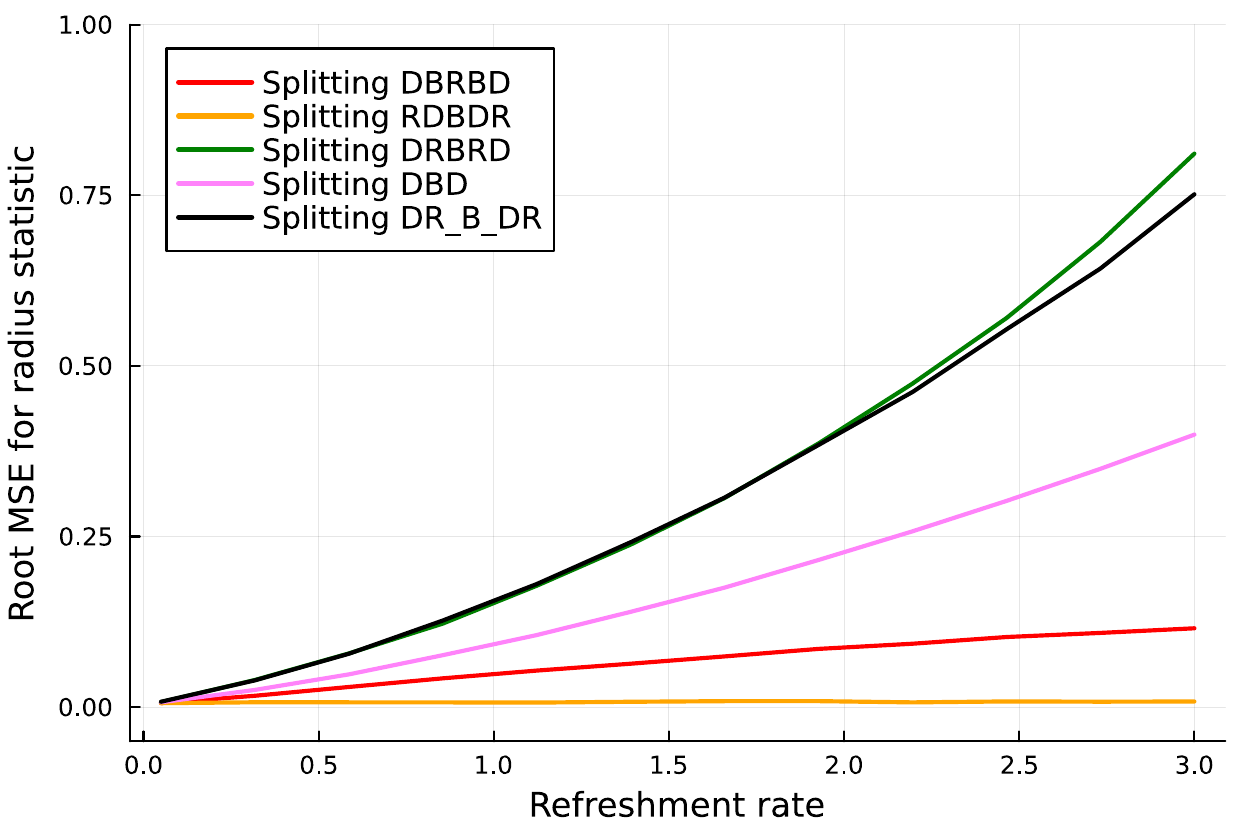}
\end{subfigure}
\caption{Square root of the MSE in the estimation of the radius statistic, $x^2$, with a one-dimensional standard Gaussian target. The step size is set to $\delta=0.5$, the number of iterations is $N=2\times 10^5$, and the experiment is repeated $300$ times. The schemes \textbf{BDB} (\emph{left}) and \textbf{DBD} (\emph{right}) correspond to including the refreshment part in \textbf{B}. In schemes \textbf{B\_DR\_B} (\emph{left}) and \textbf{DR\_B\_DR} (\emph{right}) we denote by \textbf{B} the standard bounce part, by \textbf{DR} the transition kernel which corresponds to having refreshments and deterministic motion together, and we use underscores to divide these two kernels. }
\label{fig:allschemes_stdgauss_1d}
\end{figure}

\begin{figure}[ht]
\begin{subfigure}[t]{0.45\textwidth}
    \includegraphics[width=\textwidth]{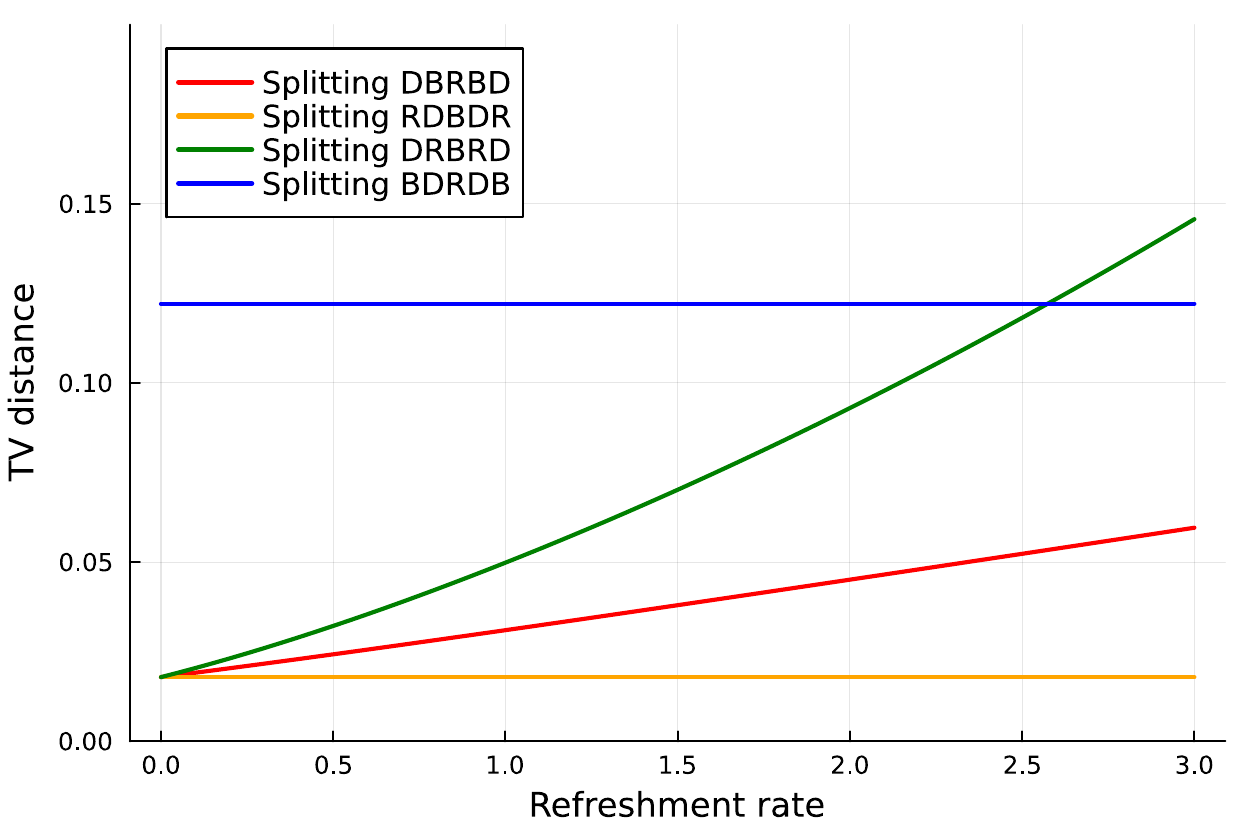}
    \label{fig:TVdist_nonlipschitz}
\end{subfigure}
\hfill
\begin{subfigure}[t]{0.45\textwidth}
    \includegraphics[width=\textwidth]{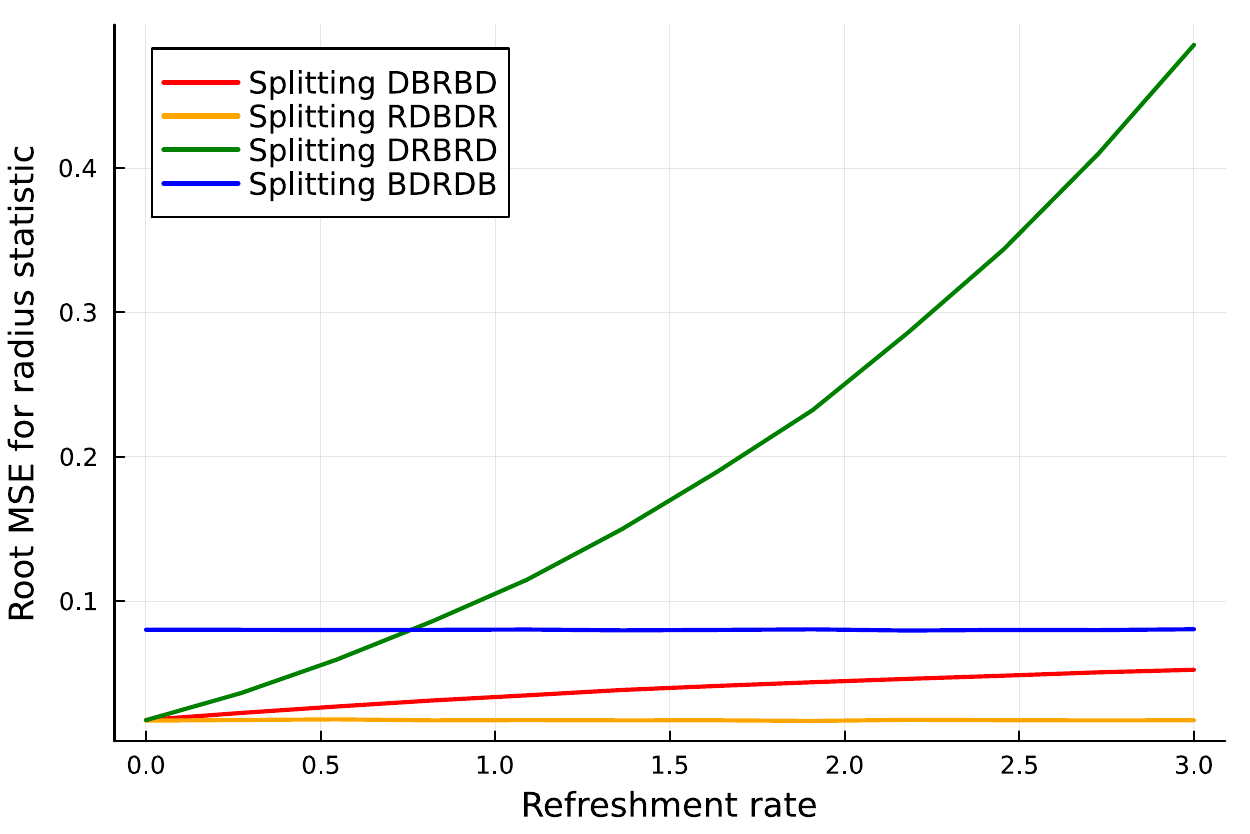}
    \label{fig:bps_empiricalerrror_1d_nonlipschitz}
\end{subfigure}
\caption{Results for the target distribution with potential $\pot(x)=x^4$. The \emph{left} plot shows the TV distance up to the second order term according to Proposition \ref{prop:f2_nonlipschitz}.  Here $\delta = 0.5$, the number of iterations is $N=2 \cdot 10^5$, and the experiment is repeated $50$ times.}
\begin{subfigure}[t]{0.45\textwidth}
    \includegraphics[width=\textwidth]{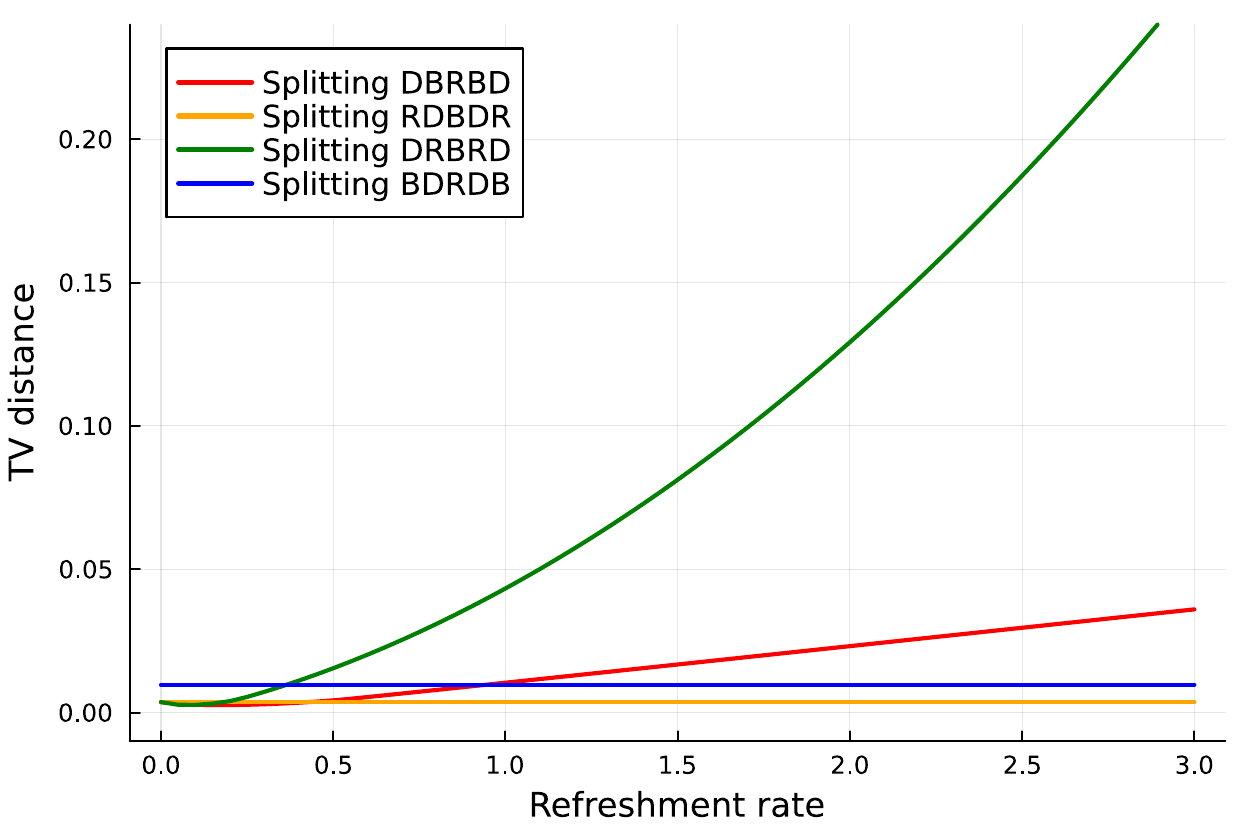}
    \label{fig:biasat0_cauchy}
\end{subfigure}
\hfill
\begin{subfigure}[t]{0.45\textwidth}
    \includegraphics[width=\textwidth]{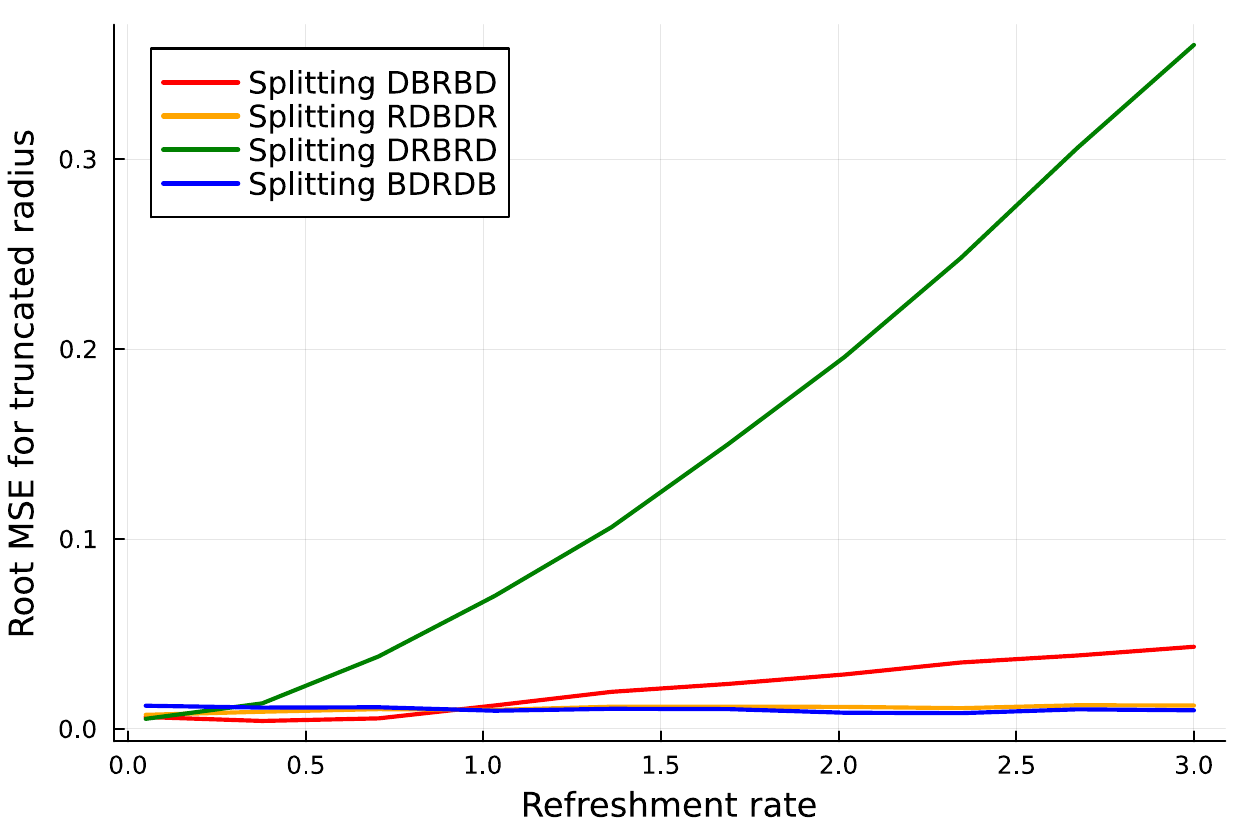}
    \label{fig:bps_empiricalerror_1d_cauchy}
\end{subfigure}
\caption{Results for a Cauchy target distribution with $\gamma=1$. \emph{Left}: TV distance up to the second order term according to Proposition \ref{prop:f2_cauchy}; \emph{right}: square root of the MSE for the estimation of the truncated radius statistic $2 \wedge x^2$. Here $\delta = 0.5$, the number of iterations is $N=4 \cdot 10^6$, and the experiment is repeated $200$ times. }
\label{fig:bias_inv_measure}
\end{figure}

\begin{figure}[ht]
\begin{subfigure}{0.49\textwidth}
  \centering
    \includegraphics[width=\textwidth]{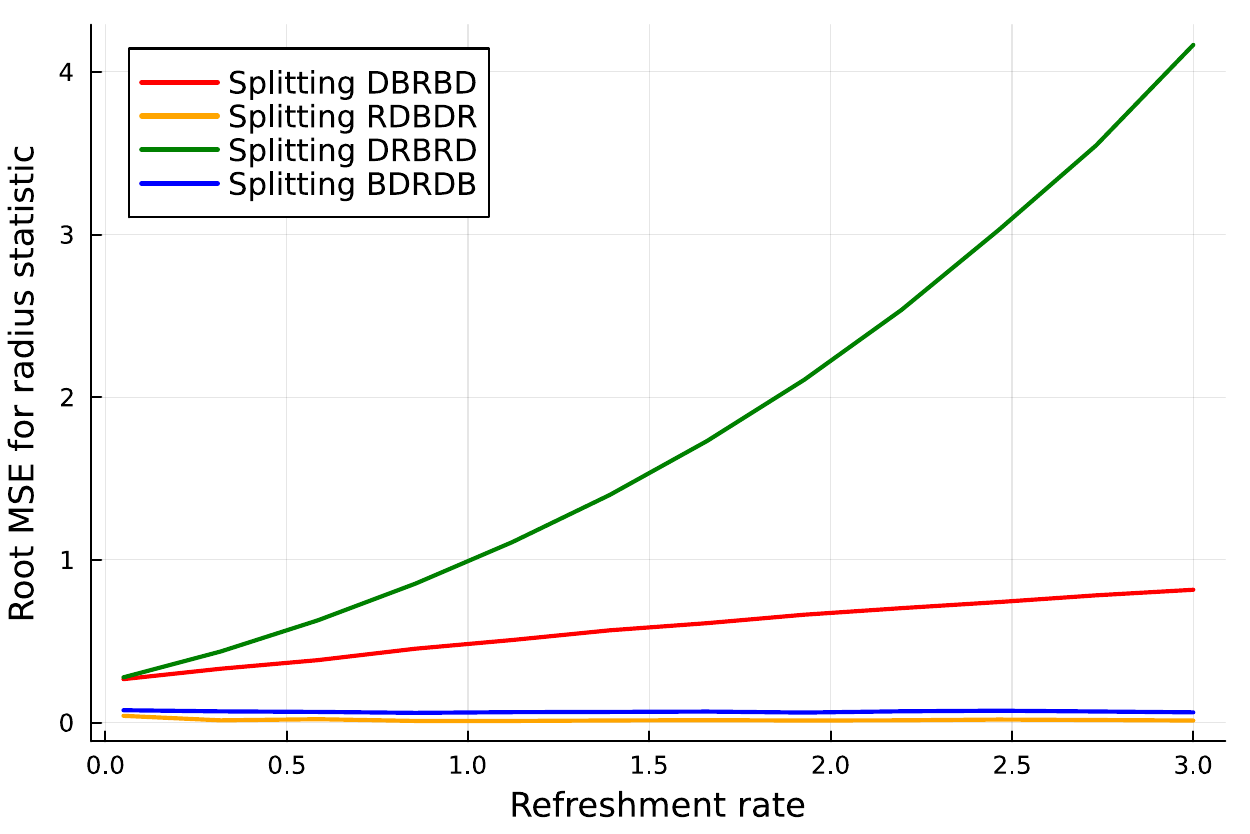}
    \caption{$\rho=0$.}
\end{subfigure}
\hfill
\begin{subfigure}{0.49\textwidth}
  \centering
    \includegraphics[width=\textwidth]{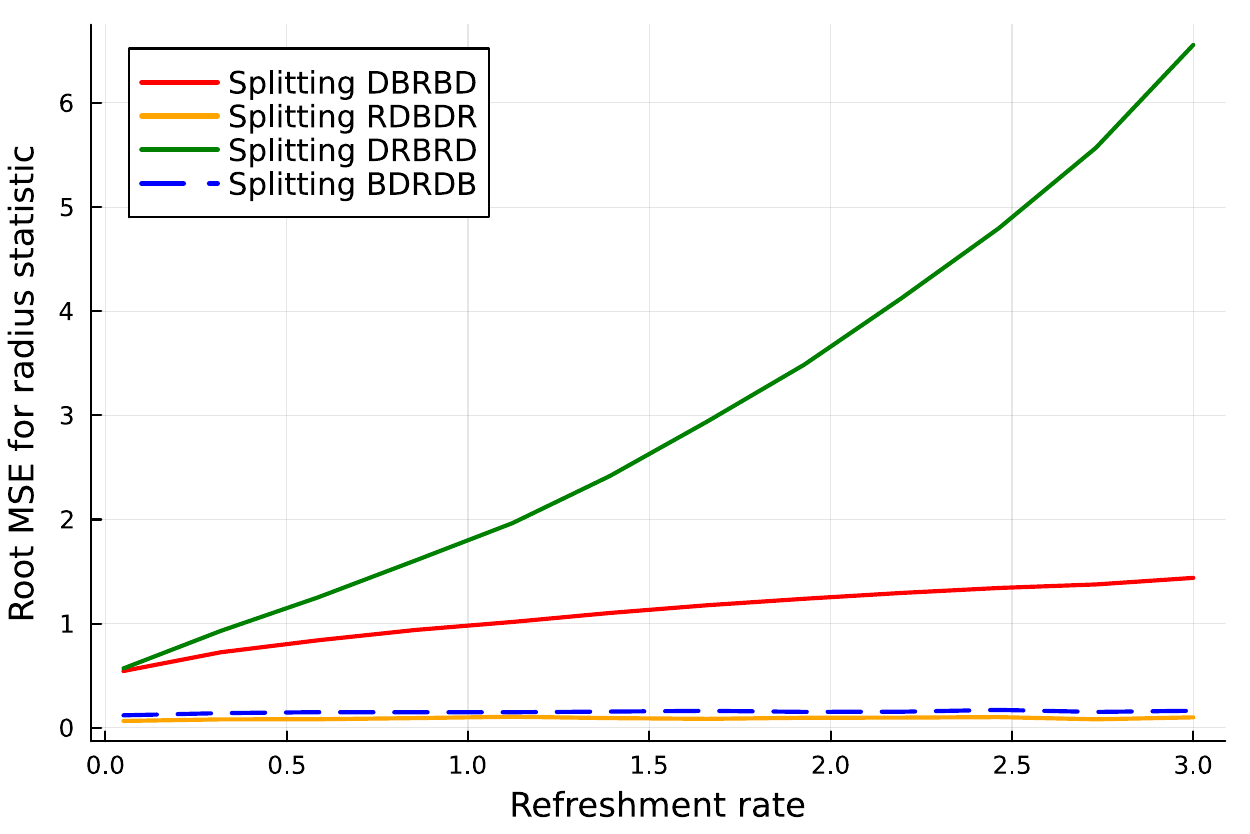}
    \caption{$\rho=0.7$.}
\end{subfigure}
\caption{Square root of the MSE for empirical estimates of the radius statistic for splitting schemes of BPS with a $5$-dimensional Gaussian target with covariance $\Sigma_{ii} =1$, $\Sigma_{ij}=\rho$ for $i\neq j$. The step size is $\delta = 0.5$ and the number of iterations is $N=2 \cdot 10^6$. The position vector is initialised with a draw from the target distribution and the velocity from a draw from the uniform distribution on the unit sphere.}
\label{fig:errinvmeas_multidim}
\end{figure}


{Let us comment on these results. First of all, the theoretical results and the numerical simulations of Figure \ref{fig:bias_inv_measure} are consistent, in the sense that they report the same behaviour although they consider two different metrics. In particular, the bias of schemes \textbf{RDBDR} and \textbf{BDRDB} appears to be independent of the refreshment rate, while \DBRBD and \DRBRD have respectively linear and quadratic dependence. In the one-dimensional case, the plots show that it is best to choose $\lambda_r=0$, which is possible as in this case BPS is irreducible. However, in higher dimensional settings it is necessary to take $\lambda_r >0$ and thus it is essential to use schemes that have good performance for most values of $\lambda_r$. From Figures \ref{fig:bias_inv_measure_gauss} and \ref{fig:bias_inv_measure} it is also clear that \RDBDR typically has the smallest bias out of all the considered splittings.
The experiments in Figure \ref{fig:errinvmeas_multidim} suggest that the findings of the one-dimensional case extend to multi-dimensional targets. In particular, \RDBDR has either a better performance than other splittings or behaves very similarly to \BDRDB both on an independent as well as a correlated Gaussian. }

\subsubsection{Gaussian target}
Let us start with a one-dimensional Gaussian target with mean zero and variance $\sigma^2>0$. 
\begin{proposition}\label{prop:f2_gauss1d}
Let $\pot(x)=x^2/(2\sigma^2)$ for $\sigma^2>0$. Then:
\begin{itemize}
\item For the splitting scheme \textbf{DBRBD} it holds that
\begin{align*}
    f_2(x,+1) =f_2(x,-1) = \frac{\lambda_r}{24} \left(\frac{2\sqrt{2}}{\sigma\sqrt{\pi}} -\frac{x^3}{\sigma^4} \sign(x) \right).
\end{align*}
\item For the splitting scheme \textbf{BDRDB} it holds that
\begin{align*}
    f_2(x,+1) & =\frac{1}{8\sigma^2}- \frac{1}{4\sigma^4} x^2 \mathbbm{1}_{x < 0},\\
    f_2(x,-1) & =\frac{1}{8\sigma^2}- \frac{1}{4\sigma^4} x^2 \mathbbm{1}_{x > 0}.
\end{align*}
\item For the splitting scheme \textbf{RDBDR} it holds that 
\begin{align*}
    f_2(x,+1) =f_2(x,-1) =0.
\end{align*}
\item For the splitting scheme \textbf{DRBRD} it holds that
\begin{equation}\notag
    f_2(x,+1) = f_2(x,-1)  =\frac{\lambda_r}{12} \left( \frac{2\sqrt{2}}{\sigma\sqrt{\pi}} - \frac{\lvert x\rvert^3}{\sigma^4}\right) +\frac{\lambda_r^2}{16}\left(1-\frac{x^2}{\sigma^2} \right) .
\end{equation}
\end{itemize}
\end{proposition}
\begin{figure}[t]
    \centering
    \begin{subfigure}{0.49\textwidth}
    \includegraphics[width=\textwidth]{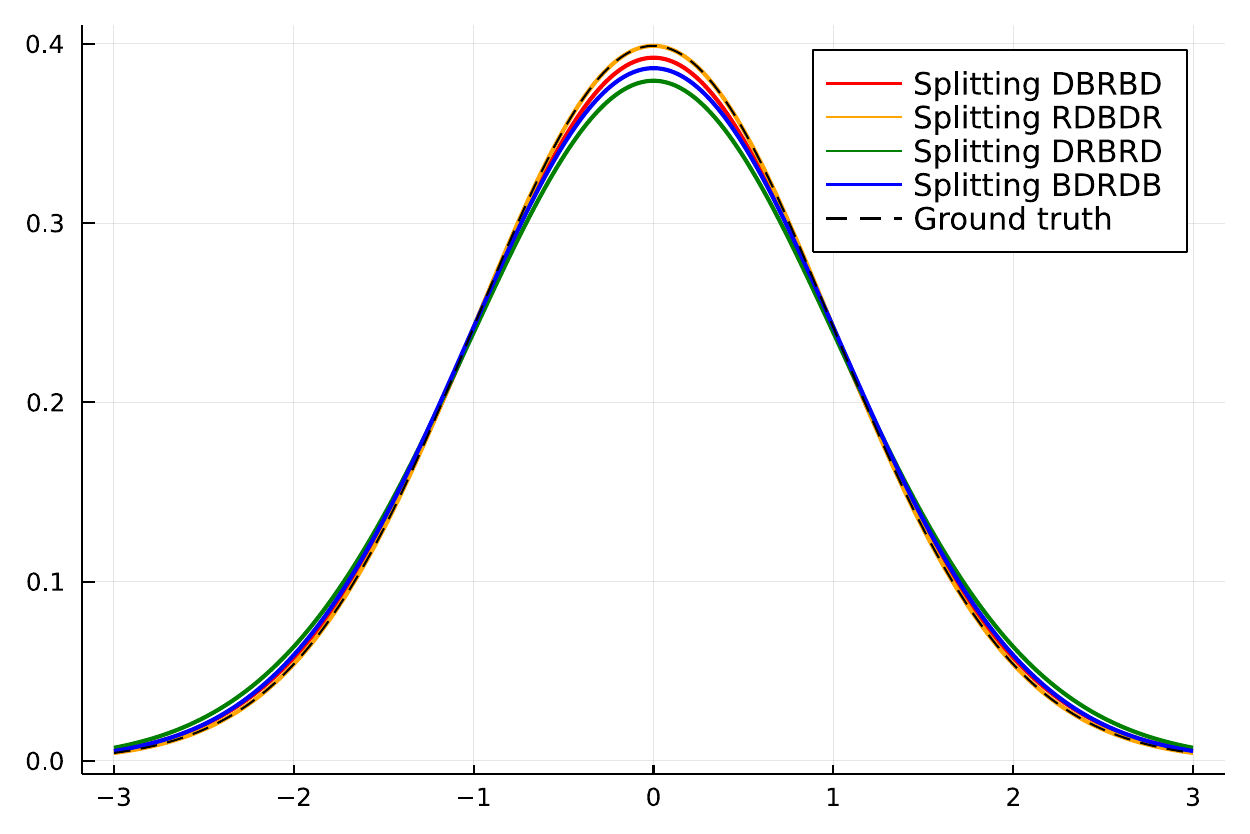}
    \caption{Refreshment rate $\lambda_r=1.0$.}
\end{subfigure}
\hfill
\begin{subfigure}{0.49\textwidth}
    \includegraphics[width=\textwidth]{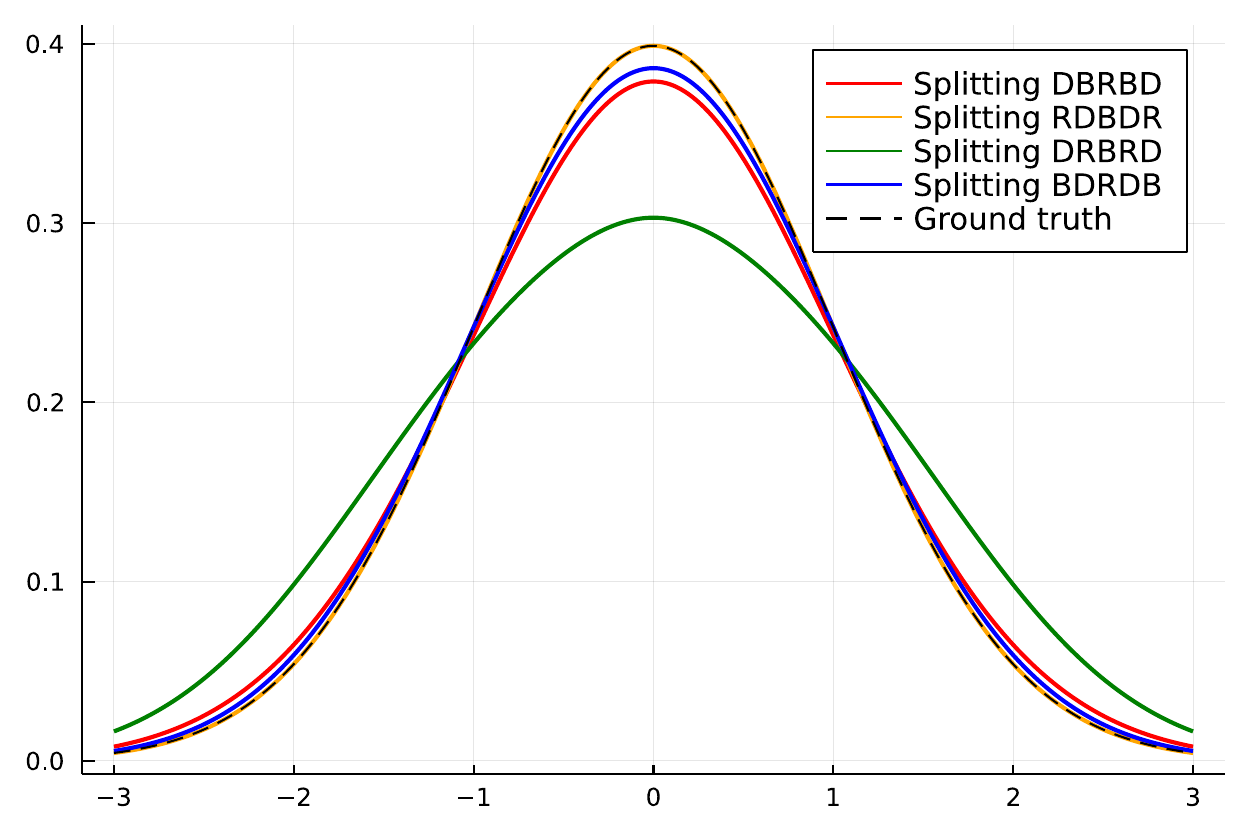}
    \caption{Refreshment rate $\lambda_r=3.0$.}
\end{subfigure}
\caption{Plots of the theoretical invariant measure up to second order for a standard Gaussian target, as given by Proposition \ref{prop:f2_gauss1d}. The step size is $\delta=0.5$.}
\label{fig:inv_meas_bps_1d}
\end{figure}

\subsubsection{Non-Lipschitz potential}
Now we focus on a target distribution with non-Lipschitz potential. 
\begin{proposition}\label{prop:f2_nonlipschitz}
Let $\pot(x)=x^4$. Then:
\begin{itemize}
\item For the splitting scheme \textbf{DBRBD} it holds that
\begin{align*}
    f_2(x,+1) =f_2(x,-1) = \frac{\lambda_r}{7}\left(\frac{1}{2 \Gamma(5/4)} - 2 x^7 \sign(x) \right) + \frac{1}{2}\left(\frac{\Gamma(3/4)}{\Gamma(1/4)} - x^2\right).
\end{align*}
\item For the splitting scheme \textbf{BDRDB} it holds that
\begin{align*}
    &f_2(x,+1) = \frac{5\Gamma(3/4)}{2\Gamma(1/4)} - x^2- 4x^6 \1_{x<0} \,, \\
    &f_2(x,-1) =\frac{5\Gamma(3/4)}{2\Gamma(1/4)} - x^2- 4x^6 \1_{x\geq 0} \,.
\end{align*}

\item For the splitting scheme \textbf{RDBDR} it holds that
\begin{align*}
    f_2(x,+1) = f_2(x,-1) =  \frac{\Gamma(3/4)}{2\Gamma(1/4)} - \frac{1}{2} x^2.
\end{align*}

\item For the splitting scheme \textbf{DRBRD} it holds that
\begin{align*}
    f_2(x,+1) =f_2(x,-1) &=  \frac{\lambda_r}{7}\left(\frac{1}{\Gamma(5/4)} - 4 x^7 \sign(x) \right) + \frac{1}{2}\left(\frac{\Gamma(3/4)}{\Gamma(1/4)} - x^2\right)  \\
    & \quad+ \frac{\lambda_r^2}{8}\left( \frac{1}{4} - x^4 \right).
\end{align*}
\end{itemize}
\end{proposition}

\subsubsection{Heavy tailed target}
Finally we consider a Cauchy distribution $\pi(x)= \gamma /(\pi (\gamma^2+x^2))$ for $\gamma>0$. 
\begin{proposition}\label{prop:f2_cauchy}
Let  $\pot(x)=\ln(\gamma^2+x^2).$  Then:
\begin{itemize}
\item For the splitting scheme \textbf{DBRBD} it holds that
\begin{align*}
    f_2(x,+1) = f_2(x,-1) &= \frac{\lambda_r}{4\gamma}\left( \frac{\pi}{4} - \lvert\arctan(x/\gamma)\rvert +\frac{\gamma \lvert x\rvert }{\gamma^2+x^2}-\frac{1}{\pi}  \right) 
    \\
    &  \quad 
    +\frac{1}{12} \left( \frac{1}{4\gamma^2} +  \frac{x^2-\gamma^2}{(\gamma^2+x^2)^2} \right).
\end{align*}
\item For the splitting scheme \textbf{BDRDB} it holds that
\begin{align*}
    f_2(x,v) = \left(\frac{(x^2-3\gamma^2)^2}{48\gamma^2(x^2+\gamma^2)^2}\right)\1_{xv<0}+\left(\frac{x^4-54x^2\gamma^2+9\gamma^4}{48\gamma^2(x^2+\gamma^2)^2}\right)\1_{xv\geq 0}.
\end{align*}

\item For the splitting scheme \textbf{RDBDR} it holds that
\begin{align*}
    f_2(x,+1) =f_2(x,-1) = \frac{1}{12} \left( \frac{1}{4\gamma^2} +  \frac{x^2-\gamma^2}{(\gamma^2+x^2)^2} \right).
\end{align*}

\item For the splitting scheme \textbf{DRBRD} it holds that
\begin{align*}
    f_2(x,+1) = f_2(x,-1)&=\frac{\lambda_r}{2\gamma}\left( \frac{\pi}{4} - \lvert\arctan(x/\gamma)\rvert +\frac{\gamma \lvert x\rvert }{\gamma^2+x^2}-\frac{1}{\pi}  \right)\\
    & \quad +\frac{1}{12} \left( \frac{1}{4\gamma^2} +  \frac{x^2-\gamma^2}{(\gamma^2+x^2)^2} \right) +\frac{\lambda_r^2}{8}\left(\ln 4-\ln\left(1+\frac{x^2}{\gamma^2}\right)\right).
\end{align*}
\end{itemize}
\end{proposition}

\subsection{Proof of Proposition \ref{prop:mu_delta1D}}\label{subsec:proof_mu_delta1D}
Fix $x\in\R$, $\delta>0$ and let $G(x,\delta):=\{(z,v)\in\R\times\{\pm 1\}:(z-x)/\delta\in\mathbb Z\}$ be the state space of the chain with initial position $x$. For now, let $\mu_\delta$ be any probability measure on $G(x,\delta)$ such that $\mu_\delta(y,w)=\mu_\delta(y,-w)$ for all $(y,w)\in G(x,\delta)$, and let us give a sufficient and necessary condition for it to be invariant by the chain.
 Since such a $\mu_\delta$ is invariant by the refreshment step,  it is invariant for the scheme \textbf{RDBDR} if and only if it is invariant for the scheme \textbf{R'DBD}, where \textbf{R'} is a deterministic flip of the velocity (which, as \textbf{R}, preserves $\mu_\delta$). Besides, from a state $(y,w)\in G(x,\delta)$, one transition of \textbf{R'DBD} can only lead to $(y,w)$ or $(y+\delta w,-w)$, from which it can only stay or come back to the initial $(y,w)$. In other words the pair $\{(y,w),(y+\delta w,-w)\}$ is irreducible for this chain, and thus $\mu_\delta$ is invariant for \textbf{R'DBD} if and only if its restrictions on all these sets for $(y,w)\in G(x,\delta)$ are invariant by this scheme, which by definition  reads
 \begin{align*}
 \forall (y,w)\in G(x,\delta)\,,\qquad
       \mu_\delta(y,w) e^{-\delta\lambda(y+w\delta/2,w)} = \mu_\delta(y+\delta w,-w) e^{-\delta\lambda(y+w\delta/2,-w)}.
   \end{align*}
   It turns out that this is exactly the  skew detailed balance condition \eqref{eq:skewDB} for the scheme \textbf{DBD}.
  Writing that $\mu_\delta(y,w) \propto \exp(-\pot_\delta(y))$ for some $\pot_\delta$ and  recalling that $\lambda(y,w)- \lambda(y,-w)=w \psi'(y)$ for all $y,w$, this is equivalent to
   $$\forall (y,w)\in G(x,\delta),\qquad \pot_\delta(y+\delta w) - \pot_\delta(y)= \delta w \pot'\left(y+\delta w/2\right)\,. $$
Up to an additive constant, the only function $\psi_\delta$ which satisfies this is the one given in the statement of Proposition~\ref{prop:mu_delta1D}. As a conclusion, we have proven that a probability measure on $G(x,\delta)$ which is independent from the velocity is invariant for the scheme \textbf{RDBDR} if and only if it is the one given in the proposition, which concludes the proof of the first statement.


Now we focus on the convergence of empirical means, assuming that the conditions of Theorem~\ref{thm:ergodicity_zzs} are met.  The reference position $x\in\R$ is still fixed.  The long-time convergence established in 
Theorem~\ref{thm:ergodicity_zzs} (for $P_\delta^2$ where $P_\delta$ is one step of the scheme) is well-known to imply an ergodic Law of Large Numbers. In particular,  for all initial conditions in $G(x,\delta)$ and all bounded $f$, distinguishing odd and even indexes, we see that $\frac1N \sum_{k=1}^N f(\overline{Z}_{t_k})$ (where $(\overline{Z}_{t_k})_{k\in\N}$ is a trajectory of the scheme) converges almost surely as $N\rightarrow\infty$ to $\tilde \mu_\delta(f) := (\mu_\delta'(f) + \mu_\delta''(f))/2$, where $\mu_\delta'$ and $\mu_\delta''$ are the unique invariant measures of $P^2_\delta$ on each periodic component of the state space. In particular, $\tilde \mu_\delta$ is an invariant measure for $P_\delta $.  In dimension 1, the scheme \textbf{DBD} is such that for all $y$, for all times, the number of visits of the points $(y,1)$ and $(y,-1)$ differ at most by 1, which implies by ergodicity that $\tilde \mu_\delta(y,w) = \mu_\delta(y,-w) $ for all $(y,w)\in G(x,\delta)$, and we conclude thanks to the first part of the proof.

\section{Proofs of Section \ref{sec:scaling_rej_prob}}\label{sec:proofs_rejectionprob}
\subsection{Proof of Proposition \ref{prop:meanacceptrate_zzs}}
    Recall the expression for the acceptance rate \eqref{eq:MH_prob_zzs} for given initial state $(x,v)$ and proposed state $(\tilde X,R_I v)$, as well as our notation $x_{1/2}=x+v\delta/2$. Let us rewrite the exponent in the acceptance probability using Taylor's theorem. Expanding  $\pot(\tilde{X})$ as well as $\partial_i \pot(x_{1/2})$ around $x$ we find
    \begin{equation}\label{eq:decom_taylor_proof_zzs}
        \pot(x)-\pot(\tilde{X})  + \delta \sum_{i\notin I}v_i\partial_i \pot(x_{1/2}) = \frac{\delta^2}{2}  \sum_{i\notin I} v_i\sum_{j\in I}\partial_{ij}\pot(x)v_j  + \delta^3  H(x,v) + \mathcal{O}(\delta^4),
    \end{equation}
    where 
    \begin{equation*}
        H(x,v) = \frac{1}{8} \sum_{i\notin I} v_i \langle v,\nabla^2 \partial_i \pot(x) v\rangle - \frac{1}{48} \sum_{\alpha:\lvert \alpha\rvert =3}\!\!\! D^\alpha\pot(x) (v+ R_Iv)^\alpha.
    \end{equation*}
    By $\mathcal{O}(\delta^4)$ we denote a remainder term that depends on fourth derivatives of $\pot$, is uniform in $\delta\in[0,\delta_0]$, and under our assumptions increases at most polynomially in $x$.

We shall compute the probability of rejecting the proposed state by partitioning the state on whether there are no events, or one or more components of the velocity are flipped. Consider first the case of no events. In this scenario the second order term in \eqref{eq:decom_taylor_proof_zzs} equals $0$ since $I$ is the empty set. Recalling that $ x_+ - x_+^2/2 \leq 1-1\wedge e^{-x} \leq x_+$ we find
    \begin{align*}
        \E[(1-\alpha((x,v),(\tilde X,\tilde V)))\1_{\text{no events}}] &= \max\left(0,\exp(-\delta\lambda(x_{1/2},v)) \delta^3 H(x,v)\right)  + \mathcal{O}(\delta^4)\\
        & = \delta^3 \max\left(0,-\frac{1}{24} \sum_{\alpha:\lvert \alpha\rvert =3} D^\alpha\pot(x) v^\alpha\right) + \mathcal{O}(\delta^4).
    \end{align*}
    On the complementary event we find
    \begin{align*}
        &\E[(1-\alpha((x,v),(\tilde X,\tilde V)))\1_{\geq \text{ 1 events}}] \\
        & =\frac{\delta^2}{2} \max\left(0,\sum_{I:1\leq\lvert I\rvert\leq d } \sum_{i\notin I} v_i\sum_{j\in I}\partial_{ij}\pot(x)v_j \prod_{i\in I}\left(1-\exp(\-\delta\lambda_i(x_{1/2},v)) \right) \prod_{j\notin I}\exp(\-\delta\lambda_j(x_{1/2},v))\right) \\
        & \quad +\mathcal{O}(\delta^4)\\
        & = \frac{\delta^3}{2} \max\left(0,\sum_{i=1}^d \lambda_i(x_{1/2},v) v_i \sum_{k\neq i}  v_k \partial_{ik}\pot(x) \right) +\mathcal{O}(\delta^4).
    \end{align*}
    In the last line we focused on the terms corresponding to only one component of the velocity being flipped, as these are the only situations with leading order, and we expanded the exponential terms. The term $\lambda_i(x_{1/2},v)$ can be substituted by the term $\lambda_i(x,v)$ because the function $\max(0,a)$ is Lipschitz and $\pot$ is smooth.
    We obtain the statement in the proposition by summing the case of no events to that of one or more events.

\subsection{Proof of Proposition \ref{prop:meanacceptrate_bps}}
    The proof is similar to that of Proposition \ref{prop:meanacceptrate_zzs}, so we only give the main steps. Observe that we only need to focus on the \textbf{DBD} part of the splitting scheme, as refreshments do not affect the rejection probability.
    Similarly to Proposition \ref{prop:meanacceptrate_zzs} we distinguish two cases based on whether a jump happens or not. 
    In the case in which no rejections take place we have that the proposal is $\tilde X=x+v\delta$. We rewrite the exponent in \eqref{eq:MH_bps_2} replacing $\pot(\tilde X)$ and $\nabla \pot(x_{1/2})$ with their Taylor expansions around $x$. This gives
    \begin{align*}
        \pot(x) - \pot(\Tilde X) + \delta \langle v,\nabla \pot(x_{1/2})\rangle = -\frac{1}{24} \delta^3 \sum_{\alpha:\lvert \alpha\rvert=3}D^\alpha\pot(x) v^\alpha + \mathcal{O}(\delta^4).
    \end{align*}
    Following a similar reasoning of Proposition \ref{prop:meanacceptrate_zzs}, we find that in this case
    \begin{align*}
        \E[(1-\alpha((x,v),(\tilde X,\tilde V)))\1_{\text{no reflection}}] &= \frac{\delta^3}{24} \max\left( 0,-\sum_{\alpha:\lvert \alpha\rvert =3} D^\alpha\pot(x) v^\alpha\right) + \mathcal{O}(\delta^4).
    \end{align*}
    On the complementary event we shall use the following Taylor expansion: 
    \begin{align*}
        &\pot(x)-\pot(x +(v+R(x_{1/2})v)\delta/2) = - \frac \delta 2 \langle v+R(x_{1/2})v,\nabla \pot(x_{1/2})\rangle \\
        & \quad + \frac{\delta^2}{8} \left(  \langle v,\nabla^2 \pot(x_{1/2}) v\rangle - \langle R(x_{1/2})v,\nabla^2 \pot(x_{1/2}) R(x_{1/2})v\rangle  \right) + \mathcal{O}(\delta^3).
    \end{align*}
    Notice that the first order term equals zero by definition of the operator $R$.
    Hence on the event that a reflection takes place we find
    \begin{align*}
        &\E[(1-\alpha((x,v),(\tilde X,\tilde V)))\1_{\text{reflection}}] \\
        &\quad =\frac{\delta^3}{8} \max\left( 0, \lambda(x_{1/2},v)(\langle v,\nabla^2 \pot(x_{1/2}) v\rangle - \langle R(x_{1/2})v,\nabla^2 \pot(x_{1/2}) R(x_{1/2})v\rangle)\right) + \mathcal{O}(\delta^4).
    \end{align*}
    It remains to show that we can replace $x_{1/2}$ by $x$ which follows if the $\delta^3$ term is continuous in $x_{1/2}$. Using the definition of $\lambda$ and $R$ we find that 
        \begin{align*}
            &F(y):=\lambda(y,v)\left(\langle v,\nabla^2 \pot(y) v\rangle -\langle R(y)v,\nabla^2 \pot(y) R(y)v\rangle\right)  \\
            & = 2\max\left(0,\left\langle \frac{\nabla\pot(y)}{\lvert \nabla\pot(y)\rvert^{\frac{2}{3}}},v\right\rangle\right)\left\langle v, \frac{\nabla\pot(y)}{\lvert \nabla\pot(y)\rvert^{\frac{2}{3}}}\right\rangle\left\langle v,\nabla^2 \pot(y) \frac{\nabla\pot(y)}{\lvert \nabla\pot(y)\rvert^{\frac{2}{3}}}\right\rangle \\
            &\quad - \max\left(0,\left\langle \frac{\nabla\pot(y)}{\lvert \nabla\pot(y)\rvert^{\frac{4}{5}}},v\right\rangle\right)\left\langle v, \frac{\nabla\pot(y)}{\lvert \nabla\pot(y)\rvert^{\frac{4}{5}}}\right\rangle^2 \left\langle\nabla\pot(y),\nabla^2 \pot(y) \frac{\nabla\pot(y)}{\lvert \nabla\pot(y)\rvert^{\frac{4}{5}}}\right\rangle.
        \end{align*}
        Since $\psi$ is smooth and $z/|z|^{4/5}$ is H\"older continuous with exponent $1/5$ the above function is the composition of smooth and H\"older continuous functions, and hence the difference $F(x_{1/2})-F(x)$ converges to zero as $\delta$ tends to $0$.

\subsection{Bounds on the rejection probability for log-concave targets}
Consider a log-concave target distribution for which the potential is gradient $L$-Lipschitz and the Hessian satisfies $m I_d\preceq \nabla^2\pot(x)\preceq MI_d$, where $I_d$ is the $d$-dimensional identity matrix. We suppose $G_1$ is the leading term in $G$, which is for instance the case when $\partial_{ijk}\pot$ is non-zero only for a small number of indices, and obtain a bound as follows.
\begin{example}[Algorithm \ref{alg:Metropolis_DBD_ZZS}]\label{ex:logconcave_zzs}
    Observe that
    \begin{align*}
        \sum_{i=1}^d \lambda_i(x,v) v_i \sum_{k\neq i}  v_k \partial_{ik}\pot(x) & = \sum_{i=1}^d \lambda_i(x,v) \left(\langle v,\nabla^2\pot(x)v\rangle - \langle R_iv,\nabla^2\pot(x)R_iv\rangle\right).
    \end{align*}
    Using the bounds $\langle v,\nabla^2\pot(x)v\rangle - \langle R_iv,\nabla^2\pot(x)R_iv\rangle \leq d(M-m)$ and $\max(0,a)\leq \lvert a\rvert$ twice we find
    \begin{align*}
        \mathbb{E}_\mu[G_1(x,v)] \leq \frac12 d(M-m) \sum_{i=1}^d \mathbb{E}_\pi\left[\lvert\partial_i\pot(x)\rvert \right]\leq \frac12 d^2 \sqrt{L} (M-m).
    \end{align*}
    In the second inequality we applied Jensen's inequality to \citet[Lemma 2]{Dalalyan2017FurtherAS}, which gives $\mathbb{E}_\pi\left[\lvert\partial_i\pot(x)\rvert \right]\leq \sqrt L.$
    Alternatively, we can assume that for all $v\in\{\pm 1\}^d$ it holds $\lvert v_i \sum_{k\neq i} v_k \partial_{ik}\pot(x) \rvert \leq M$ for all $i=1,\dots,d$, where $M$ is independent of $d$. This holds e.g. for a correlated Gaussian distribution. With this assumption we find with similar computations as above
    $\mathbb{E}_\mu[G_1(x,v)] \leq  \frac12 d \sqrt{L} M.$
\end{example}

\begin{example}[Algorithm \ref{alg:Metropolis_RDBDR_BPS}]\label{ex:logconcave_bps}
    In order to have a fair comparison between BPS and ZZS, we consider BPS with Gaussian velocity. This choice ensures that the Euclidean norm of the velocity vectors of the two samplers are equal (on average). Similarly to Example \ref{ex:logconcave_zzs}, under the assumption that $\nabla\pot$ is $L$-Lipschitz and $m I_d\preceq \nabla^2\pot(x)\preceq MI_d$ it is not hard to obtain the bound 
    $\mathbb{E}_\mu [G_1(x,v)] \lesssim \frac{\sqrt{L}(M-m)}{8}d^2.$

\end{example}

\section{Pseudo-codes for Section \ref{subsec:example_particles}}\label{sec:pseudocodes_particles}
Here we give the pseudo-codes for the jump parts of ZZS and BPS considered in Section~\ref{subsec:example_particles}. These are respectively Algorithm \ref{alg:zzs_particles} and Algorithm \ref{alg:bps_particles}. Both algorithms take as input the gradients $\nabla \pot_1$ and $\nabla\pot_2$, which are defined as follows:
\begin{align*}
    &\nabla\pot_1(x) = \nabla_x \left(\sum_{i=1}^{N-1} V(x_i-x_{i+1}) \right), \\
    &\nabla\pot_2(x,j) = \nabla_x \left( \sum_{i=1}^N W(x_i-x_j)\right).
\end{align*}
The pseudo-code for HMC is given in Algorithm \ref{alg:HMC_particles}.

\begin{algorithm}
\DontPrintSemicolon
\caption{Part \textbf{B} for the ZZS considered in Section \ref{subsec:example_particles}}
\label{alg:zzs_particles}
\KwIn{Initial condition $(x,v)$, step size $\delta$, gradients $\na\pot_1$, $\na\pot_2$.}
\KwOut{Updated velocity vector $v$.}
\For{$i \gets 1$ \KwTo $N$}{
    $t \gets 0$\;
    $\tau_1 \sim \Exp((v_i\partial_i\pot_1(x))_+)$\;
    $\tau_2 \sim \Exp(1)$\;
    \While{$\min(\tau_1, \tau_2) \leq \delta - t$}{
        \If{$\tau_1 < \tau_2$}{
            $t \gets t + \tau_1$\;
            $v_i \gets -v_i$\;
            $\tau_2 \gets \tau_2 - \tau_1$\;
            $\tau_1 \gets \infty$\;
        }
        \Else{
            $t \gets t + \tau_2$\;
            $J \sim \Unif(\{1,\dots,N\})$\;
            $U\sim\Unif[0,1]$\;
            \If{$U \leq  (v_i\partial_i \pot_2(x,J))_+$}{
                $v_i \gets -v_i$\;
                $\tau_1 \sim \Exp((v_i\partial_i\pot_1(x))_+)$\;
            }
            \Else{
                $\tau_1 \gets \tau_1 - \tau_2$\;
            }
            $\tau_2 \sim \Exp(1)$\;
        }
    }
}
\Return $v$\;
\end{algorithm}

\begin{algorithm}
\DontPrintSemicolon
\caption{Part \textbf{B} for the BPS considered in Section \ref{subsec:example_particles}}
\label{alg:bps_particles}
\KwIn{Initial condition $(x,v)$, step size $\delta$, gradients $\na\pot_1$, $\na\pot_2$.}
\KwOut{Updated velocity vector $v$.}
$t \gets 0$\;
$\beta \gets \sqrt{N} \lvert v\rvert $\;
$\tau_1 \sim \Exp(\langle \na\pot_1(x), v \rangle_+)$\;
$\tau_2 \sim \Exp(\beta)$\;
\While{$\min(\tau_1, \tau_2) \leq \delta - t$}{
        \If{$\tau_1 < \tau_2$}{
            $t \gets t + \tau_1$\;
            $v \gets v - 2 \frac{\langle v, \na\pot_1(x)\rangle }{\lvert\na\pot_1(x)\rvert^2} \na\pot_1(x)$\;
            $\tau_2 \gets \tau_2 - \tau_1$\;
            $\tau_1 \gets \infty$\;
        }
        \Else{
            $t \gets t + \tau_2$\;
            $J \sim \Unif(\{1,\dots,N\})$\;
            $U\sim\Unif[0,1]$\;
            \If{$ U \leq \frac{\langle  v,\na \pot_2(x,J)\rangle_+}{\beta}$}{
                $v \gets v - 2 \frac{\langle  v,\na \pot_2(x,J)\rangle}{\lvert\na\pot_2(x,J)\rvert^2} \na\pot_2(x,J)$\;
                $\tau_1 \sim \Exp(\langle \na\pot_1(x), v \rangle_+)$\;
            }
            \Else{
                $\tau_1 \gets \tau_1 - \tau_2$\;
            }
            $\tau_2 \sim \Exp(\beta)$\;
        }
}
\Return $v$\;
\end{algorithm}

\begin{algorithm}
\DontPrintSemicolon
\caption{HMC algorithm considered in Section \ref{subsec:example_particles}}
\label{alg:HMC_particles}
\KwIn{Initial condition $x$, step size $\delta$, number of iterations $n_{\text{iter}}$, parameters $M,K$, gradients $\na\pot_1$, $\na\pot_2$.}
\KwOut{Markov chain $(X_n)_{n=1}^{n_{\text{iter}}}$.}
\For{$j \gets 1$ \KwTo $n_{\text{iter}}$}{
    Refresh $v$ from the standard Laplace distribution\;
    \For{$k \gets 1$ \KwTo $M$}{
        $J \sim \Unif(\{1,\dots,N\})$\;
        $v \gets v - \frac12 \delta \na\pot_2(x,J)$\;
        \For{$l \gets 1$ \KwTo $K$}{
            $v \gets v - \frac{\delta}{2K} \delta \na\pot_1(x)$\;
            $x \gets x + \frac{\delta}{K} \cdot \text{sign}(v)$\;
            $v \gets v - \frac{\delta}{2K} \na\pot_1(x)$\;
        }
        $J \sim \Unif(\{1,\dots,N\})$\;
        $v \gets v - \frac12 \delta \na\pot_2(x,J)$\;
    }
    $X_j\gets x$\;
}
\Return $(X_n)_{n=1}^{n_{\text{iter}}}$\;
\end{algorithm}

\newpage
\bibliography{23-0036}

\begin{thebibliography}{75}
\providecommand{\natexlab}[1]{#1}
\providecommand{\url}[1]{\texttt{#1}}
\expandafter\ifx\csname urlstyle\endcsname\relax
  \providecommand{\doi}[1]{doi: #1}\else
  \providecommand{\doi}{doi: \begingroup \urlstyle{rm}\Url}\fi

\bibitem[Andrieu and Livingstone(2021)]{AndrieuLivingstone}
Christophe Andrieu and Samuel Livingstone.
\newblock {Peskun–Tierney ordering for Markovian Monte Carlo: beyond the
  reversible scenario}.
\newblock \emph{Annals of Statistics}, 49\penalty0 (4):\penalty0 1958--1981,
  2021.

\bibitem[Bertazzi and Bierkens(2022)]{bertazzi2020adaptive}
Andrea Bertazzi and Joris Bierkens.
\newblock {Adaptive schemes for piecewise deterministic Monte Carlo
  algorithms}.
\newblock \emph{Bernoulli}, 28\penalty0 (4):\penalty0 2404 -- 2430, 2022.

\bibitem[Bertazzi et~al.(2022)Bertazzi, Bierkens, and
  Dobson]{bertazzi2021approximations}
Andrea Bertazzi, Joris Bierkens, and Paul Dobson.
\newblock {Approximations of Piecewise Deterministic Markov Processes and their
  convergence properties}.
\newblock \emph{Stochastic Processes and their Applications}, 154:\penalty0
  91--153, 2022.

\bibitem[Bierkens and Roberts(2017)]{BierkensRoberts}
Joris Bierkens and Gareth Roberts.
\newblock {A piecewise deterministic scaling limit of lifted
  Metropolis--Hastings in the Curie--Weiss model}.
\newblock \emph{The Annals of Applied Probability}, 27\penalty0 (2):\penalty0
  846--882, 2017.

\bibitem[Bierkens et~al.(2018)Bierkens, Bouchard-Côté, Doucet, Duncan,
  Fearnhead, Lienart, Roberts, and Vollmer]{pdmp_restricted_domains}
Joris Bierkens, Alexandre Bouchard-Côté, Arnaud Doucet, Andrew~B. Duncan,
  Paul Fearnhead, Thibaut Lienart, Gareth Roberts, and Sebastian~J. Vollmer.
\newblock {Piecewise deterministic Markov processes for scalable Monte Carlo on
  restricted domains}.
\newblock \emph{Statistics \& Probability Letters}, 136:\penalty0 148--154,
  2018.
\newblock The role of Statistics in the era of big data.

\bibitem[Bierkens et~al.(2019{\natexlab{a}})Bierkens, Fearnhead, and
  Roberts]{ZZ}
Joris Bierkens, Paul Fearnhead, and Gareth Roberts.
\newblock {The Zig-Zag Process and Super-Efficient Sampling for Bayesian
  Analysis of Big Data}.
\newblock \emph{Annals of Statistics}, 47, 2019{\natexlab{a}}.

\bibitem[Bierkens et~al.(2019{\natexlab{b}})Bierkens, Roberts, and
  Zitt]{Bierkensergodicity}
Joris Bierkens, Gareth~O Roberts, and Pierre-Andr{\'e} Zitt.
\newblock Ergodicity of the zigzag process.
\newblock \emph{The Annals of Applied Probability}, 29\penalty0 (4):\penalty0
  2266--2301, 2019{\natexlab{b}}.

\bibitem[Bierkens et~al.(2022{\natexlab{a}})Bierkens, Grazzi, Meulen, and
  Schauer]{sticky_pdmp}
Joris Bierkens, Sebastiano Grazzi, Frank van~der Meulen, and Moritz Schauer.
\newblock {Sticky PDMP samplers for sparse and local inference problems}.
\newblock \emph{Statistics and Computing}, 33\penalty0 (1):\penalty0 8,
  2022{\natexlab{a}}.

\bibitem[Bierkens et~al.(2022{\natexlab{b}})Bierkens, Kamatani, and
  Roberts]{bierkens2018highdimensional}
Joris Bierkens, Kengo Kamatani, and Gareth~O. Roberts.
\newblock {High-dimensional scaling limits of piecewise deterministic sampling
  algorithms}.
\newblock \emph{The Annals of Applied Probability}, 32\penalty0 (5):\penalty0
  3361 -- 3407, 2022{\natexlab{b}}.

\bibitem[Bierkens et~al.(2025)Bierkens, Kamatani, and Roberts]{Kamatani}
Joris Bierkens, Kengo Kamatani, and Gareth~O Roberts.
\newblock {Scaling of piecewise deterministic Monte Carlo for anisotropic
  targets}.
\newblock \emph{Bernoulli}, 31\penalty0 (3):\penalty0 2323--2350, 2025.

\bibitem[Bonfiglioli and Fulci(2012)]{bonfiglioli_bch}
Andrea Bonfiglioli and Roberta Fulci.
\newblock \emph{Topics in noncommutative Algebra: The Theorem of Campbell,
  Baker, Hausdorff and Dynkin}, volume 2034.
\newblock Springer, 01 2012.
\newblock ISBN 978-3-642-22596-3.

\bibitem[Bou-Rabee et~al.(2020)Bou-Rabee, Eberle, and Zimmer]{Bou_Rabee_2020}
Nawaf Bou-Rabee, Andreas Eberle, and Raphael Zimmer.
\newblock {Coupling and convergence for Hamiltonian Monte Carlo}.
\newblock \emph{The Annals of Applied Probability}, 30\penalty0 (3), Jun 2020.

\bibitem[Bouchard-Côté et~al.(2018)Bouchard-Côté, Vollmer, and Doucet]{BPS}
Alexandre Bouchard-Côté, Sebastian~J. Vollmer, and Arnaud Doucet.
\newblock {The Bouncy Particle Sampler: A Nonreversible Rejection-Free Markov
  Chain Monte Carlo Method}.
\newblock \emph{Journal of the American Statistical Association}, 113\penalty0
  (522):\penalty0 855--867, 2018.

\bibitem[{Camrud} et~al.(2023){Camrud}, {Oliviero Durmus}, {Monmarch{\'e}}, and
  {Stoltz}]{Camrudetal}
Evan {Camrud}, Alain {Oliviero Durmus}, Pierre {Monmarch{\'e}}, and Gabriel
  {Stoltz}.
\newblock {Second order quantitative bounds for unadjusted generalized
  Hamiltonian Monte Carlo}.
\newblock \emph{arXiv e-prints}, art. arXiv:2306.09513, June 2023.

\bibitem[Cao et~al.(2023)Cao, Lu, and Wang]{Cau23}
Yu~Cao, Jianfeng Lu, and Lihan Wang.
\newblock On explicit $l^2$-convergence rate estimate for underdamped langevin
  dynamics.
\newblock \emph{Archive for Rational Mechanics and Analysis}, 247\penalty0
  (5):\penalty0 90, 2023.

\bibitem[Chambolle and Pock(2011)]{chambolle2011first}
Antonin Chambolle and Thomas Pock.
\newblock {A first-order primal-dual algorithm for convex problems with
  applications to imaging}.
\newblock \emph{Journal of mathematical imaging and vision}, 40:\penalty0
  120--145, 2011.

\bibitem[Cheng et~al.(2018)Cheng, Chatterji, Bartlett, and
  Jordan]{cheng2018underdamped}
Xiang Cheng, Niladri~S Chatterji, Peter~L Bartlett, and Michael~I Jordan.
\newblock {Underdamped Langevin MCMC: A non-asymptotic analysis}.
\newblock In \emph{Conference on learning theory}, pages 300--323. PMLR, 2018.

\bibitem[Chevallier et~al.(2024)Chevallier, Power, Wang, and
  Fearnhead]{pdmp_piecewise_smooth}
Augustin Chevallier, Sam Power, Andi~Q Wang, and Paul Fearnhead.
\newblock {PDMP Monte Carlo methods for piecewise smooth densities}.
\newblock \emph{Advances in Applied Probability}, 56\penalty0 (4):\penalty0
  1153--1194, 2024.

\bibitem[{Cloez, Bertrand} et~al.(2017){Cloez, Bertrand}, {Dessalles, Renaud},
  {Genadot, Alexandre}, {Malrieu, Florent}, {Marguet, Aline}, and {Yvinec,
  Romain}]{Cloez_etal}
{Cloez, Bertrand}, {Dessalles, Renaud}, {Genadot, Alexandre}, {Malrieu,
  Florent}, {Marguet, Aline}, and {Yvinec, Romain}.
\newblock Probabilistic and piecewise deterministic models in biology.
\newblock \emph{ESAIM: Procs}, 60:\penalty0 225--245, 2017.

\bibitem[Corbella et~al.(2022)Corbella, Spencer, and
  Roberts]{corbella_automatic}
Alice Corbella, Simon E.~F. Spencer, and Gareth~O. Roberts.
\newblock {Automatic Zig-Zag sampling in practice}.
\newblock \emph{Statistics and Computing}, 32\penalty0 (6):\penalty0 107, 2022.

\bibitem[Crisan and Ottobre(2016)]{CrisanOttobre}
Dan Crisan and Michela Ottobre.
\newblock Pointwise gradient bounds for degenerate semigroups (of {UFG} type).
\newblock In \emph{Proc. R. Soc. A}, volume 472, page 20160442. The Royal
  Society, 2016.

\bibitem[Dalalyan(2017)]{Dalalyan2017FurtherAS}
Arnak~S. Dalalyan.
\newblock Further and stronger analogy between sampling and optimization:
  Langevin monte carlo and gradient descent.
\newblock In \emph{Annual Conference Computational Learning Theory}, 2017.

\bibitem[Davis(1984)]{Davis1984}
M.~H.~A. Davis.
\newblock {Piecewise-Deterministic Markov Processes: A General Class of
  Non-Diffusion Stochastic Models}.
\newblock \emph{Journal of the Royal Statistical Society. Series B
  (Methodological)}, 46\penalty0 (3):\penalty0 353--388, 1984.

\bibitem[Davis(1993)]{Davis1993}
M.H.A. Davis.
\newblock \emph{Markov Models \& Optimization}.
\newblock Chapman \& Hall/CRC Monographs on Statistics \& Applied Probability.
  Taylor \& Francis, 1993.

\bibitem[De~Bortoli et~al.(2020)De~Bortoli, Durmus, Pereyra, and
  Vidal]{de2020maximum}
Valentin De~Bortoli, Alain Durmus, Marcelo Pereyra, and Ana~Fernandez Vidal.
\newblock {Maximum likelihood estimation of regularization parameters in
  high-dimensional inverse problems: an empirical Bayesian approach. Part II:
  Theoretical analysis}.
\newblock \emph{SIAM Journal on Imaging Sciences}, 13\penalty0 (4):\penalty0
  1990--2028, 2020.

\bibitem[Deligiannidis et~al.(2021)Deligiannidis, Paulin,
  Bouchard-C{\^o}t{\'e}, and Doucet]{Deligiannidis}
George Deligiannidis, Daniel Paulin, Alexandre Bouchard-C{\^o}t{\'e}, and
  Arnaud Doucet.
\newblock {Randomized Hamiltonian Monte Carlo as scaling limit of the bouncy
  particle sampler and dimension-free convergence rates}.
\newblock \emph{The Annals of Applied Probability}, 31\penalty0 (6):\penalty0
  2612--2662, 2021.

\bibitem[Diaconis et~al.(2000)Diaconis, Holmes, and
  Neal]{diaconis_nonreversible}
Persi Diaconis, Susan Holmes, and Radford~M. Neal.
\newblock {Analysis of a nonreversible Markov chain sampler}.
\newblock \emph{The Annals of Applied Probability}, 10\penalty0 (3):\penalty0
  726 -- 752, 2000.

\bibitem[Durmus and Moulines(2016)]{durmus2016sampling}
Alain Durmus and Eric Moulines.
\newblock {Sampling from strongly log-concave distributions with the Unadjusted
  Langevin Algorithm}.
\newblock \emph{arXiv preprint arXiv:1605.01559}, 2016.

\bibitem[Durmus et~al.(2018)Durmus, Moulines, and
  Pereyra]{DurmusMoulinesPereyra}
Alain Durmus, \'{E}ric Moulines, and Marcelo Pereyra.
\newblock {Efficient Bayesian Computation by Proximal Markov Chain Monte Carlo:
  When Langevin Meets Moreau}.
\newblock \emph{SIAM Journal on Imaging Sciences}, 11\penalty0 (1):\penalty0
  473--506, 2018.

\bibitem[Durmus et~al.(2020)Durmus, Guillin, and Monmarché]{BPS_Durmus}
Alain Durmus, Arnaud Guillin, and Pierre Monmarché.
\newblock {Geometric ergodicity of the Bouncy Particle Sampler}.
\newblock \emph{The Annals of Applied Probability}, 30\penalty0 (5):\penalty0
  2069--2098, 10 2020.

\bibitem[Durmus et~al.(2021)Durmus, Guillin, and Monmarché]{pdmp_inv_meas}
Alain Durmus, Arnaud Guillin, and Pierre Monmarché.
\newblock {Piecewise deterministic Markov processes and their invariant
  measures}.
\newblock \emph{Annales de l'Institut Henri Poincaré, Probabilités et
  Statistiques}, 57\penalty0 (3):\penalty0 1442 -- 1475, 2021.

\bibitem[Gouraud et~al.(2025{\natexlab{a}})Gouraud, Bris, Majka, and
  Monmarch\'{e}]{Gouraudetal}
Nicola\"{\i} Gouraud, Pierre~Le Bris, Adrien Majka, and Pierre Monmarch\'{e}.
\newblock {HMC and Underdamped Langevin United in the Unadjusted Convex Smooth
  Case}.
\newblock \emph{SIAM/ASA Journal on Uncertainty Quantification}, 13\penalty0
  (1):\penalty0 278--303, 2025{\natexlab{a}}.

\bibitem[Gouraud et~al.(2025{\natexlab{b}})Gouraud, Lagardère, Adjoua,
  Pl{\'e}, Monmarch{\'e}, and Piquemal]{gouraudJumps}
Nicolaï Gouraud, Louis Lagardère, Olivier Adjoua, Thomas Pl{\'e}, Pierre
  Monmarch{\'e}, and Jean-Philip Piquemal.
\newblock {Velocity Jumps for Molecular Dynamics}.
\newblock \emph{Journal of Chemical Theory and Computation}, 21\penalty0
  (6):\penalty0 2854--2866, 2025{\natexlab{b}}.

\bibitem[Hairer and Mattingly(2011)]{HairerMattingly2008}
Martin Hairer and Jonathan~C. Mattingly.
\newblock Yet another look at {H}arris' ergodic theorem for {M}arkov chains.
\newblock In \emph{Seminar on {S}tochastic {A}nalysis, {R}andom {F}ields and
  {A}pplications {VI}}, volume~63 of \emph{Progr. Probab.}, pages 109--117.
  Birkh\"auser/Springer Basel AG, Basel, 2011.

\bibitem[Horowitz(1991)]{Horowitz_guidedHMC}
Alan~M. Horowitz.
\newblock A generalized guided {M}onte {C}arlo algorithm.
\newblock \emph{Physics Letters B}, 268\penalty0 (2):\penalty0 247--252, 1991.

\bibitem[Hukushima and Sakai(2013)]{Hukushima_2013}
K~Hukushima and Y~Sakai.
\newblock {An irreversible Markov-chain Monte Carlo method with skew detailed
  balance conditions}.
\newblock \emph{Journal of Physics: Conference Series}, 473:\penalty0 012012,
  dec 2013.

\bibitem[Kopec(2014)]{Kopec16}
Marie Kopec.
\newblock {Weak backward error analysis for overdamped Langevin processes}.
\newblock \emph{IMA Journal of Numerical Analysis}, 35\penalty0 (2):\penalty0
  583--614, 05 2014.

\bibitem[Lagardere et~al.(2019)Lagardere, Aviat, and
  Piquemal]{lagardere2019pushing}
Louis Lagardere, F{\'e}lix Aviat, and Jean-Philip Piquemal.
\newblock Pushing the limits of multiple-time-step strategies for polarizable
  point dipole molecular dynamics.
\newblock \emph{The journal of physical chemistry letters}, 10\penalty0
  (10):\penalty0 2593--2599, 2019.

\bibitem[Leimkuhler et~al.(2013)Leimkuhler, Margul, and Tuckerman]{isokin}
Ben Leimkuhler, Daniel~T. Margul, and Mark~E. Tuckerman.
\newblock Stochastic, resonance-free multiple time-step algorithm for molecular
  dynamics with very large time steps.
\newblock \emph{Molecular Physics}, 111\penalty0 (22-23):\penalty0 3579--3594,
  2013.

\bibitem[Leimkuhler and Matthews(2012)]{Leimkuhler_rational}
Benedict Leimkuhler and Charles Matthews.
\newblock {Rational Construction of Stochastic Numerical Methods for Molecular
  Sampling}.
\newblock \emph{Applied Mathematics Research eXpress}, 2013\penalty0
  (1):\penalty0 34--56, 06 2012.

\bibitem[Leimkuhler et~al.(2016)Leimkuhler, Matthews, and
  Stoltz]{LeimkuhlerMatthewsStoltz}
Benedict Leimkuhler, Charles Matthews, and Gabriel Stoltz.
\newblock The computation of averages from equilibrium and nonequilibrium
  {L}angevin molecular dynamics.
\newblock \emph{IMA J. Numer. Anal.}, 36\penalty0 (1):\penalty0 13--79, 2016.

\bibitem[Leimkuhler et~al.(2024)Leimkuhler, Paulin, and Whalley]{Whalley}
Benedict Leimkuhler, Daniel Paulin, and Peter~A Whalley.
\newblock {Contraction rate estimates of stochastic gradient kinetic Langevin
  integrators}.
\newblock \emph{ESAIM: Mathematical Modelling and Numerical Analysis},
  58\penalty0 (6):\penalty0 2255--2286, 2024.

\bibitem[Lemaire et~al.(2017)Lemaire, Thieullen, and Thomas]{LeThTh17}
Vincent Lemaire, Mich{\`e}le Thieullen, and Nicolas Thomas.
\newblock {Exact Simulation of the Jump Times of a Class of Piecewise
  Deterministic Markov Processes}.
\newblock \emph{Journal of Scientific Computing}, 2017.

\bibitem[Lemaire et~al.(2020)Lemaire, Thieullen, and
  Thomas]{lemaire_thieullen_thomas_2020}
Vincent Lemaire, Mich\'ele Thieullen, and Nicolas Thomas.
\newblock {Thinning and multilevel Monte Carlo methods for piecewise
  deterministic (Markov) processes with an application to a stochastic
  Morris–Lecar model}.
\newblock \emph{Advances in Applied Probability}, 52\penalty0 (1):\penalty0
  138–172, 2020.

\bibitem[Lewis and Shedler(1978)]{lewis_shedler_thinning}
P.A.W. Lewis and G.S. Shedler.
\newblock {Simulation of Nonhomogeneous Poisson Processes by Thinning}, 1978.

\bibitem[Livingstone et~al.(2019)Livingstone, Faulkner, and
  Roberts]{Livingstone_Faulkner}
S~Livingstone, M~F Faulkner, and G~O Roberts.
\newblock {Kinetic energy choice in Hamiltonian/hybrid Monte Carlo}.
\newblock \emph{Biometrika}, 106\penalty0 (2):\penalty0 303--319, 04 2019.

\bibitem[{L{\"o}cherbach} and {Monmarch{\'e}}(2022)]{Locherbach}
Eva {L{\"o}cherbach} and Pierre {Monmarch{\'e}}.
\newblock {Metastability for systems of interacting neurons}.
\newblock \emph{Annales de l'Institut Henri Poincaré, Probabilités et
  Statistiques}, 58\penalty0 (1):\penalty0 343 -- 378, 2022.

\bibitem[Meyn and Tweedie(1993)]{meyn_tweedie_1993}
Sean~P. Meyn and R.~L. Tweedie.
\newblock {Stability of Markovian processes III: Foster–Lyapunov criteria for
  continuous-time processes}.
\newblock \emph{Advances in Applied Probability}, 25\penalty0 (3):\penalty0
  518–548, 1993.

\bibitem[Michel et~al.(2014)Michel, Kapfer, and Krauth]{MichelKapferKrauth2014}
M.~Michel, S.~C. Kapfer, and W.~Krauth.
\newblock {Generalized event-chain Monte Carlo: Constructing rejection-free
  global-balance algorithms from infinitesimal steps}.
\newblock \emph{The Journal of Chemical Physics}, 140\penalty0 (5), 2014.

\bibitem[Monmarch{\'{e}}(2016)]{Monmarche2016}
Pierre Monmarch{\'{e}}.
\newblock {Piecewise deterministic simulated annealing}.
\newblock \emph{ALEA}, 13\penalty0 (1):\penalty0 357--398, 2016.

\bibitem[{Monmarch{\'e}}(2020)]{monmarche_kin_walks}
Pierre {Monmarch{\'e}}.
\newblock {Kinetic walks for sampling}.
\newblock \emph{ALEA Lat. Am. J. Probab. Math. Stat.}, 17:\penalty0 491--530,
  2020.

\bibitem[{Monmarch{\'e}}(2021)]{monmarche_splitting}
Pierre {Monmarch{\'e}}.
\newblock {High-dimensional MCMC with a standard splitting scheme for the
  underdamped Langevin diffusion.}
\newblock \emph{Electronic Journal of Statistics}, 15\penalty0 (2):\penalty0
  4117 -- 4166, 2021.

\bibitem[Monmarch\'e(2023)]{elementary}
Pierre Monmarch\'e.
\newblock {Elementary coupling approach for non-linear perturbation of Markov
  processes with mean-field jump mechanisms and related problems}.
\newblock \emph{ESAIM: PS}, 27:\penalty0 278--323, 2023.

\bibitem[{Monmarch{\'e}} et~al.(2020){Monmarch{\'e}}, {Weisman},
  {Lagard{\`e}re}, and {Piquemal}]{weisman}
Pierre {Monmarch{\'e}}, Jeremy {Weisman}, Louis {Lagard{\`e}re}, and
  Jean-Philip {Piquemal}.
\newblock Velocity jump processes: An alternative to multi-timestep methods for
  faster and accurate molecular dynamics simulations.
\newblock \emph{The Journal of Chemical Physics}, 153\penalty0 (2):\penalty0
  024101, 2020.

\bibitem[Morrone et~al.(2010)Morrone, Zhou, and Berne]{Morrone_etal}
Joseph~A. Morrone, Ruhong Zhou, and B.~J. Berne.
\newblock Molecular dynamics with multiple time scales: How to avoid pitfalls.
\newblock \emph{Journal of Chemical Theory and Computation}, 6\penalty0
  (6):\penalty0 1798--1804, 2010.
\newblock PMID: 26615840.

\bibitem[Neal(2011)]{Neal}
Radford~M Neal.
\newblock {MCMC using Hamiltonian dynamics}.
\newblock \emph{Handbook of markov chain monte carlo}, 2\penalty0
  (11):\penalty0 2, 2011.

\bibitem[Neklyudov et~al.(2020)Neklyudov, Welling, Egorov, and
  Vetrov]{involutiveMCMC}
Kirill Neklyudov, Max Welling, Evgenii Egorov, and Dmitry Vetrov.
\newblock {Involutive MCMC: A Unifying Framework}.
\newblock In \emph{Proceedings of the 37th International Conference on Machine
  Learning}, ICML'20. JMLR.org, 2020.

\bibitem[Nishimura et~al.(2024)Nishimura, Zhang, and
  Suchard]{Nishimura05122024}
Akihiko Nishimura, Zhenyu Zhang, and Marc~A. Suchard.
\newblock {Zigzag Path Connects Two Monte Carlo Samplers: Hamiltonian
  Counterpart to a Piecewise Deterministic Markov Process}.
\newblock \emph{Journal of the American Statistical Association}, 0\penalty0
  (0):\penalty0 1--13, 2024.

\bibitem[Oliveira et~al.(2009)Oliveira, Bioucas-Dias, and
  Figueiredo]{oliveira2009adaptive}
Joao~P Oliveira, Jos{\'e}~M Bioucas-Dias, and M{\'a}rio~AT Figueiredo.
\newblock Adaptive total variation image deblurring: A
  majorization--minimization approach.
\newblock \emph{Signal processing}, 89\penalty0 (9):\penalty0 1683--1693, 2009.

\bibitem[Pagani et~al.(2024)Pagani, Chevallier, Power, House, and
  Cotter]{cotter2020nuzz}
Filippo Pagani, Augustin Chevallier, Sam Power, Thomas House, and Simon Cotter.
\newblock {NuZZ: Numerical Zig-Zag for general models}.
\newblock \emph{Statistics and Computing}, 34\penalty0 (1):\penalty0 61, 2024.

\bibitem[Pereyra(2016)]{P16}
Marcelo Pereyra.
\newblock {Proximal Markov chain Monte Carlo algorithms}.
\newblock \emph{Statistics and Computing}, 26\penalty0 (4):\penalty0 745--760,
  2016.

\bibitem[Pereyra et~al.(2020)Pereyra, Mieles, and Zygalakis]{PMZ20}
Marcelo Pereyra, Luis~Vargas Mieles, and Konstantinos~C. Zygalakis.
\newblock {Accelerating Proximal Markov Chain Monte Carlo by Using an Explicit
  Stabilized Method}.
\newblock \emph{SIAM Journal on Imaging Sciences}, 13\penalty0 (2):\penalty0
  905--935, 2020.

\bibitem[Peters and {De With}(2012)]{PetersDeWith2012}
E.~A. J.~F. Peters and G.~{De With}.
\newblock {Rejection-free Monte Carlo sampling for general potentials}.
\newblock \emph{Physical Review E - Statistical, Nonlinear, and Soft Matter
  Physics}, 85\penalty0 (2):\penalty0 1--5, 2012.

\bibitem[Rockafellar and Wets(2009)]{rockafellar2009variational}
R~Tyrrell Rockafellar and Roger J-B Wets.
\newblock \emph{Variational analysis}, volume 317.
\newblock Springer Science \& Business Media, 2009.

\bibitem[Rudin et~al.(1992)Rudin, Osher, and Fatemi]{rudin1992nonlinear}
Leonid~I Rudin, Stanley Osher, and Emad Fatemi.
\newblock {Nonlinear total variation based noise removal algorithms}.
\newblock \emph{Physica D: nonlinear phenomena}, 60\penalty0 (1-4):\penalty0
  259--268, 1992.

\bibitem[Sanz-Serna and Zygalakis(2021)]{KZSS}
Jesus~Maria Sanz-Serna and Konstantinos~C Zygalakis.
\newblock Wasserstein distance estimates for the distributions of numerical
  approximations to ergodic stochastic differential equations.
\newblock \emph{Journal of Machine Learning Research}, 22\penalty0
  (242):\penalty0 1--37, 2021.

\bibitem[Shahbaba et~al.(2014)Shahbaba, Lan, Johnson, and
  Neal]{shahbaba2014split}
Babak Shahbaba, Shiwei Lan, Wesley~O Johnson, and Radford~M Neal.
\newblock {Split Hamiltonian Monte Carlo}.
\newblock \emph{Statistics and Computing}, 24:\penalty0 339--349, 2014.

\bibitem[Stoltz and Trstanova(2018)]{stoltz_trstanova}
Gabriel Stoltz and Zofia Trstanova.
\newblock Langevin dynamics with general kinetic energies.
\newblock \emph{Multiscale Modeling \& Simulation}, 16:\penalty0 777--806, 01
  2018.

\bibitem[Talay(1990)]{Talay}
Denis Talay.
\newblock Second-order discretization schemes of stochastic differential
  systems for the computation of the invariant law.
\newblock \emph{Stochastics: An International Journal of Probability and
  Stochastic Processes}, 29\penalty0 (1):\penalty0 13--36, 1990.

\bibitem[Thin et~al.(2020)Thin, Kotelevskii, Andrieu, Durmus, Moulines, and
  Panov]{Thin_nonreversible}
Achille Thin, Nikita Kotelevskii, Christophe Andrieu, Alain Durmus, Eric
  Moulines, and Maxim Panov.
\newblock {Nonreversible MCMC from conditional invertible transforms: a
  complete recipe with convergence guarantees}.
\newblock \emph{arXiv:2012.15550}, 2020.

\bibitem[Tuckerman et~al.(1992)Tuckerman, Berne, and
  Martyna]{tuckerman1992reversible}
MBBJM Tuckerman, Bruce~J Berne, and Glenn~J Martyna.
\newblock Reversible multiple time scale molecular dynamics.
\newblock \emph{The Journal of chemical physics}, 97\penalty0 (3):\penalty0
  1990--2001, 1992.

\bibitem[Turitsyn et~al.(2011)Turitsyn, Chertkov, and Vucelja]{Turitsyn2011}
Konstantin~S. Turitsyn, Michael Chertkov, and Marija Vucelja.
\newblock {Irreversible Monte Carlo algorithms for efficient sampling}.
\newblock \emph{Physica D: Nonlinear Phenomena}, 240\penalty0 (4):\penalty0
  410--414, 2011.

\bibitem[Vanetti et~al.(2017)Vanetti, Bouchard-Côté, Deligiannidis, and
  Doucet]{vanetti2017piecewisedeterministic}
Paul Vanetti, Alexandre Bouchard-Côté, George Deligiannidis, and Arnaud
  Doucet.
\newblock {Piecewise-Deterministic Markov Chain Monte Carlo}.
\newblock \emph{arXiv:1707.05296}, 2017.

\bibitem[Vidal et~al.(2020)Vidal, De~Bortoli, Pereyra, and
  Durmus]{vidal2020maximum}
Ana~Fernandez Vidal, Valentin De~Bortoli, Marcelo Pereyra, and Alain Durmus.
\newblock {Maximum likelihood estimation of regularization parameters in
  high-dimensional inverse problems: An empirical Bayesian approach part i:
  Methodology and experiments}.
\newblock \emph{SIAM Journal on Imaging Sciences}, 13\penalty0 (4):\penalty0
  1945--1989, 2020.

\bibitem[Vucelja(2014)]{Vucelja_lifting}
Marija Vucelja.
\newblock {Lifting -- A Nonreversible Markov Chain Monte Carlo Algorithm}.
\newblock \emph{American Journal of Physics}, 84, 12 2014.

\end{thebibliography}

\end{document}